\mathchardef\dashmod="2D
\newcommand{ \inj}{ \hookrightarrow}
\newcommand{ \surj}{ \twoheadrightarrow}
\newcommand{ \Hom}{ \mathrm{Hom}}
\newcommand{ \rp}{ R_+}
\newcommand{ \rn}{ R_-}
\newcommand{\kr}{k\R}
\newcommand{\E}{\mathcal{E}}
\newcommand{\qo}{\mathbb{Q}_0}
\newcommand{\qi}{\mathbb{Q}_1}
\newcommand{\li}{\Lambda_1}
\newcommand{\lo}{\Lambda_0}
\newcommand{\lamqo}{\Lambda(Q_0)}
\newcommand{\kera}{\mathrm{Soc}}
\newcommand{\sqop}{\widetilde{Sq^2}}
\newcommand{\Le}{\mathbb{L}}
\newcommand{\extrel}{\mathrm{Ext}_{\mathcal{R}el}}
\newcommand{\torrel}{\mathrm{Tor}^{\mathcal{R}el}}
\newcommand{\R}{\mathbb{R}}
\newcommand{\Z}{\mathbb{Z}}
\newcommand{\F}{\mathbb{F}}
\newcommand{\mf}{\underline{\F}}
\newcommand{\mz}{\underline{\Z}}
\newcommand{\ste}{\mathcal{A}}
\newcommand{\ab}{\mathcal{B}}
\newcommand{\Ccal}{\mathcal{C}}
\newcommand{\tri}{\mathcal{T}}
\newcommand{\sh}{\mathcal{SH}}
\newcommand{\Sp}{\mathcal{S}p}
\newcommand{\gr}{Q}
\theoremstyle{definition}
\newtheorem{de}{Definition}[section]
\newtheorem{nota}[de]{Notation}
\theoremstyle{plain}
\newtheorem{thm}[de]{Theorem}
\newtheorem{lemma}[de]{Lemma}
\newtheorem{pro}[de]{Proposition}
\newtheorem{corr}[de]{Corollary}
\newtheorem*{thm*}{Theorem}
\newtheorem*{lemma*}{Lemma}
\newtheorem*{pro*}{Proposition}
\newtheorem*{corr*}{Corollary}
\theoremstyle{remark}
\newtheorem{rk}[de]{Remark}
\newtheorem{ex}[de]{Example}
\title[$ k\R^{\star}(BV)$]{Detection of height $h$ and connective Real $K$-theory of elementary abelian $2$-groups}
\author{Nicolas Ricka}
\address{Institut de Recherche Mathématique Avancée \\
 UMR 7501 \\
 7 rue René-Descartes \\
 67084 Strasbourg, France}
\email{ricka@math.univ-paris13.fr}
\keywords{K-theory with reality, equivariant Steenrod algebra, relative homological algebra, slice tower, detection.}
\subjclass[2000]{19L47, 55S10, 55N91}
\date{\today}
\thanks{ I want to thank my avisor Geoffrey Powell for both the very stimulating mathematical discussions, and his careful readings of early expositions of the results presented in this paper. I also want to thank the reviewer, whose remarks have improved the clarity of the exposition.}
\begin{document}

\begin{abstract}
In this paper, we determine the connective K-theory with reality of elementary abelian $2$-groups as a module over $\Z[v_1,a]$, where $v_1$ is the equivariant Bott class and $a$ the Euler class of the sign representation. This gives in particular a new approach to the computation of the connective real K-theory of such groups. The originality here is to make all computations in the equivariant stable category, considering only equivariant cohomology theories, and to use relative homological algebra over some subalgebras of the equivariant Steenrod algebra to perform explicit computations.
\end{abstract}

\maketitle

During the last few years, the study of equivariant stable homotopy theory has proven itself to be efficient in solving stable homotopy theoretic problems. For example, Voevodsky's $ \R$-realization functor \cite[section 3.3]{MV01} provided inspiration to the development of equivariant tools such as Hill-Hopkins-Ravenel's slice filtration in equivariant stable homotopy theory in the proof of the Kervaire invariant one problem in \cite{HHR}.

Before \cite{HHR}, a particular case of the slice filtration already appeared for the group with two elements $ \gr$: Dugger considered the slice filtration for Atiyah's K-theory with reality spectrum $K\R$ in  \cite{Du03}, and identifies its $k$-invariants. This tower consists only in shifts of $k\R$, the connective cover of $K\R$, and in particular is far simpler than the equivariant Postnikov tower of $K\R$.
This indicates that the study of the $k\R$-cohomology of a spectrum $X$ should rely on the slice tower of $k\R$, and thus on the action of the equivariant modulo $2$ Steenrod algebra on the cohomology of $X$.

Throughout the paper, $\gr$ denotes the group with two elements. Let $RO( \gr)$ be the Grothendieck group of real representations of $\gr$. Let $1$ be the the trivial representation and $\alpha$ the sign representatin of $\gr$, so that $RO(\gr)$ is the free abelian group on  $1$ and $ \alpha$. 

We will use the convention of Hu and Kriz in \cite{HK01} for graded objects: a $*$ indicates a $\Z$-grading and a $\star$ indicates a $RO( \gr)$-grading. For example, $\gr$-equivariant cohomology theories are naturally $RO( \gr)$-graded objects. In particular, ordinary equivariant cohomology with respect to the constant Mackey functor $\underline{\Z}$ is denoted $H\underline{\Z}^{\star}$, and connective K-theory with reality is denoted $\kr^{\star}$. Note that these objects are not mere abelian groups, but they are the more structured corresponding $\gr$-equivariant analogue of abelian groups: Mackey functors fot the group $\gr$.

Let $V$ be an elementary abelian $2$-group. Consider the classifying space $BV$ as a $ \gr$-space with trivial $\gr$-action. The aim of this article is to set up an equivariant machinery general enough to get an explicit description of $\kr^{ \star}(BV)$: the connective $K$-theory with reality of $BV$.

The study of $ \kr^{ \star}(BV)$ provides in particular an new and unified computation of the $\Z$-graded abelian groups $ko^*(BV)$ and $ku^*(BV)$ provided by Ossa \cite{Os89} for $ku^*$ and Yu's thesis \cite{Yu} for $ko^*$, correcting Ossa's computation of $ko^*(BV)$ in \cite{Os89}.

These groups where studied extensively by Bruner and Greenlees in \cite{BG03} for $ku^*$ of groups and \cite{BG10} for $ko^*$. In \cite{Po11} and \cite{Po12}, Powell studied their functorial behavior (as a functor of $V$).

In these arguments, the realification-complexification exact sequence
$$ ko \stackrel{c}{ \rightarrow} ku \stackrel{R}{ \rightarrow} \Sigma^2ko,$$
is always at the center of the computation. Since we know by \cite{At66} that this sequence is of $\gr$-equivariant nature, this provides another motivation to study these objects from an equivariant point of view. \\

The aim of this paper is to compute the $\gr$-equivariant k-theory with reality of groups, \textit{i.e.} the $RO(\gr)$-graded Mackey functor $\kr^{\star}(BV)$.
The main result is an explicit computation of the $\Z[a,v_1]$-module structure of $\kr^{\star}_{\gr}(BV)$, that is equivariant classes of maps $BV \rightarrow \Sigma^{\star}\kr$.

\begin{thm*}[\emph{Theorem \ref{thm:krbvformula}}]
There is a $\Z[a,v_1]$-module splitting of $ \kr^{\star}_{\gr}(BV)$ as
$$ \kr^{\star}_{\gr}(BV) \cong co\Gamma_{v_1}(  \kr^{\star}_{\gr}(BV)) \oplus F^1(V) \oplus F^2(V) \otimes_{\Z} \Lambda(v_1),$$
where $co\Gamma_{v_1}$ denotes the cotorsion part, that is, for a $v_1$-module $M$, the quotient of $M$ by its $v_1$-torsion part: $M/\Gamma_{v_1}M$.
and isomorphisms:
\begin{enumerate}
\item $F^1(V) \cong Im( \qi : H \mf^{\star}_{\gr}(BV) \rightarrow H \mf^{\star+2+\alpha}_{\gr}(BV))$,
\item $F^2(V) \cong Sq^2Sq^2Sq^2 F$ where $F$ is the largest free $ \ste(1)$-module contained in $ H \F^*(BV)$,
\item and $$ \frac{\Phi_n}{\Phi_{n+1}} \cong  \bigoplus_{i=1}^n \left(( \Sigma^{-i(1+ \alpha)}HP^{ \star})_{twist \geq 0} \oplus ( \Sigma^{-i(1+ \alpha)-1}HP^{ \star})_{twist \leq -2}, \right)^{ \oplus \begin{pmatrix} n \\ i \end{pmatrix}}$$
where $$\Phi_n = Im(v_1^n : co\Gamma_{v_1}( \kr^{\star+n(1+ \alpha)}_{\gr}(BV)) \rightarrow co\Gamma_{v_1}( \kr^{\star+n(1+ \alpha)}_{\gr}(BV))),$$ for $n \geq 0$ defines a decreasing exhaustive filtration of the $\Z[a,v_1]$-module $co\Gamma_{v_1}(  \kr^{\star}_{\gr}(BV))$.
\end{enumerate}
\end{thm*}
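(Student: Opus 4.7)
The proof rests on three ingredients: Dugger's description of the slice tower of $\kr$ as built from shifts of $H\mz$, mod-$2$ reduction (available because $V$ is $2$-elementary abelian), and relative homological algebra over a subalgebra of the equivariant Steenrod algebra. The strategy splits into three stages: first set up a spectral sequence with $E_1$-page built from $H\mf^{\star}_{\gr}(BV)$ converging to $\kr^{\star}_{\gr}(BV)$; then decompose the input module to match the three summands of the theorem; and finally analyse the $v_1$-adic filtration on the cotorsion summand.

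For the first stage, I would mod-$2$-reduce Dugger's slice tower to obtain a filtration of $\kr$ whose layers are shifts of $H\mf$, and apply this filtration to compute $\kr^{\star}_{\gr}(BV)$ via a spectral sequence of $\Z[a,v_1]$-modules whose $E_1$-page is a direct sum of shifts of $H\mf^{\star}_{\gr}(BV)$. Repackaging through relative homological algebra, the $E_2$-page takes the form $\extrel_{\E}(\mf, H\mf^{\star}_{\gr}(BV))$ for a suitable subalgebra $\E$ of the equivariant Steenrod algebra (the equivariant analogue of the subalgebras controlling $ku$ and $ko$ classically). Collapse should follow for bidegree reasons analogous to the non-equivariant $ko$ case.

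For the second stage, I would decompose $H\mf^{\star}_{\gr}(BV)$ as an $\E$-module in parallel with the classical splitting of $H^*(BV;\F_2)$ as an $\ste(1)$-module into the maximal free summand $F$ plus a complement. Under the spectral sequence, the free summand is responsible for $F^2(V) \otimes_{\Z} \Lambda(v_1)$, with the triple composition $Sq^2Sq^2Sq^2$ arising from the $k$-invariants of the slice tower; the summand detected by $\qi$ produces the $v_1$-torsion factor $F^1(V) \cong \mathrm{Im}(\qi)$; and the remaining pieces assemble into the cotorsion term $co\Gamma_{v_1}(\kr^{\star}_{\gr}(BV))$.

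The third stage is to identify the associated graded of the $v_1$-adic filtration $\Phi_n$ on the cotorsion summand, which is where the main technical difficulty lies. The binomial coefficient $\binom{n}{i}$ should come from choosing $i$ out of $n$ polynomial generators of $H^*(BV;\F_2)$ when iterating multiplication by $v_1$; matching the resulting Mackey functor summands to the decomposition $(\Sigma^{-i(1+\alpha)}HP^{\star})_{twist \geq 0} \oplus (\Sigma^{-i(1+\alpha)-1}HP^{\star})_{twist \leq -2}$ requires tracking the interaction of $v_1$ with the Euler class $a$ in each representation bidegree and identifying which part of the Mackey functor structure survives. This last step is the main obstacle, and I expect it to consume the bulk of the proof.
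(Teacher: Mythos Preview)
Your overall shape is close to the paper's, but there is a real gap at the step you wave through as ``collapse should follow for bidegree reasons.'' That is exactly the hard part, and the paper does not get it from sparseness alone. What the paper proves is a \emph{detection of height $2$} property for the slice tower (Theorem~\ref{thm_2detectkr}): every $v_1$-torsion class is killed by $v_1^2$. The argument is not purely formal: one first shows the Borel tower $E\gr_+\wedge\Sigma^{\bullet(1+\alpha)}\kr$ has detection of height~$1$, and this requires already knowing $\mathcal{H}^{\star}(V)$ explicitly (Corollary~\ref{cor:hv}) and then checking by hand that every $\F[a]$-module map $HP^{\star}\to HP^{\star+3+2\alpha}$ vanishes (Proposition~\ref{thm_EK_detect}). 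One then upgrades from height~$1$ to height~$2$ via the isotropy separation sequence $E\gr_+\to S^0\to\widetilde{E\gr}$, using that $a^3v_1=0$ forces $\widetilde{E\gr}\wedge v_1=0$ so the geometric tower is trivial (Proposition~\ref{pro:suchgreatheights}). None of this is a bidegree count; without the isotropy separation trick you will not get the bound on $v_1$-torsion length, and without that bound the splitting $\Gamma_{v_1}\cong F^1(V)\oplus F^2(V)\otimes\Lambda(v_1)$ does not follow.

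Two further corrections to your ordering. First, the computation of $\mathcal{H}^{\star}(V)=H_{01}^{\star}(R(H\F^*(BV)))$ must come \emph{before} the detection argument, since the latter uses the explicit form of $HP^{\star}$; you have it the other way around. Second, what you flag as the main obstacle (the $v_1$-adic associated graded in stage three) is in fact immediate once height~$2$ detection is known: Theorem~\ref{thm:chaincplx} identifies $\Phi_n/\Phi_{n+1}$ with the homology of an explicit three-term complex built from $\mathcal{H}^{\star}(V)$, and Lemma~\ref{lemma_imt} shows the differential $t$ is supported only on the free $\ste(1)$-summand and equals $Sq^2Sq^2Sq^2$ there (this is read off from the $ko$ Postnikov tower at the fixed-point level, not from the slice $k$-invariants directly). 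The binomial coefficients come straight from the K\"unneth decomposition of $H\F^*(BV)$ into tensor powers $P^{\otimes i}$ (Lemma~\ref{lemma_kunneth_hbv}), not from iterating $v_1$.
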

With $HP^{\star}$ some explicit $\Z[a]$-module defined in Notation \ref{nota_HPstar}.

As the $RO(\gr)$-graded abelian group $\kr^{\star}_{e}(BV) = ku^*(BV)[\sigma^{\pm 1}]$ is quite easy to determine using various techniques (see for example \cite{BG03}), and the action of the Euler class $a$ of the representation $\alpha$ suffices to understand the restriction and transfer of a Mackey functor obtained via a cohomology theory, this is really a determination of $\kr^{\star}(BV)$.

During the trip, we obtain two families of result of independent interest.
First we show that the equivariant Steenrod algebra is a flat Hopf algebroid, by the general machinery developed by Boardman, this gives a nice duality between modules over the equivariant Steenrod algebra and comodules over its dual:

\begin{thm*}[\emph{Theorem \ref{thm:boardman_hmf}}]
Denote
\begin{itemize}
\item $ \ste^{\star} = (H\mf^{\star}_{\gr}(H\mf))$
\item $\ste_{\star} = (H\mf_{\star}^{\gr}(H\mf))$.
\end{itemize}
Then, , the category of Boardman $\ste^{ \star}$-modules is equivalent to the category of Boardman $\ste_{ \star}$-comodules.
\end{thm*}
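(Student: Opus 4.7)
The strategy is to apply Boardman's general duality machinery for flat Hopf algebroids associated to a ring spectrum. For a commutative ring spectrum $E$ with $E_{\star}E$ flat over $E_{\star}$, Boardman's theorem identifies the category of comodules over $E_{\star}E$ with a category of suitably topologized modules over the continuous dual $E^{\star}E$. Applied to $E = H\mf$, this would yield exactly the stated equivalence between Boardman $\ste^{\star}$-modules and Boardman $\ste_{\star}$-comodules.

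The first step is to verify the flatness hypothesis, namely that $\ste_{\star}$ is flat, indeed free, over the coefficient Mackey functor $H\mf_{\star}^{\gr}$. This relies on the Hu--Kriz computation of the dual equivariant Steenrod algebra, which exhibits $\ste_{\star}$ explicitly as a polynomial-exterior algebra over the coefficients. With flatness established, the Hopf algebroid structure maps (unit, counit, coproduct, antipode) are produced directly from the symmetric monoidal structure of the $\gr$-equivariant stable category applied to $H\mf \wedge H\mf$; this part is formal once flatness is known.

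The second step is to identify $\ste^{\star}$ with the continuous $H\mf_{\star}^{\gr}$-linear dual of $\ste_{\star}$ via the Kronecker pairing $\ste^{\star} \otimes \ste_{\star} \to H\mf_{\star}^{\gr}$ coming from the homology--cohomology adjunction. Freeness of $\ste_{\star}$ together with a finite-type argument in each $RO(\gr)$-degree ensures that this pairing is non-degenerate and that $\ste^{\star}$ is the appropriate profinite dual. Once this is in hand, the equivalence of categories arises as a formal consequence of Boardman's framework: the functors $M \mapsto \Hom_{H\mf_{\star}^{\gr}}(M, H\mf_{\star}^{\gr})$ on comodules and its appropriate continuous dual on modules are mutually inverse.

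The main obstacle will be adapting Boardman's framework, which was originally designed for $\Z$-graded theories over a field, to the $RO(\gr)$-graded setting over the considerably more complicated coefficient ring $H\mf_{\star}^{\gr}$. In particular, Boardman's filtration of $E_{\star}E$ by finitely generated sub-comodules, used to topologize the dual, must be replaced by a cofinal family adapted to the $RO(\gr)$-grading and the Mackey functor structure; and one must check that the resulting profinite topology on $\ste^{\star}$ is compatible with the Kronecker pairing. Verifying these filtration-level compatibilities is the technical heart of the argument; once they are in place, the algebraic content of the equivalence is essentially formal.
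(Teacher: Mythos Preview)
Your proposal is correct and follows essentially the same two-step strategy as the paper: establish freeness of $\ste_{\star}$ over $H\mf_{\star}^{\gr}$ via the Hu--Kriz monomial basis, identify $\ste^{\star}$ with the $H\mf_{\star}^{\gr}$-linear dual of $\ste_{\star}$, and then invoke Boardman's Theorem~11.13.

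The one place where the paper's execution differs from yours is in how the duality identification $\ste^{\star} \cong \Hom_{H\mf_{\star}^{\gr}}(\ste_{\star}, H\mf_{\star}^{\gr})$ is obtained, and this bears on your ``main obstacle''. You propose to check non-degeneracy of the Kronecker pairing via a finite-type argument in each $RO(\gr)$-degree and then worry about compatibility of the profinite topology with that pairing. The paper sidesteps this entirely: the algebraic freeness of $\ste_{\star}$ is first upgraded to a \emph{spectrum-level} splitting $H\mf \wedge H\mf \simeq \bigvee_{b \in \mathcal{BM}} \Sigma^{|b|} H\mf$ in the homotopy category of $H\mf$-modules (by an adjunction argument from the monomial basis). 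Once that wedge decomposition is in hand, the chain of isomorphisms
\[
\ste^{\star} = [H\mf, H\mf]^{\star}_{\gr} \cong \Hom_{H\mf\text{-}\mathrm{mod}}(H\mf\wedge H\mf, H\mf)^{\star}_{\gr} \cong \Hom_{H\mf_{\star}^{\gr}}(\ste_{\star}, H\mf_{\star}^{\gr})
\]
is immediate, and no separate filtration or finite-type verification is needed. In other words, the $RO(\gr)$-graded adaptation you flag as the technical heart is absorbed into the single observation that the Hu--Kriz basis realizes topologically; after that, Boardman's theorem applies verbatim with $H = H\mf_{\star}^{\gr}$.
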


Using these property, we then show that the equivariant analogues of the Cartan formulae holds with respect to multiplication with elements in the coefficient ring $H\mf^{\star}_{\gr}$.

\begin{pro*}[\emph{Proposition \ref{pro_cartancoeff}}]

\begin{enumerate}
 \item For all $h \in H \mf^{ \star}_{\gr}$,  $ \eta_R(h) = \sum_{h' \in H \mf^{ \star}_{\gr}, x \in \ste_{ \star}} (x^{ \vee}h)x$.
\item Let $M$ be a Boardman $ \ste^{ \star}$-module and $x^{ \vee} \in \ste^{ \star}$.
Define $x'_i$ and $x''_i$ in $\ste_{\star}$ by
$$ \sum_{i \geq 0} x'_i \otimes x''_i = \sum_{h,y,z| pr_x(yz) = \eta_R(h)x} hy \otimes z \in \ste_{\star}$$
where the second sum is over $h \in H \mf_{ \star}^{\gr}$ and $y,z$ in $\mathcal{BM}$.
Then, for all $h \in H \mf^{ \star}_{\gr}$ and $m \in M$,
$$ x(hm) = \sum_{i \geq 0} x'_i(h) x''_i(m).$$
\end{enumerate}
\end{pro*}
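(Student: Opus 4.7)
Both parts are applications of the dual basis formalism, made available by the flat Hopf algebroid structure and the equivalence between Boardman $\ste^\star$-modules and $\ste_\star$-comodules established in Theorem \ref{thm:boardman_hmf}. I first fix a (topological) basis $\{x\}$ of $\ste_\star$ over $H\mf^\star_\gr$, together with the dual basis $\{x^\vee\}$ in $\ste^\star$, so that every element $\xi \in \ste_\star$ admits an expansion $\xi = \sum_x \langle x^\vee, \xi \rangle\, x$.

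For (1), the coefficient ring $H\mf^\star_\gr$ is itself a Boardman $\ste^\star$-module, and under Theorem \ref{thm:boardman_hmf} its associated $\ste_\star$-coaction is precisely the right unit $\eta_R$. Applying the dual basis expansion above to $\xi = \eta_R(h)$ gives $\eta_R(h) = \sum_x \langle x^\vee, \eta_R(h)\rangle\, x$, and by the very construction of the duality in Theorem \ref{thm:boardman_hmf} the pairing $\langle x^\vee, \eta_R(h)\rangle$ coincides with the action $x^\vee h$ of the operation $x^\vee \in \ste^\star$ on the coefficient $h \in H\mf^\star_\gr$. This is exactly the formula of part (1).

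For (2), the $\ste^\star$-action on a Boardman module $M$ with coaction $\rho: M \to \ste_\star \otimes M$ is given by $xm = \sum_y \langle x, y\rangle\, m_y$ when $\rho(m) = \sum_y y \otimes m_y$. Compatibility of $\rho$ with multiplication by coefficients, together with the Hopf algebroid structure, yields $\rho(hm) = \eta_R(h)\cdot \rho(m)$, where $\eta_R(h)$ acts on the first tensor factor by multiplication in $\ste_\star$. Substituting the formula from part (1) for $\eta_R(h)$ and expanding each product of two basis elements in $\{x\}$ (with coefficients in $H\mf^\star_\gr$) produces a triple sum which regroups, for each basis element $x$, around the triples $(h, y, z)$ such that $pr_x(yz) = \eta_R(h)\, x$. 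These regroupings produce precisely the elements $x'_i, x''_i$ described in the statement, and the final pairing with $x$ yields the announced Cartan-type formula.

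The main obstacle is the careful bookkeeping forced by the two distinct $H\mf^\star_\gr$-module structures on $\ste_\star$, via $\eta_L$ and $\eta_R$: scalars must be moved across basis elements through the appropriate unit map at each step, and the projection operator $pr_x$ must be shown to be well-defined in this bimodule framework. One must also verify convergence of the resulting (possibly infinite) sums in the Boardman setting, which is ensured by the completeness properties built into the equivalence of Theorem \ref{thm:boardman_hmf}.
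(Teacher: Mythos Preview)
Your proposal is correct and follows essentially the same route as the paper: both arguments invoke the Boardman equivalence to write the coaction as $\lambda(m) = \sum_x x^\vee m \otimes x$, specialize to $M = H\mf^\star_\gr$ (whose coaction is $\eta_R$) for part (1), and for part (2) exploit the $H\mf^\star_\gr$-linearity of $\lambda$ via $\eta_R$ on the $\ste_\star$-factor, then substitute (1) and expand the product $yz$ in the basis $\mathcal{BM}$. Your remarks on the $\eta_L/\eta_R$ bimodule bookkeeping and on convergence are points the paper leaves implicit, but they do not constitute a different strategy.
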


During the proof of our main theorem, we have to study the structure of the category of $\E$-modules, where $\E = \Lambda_{\F}(\qo,\qi)$, where $\qo$ and $\qi$ are two equivariant stable cohomology operations related to the classical Milnor operations. We show a surprising relationship between this category and the well known category of modules over the subalgebra of the (non-equivariant) Steenrod algebra $\ste(1)$ generated by the two first Steenrod operations $Sq^1$ and $Sq^2$.

\begin{thm*} [\emph{Theorem \ref{thm:formula_qoi}}]
Let $H\mf^{\star} \otimes(-) : \F\dashmod mod^{\Z} \rightarrow H\mf^{\star}\dashmod mod$, where $H\mf^{\star}\dashmod mod$ denotes the category of $H\mf^{\star}$-modules in $\mathcal{M}^{RO(\gr)}$, be the extension of scalars functor.
Then the following diagram is commutative up to natural isomorphism
$$ \xymatrix@=2cm{ \sh \ar[r]  \ar[d]_{H\F^*} & \gr\sh \ar[d]^{H\mf^{\star}} \\
\F\dashmod mod^{\Z} \ar[r]_{H\mf \otimes(-)} & H\mf^{\star}\dashmod mod }$$
where the top arrow is the inflation functor.

Moreover, this refines to the following commutative diagram up to natural isomorphism
$$ \xymatrix@=2cm{ \sh \ar[r]  \ar[d]_{H\F^*} & \gr\sh \ar[d]^{H\mf^{\star}} \\
\ste(1)\dashmod mod \ar[r]_{R} & \E\dashmod mod, }$$
Where $R$ denotes the extension of scalars from $\F$ to $H\mf^{\star}$ together with a natural action of $\E$ coming from the action of $\ste(1)$ on the source.

\end{thm*}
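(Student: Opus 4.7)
The plan splits naturally according to the two squares. For the top square, I would proceed by cellular induction. The inflation functor $\sh \to \gr\sh$ is a symmetric monoidal left adjoint: it sends $S^n$ to $S^n$ and preserves cofiber sequences and homotopy colimits. For $X = S^n$ the candidate isomorphism $H\mf^{\star}(iX) \cong H\mf^{\star} \otimes_{\F} H\F^{*}(X)$ is tautological, since both sides reduce to $\Sigma^n H\mf^{\star}$. Given the isomorphism for the $(n-1)$-skeleton $X_{n-1}$, a cofiber sequence $X_{n-1} \to X_n \to \bigvee_{\alpha} S^{n}$ yields long exact sequences after applying $H\mf^{\star}(-)$ on one side and $H\mf^{\star} \otimes_{\F} H\F^{*}(-)$ on the other; since $\F$ is a field, $H\mf^{\star} \otimes_{\F} -$ is exact, and the five lemma extends the isomorphism to $X_n$. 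Passage to infinite CW-spectra is handled by a standard $\lim^1$ argument.

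For the bottom square, I would upgrade this $H\mf^{\star}$-module isomorphism to an $\E$-module isomorphism. The strategy is to compute the action of the generators $\qo, \qi$ first on the unit class in $H\mf^{\star}(iS^0) = H\mf^{\star}$, then propagate to arbitrary $X$ via the coefficient Cartan formula of Proposition~\ref{pro_cartancoeff}. On the unit class, $\qo$ and $\qi$ must act as elements of $H\mf^{\star}$ of specific positive bidegree; an explicit inspection of the coefficient ring $H\mf^{\star}_{\gr}$ shows that these classes vanish, so the action on the unit is trivial. On a decomposable element $h \otimes x \in H\mf^{\star} \otimes_{\F} H\F^{*}(X) \cong H\mf^{\star}(iX)$, Proposition~\ref{pro_cartancoeff} then expresses $\qo(h \otimes x)$ and $\qi(h \otimes x)$ as combinations of classical Steenrod operations acting on $x$ and coefficient-derivation terms on $h$. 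This is precisely the assignment that defines the functor $R$, and naturality in $X$ (together with the cellular argument from the first square) promotes the pointwise identification to commutativity of the second square up to natural isomorphism.

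The main obstacle is the explicit identification of $\qo, \qi$ as equivariant lifts of classical Steenrod operations and the verification that the resulting formulas on $H\mf^{\star} \otimes_{\F} M$ genuinely satisfy the exterior relations $\qo^{2} = \qi^{2} = 0$ defining $\E$. Since $\ste(1)$ is not itself exterior, there is no algebra map $\ste(1) \to \E$ to restrict along; the $\E$-structure on $R(M)$ must instead be produced by specific formulas in which the $H\mf^{\star}$-coefficient contributions, governed by Proposition~\ref{pro_cartancoeff}, precisely compensate the obstructing terms from non-exterior relations such as $(Sq^{2})^{2} = Sq^{3}Sq^{1}$. Producing these formulas and assembling them into a well-defined functor $R$ compatible with the cellular isomorphism from the first square is the technical core of the argument, and is where the Boardman-style comodule duality of Theorem~\ref{thm:boardman_hmf} is most useful, because the coproduct on $\ste_{\star}$ gives a manageable bookkeeping device for the coefficient-corrected Cartan formula.
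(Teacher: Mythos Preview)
Your treatment of the first square is essentially the paper's argument: both reduce to checking that a natural transformation of cohomology theories is an isomorphism on $S^0$, which is immediate. The cellular induction and the paper's appeal to the adjunction between inflation and fixed points are two phrasings of the same standard fact.

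For the second square, however, your proposed strategy has a genuine gap. Computing $\qo(1)$ and $\qi(1)$ on the unit of the coefficient ring and then invoking the Cartan formula does not suffice: the Cartan formula (Corollary~\ref{cor:cartan}) expresses $\qi(h\cdot x)$ in terms of operations on $h$ \emph{and} the unknown $\qi(x)$, so knowing $\qi(1)=0$ gives no information about $\qi(x)$ for $x\in H\F^*(X)\subset H\mf^{\star}_{\gr}(iX)$. You correctly flag this identification as the main obstacle, but your suggested route through Boardman duality is indirect and you do not indicate how it would actually pin down the formulas.

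The paper's argument is more elementary and direct. Restricting $\qo$ and $\qi$ to the integer-graded part $H\mf^{*}_{\gr}(iX)\cong H\F^{*}(X)$ yields natural transformations of ordinary mod~$2$ cohomology, hence elements of the classical Steenrod algebra in degrees $1$ and (after decomposing along $H\mf^{*+2+\alpha}_{\gr}\cong aH\F^{*+2}\oplus\sigma^{-1}H\F^{*+3}$) degrees $2$ and $3$. Degree forces $\qo=Sq^1$; for $\qi$ one writes $\qi(x)=\epsilon_1 aSq^2(x)+\epsilon_2\sigma^{-1}Sq^2Sq^1(x)+\epsilon_3\sigma^{-1}Sq^1Sq^2(x)$ with unknown $\epsilon_i\in\F$, and then the single relation $\qo\qi=\qi\qo$ (together with the Adem relation $Sq^2Sq^2=Sq^1Sq^2Sq^1$) forces $\epsilon_1=\epsilon_2=\epsilon_3$. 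Nontriviality of $\qi$ gives all $\epsilon_i=1$, yielding $\qi=aSq^2+\sigma^{-1}Q_1$. Only then are the Cartan formulas used, to extend from $x$ to $h\otimes x$ and to verify that the resulting action agrees with the definition of $R$. The commutation relation $\qo\qi=\qi\qo$ is the decisive ingredient you are missing.
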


In the first section of the paper, we consider towers of objects $k_{\bullet}$ in a triangulated category $\tri$ and an exact functor $(-)^* : \tri \rightarrow \ab$. Note that $(-)^*$ denotes a covariant functor. The notation is intended to follow the usual convention in stable homotopy theory, where the functor $(-)^*$ which associated a cohomology theory to a spectrum is indeed covariant. 

We then introduce a property of this tower, called the detection of height $h$, where $h$ is a nonnegative integer. We show that, when $h \leq 2$, the objects $k^*_{\bullet}$ can be recovered entirely from a $\Lambda(\theta)$-module structure on $C_{\bullet}^*$, where $C_{\bullet}$ are the layers of $k_{\bullet}$. Precisely, $k_{\bullet}$ can be understood in terms of the homology of the complex $(C_{\bullet}^*, \theta)$. We also provide some general tools to check detection of height $h$ when $h \leq 2$.

We then recall a result of Dugger's work in \cite{Du03} which identifies the slice tower of Atiyah's k-theory with reality $\gr$-equivariant spectrum. This enables the use of the first section, when the tower $k_{\bullet}$ is the slice tower of $\kr$, and the functor $(-)^*$ is $[-,X]^*$, where $X$ is some fixed space. Consequently, the first section brings the problem of computing $\kr^{\star}_{\gr}(BV)$ back to understanding the homology of $H\underline{\Z}^{\star}_{\gr}(BV)$ with respect to an integral lift of the second equivariant Milnor operation $\qi$. We call $\mathcal{H}^{\star}_{\gr}(V)$ this bigraded object.

It is easy to see that $\mathcal{H}^{\star}(V)$ can be recovered from the $\E$-module structure on $H\mf^{\star}_{\gr}(BV)$, where $\E := \Lambda_{\F}(\qo,\qi)$. The subject of the third section is precisely the determination of the $\E$-module structure on $H\mf^{\star}_{\gr}(X)$, for $X$ any non-equivariant spectrum. It turns out that this fits into an interesting picture, as everything can be determined from the $\ste(1)$-module structure on the non-equivariant cohomology of $X$: 
$$H\mf^{\star}_{\gr}(X) = R(H\F^*(X))_{\gr}$$
as $\E$-modules, for some functor $R : \ste(1)\dashmod mod \rightarrow \E \dashmod mod$.

In the fourth section, we interpret $\mathcal{H}^{\star}(V)$ as a relative extention group
$$ \mathcal{H}^{\star}(V) = \extrel(\F, H\mf^{\star}_{\gr}(BV)) = \extrel(\F, R(H\F^{*}(BV))_{\gr}),$$
in the context of relative homological algebra with respect to the pair $\E(1) \subset \ste(1)$. 
We then use the classical tools of homological algebra (in the relative context) to produce general tools for the computation of the composite $\extrel(\F, R(-))$. The main computational tool is given in Theorem \ref{thm_secR}. This result relates short exact sequences of $\ste(1)$-modules to long exact sequences in $\extrel(\F,R(-))$. 

The fifth section is devoted to the computation of $\extrel(\F,R(-))$ for the most simple $\ste(1)$-modules: the free ones. The result is presented in Corollary \ref{pro_h01libre}. 

The point is that  $\extrel(\F,R(\ste(1)))$ is so small that $\extrel(\F,R(M))$ depends in general mostly on the stable isomorphism class of a $\ste(1)$-module $M$. Expliciting this fact is the subject of section 6.

In section 7 we use the explicit form of the $\ste(1)$-modules $H\F^*(BV)$ to perform the complete computation of $\mathcal{H}^{\star}(V)$. As both $\extrel$ and $R$ are additive functors, the computation is done for each indecomposable non free summand of the $\ste(1)$-module $H\F^*(BV)$. The result of this computation is Theorem \ref{pro_calculh01pn}.

The last section of the paper recollects the machinery of section 1 together with the actual computation of $\mathcal{H}^{\star}(V)$. The proof divides into two steps: first showing that the slice tower for $\kr$ detects at height $2$, and then use the actual computation of $\mathcal{H}^{\star}(V)$ with the detection property of height $2$ to finish the computation.

\tableofcontents

\section{Detection of height $h$} \label{section:detection}

\subsection{Definition} Let $\tri$ be a triangulated category, and $\Sigma : \tri \rightarrow \tri$ its shift functor. Let $\ab$ be an abelian category, and $\ab^{\Z}$ denote the category of $\Z$-graded objects of $\ab$.
For this article, we consider a (covariant) exact functor $(-)^* : \tri \rightarrow \ab^{\Z}$, {\it i.e.} $(-)^*$ sends distinguished triangles into long exact sequences of objects of $\ab$. 

\begin{ex} 
The main examples we have in mind are the stable homotopy category $\sh$ and the functor which associates to a spectrum $X$ the cohomology theory it represents, and in general the equivariant stable homotopy category (see \cite[Theorem 9.4.3]{HPS}).
\end{ex}

\begin{de}
Let $K$ be an object of $\tri$. A {\em tower over $K$} is a diagram of the form
$$ \xymatrix{ \hdots \ar[r]^{e_{n+2}} & k_{n+1} \ar[r]^{e_{n+1}} \ar[drrr]_{f_{n+1}}& k_{n} \ar[r]^{e_{n}} \ar[drr]^{f_{n}} & k_{n-1} \ar[r]^{e_{n-1}} \ar[dr]^{f_{n-1}}& \hdots \\
& & & & K }$$
where the indices run through $\Z$.
\end{de}

\begin{ex}
A $t$-structure on the category $\tri$ give such towers, for example the Postnikov towers in equivariant stable homotopy theory (see \cite[IV,4.1]{GM95}). The slice tower (see \cite{HHR}) also give such an example.
\end{ex}

Let $k_{\bullet}$ be a tower over an object $K$ of $\tri$. One want to use the triangulated structure of the category $\tri$ to compute as much information as possible about the various stages $k_{\bullet}^*$.

\begin{nota}
Complete the map $k_{n+1} \stackrel{e_{n+1}}{\longrightarrow} k_n$ into a distinguished triangle
$$ k_{n+1} \stackrel{e_{n+1}}{\longrightarrow} k_n \stackrel{c_{n}}{\longrightarrow} C_n \stackrel{\delta_{n}}{\longrightarrow} \Sigma k_{n+1}$$
and denote $\theta_n : C_n \rightarrow \Sigma C_{n+1}$ the composite $\delta_nc_n$. The situation is summarized in the following diagram.
$$ \xymatrix@=1,5cm{\hdots \ar[r]^{e_{n+2}} & k_{n+1} \ar[r]^{e_{n+1}}  \ar[d]^{c_{n+1}} & k_{n} \ar[r]^{e_{n}}  \ar[d]^{c_{n}} & k_{n-1} \ar[r]^{e_{n-1}}   \ar[d]^{c_{n-1}} & \hdots \ar[r] &  K \\
 \hdots & C_{n+1} \ar@{-->}[l]^{\theta_{n+1}}   \ar@{-->}[ul]_{\delta_{n+1}} & C_{n} \ar@{-->}[l]^{\theta_{n}}   \ar@{-->}[ul]_{\delta_{n}} & C_{n-1} \ar@{-->}[l]^{\theta_{n-1}}   \ar@{-->}[ul]_{\delta_{n-1}} & \hdots \ar@{-->}[l]^{\theta_{n-2}}&  }$$
where a dotted arrow from $X$ to $Y$ represents a map $X \rightarrow \Sigma Y$.
\end{nota}

\vspace{0,5cm}

For the application, we consider a tower $k_{\bullet}$ over $K$, when the object $K^*$ is completely understood. Our goal is to exhibit a property of the tower which enables us to compute explicitly $k_{\bullet}^*$. We now introduce this key property.

\begin{de}
\begin{enumerate}
\item Let $k_{\bullet}$ be a tower over an object $K$. For an integer $n$, define $$T_n(k_{\bullet}) = Ker(f_n^* : k_n^* \rightarrow K^*).$$
\item We say that $k_{\bullet}$ has the detection of height $h$ and level $n$ (for the functor $(-)^*$, if there is an ambiguity) if the morphism
$$T_n(k_{\bullet}) \rightarrow Coker(e_{n+1}^*e_{n+2}^* \hdots e_{n+h}^* : k^*_{n+h} \rightarrow k_n^*)$$
is injective.
We say that $k_{\bullet}$ has the detection of height $h$ if $k_{\bullet}$ has the detection of height $h$ and level $n$ for all $n \in \Z$.
\end{enumerate}
\end{de}

\begin{nota}
Let $R$ be a ring, $M$ a $R$-module and $x \in R$. Denote
\begin{itemize}
\item $\Gamma_x(M)$ the $x$-torsion sub-module of $M$, \\
\item $co\Gamma_x(M)$ the cokernel of the map $\Gamma_x(M) \inj M$.
\end{itemize}
\end{nota}

\begin{ex} \label{ex:periodic_tower}
Let $k$ be a ring spectrum and $x \in k_d$. Then, multiplication by $x$ gives a tower over $$K = k[x^{-1}] := \underset{n \rightarrow \infty}{hocolim} \ \Sigma^{-n d} k:$$
$$  \xymatrix{ \hdots \ar[r]^{x} & \Sigma^{(n+1)d} k \ar[r]^{x} \ar[drrr] & \Sigma^{nd} k \ar[r]^{x} \ar[drr] & \Sigma^{(n-1)d} k \ar[r]^{x} \ar[dr] & \hdots \\
& & & & K. }$$
Let $X$ be a spectrum, and consider detection properties with respect to the exact functor $[X,-]^*$.
The graded abelian group $T_x( \Sigma^{d}k^*_{\bullet}(X))$ is then exactly a shift of $T_x(k^*(X))$. It is easy to see that detection of height $h$ for this tower is equivalent to $Ker(x^h : k^{*+hd}(X) \rightarrow k^*(X)) \subset \Gamma_x(k^*(X))$ being an isomorphism.
\end{ex}

\begin{lemma}
Let $h \geq 0$ and $n \in \Z$. If a tower $k_{\bullet}$ over $K$ satisfies the detection  of height $h$ at level $n$, then it also satisfies the detection  of height $(h+1)$ at level $n$.
\end{lemma}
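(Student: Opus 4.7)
The plan is to exploit the fact that the cokernels appearing in the detection condition form a tower indexed by $h$, and a larger $h$ produces a bigger cokernel (i.e., a smaller image being quotiented out). Passing from height $h$ to height $h+1$ should enlarge the receptacle of the detection map, and enlarging an injective target map can only keep it injective.

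More concretely, I would first observe that for every $h \geq 0$ the composite $e_{n+1}^* e_{n+2}^* \cdots e_{n+h+1}^*$ factors as $(e_{n+1}^* e_{n+2}^* \cdots e_{n+h}^*) \circ e_{n+h+1}^*$, so
$$\mathrm{Im}(e_{n+1}^* e_{n+2}^* \cdots e_{n+h+1}^*) \subseteq \mathrm{Im}(e_{n+1}^* e_{n+2}^* \cdots e_{n+h}^*)$$
as subobjects of $k_n^*$. Consequently the universal property of the quotient in the abelian category $\ab$ yields a canonical surjection
$$\pi : \mathrm{Coker}(e_{n+1}^* \cdots e_{n+h+1}^*) \twoheadrightarrow \mathrm{Coker}(e_{n+1}^* \cdots e_{n+h}^*).$$
Both detection maps out of $T_n(k_{\bullet})$ are induced by the inclusion $T_n(k_{\bullet}) \hookrightarrow k_n^*$ followed by the respective quotient projection, so they fit into a commutative triangle with $\pi$ as the right edge.

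Chasing this triangle finishes the proof: if $\xi \in T_n(k_{\bullet})$ maps to zero in $\mathrm{Coker}(e_{n+1}^* \cdots e_{n+h+1}^*)$, then applying $\pi$ shows it also maps to zero in $\mathrm{Coker}(e_{n+1}^* \cdots e_{n+h}^*)$, hence $\xi = 0$ by the height-$h$ detection hypothesis. No step looks like a real obstacle here; the only mild subtlety is to state the surjection $\pi$ precisely, since it lives in a possibly non-concrete abelian category $\ab$, but it is just the standard factorization of one quotient through a larger one.
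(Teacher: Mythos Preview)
Your proof is correct and follows essentially the same approach as the paper: both arguments construct the canonical surjection $\pi$ between the two cokernels, observe that the height-$h$ detection map factors as $\pi$ composed with the height-$(h+1)$ detection map, and conclude that injectivity of the composite forces injectivity of the first factor.
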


\begin{proof}
Consider the commutative diagram 

$$ \xymatrix{ T_x(k_n^*) \ar@{^(->}[r] \ar[dr]^f & k_n^* \ar@{->>}[d] & \\
 & Coker(e^*_{n+1}e_{n+2}^* \hdots e^*_{n+h+1}) \ar@{->>}[r]^{\pi} & Coker(e^*_{n+1}e_{n+2}^* \hdots e^*_{n+h}). } $$
If $k_{\bullet}$ satisfies the detection of height $h$, then $\pi \circ f$ is injective, so $f$ is also injective.
\end{proof}

\begin{rk}
The case $h=1$ already appeared in the literature.
Let $X$ be an object of $\tri$. The property of detection  of height $1$ at level $n$ for the functor $[X,-]^*$ in our sense is equivalent to the detection property of level $n$ detection for $X$ as defined in \cite[Definition 2.2]{Po12}.
\end{rk}

\subsection{Checking detection of height $h$ for low $h$} Our next objective is to provide some tools to prove detection properties. We are mainly interested in $h = 1$ and $2$.
\begin{nota} \label{nota:k_T}
Let $k_{\bullet}$ be a tower over $K$ and denote simply $T_n = T_n(k_{\bullet}^*)$. Our first tool is some natural filtration of $T_n$.
\end{nota}

\begin{de} \label{de:filtrationkernel}
Let $F^0_n(k_{\bullet}^*)$, $F^1_n(k_{\bullet}^*)$ and $F^2_n(k_{\bullet}^*)$, or simply $F^0_n$, $F^1_n$ and $F^2_n$ if it is clear by the context, the sub-quotients corresponding to the following filtration of $T_n$:
$$ \xymatrix{ 0 \ar@{^(->}[r] & Ker(e_n^*)\cap{Im(e_{n+1}^*)} \ar@{^(->}[r]  \ar@{->>}[d] & Ker(e_n^*) \ar@{^(->}[r]  \ar@{->>}[d]  & T_n  \ar@{->>}[d]  \\
 & F^0_n & F^1_n & F^2_n } .$$
\end{de}

\begin{pro} \label{pro:detect_1_2}
With Notation \ref{nota:k_T}, the following properties are satisfied.
\begin{enumerate}
\item The injection $Ker(e_n^*e_{n+1}^*) \subset T_{n+1}$ induces a monomorphism
\begin{eqnarray*}
\iota_{n+1} : F^0_n & \inj & F^2_{n+1} \\
e_{n+1}^*x & \mapsto & [x].
\end{eqnarray*}
\item The tower $k_{\bullet}$ satisfies the detection  of height $1$ at level $n$ if and only if $F^2_n = 0$.
\item The tower $k_{\bullet}$ satisfies the detection  of height $1$ at level $n$ if and only if $F^0_n = 0$.
\item The tower $k_{\bullet}$ satisfies the detection  of height $2$ at level $n$ if and only if the maps $\iota_n$ are isomorphisms.
\end{enumerate}
\end{pro}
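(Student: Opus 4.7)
The main identity driving the whole proposition is the tower commutativity $f_{n+1}^* = f_n^* \circ e_{n+1}^*$. From this I would extract two basic facts: first, $\ker(e_n^*) \subseteq T_n$ (via $f_n^* = f_{n-1}^* \circ e_n^*$), so the filtration defining $F^0_n, F^1_n, F^2_n$ is well-posed; second,
$$T_n \cap \mathrm{Im}(e_{n+1}^*) = e_{n+1}^*(T_{n+1}),$$
because an element $e_{n+1}^* x$ lies in $T_n$ iff $f_{n+1}^* x = f_n^* e_{n+1}^* x = 0$, iff $x \in T_{n+1}$. Quotienting by $\ker(e_{n+1}^*)$ in the source of this surjection gives a canonical identification $F^2_{n+1} \xrightarrow{\sim} T_n \cap \mathrm{Im}(e_{n+1}^*)$, and this identification is the engine behind every part of the proposition.

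For (1), the map $\iota_{n+1}$ is essentially forced by the preceding analysis: given $y = e_{n+1}^* x \in F^0_n$, the element $x$ automatically lies in $T_{n+1}$ (since $y \in \ker(e_n^*) \subseteq T_n$), and both well-definedness of $[x] \in F^2_{n+1} = T_{n+1}/\ker(e_{n+1}^*)$ and injectivity of $\iota_{n+1}$ reduce to the tautology that two lifts of $y$ differ by an element of $\ker(e_{n+1}^*)$.

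For (2) and (3), the plan is to rephrase detection of height $1$ as the vanishing of $T_n \cap \mathrm{Im}(e_{n+1}^*)$. The canonical identification $F^2_{n+1} \cong T_n \cap \mathrm{Im}(e_{n+1}^*)$ converts this directly into the vanishing of the relevant $F^2$-piece; combining with the inclusion $F^0_n \subseteq T_n \cap \mathrm{Im}(e_{n+1}^*)$ or with the injection $\iota$ from (1) translates it into the vanishing of the corresponding $F^0$-piece at the matching level.

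For (4), the composite $F^0_n \xrightarrow{\iota_{n+1}} F^2_{n+1} \xrightarrow{\sim} T_n \cap \mathrm{Im}(e_{n+1}^*)$ is nothing but the natural inclusion $\ker(e_n^*) \cap \mathrm{Im}(e_{n+1}^*) \hookrightarrow T_n \cap \mathrm{Im}(e_{n+1}^*)$, which is an isomorphism precisely when $T_n \cap \mathrm{Im}(e_{n+1}^*) \subseteq \ker(e_n^*)$, i.e.\ when $e_n^* e_{n+1}^*$ vanishes on $T_{n+1}$; by the same $f^* e^* = f^*$ argument iterated once more, this is exactly the condition of detection of height $2$. The main obstacle throughout is purely bookkeeping: carefully matching the indices linking the filtration at one level with the intersections and images one level higher, so that (2), (3) and (4) all appear as avatars of the single identification $F^2_{n+1} \cong T_n \cap \mathrm{Im}(e_{n+1}^*)$.
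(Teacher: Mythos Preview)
Your proof is correct and follows essentially the same route as the paper: both arguments hinge on the identification $F^2_{n+1} \cong T_n \cap \mathrm{Im}(e_{n+1}^*)$ coming from $f_{n+1}^* = f_n^* e_{n+1}^*$, and then read off (2)--(4) as immediate consequences. If anything, your treatment of the index bookkeeping is more explicit than the paper's own proof, which silently passes to the ``for all $n$'' version and contains a couple of index slips.
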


\begin{proof}
The map $i_{n+1}$ is well defined by construction of $F^2_{n+1}$. The rest of the first assertion is clear.

Detection of height $1$ is equivalent to $T_n \cap Im(e^*_{n})=0$ for all $n$. Clearly this is equivalent to both $T_n = Ker(e^*_n)$ and $Ker(e_n^*) \cap Im(e_n^*)=0$. This proves the second and third points.

We now prove the last point. First, if some map $i_n$ is not an isomorphism, then let $x \in T_n$ such that $[x] \in F^2_n$ is not in the image of $i_n$. Then $e_{n+1}^* e_n^*(x) \neq 0$, so $k_{\bullet}$ does not satisfies detection of height $2$ at level $n$.
Conversly, if the tower satisfies detection of height $2$, then $\forall n$, any $ x\in F^2_n$ is the image of $e_n^*x \in Ker(e^*_{n-1}) \cap Im(e^*_n)$ by $\iota_n$, so $\iota_n$ is surjective. 
\end{proof}

\subsection{Pushing the detection property along morphisms of towers}
\begin{lemma} \label{lemma:tri_towers}
Let $i_{\bullet}$, $j_{\bullet}$ and $k_{\bullet}$ be three towers. Suppose given two morphisms $f_{\bullet} : i_{\bullet} \rightarrow j_{\bullet}$ and $g_{\bullet} : j_{\bullet} \rightarrow k_{\bullet}$ of towers over some object $K$ of $\tri$. This provides a commutative diagram

$$  \xymatrix@=1,5cm{ \hdots \ar[r]^{e_{n+2}} & i_{n+1} \ar[r]^{e_{n+1}} \ar[d]^{f_{n+1}} & i_{n} \ar[r]^{e_{n}} \ar[d]^{f_{n}} & i_{n-1} \ar[r]^{e_{n-1}} \ar[d]^{f_{n-1}} & \hdots \\
\hdots \ar[r]^{e'_{n+2}} & j_{n+1} \ar[r]^{e'_{n+1}} \ar[d]^{g_{n+1}} & j_{n} \ar[r]^{e'_{n}} \ar[d]^{g_{n}} & j_{n-1} \ar[r]^{e'_{n-1}} \ar[d]^{g_{n-1}} & \hdots \\
\hdots \ar[r]^{e''_{n+2}} & k_{n+1} \ar[r]^{e''_{n+1}} & k_{n} \ar[r]^{e''_{n}} & k_{n-1} \ar[r]^{e''_{n-1}} & \hdots  }$$
where the columns are cofiber sequences.

If the tower $i_{\bullet}$ satisfies detection  of height $h_i$, and the tower $k_{\bullet}$ satisfies detection of height $h_k$, then the tower $j_{\bullet}$ satisfies detection  of height $(h_i+h_k)$.
\end{lemma}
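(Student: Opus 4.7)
The plan is to perform a diagram chase which, starting from an element $x \in T_n(j_\bullet)$ that lies in $\mathrm{Im}(e'_{n+1}\circ \cdots \circ e'_{n+h_i+h_k})$, produces a zero by invoking the detection hypothesis for $k_\bullet$ at level $n+h_i$ and then the detection hypothesis for $i_\bullet$ at level $n$. The long exact sequences coming from the columns $i_n \to j_n \to k_n$, together with the compatibility of the morphisms $f_\bullet$ and $g_\bullet$ with the structure maps to $K$, allow us to transport information between the three towers.

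Fix $n \in \Z$, write $m = h_i + h_k$, and suppose $x \in T_n(j_\bullet)$ satisfies $x = e'_{n+1}\circ \cdots \circ e'_{n+m}(y)$ for some $y \in j_{n+m}^*$. Set $y' = e'_{n+h_i+1}\circ \cdots \circ e'_{n+m}(y) \in j_{n+h_i}^*$, so that $x = e'_{n+1}\circ \cdots \circ e'_{n+h_i}(y')$. Because $j_\bullet$ is a tower over $K$, one has $f_n^j \circ (e'_{n+1}\cdots e'_{n+h_i}) = f_{n+h_i}^j$, and applying $(-)^*$ yields $f_{n+h_i}^{j*}(y') = f_n^{j*}(x) = 0$, so $y' \in T_{n+h_i}(j_\bullet)$. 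Pushing to the third tower, $g_{n+h_i}^*(y') \in T_{n+h_i}(k_\bullet)$ (using $f^k \circ g = f^j$) and it lies in $\mathrm{Im}(e''_{n+h_i+1}\circ \cdots \circ e''_{n+h_i+h_k})$. The detection property of height $h_k$ for $k_\bullet$ at level $n+h_i$ therefore forces $g_{n+h_i}^*(y') = 0$.

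The exactness of the long exact sequence attached to the cofiber column at level $n+h_i$ then provides an element $\tilde{y} \in i_{n+h_i}^*$ with $f_{n+h_i}^*(\tilde{y}) = y'$. Define $\tilde{x} = e_{n+1}\circ \cdots \circ e_{n+h_i}(\tilde{y}) \in i_n^*$. Naturality of $f_\bullet$ gives $f_n^*(\tilde{x}) = e'_{n+1}\circ\cdots\circ e'_{n+h_i}(y') = x$, and combining $f_n^{j*}(x) = 0$ with $f^j \circ f = f^i$ shows $\tilde{x} \in T_n(i_\bullet)$. Thus $\tilde{x} \in T_n(i_\bullet) \cap \mathrm{Im}(e_{n+1}\circ \cdots \circ e_{n+h_i})$, and the detection of height $h_i$ for $i_\bullet$ at level $n$ yields $\tilde{x} = 0$, whence $x = f_n^*(\tilde{x}) = 0$, as desired.

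The main point to watch is the bookkeeping: one must verify at each stage that the elements produced lie in the appropriate $T_\bullet$, since otherwise the detection hypotheses cannot be invoked. This is precisely where the assumption that $f_\bullet$ and $g_\bullet$ are morphisms of towers \emph{over $K$}, rather than merely morphisms of towers in $\tri$, is used essentially.
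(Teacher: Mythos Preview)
Your proof is correct and follows essentially the same diagram chase as the paper. Both arguments first invoke detection for $k_\bullet$ to kill the image in the bottom row, lift along the cofiber sequence to $i_\bullet$, and then invoke detection for $i_\bullet$; the only cosmetic difference is that you verify the condition $T_n(j_\bullet)\cap\mathrm{Im}(j_{n+h_i+h_k}^*\to j_n^*)=0$ directly, whereas the paper instead shows that every $x\in T_n(j_\bullet)$ dies in $j_{n-h_i-h_k}^*$, which is the equivalent reformulation obtained by shifting the level index.
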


\begin{proof}
The proof is a diagram chase.

Let $x \in T_n(j_{\bullet}^*)$. Then $g_n^*(x) \in T_n(k_{\bullet}^*)$, so ${e''}_{n-h_k+1}^* \hdots {e''}_n^* (g_n^*(x)) = 0$ by the detection of height $h_k$ for the tower $k_{\bullet}$. By exactness of $(-)^*$, the element ${e'}_{n-h_k+1}^* \hdots {e'}_n^* (x)$ comes from $i_{n-h_k+1}^*$. Now, the morphism $f_{\bullet}$ is over $K$, so $x \in T_n(j_{\bullet}^*)$ implies that ${e'}_{n-h_k+1}^* \hdots {e'}_n^* (x)$ comes from an element $y \in T_{n-h_k}(i_{\bullet}^*)$. By the detection of height $h_i$ for the tower $i_{\bullet}^*$, one has $e_{n-h_k-h_i+1}^* \hdots e_{n-h_k}^* (y) = 0.$

By commutativity of the diagram given in the lemma, we have $${e'}_{n-h_k-h_i+1}^* \hdots {e'}_n^* (x) = 0.$$ This concludes the proof.
\end{proof}

The following proposition is an important consequence of lemma \ref{lemma:tri_towers}. The situation considered in the proposition is inspired by the isotropy separation sequence in equivariant stable homotopy theory.

\begin{pro} \label{pro:suchgreatheights}
Let $E, \tilde{E} : \tri \rightarrow \tri$ two exact functors and $$E \rightarrow id_{\tri} \rightarrow \tilde{E}$$ a natural distinguished triangle. 
Let $k_{\bullet}$ be a tower over $K$.
Suppose moreover that $EK \rightarrow K$ is the identity, and that $\tilde{E}e_n$ is trivial for all $n \in \Z$.
Then if the tower $E(k_{\bullet})$ satisfies the detection  of height $h$, $k_{\bullet}$ satisfies the detection of height $(h+1)$.
\end{pro}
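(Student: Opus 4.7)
The plan is to apply the natural distinguished triangle $E \to \mathrm{id} \to \tilde{E}$ stagewise to the tower $k_{\bullet}$. For each $n$ this gives a distinguished triangle $E(k_n) \xrightarrow{\iota_n} k_n \xrightarrow{\alpha_n} \tilde{E}(k_n) \to \Sigma E(k_n)$, which upon applying the exact functor $(-)^*$ becomes a long exact sequence. The hypothesis that $EK \to K$ is the identity forces $\tilde{E}K = 0$, and the hypothesis $\tilde{E}(e_n) = 0$ makes all structure maps of $\tilde{E}(k_{\bullet})$ trivial. I would use the equivalent reformulation of detection of height $h$: it amounts to requiring that for every $n$ and every $x \in T_n(k_{\bullet}^*)$, the composition of $h$ consecutive structure maps $e_{n-h+1}^* \cdots e_n^*(x)$ vanishes in $k_{n-h}^*$.

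Pick $x \in T_n(k_{\bullet}^*)$; the goal is to show $e_{n-h}^* \cdots e_n^*(x) = 0$ in $k_{n-h-1}^*$, i.e.\ that a composition of $h+1$ structure maps annihilates $x$. First I would use naturality of $\mathrm{id} \to \tilde{E}$ together with $\tilde{E}(e_n) = 0$ to compute $\alpha_{n-1}^*(e_n^*(x)) = \tilde{E}(e_n)^*(\alpha_n^*(x)) = 0$; by exactness of the long exact sequence associated to the triangle $E(k_{n-1}) \to k_{n-1} \to \tilde{E}(k_{n-1})$, the element $e_n^*(x)$ lifts to some $y \in E(k_{n-1})^*$ with $\iota_{n-1}^*(y) = e_n^*(x)$.

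Next I would verify that the lift $y$ actually belongs to $T_{n-1}(E(k_{\bullet})^*)$. By naturality of $E \to \mathrm{id}$ applied to the map $f_{n-1}$, combined with $EK \to K$ being the identity, one has $E(f_{n-1})^*(y) = f_{n-1}^*(\iota_{n-1}^*(y)) = f_{n-1}^*(e_n^*(x)) = f_n^*(x) = 0$, so indeed $y \in T_{n-1}(E(k_{\bullet})^*)$. The detection of height $h$ hypothesis for $E(k_{\bullet})$ (in its reformulated version, at level $n-h-1$) then yields $E(e_{n-h})^* \cdots E(e_{n-1})^*(y) = 0$ in $E(k_{n-h-1})^*$. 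A final diagram chase, using naturality of $\iota$ with respect to the tower structure maps, identifies $\iota_{n-h-1}^*$ of this vanishing expression with $e_{n-h}^* \cdots e_n^*(x)$, which is therefore zero.

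The main obstacle is largely bookkeeping: carefully setting up the equivalent reformulation of detection, tracking the direction of each natural transformation so that the required squares commute, and applying the identification $EK = K$ precisely where it is needed — namely, to upgrade the vanishing of $f_{n-1}^*(e_n^*(x))$ in $K^*$ to a vanishing of $E(f_{n-1})^*(y)$ in $EK^*$. Conceptually the argument is a close cousin of Lemma \ref{lemma:tri_towers} applied to the triangle of towers $E(k_{\bullet}) \to k_{\bullet} \to \tilde{E}(k_{\bullet})$, exploiting that $\tilde{E}(k_{\bullet})$ has detection of height $1$ trivially because all its structure maps are zero.
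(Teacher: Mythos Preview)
Your proof is correct and follows essentially the same route as the paper: the paper packages the diagram chase by invoking Lemma~\ref{lemma:tri_towers} on the triangle of towers $E(k_{\bullet}) \to k_{\bullet} \to \tilde{E}(k_{\bullet})$, noting that $\tilde{E}(k_{\bullet})$ trivially has detection of height $1$ since all its structure maps vanish---exactly the observation you make in your final paragraph. Your argument simply unrolls that lemma by hand, and your reformulation of detection of height $h$ (that $e_{n-h+1}^*\cdots e_n^*$ kills $T_n$) is indeed equivalent to the paper's definition because any preimage of an element of $T_m$ under $e_{m+1}^*\cdots e_{m+h}^*$ automatically lies in $T_{m+h}$.
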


\begin{proof}
Consider the tower $\tilde{E}k_{\bullet}$ as a tower over $K$ with the trivial maps $\tilde{E}k_{n} \rightarrow K$. Then the morphism of towers $k_{\bullet} \rightarrow \tilde{E}k_{\bullet}$ induced by the natural transformation $id_{\tri} \rightarrow \tilde{E}$ is a morphism over $K$ because $\tilde{E}e_n =0$ for all $n$.

Now, $\tilde{E}k_{\bullet}$ satisfies trivially the detection  of height $1$ because $T_n(\tilde{E}k_{\bullet}^*) = \tilde{E}k_{n}^* = Coker(\tilde{E}e_{n+1}^*)$. The proposition is now a consequence of lemma \ref{lemma:tri_towers}.
\end{proof}

\subsection{Detection as a computational tool} 
We now show how the detection of height $2$ for a tower $k_{\bullet}$ helps to gain control over the objects $k_{\bullet}^*$.
Recall that, for all $n \in \Z$, we have have defined a filtration of $k_{n}^*$ of length four in the last section.

\begin{de} \label{de:filtrationcotors}
Let $\phi_n$ denote $Im(f_n^* : k_n^* \rightarrow K^*)$.
\end{de}

We will now give as much information as possible about each sub-quotients of $k_{n}^*$. Although the computation is not complete in the general case, it is sufficient for our application.

\begin{thm} \label{thm:chaincplx}
Let $k_{\bullet}$ be a tower over $K$ and $n \in \Z$.
\begin{enumerate}
\item The map $c_n$ induces an isomorphism $$F^1_n \overset{\sim}{\longrightarrow} Im(\theta_{n-1}^* : C_{n-1}^* \rightarrow (\Sigma C_n)^*).$$
\item There is a chain complex $$ \xymatrix@=2cm{ F_n^2 \ar@{^(->}[r]^{\overline{c_n^*}} &  \frac{Ker(\theta_n^*)}{Im(\theta_{n-1}^*)} \ar@{->>}[r]^{\overline{\delta_n^*}} & \Sigma F^0_{n+1},}$$
where the first morphism is induced by $c_n$ and the second one is induced by $\delta_n$, and whose homology is isomorphic to $ \frac{\phi_n}{\phi_{n+1}}$.
\item Suppose that $k_{\bullet}$ satisfies detection of height $2$. Then the previous chain complex is isomorphic to
$$ \xymatrix@=2cm{ F_n^2 \ar@{^(->}[r]^{\overline{c_n^*}} &  \frac{Ker(\theta_n^*)}{Im(\theta_{n-1}^*)} \ar@{->>}[r]^{\iota_{n+2}\overline{\delta_n^*}} & \Sigma F^2_{n+2}.}$$
\end{enumerate}
\end{thm}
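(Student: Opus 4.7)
The plan is to establish all three parts by diagram-chasing the long exact sequences obtained by applying the exact functor $(-)^*$ to the distinguished triangles $k_{n+1} \xrightarrow{e_{n+1}} k_n \xrightarrow{c_n} C_n \xrightarrow{\delta_n} \Sigma k_{n+1}$, together with the definitions from Definition \ref{de:filtrationkernel} and Proposition \ref{pro:detect_1_2}.

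For (1), I will apply $(-)^*$ to the two consecutive triangles at indices $n-1$ and $n$. The first yields a surjection $\delta_{n-1}^* : C_{n-1}^* \twoheadrightarrow \Sigma\,\mathrm{Ker}(e_n^*)$, while the second shows that $\Sigma c_n^*$ has kernel $\Sigma\,\mathrm{Im}(e_{n+1}^*)$. Since $\theta_{n-1}^* = (\Sigma c_n^*) \circ \delta_{n-1}^*$, one reads off that $\mathrm{Im}(\theta_{n-1}^*) \cong \Sigma\bigl(\mathrm{Ker}(e_n^*)/(\mathrm{Ker}(e_n^*)\cap \mathrm{Im}(e_{n+1}^*))\bigr) = \Sigma F^1_n$, which is the claimed isomorphism up to the standard suspension identification.

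For (2), the two maps must first be shown to be well-defined. The vanishing $\delta_n c_n = 0$ gives $c_n^*(T_n) \subseteq \mathrm{Ker}(\delta_n^*) \subseteq \mathrm{Ker}(\theta_n^*)$, and (1) gives $c_n^*(\mathrm{Ker}(e_n^*)) = \mathrm{Im}(\theta_{n-1}^*)$; combined with $c_n^*(\mathrm{Im}(e_{n+1}^*))=0$, this shows that $c_n^*$ descends to $\overline{c_n^*} : F^2_n \to \mathrm{Ker}(\theta_n^*)/\mathrm{Im}(\theta_{n-1}^*)$. For $\overline{\delta_n^*}$ I would restrict $\delta_n^*$ to $\mathrm{Ker}(\theta_n^*)$ and note that $\theta_n^*(x)=0$ forces $\delta_n^*(x) \in \Sigma\,\mathrm{Im}(e_{n+2}^*)$, which combined with the automatic inclusion $\mathrm{Im}(\delta_n^*) \subseteq \Sigma\,\mathrm{Ker}(e_{n+1}^*)$ from the LES lands the image in $\Sigma F^0_{n+1}$; the vanishing on $\mathrm{Im}(\theta_{n-1}^*)$ follows again from $\delta_n c_n = 0$. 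The chain-complex property reduces to $\delta_n^* c_n^* = 0$. For the homology, the LES gives $\mathrm{Ker}(\overline{\delta_n^*}) = \mathrm{Im}(c_n^*)/\mathrm{Im}(\theta_{n-1}^*)$ and $\mathrm{Im}(\overline{c_n^*}) = c_n^*(T_n)/\mathrm{Im}(\theta_{n-1}^*)$, so the homology equals $\mathrm{Im}(c_n^*)/c_n^*(T_n)$. Applying the third isomorphism theorem to $c_n^* : k_n^* \twoheadrightarrow \mathrm{Im}(c_n^*)$, whose kernel is $\mathrm{Im}(e_{n+1}^*)$, identifies this with $k_n^*/(T_n + \mathrm{Im}(e_{n+1}^*))$. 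On the other hand, $\phi_n/\phi_{n+1} = f_n^*(k_n^*)/f_n^*(\mathrm{Im}(e_{n+1}^*))$, which a direct computation using $T_n = \mathrm{Ker}(f_n^*)$ identifies with the same quotient.

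Part (3) is then immediate from Proposition \ref{pro:detect_1_2}(4): under detection of height $2$, the map $\iota_{n+2} : F^0_{n+1} \to F^2_{n+2}$ is an isomorphism, and substituting it into the target of $\overline{\delta_n^*}$ gives the stated chain complex. The main obstacle is not conceptual but bookkeeping: one must carefully track the various suspension shifts so that the putative chain complex lives in a coherent bigrading, and keep straight the meaning of the quotient $\mathrm{Ker}(\theta_n^*)/\mathrm{Im}(\theta_{n-1}^*)$ when the numerator and denominator naturally sit in objects differing by a shift.
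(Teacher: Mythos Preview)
Your proposal is correct and follows essentially the same route as the paper. The only noteworthy difference is in the homology computation of part (2): the paper constructs an explicit map $\psi:\phi_n/\phi_{n+1}\to \mathrm{Ker}(\overline{\delta_n^*})/\mathrm{Im}(\overline{c_n^*})$, $[f_n^*(x)]\mapsto[c_n^*(x)]$, and checks injectivity and surjectivity by hand, whereas you identify both sides with $k_n^*/(T_n+\mathrm{Im}(e_{n+1}^*))$ via the isomorphism theorems. Your argument is in fact slightly more complete than the paper's on one point: you verify that $\overline{\delta_n^*}$ actually lands in $\Sigma F^0_{n+1}$, which the paper's proof leaves implicit.
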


\begin{proof}
\begin{enumerate}
\item By exactness, $Ker(e_n^*) = Im(\delta_{n-1}^*)$, giving a morphism $$Ker(e_{n}^*) = Im(\delta_{n-1}^*) \overset{c_{n-1}^*}{\longrightarrow} Im(\theta^*_{n-1}),$$ which is surjective by definition. Its kernel is precisely $Im(e_{n+1}^*) \cap Ker(e_n^*)$.
\item By exactness, $Ker(e_n^*)  = Im( \delta_{n-1}^*)$, so $c_{n}^*$ gives a well-defined map
$$ \frac{T_n}{Ker(e_n^*)} \overset{\overline{c_n^*}}{\longrightarrow} \frac{Ker(\theta_n^*)}{Im(\theta_{n-1}^*)}.$$
The second arrow is well-defined because $\delta_n^* \circ \theta_{n-1}^* = 0$.
To conclude the proof of $(2)$, we will construct a map $$ \psi : \Phi_n/ \Phi_{ n+1} \rightarrow Ker( \overline{ \delta_n^*} ) / Im(  \overline{c_n^*})$$ and show it is injective and surjective.

Let $[f_{ n}^*(x)] \in \Phi_n/ \Phi_{ n+1}$ where $x \in k_n^*(X)$. By construction, $c_n^*(x) \in Ker( \delta_n^*) \subset Ker( \theta_n^*)$, so $c_n^*(x)$ defines a class $[c_n^*(x)] \in  \frac{Ker( \theta_n^*)}{Im( \theta_{ n-1}^*)}$. Moreover, by exactness, $ \delta_n^* \circ c_n^* = 0$, so $[c_n^*(x)] \in Ker( \overline{ \delta_n}^*)$. Define $ \psi( [f_n^*(x)]) = [c_n^*(x)] \in Ker( \overline{ \delta_n}^* ) / Im(  \overline{c_n}^*)$. \\
This defines a morphism because for all $t \in T_n$ and $y \in \Phi_{n+1}$, we have $c_n^*(t+ e_n^*y) = c_n^*(t) \in Im(  \overline{c_n^*})$.
\begin{itemize}
 \item Injectivity: let $[f_n^*(x)] \in \Phi_n/ \Phi_{n+1}$ such that $c_n^*(x) \in Im( \overline{c_n}^*)$. Then $ \exists t \in T_n$ and $ y \in k_{n+1}^*$ such that $x = t + e_{n+1}^*y$. Thus $[f_n^*(x)] = [f_n^*(t + e_{n+1}^*y)] = [f_n^*(e_{n+1}^*y)] = 0$.
\item Surjectivity: let $[y] \in Ker( \overline{ \delta_n}^*)/ Im(  \overline{c_n}^*)$, where $y \in Ker( \delta_n^*)$. By exactness $ \exists x \in k_n^*$ such that $c_n^*(x) = y$. Then $[f_n^*(x)]$ is a preimage of $[y]$.
\end{itemize}
\item This is a consequence of $(2)$ and the isomorphism provided by the third point of Proposition \ref{pro:detect_1_2}.
\end{enumerate}
\end{proof}

\begin{rk}
Let $k_{\bullet}$ be a tower obtained as in Example \ref{ex:periodic_tower}, satisfying the detection of height $2$. Then there is an isomorphism $$ \Sigma F^2_{n+2} \cong \Sigma^{1+2|x|} F^2_n.$$  Thus the third point of Theorem \ref{thm:chaincplx} gives a strong condition on $\frac{Ker(\theta_n^*)}{Im(\theta_{n-1}^*)}$ in this case.
\end{rk}

\section{The slice tower for $K$-theory with reality} \label{section:slicetower}

In this section, we recall the tower we are interested in, and give an interpretation of the various constructions of section \ref{section:detection}. The point of this section is also to identify explicitly the central term of the chain complex of Theorem \ref{thm:chaincplx} when the tower under consideration is the slice tower of periodic K-theory with reality, with respect to the functor $[BV,-]_{\gr}^{\star}: \gr\sh \rightarrow \ab^{RO(\gr)}$, which associates to a $Q$-equivariant spectrum $E$ the $RO(\gr)$-abelian group $E_{\gr}^{\star}(BV)$. This is the abelian group $\mathcal{H}^{\star}(V)$ of Notation \ref{nota:hv}.

\subsection{Conventions for $\gr$-equivariant stable homotopy theory}

\subsection{Equivariant stable homotopy theory and Mackey functors} \label{sub:conventions}

We refer to \cite{MM} for the constructions and definitions in the equivariant stable homotopy category. When $X,Y$ are equivariant objects, the symbol $[X,Y]^{\star}_{\gr}$ will always denote the abelian group of $\gr$-equivariant maps, as opposed to the more structured object $[X,Y]^{\star}$, which is a Mackey functor as we will see in this section.

\begin{nota}
Let $\gr\mathcal{T}$ be the category of $\gr$-spaces and $\gr\mathcal{H}$ its homotopy category with respect to the usual fine model structure, where weak equivalences (resp. cofibrations) are maps which are non-equivariant weak equivalences (resp. cofibrations) on all fixed points. This category is called {\it~the $\gr$-equivariant homotopy category}.
\end{nota}

We now define the spheres and suspension functors we need to set up the equivariant stable homotopy category.

\begin{de} \label{de:repsphere}
A {\em representation sphere} is a pointed $\gr$-space of the form $S^W$ (the one point compactification of $W$), for $W$ an orthogonal representation of $\gr$.
\end{de}

Forcing the suspension functors $\Sigma^W = S^W \wedge (-)$ to be invertible up to weak equivalence, for all orthogonal representation $W$, yields the category of $\gr$-equivariant spectra, denoted $\gr\Sp$.

Since the functors $S^W \wedge (-)$ are now weakly invertible in the equivariant stable homotopy category, one point compactification assembles to a nice functor $S^{(-)}$, from $RO(\gr)$ to $\gr\sh$.

\begin{nota}
Let $\mathcal{M}$ be the category of Mackey functors and natural transformations between them.
Let $\mathcal{M}^{RO(\gr)}$ be the category of $RO(\gr)$-graded Mackey functors.
\end{nota}

The interested reader can consult \cite{FL04} for Mackey functors and the relationship between Mackey functors and equivariant stable homotopy theory. In the particular case where the ambient group is $\gr$, we have the following convenient graphical representation.

\begin{nota} \label{nota:mackey}
Let $M$ be a Mackey functor. Let $\theta_M :  M_e \rightarrow M_e$ the action of the non-trivial element of $\gr$ on $M_e$, we represent $M$ by the following diagram:
$$\xymatrix{ M_{\gr}  \ar@/^/[d]^{ \rho_M } \\ 
M_e. \ar@/^/[u]^{\tau_M} } $$
Observe that, since $\theta_M = \rho_M\tau_M -1$ this diagram suffices to determine $M$, and in particular the action of $\gr$ on $M_e$.
\end{nota}

Let $X,Y$ be $\gr$-spectra. The stable homotopy classes of morphisms $[X,Y]_{\star}$ is naturally a $RO(\gr)$-graded Mackey functor.

With the previous notations, $[-,-]_{\star}$ defines a functor
$$\gr\sh^{op} \times \gr\sh \rightarrow \mathcal{M}^{RO(\gr)}.$$

\begin{de}
Let $E$ be a $\gr$-spectrum. The {\it~homotopy Mackey functor} of $E$ is by definition the $RO(\gr)$-graded Mackey functor $\underline{\pi}_{\star}(E) := [S^0,E]_{\star}$.
\end{de}

\begin{pro} \label{pro:functorH}
The $\gr$-equivariant Postnikov tower defines a $t$-structure on the $\gr$-equivariant stable homotopy category whose heart is isomorphic to the category $\mathcal{M}$ of Mackey functors for the group $\gr$. In particular, one has an Eilenberg-MacLane functor
$$H : \mathcal{M} \rightarrow \gr\sh$$
which sends a short exact sequences of Mackey functors to a distinguished triangle of $\gr$-equivariant spectra.
\end{pro}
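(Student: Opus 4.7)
The plan is to verify the two halves of the statement in turn: first exhibit the $t$-structure and identify its heart, then construct $H$ and see why it turns short exact sequences into distinguished triangles.

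For the $t$-structure, I would define $\gr\sh_{\geq n}$ to be the full subcategory of $\gr$-spectra $E$ with $\underline{\pi}_k(E) = 0$ for all $k < n$, and $\gr\sh_{\leq n}$ dually with $\underline{\pi}_k(E) = 0$ for $k > n$. Stability of these classes under (de)suspension in the $\Z$-direction is immediate from the definition of $\underline{\pi}_\star$. Orthogonality, i.e.\ $[X,Y]_\gr = 0$ for $X \in \gr\sh_{\geq n+1}$ and $Y \in \gr\sh_{\leq n}$, follows from a standard cellular argument: one builds $X$ from cells of the form $\gr/H_+ \wedge S^k$ with $k \geq n+1$, and by adjunction each maps trivially into $Y$. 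The existence of truncation triangles $\tau_{\geq n}E \to E \to \tau_{\leq n-1}E$ is obtained via the classical equivariant Postnikov tower construction, which inductively attaches cells $\gr/H_+\wedge D^{k+1}$ to kill the homotopy Mackey functors $\underline{\pi}_k$ for $k < n$; see for instance \cite[IV.4.1]{GM95}.

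To identify the heart, note that an object of $\gr\sh^{\heartsuit} = \gr\sh_{\geq 0} \cap \gr\sh_{\leq 0}$ is a $\gr$-spectrum whose only nonvanishing homotopy Mackey functor is $\underline{\pi}_0$. By the classical theorem of Segal-tom Dieck, the functor $\underline{\pi}_0 : \gr\sh \to \mathcal{M}$ takes values in Mackey functors, because for each $H \leq \gr$ the groups $\pi_0(E^H)$ assemble through restrictions and transfers into such a structure. The heart being abelian (a general feature of $t$-structures) together with exactness of $\underline{\pi}_0$ on the heart means it suffices to check that $\underline{\pi}_0$ is an equivalence onto $\mathcal{M}$. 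Essential surjectivity is the existence of Eilenberg-MacLane spectra $HM$ for each Mackey functor $M$, which can be built cell by cell, again as in \cite{GM95}; full faithfulness is a diagram chase using the cofiber sequence representation of morphisms and the vanishing of higher homotopy Mackey functors in the heart.

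For the second part, the Eilenberg-MacLane functor $H : \mathcal{M} \to \gr\sh$ is defined as the composite of the inverse equivalence $\mathcal{M} \xrightarrow{\sim} \gr\sh^{\heartsuit}$ with the inclusion into $\gr\sh$. Given a short exact sequence $0 \to M' \to M \to M'' \to 0$ in $\mathcal{M}$, complete the morphism $HM' \to HM$ in $\gr\sh$ into a distinguished triangle $HM' \to HM \to C \to \Sigma HM'$. Applying $\underline{\pi}_\star$ and using that $\underline{\pi}_k(HM') = \underline{\pi}_k(HM) = 0$ for $k \neq 0$, the long exact sequence forces $C$ to lie in the heart and $\underline{\pi}_0(C) \cong M''$; thus $C \simeq HM''$ canonically.

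The main obstacle is the identification of the heart with $\mathcal{M}$, which rests on the nontrivial fact that the homotopy groups of equivariant spectra naturally carry the structure of a Mackey functor with the expected restrictions and transfers, and on the existence of enough Eilenberg-MacLane spectra; the rest of the proof is a formal consequence of the general theory of $t$-structures.
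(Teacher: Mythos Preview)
Your argument is correct and follows the standard route to this result. The paper's own proof, however, consists entirely of a citation: it states that the proposition summarizes \cite[Proposition I.7.14]{LMS} and \cite[Theorem 1.13]{Le95} in the special case of the group with two elements, and gives no further argument. What you have written is essentially a sketch of what those references contain, so there is no genuine methodological difference---you are simply unpacking what the paper leaves packed. One small point: your appeal to ``the classical theorem of Segal--tom Dieck'' for the Mackey functor structure on $\underline{\pi}_0$ is a slight misattribution; the relevant input is the identification of the Burnside category with the stable orbit category (so that $[\gr/H_+,\gr/K_+]_\gr$ encodes restrictions and transfers), which is closer to Lindner and Lewis--May--Steinberger than to the tom Dieck splitting. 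This does not affect the validity of your outline.
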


\begin{proof}
This proposition summarize the results of \cite[Proposition I.7.14]{LMS} and \cite[Theorem 1.13]{Le95} in the particular case of the group with two elements.
\end{proof}

\begin{de}
Denote $\mz$ the Mackey functor
$$\xymatrix{ \Z \ar@/^/[d]^{ =} \\ 
\Z \ar@/^/[u]^{ 2} } $$
and $\mf$ the Mackey functor
 $$\xymatrix{ \F \ar@/^/[d]^{ =} \\ 
\F. \ar@/^/[u]^{0} } $$
\end{de}

\begin{rk}
The Mackey functors whose restrictions are monomorphisms in general, and $\mf$, $\mz$ in particular, play a special role in this context. One reason is that equivariant Eilenberg-MacLane spectra with coefficients in constant Mackey functors are exactly the $0th$-slices (see \cite[Proposition 4.47]{HHR}). Moreover $H\mz = Cofiber(P^1(S^0) \rightarrow S^0)$ with the notations of \textit{loc cit}, by \cite[Corollary 4.51]{HHR}.
\end{rk}

Proposition \ref{pro:functorH} provides a distinguished triangle
$$ \xymatrix{ H\mz \ar[r]^{\times 2} & H\mz \ar[d]^p \\
 & H\mf \ar@{-->}[ul]^{\partial}}$$

\begin{de} \label{de:qo}
Denote $\qo : H\mf \rightarrow H\mf$ the composite $\Sigma p \circ \partial$. It defines a cohomology operation satisfying $\qo^2=0$.
\end{de}

\subsection{Connective $K$-theory with reality and equivariant Milnor operations}

Recall that Atiyah \cite{At66} defined a $\gr$-equivariant cohomology theory called $K$-theory with reality, which is represented by the $\gr$-equivariant spectrum indexed over a complete universe denoted $K\R$. Let $k\R$ be its connective cover.

Dugger studies in \cite[p.21]{Du03} the slice tower for $K\R$. His results are summarized in the following proposition.

\begin{pro}[\emph{\cite[Theorem 1.5]{Du03}}] \label{pro:de_q1}
There is an equivariant lift of the complex Bott map $v_1$ in degree $(1+ \alpha)$ such that the slice tower of $K\R$ is the following tower over $K\R$:
$$ \xymatrix@=0.9cm{\hdots \ar[r]^{v_1 \quad \quad} & \Sigma^{(n+1)(1+\alpha)}k\R \ar[r]^{v_1}  \ar[d] & \Sigma^{n(1+\alpha)}k\R \ar[r]^{v_1}   \ar[d] & \Sigma^{(n-1)(1+\alpha)}k\R \ar[r]^{\quad \quad v_1}    \ar[d] & \hdots  \\
 \hdots & \Sigma^{(n+1)(1+\alpha)}H\underline{\Z} \ar@{-->}[l]^{\overline{\qi} \quad \quad}   \ar@{-->}[ul]_{\delta_{n+1}} & \Sigma^{(n)(1+\alpha)}H\underline{\Z} \ar@{-->}[l]^{\overline{\qi} }   \ar@{-->}[ul]_{\delta_{n}} & \Sigma^{(n-1)(1+\alpha)}H\underline{\Z} \ar@{-->}[l]^{\overline{\qi} }   \ar@{-->}[ul]_{\delta_{n-1}} & \hdots \ar@{-->}[l]^{\quad \quad \overline{\qi} }&  }$$
where $H\underline{\Z}$ is the Eilenberg-MacLane spectrum with coefficients the constant Mackey functor $\Z$, and $\overline{\qi} $ is some degree $2+ \alpha$ map.
\end{pro}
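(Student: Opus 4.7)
Since this statement is essentially a repackaging of Theorem~1.5 of \cite{Du03}, my plan is to adapt Dugger's arguments to the notation used here. The approach rests on three main ingredients: producing the equivariant Bott class, identifying the cofiber of multiplication by $v_1$, and matching the resulting tower with the slice filtration.

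First I would construct the equivariant Bott class $v_1 \in \pi_{1+\alpha}^{\gr}(k\R)$. Because $k\R$ arises from Atiyah's Real $K$-theory, the classical complex Bott element in $\pi_2(ku)$ admits a canonical Real lift in degree $1+\alpha$, and one checks that inverting it recovers $K\R$:
$$K\R \simeq \underset{n \rightarrow \infty}{\mathrm{hocolim}}\ \Sigma^{-n(1+\alpha)}k\R.$$
This identification already gives the top row of the diagram as a tower over $K\R$ with structural maps given by multiplication by $v_1$, and provides the vertical maps as the fiber-sequence cofibers.

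Next I would identify the cofiber of $v_1 : \Sigma^{1+\alpha}k\R \to k\R$ with $H\underline{\Z}$. The non-equivariant analogue is the familiar statement $ku/v \simeq H\Z$. Equivariantly, one can verify this by testing on the underlying spectrum, where one recovers the classical assertion, and on $\gr$-geometric fixed points, using Atiyah's description of $\Phi^{\gr}k\R$; once both restrictions agree with those of $H\underline{\Z}$, the isotropy separation sequence pins down the full Mackey-functor structure. With this identification in place, the connecting maps $\delta_n$ are automatic, and $\overline{\qi}$ can then be defined as the composite $\delta_n \circ c_{n+1}$ (appropriately shifted), in exact analogy with Definition~\ref{de:qo}. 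This yields a cohomology operation of degree $2+\alpha$ on $H\underline{\Z}$, which is the desired equivariant lift of Milnor's $Q_1$.

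Finally, to conclude that the resulting tower is \emph{genuinely} the slice tower of $K\R$, I would invoke the slice machinery of \cite{HHR}: it suffices to check that each stage $\Sigma^{n(1+\alpha)}k\R$ is a pure $n(1+\alpha)$-slice, which follows from the fact that $k\R$ is a $0$-slice together with the $(1+\alpha)$-periodicity of the slice cells. The main obstacle I expect is the cofiber identification in the second step: ensuring that the cofiber of $v_1$ is $H\underline{\Z}$ with the \emph{correct} Mackey-functor structure (and not merely a spectrum whose underlying homotopy groups happen to coincide) requires the careful fixed-point analysis carried out in \cite{Du03}. Everything else is a formal consequence of the cofiber sequence together with the slice axioms.
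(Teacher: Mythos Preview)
The paper does not supply a proof of this proposition at all: it is stated as a citation of \cite[Theorem 1.5]{Du03} and used as a black box. So there is no ``paper's own proof'' to compare against; your sketch is essentially an outline of how Dugger's argument goes, which is reasonable in spirit.

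That said, your final step contains a genuine confusion. You write that it suffices to check that each $\Sigma^{n(1+\alpha)}k\R$ is a ``pure $n(1+\alpha)$-slice'', and that this follows from $k\R$ being a $0$-slice. Neither claim is correct: $k\R$ is not a $0$-slice (its $0$-slice is $H\underline{\Z}$), and the stages $\Sigma^{n(1+\alpha)}k\R$ are the slice \emph{connective covers} in the tower, not the slices themselves. The slices (the layers $C_n$ in the paper's notation) are the cofibers $\Sigma^{n(1+\alpha)}H\underline{\Z}$. To identify the tower with the slice tower one must show two things: that $\Sigma^{n(1+\alpha)}k\R$ is slice $\geq 2n$-connective, and that each layer $\Sigma^{n(1+\alpha)}H\underline{\Z}$ lies in the $2n$-th slice category (this uses that $H\underline{\Z}$ is the zero-slice of the sphere, \cite[Corollary 4.51]{HHR}). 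Your cofiber identification in step two is the substantive input, but the slice-tower verification needs to be phrased in terms of connectivity of the stages and the slice category of the layers, not by asserting the stages are themselves slices. Historically, Dugger's paper predates the HHR slice machinery, so what he actually builds is this tower directly via the Bott map and the cofiber identification; the reinterpretation as the slice tower is a later observation.
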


\begin{lemma}
There is a cohomology operation $\qi$ of degree $2+\alpha$ such that $\overline{\qi}$ is the unique integral lift of $\qi$. Moreover $\qo$ and $\qi$ generates an exterior sub-algebra of the $\gr$-equivariant Steenrod algebra.
\end{lemma}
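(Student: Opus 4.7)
The plan has three steps: construct $\qi$ as a mod-$2$ reduction of $\overline{\qi}$, prove that $\overline{\qi}$ is the unique integral lift, and verify the three exterior-algebra relations $\qo^2 = 0$, $\qi^2 = 0$, and $\qo \qi = \qi \qo$ (in characteristic $2$).

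For the construction of $\qi$, I would exploit the identity $p \circ (\times 2) = 0 : H\mz \to H\mf$ coming from the distinguished triangle $H\mz \xrightarrow{\times 2} H\mz \xrightarrow{p} H\mf$. Then the composite $p \circ \overline{\qi} : H\mz \to \Sigma^{2+\alpha} H\mf$ satisfies $(p \circ \overline{\qi}) \circ (\times 2) = 2(p \circ \overline{\qi}) = 0$, so by the universal property of the cofibre $H\mf$ of multiplication by $2$ on $H\mz$, there exists $\qi : H\mf \to \Sigma^{2+\alpha} H\mf$ with $\qi \circ p = p \circ \overline{\qi}$.

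For uniqueness of the integral lift, I would apply $[H\mz, \Sigma^{2+\alpha}(-)]_{\gr}$ to the same triangle. Two integral lifts of $\qi$ agree after composing with $p$, so their difference lies in $\ker(p_*) = 2 \cdot [H\mz, \Sigma^{2+\alpha} H\mz]_{\gr}$. To finish, one must show that this image vanishes, which reduces to the explicit computation of $H\mz^{2+\alpha}_{\gr}(H\mz)$ due to Hu--Kriz \cite{HK01}: the relevant bidegree carries no nontrivial $2$-divisible class, forcing $\overline{\qi}$ to be unique.

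For the exterior-algebra relations, $\qo^2 = 0$ is Definition \ref{de:qo}. For $\qi^2 = 0$, observe that $\overline{\qi}$ is (a desuspension of) the $k$-invariant $\theta_n$ of the slice tower for $K\R$; as a composite $\Sigma c_{n+1} \circ \delta_n$, one has $\theta_{n+1} \circ \theta_n = 0$ because $\delta_{n+1} \circ c_{n+1} = 0$ in the distinguished triangle $k_{n+1} \to C_{n+1} \to \Sigma k_{n+2}$. Thus $\overline{\qi}^2 = 0$, so $\qi^2 \circ p = p \circ \overline{\qi}^2 = 0$, and the uniqueness argument applied in the bidegree $4+2\alpha$ then upgrades this to $\qi^2 = 0$. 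For anti-commutativity, I would invoke the flat Hopf algebroid structure of Theorem \ref{thm:boardman_hmf} to show that both $\qo$ and $\qi$ are primitive in $\ste_{\star}$, equivalently that they act as derivations on any Boardman $\ste^{\star}$-module; graded derivations on a graded-commutative ring necessarily anti-commute, yielding $\qo \qi + \qi \qo = 0$. The main obstacle is the uniqueness step, which rests essentially on the Hu--Kriz calculation of the equivariant integral cohomology of $H\mz$ in the two bidegrees used above.
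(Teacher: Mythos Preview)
Your construction of $\qi$ and the uniqueness argument are essentially the paper's, and your argument for $\qi^2 = 0$ via the tower ($\theta_{n+1}\theta_n = 0$) is correct and matches what the paper invokes later. The gap is in the anti-commutation step.

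First, $\qi$ is not primitive: the Cartan formula derived in Corollary~\ref{cor:cartan} reads $\qi(xy) = \qi(x)y + a\,\qo(x)\qo(y) + x\,\qi(y)$, so the coproduct of $\qi$ in $\ste^{\star}$ carries an extra term $a\,\qo\otimes\qo$, coming from the relation $\tau_0^2 = a\tau_1 + \eta_R(\sigma^{-1})\xi_1$ in $\ste_{\star}$. Second, and more fundamentally, the claim ``graded derivations on a graded-commutative ring necessarily anti-commute'' is false: the (anti-)commutator $[D_1,D_2]$ of two derivations is again a derivation but has no reason to vanish (take $\partial_x$ and $x\partial_x$ on $\F[x]$). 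Primitivity alone cannot force $\qo\qi = \qi\qo$; you would still owe a computation in the equivariant Steenrod algebra.

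The paper's route is simpler and self-contained. Completing $\overline{\qi}$ to a map of distinguished triangles (axiom TR3) yields not only the square $\qi\, p = p\,\overline{\qi}$ but also $(\Sigma^{2+\alpha}\partial)\circ\qi = (\Sigma\overline{\qi})\circ\partial$. Since $\qo = (\Sigma p)\circ\partial$ by Definition~\ref{de:qo}, one substitutes directly:
\[
\qo\,\qi \;=\; (\Sigma^{3+\alpha}p)\circ(\Sigma^{2+\alpha}\partial)\circ\qi \;=\; (\Sigma^{3+\alpha}p)\circ(\Sigma\overline{\qi})\circ\partial \;=\; (\Sigma\qi)\circ(\Sigma p)\circ\partial \;=\; \qi\,\qo.
\]
Thus the very diagram that produces $\qi$ already encodes the commutation relation, with no appeal to the Hopf algebroid structure.
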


\begin{proof}
If such a lift exists, then it is unique since there is only one nonzero operation in this degree by the results of Hu and Kriz \cite{HK01}.

Now, the map $\overline{\qi} : H\mz \rightarrow \Sigma^{2+\alpha} H\mz$ commutes with multiplication by two, so there exists $\qi$ completing
$$ \xymatrix@=1.5cm{ H\mz \ar[r]^{\times 2} \ar[d]^{\overline{\qi}} & H\mz \ar[r]^p  \ar[d]^{\overline{\qi}} & H\mf \ar[r]^{\partial} & \Sigma H\mz \ar[d]^{\overline{\qi}} \\  \Sigma^{2+\alpha} H\mz \ar[r]^{\times 2} & \Sigma^{2+\alpha} H\mz \ar[r]^p  & \Sigma^{2+\alpha} H\mf \ar[r]^{\partial} & \Sigma^{3+\alpha}  H\mz }$$
into a map of distinguished triangles. This proves the first point. The commutation of the squares involving the maps $p$ and $\partial$ into the resulting commutative diagram concludes the proof.
\end{proof}

\begin{de}
Define $\E$ the subalgebra  $ \Lambda_{\F}(\qo, \qi)$ of the $\gr$-equivariant Steenrod algebra. Define also $\Lambda_0$ and $\Lambda_1$ as the exterior algebras over $\F$ generated by $\qo$ and $\qi$ respectively.
\end{de}

Let $V$ be an elementary abelian $2$-group. One wants to understand $k\R^{\star}(BV)$ as a Mackey functor. In order to use the results of section \ref{section:detection}, {\em i.e.} to prove a detection property and to make the actual computation, we need to understand the action of $\overline{\qi}$ on $H\mz^{\star}_{\gr}(BV)$, and in particular the Margolis homology with respect to $\overline{\qi}$: $$ \frac{Ker(\overline{\qi} : H\mz^{\star}_{\gr}(BV) \rightarrow H\mz^{\star+2+\alpha}_{\gr}(BV))}{Im(\overline{\qi} : H\mz^{\star-2-\alpha_{\gr}}(BV) \rightarrow H\mz^{\star}_{\gr}(BV))}.$$

\begin{nota} \label{nota:hv}
Denote $\mathcal{H}^{\star}_{\gr}(V)$ the abelian group $$ \frac{Ker(\overline{\qi} : H\mz^{\star}(BV)_{\gr} \rightarrow H\mz^{\star+2+\alpha}(BV)_{\gr})}{Im(\overline{\qi} : H\mz^{\star-2-\alpha}(BV)_{\gr} \rightarrow H\mz^{\star}(BV)_{\gr})}.$$
\end{nota}

Now, the multiplication by $2$ on $BV$ is nulhomotopic, so the long exact sequence
$$ \xymatrix{ \hdots \ar[r] & H\mz^{\star}_{\gr}(BV) \ar[r]^{\times 2} & H\mz^{\star}_{\gr}(BV) \ar[r] & H\mf^{\star}_{\gr}(BV) \ar[r] & \hdots}$$
is split and $H\mz^{\star}_{\gr}(BV) = Ker(\qo : H\mf^{\star}_{\gr}(BV) \rightarrow H\mf^{\star+1}_{\gr}(BV))$. Thus $\mathcal{H}^{\star}(V)$ depends only on the $\E$-module structure of $H\mf^{\star}_{\gr}(BV)$. The determination of this structure is our next objective.

\section{The action of $ \E$ on the cohomology of non-equivariant spectra}

This section is independent from the rest of the paper. The aim here is to understand the action of the equivariant Milnor derivations $\qo,\qi$ on the cohomology of $\gr$-spectra, and especially to have an explicit description of this action for spectra induced from non-equivariant ones.

The main results of this section are the Cartan formulae in equivariant cohomology in Corollary \ref{cor:cartan}, and the comparison between the action of the Steenrod algebra on the cohomology of a non-equivariant spectrum and the action of the equivariant Steenrod algebra on its equivariant cohomology in Theorem \ref{thm:formula_qoi}.

\subsection{Recollections and observations about the coefficient ring for equivariant cohomology}

In order to be self-contained, we recall the structure of the coefficient ring for $H\mf$-cohomology. The computation is done in \cite{FL04} where it is attributed to Stong. The case we are interested in here is also given in \cite[p.371]{HK01}.

\begin{nota}
There are two particular elements in the cohomology of the point: the Euler class of the map $S^0 \inj S^{\alpha}$, denoted $a$, and the orientation class for the sign representation, denoted $\sigma^{-1}$. These elements are in $H\mf^{\alpha}$ and $H\mf^{\alpha -1}$ respectively.
\end{nota}

\begin{pro} \label{pro_cohompoint}
 The $RO( \gr)$-graded Mackey functor $H \mf_{ \star}$ is represented in the following picture. The symbol $ \bullet$ stands for the Mackey functor $$ \xymatrix{ \gr \ar@/^/[d] \\ 
0, \ar@/^/[u] } $$ 
and $L$ stands for $$\xymatrix{ \gr \ar@/^/[d]^{ 0} \\ 
\gr. \ar@/^/[u]^{=} } $$
A vertical line represents the product with the Euler class $a$. This product induces one of the following Mackey functor maps:
\begin{itemize}
\item the identity of $ \bullet$, 
\item the unique non-trivial morphism $ \mf \rightarrow \bullet$,
\item the unique non-trivial morphism $ \bullet \inj \mf$.
\end{itemize}
 
 \begin{center}
\definecolor{cqcqcq}{rgb}{0.75,0.75,0.75}
\definecolor{qqqqff}{rgb}{0.33,0.33,0.33}

%\shorthandoff{:}

\begin{tikzpicture}[line cap=round,line join=round,>=triangle 45,x=0.5cm,y=0.5cm]
\draw[->,color=black] (-10,0) -- (10,0);
\foreach \x in {-10,-8,-6,-4,-2,2,4,6,8}
\draw[shift={(\x,0)},color=black] (0pt,2pt) -- (0pt,-2pt) node[below] {\footnotesize $\x$};
\draw[->,color=black] (0,-10) -- (0,10);
\foreach \y in {-10,-8,-6,-4,-2,2,4,6,8}
\draw[shift={(0,\y)},color=black] (2pt,0pt) -- (-2pt,0pt) node[left] {\footnotesize $\y$};
\draw[color=black] (0pt,-10pt) node[right] {\footnotesize $0$};
\clip(-10,-10) rectangle (10,10);
\draw (-0.31,0.5) node[anchor=north west] {$\underline{ \mathbb{F}}$};
\draw (0.7,-0.51) node[anchor=north west] {$\underline{ \mathbb{F}}$};
\draw (1.7,-1.52) node[anchor=north west] {$\underline{ \mathbb{F}}$};
\draw (2.71,-2.52) node[anchor=north west] {$\underline{ \mathbb{F}}$};
\draw (3.67,-3.51) node[anchor=north west] {$\underline{ \mathbb{F}}$};
\draw (4.65,-4.51) node[anchor=north west] {$\underline{ \mathbb{F}}$};
\draw (5.66,-5.49) node[anchor=north west] {$\underline{ \mathbb{F}}$};
\draw (6.66,-6.5) node[anchor=north west] {$\underline{ \mathbb{F}}$};
\draw (7.65,-7.46) node[anchor=north west] {$\underline{ \mathbb{F}}$};
\draw (8.63,-8.47) node[anchor=north west] {$\underline{ \mathbb{F}}$};
\draw (9.64,-9.45) node[anchor=north west] {$\underline{ \mathbb{F}}$};
\draw (-12.25,12.46) node[anchor=north west] {$L$};
\draw (-11.24,11.45) node[anchor=north west] {$L$};
\draw (-10.28,10.49) node[anchor=north west] {$L$};
\draw (-9.3,9.51) node[anchor=north west] {$L$};
\draw (-8.29,8.5) node[anchor=north west] {$L$};
\draw (-7.29,7.5) node[anchor=north west] {$L$};
\draw (-6.3,6.51) node[anchor=north west] {$L$};
\draw (-5.32,5.51) node[anchor=north west] {$L$};
\draw (-4.31,4.53) node[anchor=north west] {$L$};
\draw (-3.31,3.52) node[anchor=north west] {$L$};
\draw (-2.4,2.49) node[anchor=north west] {$L$};
\draw (0,0) -- (0,-10);
\draw (1,-1) -- (1,-10);
\draw (2,-2) -- (2,-10);
\draw (3,-3) -- (3,-10);
\draw (4,-4) -- (4,-10);
\draw (5,-5) -- (5,-10);
\draw (6,-6) -- (6,-10);
\draw (7,-7) -- (7,-10);
\draw (8,-8) -- (8,-10);
\draw (9,-9) -- (9,-10);
\draw (10,-10) -- (10,-10);
\draw (-2,2) -- (-2,10);
\draw (-3,3) -- (-3,10);
\draw (-4,4) -- (-4,10);
\draw (-5,5) -- (-5,10);
\draw (-6,6) -- (-6,10);
\draw (-7,7) -- (-7,10);
\draw (-8,8) -- (-8,10);
\draw (-9,9) -- (-9,10);
\draw (-10,10) -- (-10,10);
\draw (-11,11) -- (-11,10);
\draw (9.4,0.9) node[anchor=north west] {$1$};
\draw (0.1,10) node[anchor=north west] {$\alpha$};
\begin{scriptsize}
\fill [color=qqqqff] (1,-2) circle (1.5pt);
\fill [color=qqqqff] (1,-3) circle (1.5pt);
\fill [color=qqqqff] (1,-4) circle (1.5pt);
\fill [color=qqqqff] (1,-5) circle (1.5pt);
\fill [color=qqqqff] (1,-6) circle (1.5pt);
\fill [color=qqqqff] (1,-7) circle (1.5pt);
\fill [color=qqqqff] (1,-8) circle (1.5pt);
\fill [color=qqqqff] (1,-9) circle (1.5pt);
\fill [color=qqqqff] (1,-10) circle (1.5pt);
\fill [color=qqqqff] (1,-11) circle (1.5pt);
\fill [color=qqqqff] (0,-1) circle (1.5pt);
\fill [color=qqqqff] (0,-2) circle (1.5pt);
\fill [color=qqqqff] (0,-3) circle (1.5pt);
\fill [color=qqqqff] (0,-4) circle (1.5pt);
\fill [color=qqqqff] (0,-5) circle (1.5pt);
\fill [color=qqqqff] (0,-6) circle (1.5pt);
\fill [color=qqqqff] (0,-7) circle (1.5pt);
\fill [color=qqqqff] (0,-8) circle (1.5pt);
\fill [color=qqqqff] (0,-9) circle (1.5pt);
\fill [color=qqqqff] (0,-10) circle (1.5pt);
\fill [color=qqqqff] (0,-11) circle (1.5pt);
\fill [color=qqqqff] (2,-3) circle (1.5pt);
\fill [color=qqqqff] (2,-4) circle (1.5pt);
\fill [color=qqqqff] (2,-5) circle (1.5pt);
\fill [color=qqqqff] (2,-6) circle (1.5pt);
\fill [color=qqqqff] (2,-7) circle (1.5pt);
\fill [color=qqqqff] (2,-8) circle (1.5pt);
\fill [color=qqqqff] (2,-9) circle (1.5pt);
\fill [color=qqqqff] (2,-10) circle (1.5pt);
\fill [color=qqqqff] (2,-11) circle (1.5pt);
\fill [color=qqqqff] (3,-4) circle (1.5pt);
\fill [color=qqqqff] (3,-5) circle (1.5pt);
\fill [color=qqqqff] (3,-6) circle (1.5pt);
\fill [color=qqqqff] (3,-7) circle (1.5pt);
\fill [color=qqqqff] (3,-8) circle (1.5pt);
\fill [color=qqqqff] (3,-9) circle (1.5pt);
\fill [color=qqqqff] (3,-10) circle (1.5pt);
\fill [color=qqqqff] (3,-11) circle (1.5pt);
\fill [color=qqqqff] (4,-5) circle (1.5pt);
\fill [color=qqqqff] (4,-6) circle (1.5pt);
\fill [color=qqqqff] (4,-7) circle (1.5pt);
\fill [color=qqqqff] (4,-8) circle (1.5pt);
\fill [color=qqqqff] (4,-9) circle (1.5pt);
\fill [color=qqqqff] (4,-10) circle (1.5pt);
\fill [color=qqqqff] (4,-11) circle (1.5pt);
\fill [color=qqqqff] (5,-6) circle (1.5pt);
\fill [color=qqqqff] (5,-7) circle (1.5pt);
\fill [color=qqqqff] (5,-8) circle (1.5pt);
\fill [color=qqqqff] (5,-9) circle (1.5pt);
\fill [color=qqqqff] (5,-10) circle (1.5pt);
\fill [color=qqqqff] (5,-11) circle (1.5pt);
\fill [color=qqqqff] (6,-7) circle (1.5pt);
\fill [color=qqqqff] (6,-8) circle (1.5pt);
\fill [color=qqqqff] (6,-9) circle (1.5pt);
\fill [color=qqqqff] (6,-10) circle (1.5pt);
\fill [color=qqqqff] (6,-11) circle (1.5pt);
\fill [color=qqqqff] (7,-8) circle (1.5pt);
\fill [color=qqqqff] (7,-9) circle (1.5pt);
\fill [color=qqqqff] (7,-10) circle (1.5pt);
\fill [color=qqqqff] (7,-11) circle (1.5pt);
\fill [color=qqqqff] (8,-9) circle (1.5pt);
\fill [color=qqqqff] (8,-10) circle (1.5pt);
\fill [color=qqqqff] (8,-11) circle (1.5pt);
\fill [color=qqqqff] (9,-10) circle (1.5pt);
\fill [color=qqqqff] (9,-11) circle (1.5pt);
\fill [color=qqqqff] (10,-11) circle (1.5pt);
\fill [color=qqqqff] (-2,11) circle (1.5pt);
\fill [color=qqqqff] (-2,10) circle (1.5pt);
\fill [color=qqqqff] (-2,9) circle (1.5pt);
\fill [color=qqqqff] (-2,8) circle (1.5pt);
\fill [color=qqqqff] (-2,7) circle (1.5pt);
\fill [color=qqqqff] (-2,6) circle (1.5pt);
\fill [color=qqqqff] (-2,5) circle (1.5pt);
\fill [color=qqqqff] (-2,4) circle (1.5pt);
\fill [color=qqqqff] (-2,3) circle (1.5pt);
\fill [color=qqqqff] (-3,11) circle (1.5pt);
\fill [color=qqqqff] (-3,10) circle (1.5pt);
\fill [color=qqqqff] (-3,9) circle (1.5pt);
\fill [color=qqqqff] (-3,8) circle (1.5pt);
\fill [color=qqqqff] (-3,7) circle (1.5pt);
\fill [color=qqqqff] (-3,6) circle (1.5pt);
\fill [color=qqqqff] (-3,5) circle (1.5pt);
\fill [color=qqqqff] (-3,4) circle (1.5pt);
\fill [color=qqqqff] (-4,11) circle (1.5pt);
\fill [color=qqqqff] (-4,10) circle (1.5pt);
\fill [color=qqqqff] (-4,9) circle (1.5pt);
\fill [color=qqqqff] (-4,8) circle (1.5pt);
\fill [color=qqqqff] (-4,7) circle (1.5pt);
\fill [color=qqqqff] (-4,6) circle (1.5pt);
\fill [color=qqqqff] (-4,5) circle (1.5pt);
\fill [color=qqqqff] (-5,11) circle (1.5pt);
\fill [color=qqqqff] (-5,10) circle (1.5pt);
\fill [color=qqqqff] (-5,9) circle (1.5pt);
\fill [color=qqqqff] (-5,8) circle (1.5pt);
\fill [color=qqqqff] (-5,7) circle (1.5pt);
\fill [color=qqqqff] (-5,6) circle (1.5pt);
\fill [color=qqqqff] (-6,11) circle (1.5pt);
\fill [color=qqqqff] (-6,10) circle (1.5pt);
\fill [color=qqqqff] (-6,9) circle (1.5pt);
\fill [color=qqqqff] (-6,8) circle (1.5pt);
\fill [color=qqqqff] (-6,7) circle (1.5pt);
\fill [color=qqqqff] (-7,11) circle (1.5pt);
\fill [color=qqqqff] (-7,10) circle (1.5pt);
\fill [color=qqqqff] (-7,9) circle (1.5pt);
\fill [color=qqqqff] (-7,8) circle (1.5pt);
\fill [color=qqqqff] (-8,11) circle (1.5pt);
\fill [color=qqqqff] (-8,10) circle (1.5pt);
\fill [color=qqqqff] (-8,9) circle (1.5pt);
\fill [color=qqqqff] (-9,11) circle (1.5pt);
\fill [color=qqqqff] (-9,10) circle (1.5pt);
\fill [color=qqqqff] (-10,11) circle (1.5pt);
\end{scriptsize}
\end{tikzpicture}

%\shorthandon{:}
\end{center}

\end{pro}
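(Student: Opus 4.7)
The plan is to reduce to the classical computation of Stong reproduced in \cite{FL04} and \cite[p.~371]{HK01}. For the reader's benefit, since the precise $RO(\gr)$-pattern of $H\mf_{\star}$ and the effect of multiplication by $a$ are used throughout the sequel, I would sketch the inductive argument rather than just cite. The central tool is the cofiber sequence
\[ S^0 \xrightarrow{\ a\ } S^\alpha \longrightarrow \Sigma \gr_+, \]
coming from the standard $\gr$-CW decomposition of $S^\alpha$ (one fixed $0$-cell plus one free equivariant $1$-cell), whose first map represents the Euler class. Applying $H\mf^{\star}(-)$ gives a Mackey-functor-valued long exact sequence
\[ \cdots \to H\mf^{\star-1}(\gr_+) \to H\mf^{\star-\alpha} \xrightarrow{\ \cdot a\ } H\mf^{\star} \to H\mf^{\star}(\gr_+) \to \cdots \]
in which the displayed connecting map is precisely multiplication by the Euler class $a$.

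From here I would compute $H\mf^{m + n\alpha}$ by induction on $|n|$, starting from the classical non-equivariant base case $n = 0$, where $H\mf^{m}$ is $\mf$ for $m=0$ and zero otherwise. The auxiliary input is the identification of $H\mf^{\star}(\gr_+)$, which by the induction--restriction adjunction reduces to the non-equivariant cohomology of a point, hence is concentrated in non-equivariant degree zero with Mackey functor structure determined by the Weyl action on $\gr_+$. Threading the long exact sequence produces the lower-right cone of $\mf$'s along the line $\star = m(1 - \alpha)$ (generated by powers of the orientation class $\sigma$) together with $\bullet$'s accumulating where powers of $a$ act non-trivially, and dually the upper-left cone of $L$'s and $\bullet$'s (built from the duals, or equivalently from the same long exact sequence run in the other direction). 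The three Mackey functor morphisms listed in the statement are exactly the nontrivial transition patterns that can occur between adjacent Mackey functors in the picture, verified by a direct computation of the relevant $\Hom$ groups between $\mf$, $L$, and $\bullet$.

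The main technical obstacle is the bookkeeping required to identify, in each bidegree, which Mackey functor occurs, and to verify that the restriction, transfer, and Weyl action match those asserted. This step is essentially mechanical once the long exact sequence framework is in place, since no nontrivial Mackey-functor extensions are available between the three types $\mf$, $L$, $\bullet$ in the relevant bidegrees, but it does require some patience to follow the induction through both cones simultaneously and to confirm that the rank pattern depicted (a single $\mf$ or $L$ on each anti-diagonal, with a linear string of $\bullet$'s attached) is correct.
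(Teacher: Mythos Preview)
The paper does not prove this proposition: it is stated as a recollection, with the computation attributed to Stong and citations to \cite{FL04} and \cite[p.~371]{HK01}. Your proposal therefore goes beyond what the paper does, and your sketch via the cofiber sequence $\gr_+ \to S^0 \to S^\alpha$ (which the paper itself invokes later in Lemma~\ref{lemma_mackey_a}) is the standard and correct way to establish the result inductively on the twist. If you keep the sketch, two small points: the anti-diagonal of $\mf$'s in homological degree $n(1-\alpha)$ is generated by powers of $\sigma^{-1}$ in the paper's conventions (since $\sigma^{-1}\in H\mf^{\alpha-1}$), and you might say explicitly that the upper-left cone is obtained either by Spanier--Whitehead duality or by running the same long exact sequence for negative twists, rather than leaving both options open.
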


We finish this subsection by a lemma about Mackey functors, relating the $ \Z[a]$-module structure on $([-,-]^{\star})_{\gr}$ with the $RO( \gr)$-graded Mackey functor structure of $ [-,-]^{\star}$.

\begin{lemma} \label{lemma_mackey_a}
 Let $E$ be a $ \gr$-spectrum such that $2a$ acts trivially on $E_{\star}^{\gr}$.
\begin{enumerate}
 \item $Im(a)= Ker( \rho)$ where $ \rho $ is the restriction of the Mackey functor $E_{ \star}$.
\item $ Ker(a) = Im( \tau)$ where $ \tau $ is the transfer.
\end{enumerate}
\end{lemma}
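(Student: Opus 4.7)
The plan is to extract both statements from the long exact sequence obtained by smashing the fundamental equivariant cofibre sequence $\gr_+ \to S^0 \xrightarrow{a} S^\alpha$ with $E$ and applying $\pi^\gr_\star$. Here the first map is the augmentation collapsing $\gr_+$ onto the non-basepoint of $S^0$, and $a \colon S^0 \hookrightarrow S^\alpha$ is the inclusion of fixed points, representing the Euler class. Using the Wirthm\"uller identification $\pi^\gr_\star(\gr_+ \wedge E) \cong \pi^e_\star(E) = E^e_\star$, under which the augmentation becomes the transfer, I obtain a long exact sequence
$$\cdots \to E^e_\star \xrightarrow{\tau} E^\gr_\star \xrightarrow{a} E^\gr_{\star-\alpha} \xrightarrow{\partial} E^e_{\star-1} \to \cdots$$

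Statement (2) then follows immediately from exactness at $E^\gr_\star$, giving $Ker(a) = Im(\tau)$. As a sanity check, the inclusion $Im(\tau) \subseteq Ker(a)$ can also be seen purely from the Mackey functor structure via Frobenius reciprocity, $a \cdot \tau(z) = \tau(\rho(a) \cdot z) = 0$, since $\rho(a)$ lives in $\pi_{-1}$ of the non-equivariant sphere and hence vanishes.

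For statement (1), exactness of the shifted long exact sequence identifies $Im(a \colon E^\gr_{\star+\alpha} \to E^\gr_\star)$ with $Ker(\partial \colon E^\gr_\star \to E^e_{\star+\alpha-1})$, so I reduce to matching $\partial$ with the restriction $\rho$. The key observation is that $a|_e$ is null-homotopic, so the non-equivariant restriction of the rotated cofibre sequence $S^0 \xrightarrow{a} S^\alpha \to \Sigma \gr_+$ becomes the split sequence $S^0 \xrightarrow{0} S^1 \to S^1 \vee S^1$. This pins down the connecting map $S^\alpha \to \Sigma \gr_+$, after restriction, as the inclusion of the wedge summand indexed by the identity of $\gr$. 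Combined with the explicit form of the Wirthm\"uller isomorphism, namely projection onto the $g = e$ component, this forces $\partial$ to agree with $\rho$ under the canonical identification $E^e_{\star+\alpha-1} = E^e_\star$ coming from $(\alpha-1)|_e = 0$.

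I expect the main technical obstacle to be the bookkeeping in this last step: the $RO(\gr)$-degrees $\star + \alpha - 1$ and $\star$ are distinct as $RO(\gr)$-graded components but coincide after restriction to the trivial subgroup, and the Wirthm\"uller identification and Puppe-rotation sign conventions must be tracked consistently so that the resulting identification $\partial = \rho$ is canonical. The role of the hypothesis that $2a$ acts trivially on $E^\gr_\star$ is to ensure that the standard Mackey functor relation $\tau \rho = 2$ is available in the computation; this condition is automatic when $E^\gr_\star = \pi^\gr_\star(E)$, where $2a = \tau\rho(a) = \tau(0) = 0$ already follows from $\rho(a) = 0$.
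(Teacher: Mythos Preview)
Your proof is correct and rests on the same cofibre sequence $\gr_+ \to S^0 \to S^\alpha$ as the paper. The only difference is organisational: the paper applies the contravariant functor $[-,E]^\gr_\star$ for (1) and the covariant functor $\pi^\gr_\star(-\wedge E)$ for (2), so that $\rho$ and $\tau$ appear directly as the maps induced by $\gr_+ \to S^0$, whereas you use only the covariant functor and then identify the connecting map $\partial$ with $\rho$. The paper's route avoids exactly the bookkeeping you flag in your final paragraph.

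Your identification of $\partial$ with $\rho$ is correct in outcome, but the phrase ``inclusion of the wedge summand indexed by the identity of $\gr$'' is slightly loose: it conflates the splitting of the restricted cofibre coming from the null-homotopy of $a|_e$ with the canonical $\gr$-labelling of $(\Sigma\gr_+)|_e = S^1_e \vee S^1_g$. What you actually need, and what is true, is that the composite of the restricted connecting map with the Wirthm\"uller projection onto the $e$-summand is a degree $\pm 1$ self-map of $S^1$, whence $\ker\partial = \ker\rho$. Neither your argument nor the paper's makes essential use of the hypothesis that $2a$ acts trivially; your observation that this is automatic is correct.
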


\begin{proof}
 These are consequences of the existence of the long exact sequence associated to the distinguished triangle $$ \gr_+ \rightarrow S^0 \rightarrow S^{ \alpha}$$
in the stable $\gr$-equivariant category.
\begin{enumerate}
 \item Apply the exact functor $ [ -, \Sigma^{ - \star} E]_{e}$ to the triangle. We have:
$$ \xymatrix{  [S^{ \alpha}, \Sigma^{- \star} E]_{e} \ar[r] \ar@{ = }[d] & [S^0, \Sigma^{- \star} E]_{e} \ar[r] \ar@{ = }[d] & [ \gr_+, \Sigma^{- \star} E]_{e} \ar@{ = }[d] \\
\underline{ \pi}_{ \star+ \alpha}(E)_{e} \ar[r]^a & \underline{ \pi}_{ \star}(E)_{e} \ar[r]^{ \rho} & \underline{ \pi}_{ \star}(E)_{\gr} \\ }$$
where lines are exact. The first point follows.
\item Apply the exact functor $ [ S^{ \star}, (-) \wedge E]_{e}$ to the triangle. We have:
$$ \xymatrix{  [S^{ \star}, \gr_+ \wedge E]_{e} \ar[r] \ar@{ = }[d] & [S^{ \star}, E]_{e} \ar[r] \ar@{ = }[d] & [ \Sigma^{ \star - \alpha}, E]_{e} \ar@{ = }[d] \\
\underline{ \pi}_{ \star}(E)_{\gr} \ar[r]^{ \tau} & \underline{ \pi}_{ \star}(E)_{e} \ar[r]^{a} & \underline{ \pi}_{ \star}(E)_{e} \\ }$$
where lines are exact. The second point follows.
\end{enumerate}
\end{proof}

\subsection{Cartan formulae in $\E$ for the $H\mf^{\star}$-module structure}

Recall that Hu and Kriz computed a presentation of the $\gr$-equivariant modulo $2$ dual Steenrod algebra $\ste_{\star} := (H\mf_{\star}^{\gr}H\mf)$ from which we can deduce the following result.

\begin{pro} \label{pro_hypolibrepourhfd}
 The  $ H \mf_{ \star}^{\gr}$-module $(H \mf_{ \star}^{\gr} H \mf)$ is free over $$ \mathcal{BM} := \{ \Pi_{i,j} \tau_i^{ \epsilon_i} \xi_{j}^{n(j)}, n(j) \in \mathbb{N}, \epsilon(i) \in \{ 0,1 \} \},$$
 where the gradings are $|\tau_i|=(2^i-1)(1+\alpha)+1$ and $|\xi_i|=(2^i-1)(1+\alpha)$.
We call $\mathcal{BM}$ the \textit{monomial basis} of $(H \mf_{ \star} H \mf)_{e}$.
\end{pro}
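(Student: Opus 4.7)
The plan is to read this off directly from Hu--Kriz's presentation of $\ste_{\star}$ in \cite{HK01}, combined with a small reduction argument using the quadratic relations on the generators $\tau_i$. Hu--Kriz give explicit generators $\tau_i$ in degree $(2^i-1)(1+\alpha)+1$ and $\xi_j$ in degree $(2^j-1)(1+\alpha)$ for $\ste_{\star}$ as an $H\mf_{\star}^{\gr}$-algebra, along with relations of the form $\tau_i^2 = a\tau_{i+1} + \sigma^{-1}(1+[\tau_0]\xi_1)\xi_{i+1}$ (or whatever the exact form in \emph{loc.\ cit.}\ is), which express $\tau_i^2$ as an $H\mf_{\star}^{\gr}$-linear combination of elements of lower $\tau$-degree in the candidate basis.

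First I would handle the spanning part. Starting from the presentation, every monomial in $\ste_{\star}$ is \textit{a priori} a product $\prod \tau_i^{\epsilon_i} \xi_j^{n_j}$ with arbitrary exponents $\epsilon_i \geq 0$. Iterated use of the relation $\tau_i^2 = (\textrm{lower})$ rewrites any occurrence of $\tau_i^2$ as an $H\mf_{\star}^{\gr}$-linear combination of elements of $\mathcal{BM}$, and an easy induction on the total $\tau$-degree (well-founded because the correction terms have strictly smaller total $\tau$-degree, even though the internal $RO(\gr)$-grading jumps by $a$) shows that $\mathcal{BM}$ generates $\ste_{\star}$ over $H\mf_{\star}^{\gr}$.

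Second, for linear independence, I would appeal directly to the freeness part of \cite[Theorem 6.41]{HK01} (or its analogue in the relevant section of Hu--Kriz), which asserts precisely that $\ste_{\star}$ is a free $H\mf_{\star}^{\gr}$-module on the monomial basis $\mathcal{BM}$. As a sanity check, one can invert $\sigma^{-1}$ (or alternatively restrict along $\gr \to e$, using that the forgetful functor sends $\xi_j$ and $\tau_i$ to the classical Milnor generators) to recover the non-equivariant dual Steenrod algebra $\F_2[\xi_j]\otimes\Lambda_{\F_2}(\tau_i)$, which is free on the analogous monomial basis over $\F_2$; this gives a ``detection'' argument ruling out $H\mf_{\star}^{\gr}$-linear relations supported away from $a$-torsion.

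The main obstacle, and the reason the proposition is attributed to Hu--Kriz rather than proved from scratch, is freeness over $H\mf_{\star}^{\gr}$. The coefficient ring is the rather complicated $RO(\gr)$-graded Mackey functor recalled in Proposition \ref{pro_cohompoint}, featuring both the negative cone of $L$-type Mackey functors (all killed by $a$) and the positive $a$-periodic tower; a naive Poincar\'e series argument on either piece alone is not enough. Hu--Kriz's argument genuinely uses the $RO(\gr)$-grading together with the splitting of $\ste_{\star}$ into the $+$ and $-$ parts, so my proof plan reduces to citing their computation and checking that the reduction above puts every monomial into the claimed basis.
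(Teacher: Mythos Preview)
Your surjectivity argument is essentially the same as the paper's: both use the relations $\tau_k^2 = a\tau_{k+1} + \eta_R(\sigma^{-1})\xi_{k+1}$ to rewrite arbitrary monomials into $H\mf_\star^\gr$-combinations of elements of $\mathcal{BM}$, by induction on a suitable $\tau$-degree (the paper uses $\max\{j_k\}$, you use total $\tau$-degree; both work).

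The injectivity step is where you diverge, and here there is a genuine gap. You propose to cite freeness directly from Hu--Kriz, and you explicitly say the proposition ``is attributed to Hu--Kriz rather than proved from scratch.'' But that is not how the paper is set up: Hu--Kriz supply only the \emph{presentation} $\ste_\star \cong H\mf_\star^\gr[\xi_{i+1},\tau_i \mid i\ge 0]/R$, and the paper then \emph{deduces} freeness on $\mathcal{BM}$ from it. Your sanity check (invert $\sigma^{-1}$ or restrict to the underlying non-equivariant spectrum) only detects relations away from $a$-torsion, and you yourself note this is insufficient over the full coefficient ring with its negative cone. So as written your argument does not establish injectivity.

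The paper's injectivity argument is short and elementary, and worth knowing. One identifies $\ker\phi$ with $H\mf_\star^\gr\{\mathcal{BM}\}\cap R$ inside the free polynomial algebra, and then observes that every nonzero element $r\in R$ has a nonzero projection onto some monomial $\xi_1^{i_1}\cdots\xi_n^{i_n}\tau_0^{j_0}\cdots\tau_m^{j_m}$ with at least one $j_k\ge 2$ (this is immediate from the shape of the generators $a\tau_{k+1}+\eta_R(\sigma^{-1})\xi_{k+1}-\tau_k^2$ of $R$). Since $\mathcal{BM}$ by definition contains no such monomial, the intersection is zero. This is exactly the odd-primary non-equivariant argument transported to the $RO(\gr)$-graded setting, and it avoids any appeal to the structure of the coefficient ring beyond the presentation itself.
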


\begin{proof}
We show that the $H \mf_{ \star}^{\gr}$-module morphism 
$$ \phi : H \mf_{ \star} \{ \mathcal{BM} \} \rightarrow \ste_{ \star}$$
is an isomorphism. \\ 
Let $R$ be the ideal generated by $ a \tau_{k+1}+ \eta_R( \sigma^{-1}) \xi_{k+1} - \tau_k^2$ for $k \geq 0$ so that $$ \ste_{ \star} \cong H \mf_{ \star}^{\gr}[ \xi_{i+1}, \tau_{i} | i \geq 0] / R.$$
\begin{itemize}
\item { \bf Surjectivity: } surjectivity follows from the definition of $\mathcal{BM}$. Let $ \xi_1^{i_1} \hdots \xi_n^{i_n} \tau_0^{j_1} \hdots \tau_m^{j_m}$ be an element of $\mathcal{BM}$. For all $k$ such that $j_k \geq 2$, write  $$ \xi_1^{i_1} \hdots \xi_n^{i_n} \tau_0^{j_1} \hdots \tau_m^{j_m} \equiv \xi_1^{i_1} \hdots \xi_n^{i_n}( \Pi_{k|j_k \leq 1} \tau_{j_k} )( \Pi_{k|j_k \geq 2} \tau_{k}^{j_k-2}(a \tau_{k+1}+ \eta_R( \sigma^{-1}) \xi_{k+1})$$ modulo $R$. By induction over $max \{ j_k \}$, there is an element of $H \mf_{ \star}^{\gr} \{ \mathcal{BM} \}$ whose image by $ \phi$ is $ \xi_1^{i_1} \hdots \xi_n^{i_n} \tau_0^{j_1} \hdots \tau_m^{j_m}$.
\item { \bf Injectivity: }  this is shown analogously to the non-equivariant odd case. First, see that $Ker( \phi) \cong H \mf_{ \star}^{\gr} \{ \mathcal{BM} \} \cap R$. But for all $0 \neq r \in R$, $ \exists i_1, \hdots, i_n, j_1, \hdots, j_k$ and $ \exists k \geq k_0 \geq 0$ such that $j_k \geq 2$ and $pr_{ H \mf^{ \star}_{\gr} \xi_1^{i_1} \hdots \xi_n^{i_n} \tau_0^{j_1} \hdots \tau_m^{j_m}}(r) \neq 0$. By definition of $ \mathcal{BM}$,  $ H \mf_{ \star}^{\gr} \{ \mathcal{BM} \} \cap R = 0$.
\end{itemize}
\end{proof}

To set up the Cartan formulae, we need to refine the previous result a little.

\begin{corr} \label{corr_hypolibertehfd}
There is an isomorphism of Mackey functors $$ H \mf_{ \star}(H \mf) \cong \bigoplus_{b \in \mathcal{BM}} \Sigma^{ |b|} H \mf_{ \star}. $$
\end{corr}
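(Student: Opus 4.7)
The strategy is to promote the $\gr$-level freeness established in Proposition \ref{pro_hypolibrepourhfd} to the full Mackey-functor level. First, I would observe that each monomial $b \in \mathcal{BM}$ is, by construction, a class in $(H\mf_{|b|}(H\mf))_{\gr}$, i.e. represents an equivariant map $S^{|b|} \to H\mf \wedge H\mf$. Using the $H\mf$-module structure on $H\mf \wedge H\mf$ coming from the left-hand factor, this extends canonically to a map of $H\mf$-modules $\Sigma^{|b|} H\mf \to H\mf \wedge H\mf$. Since $\mathcal{BM}$ has only finitely many elements in each $RO(\gr)$-degree, assembling these maps over $b \in \mathcal{BM}$ produces a well-defined map of $\gr$-spectra
$$ \phi : \bigvee_{b \in \mathcal{BM}} \Sigma^{|b|} H\mf \longrightarrow H\mf \wedge H\mf,$$
which, on passing to $\underline{\pi}_{\star}$, yields a map of $RO(\gr)$-graded Mackey functors $\bigoplus_{b} \Sigma^{|b|} H\mf_{\star} \to H\mf_{\star}(H\mf)$.

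Next I would verify that this induced map is an isomorphism by checking both levels of the target Mackey functor separately; recall that a morphism in $\mathcal{M}^{RO(\gr)}$ is an isomorphism iff its $(-)_{\gr}$ and $(-)_{e}$ components are, and that commutation with restriction and transfer is automatic because $\phi$ is induced by an honest map of $\gr$-spectra. The $\gr$-level statement is exactly Proposition \ref{pro_hypolibrepourhfd}. At the underlying level, $H\mf$ and $H\mf \wedge H\mf$ restrict to the non-equivariant Eilenberg-MacLane spectrum $H\F$ and its classical smash-square, so the claim reduces to showing that the images of $\mathcal{BM}$ form an $\F$-basis of the classical mod-$2$ dual Steenrod algebra $\F[\zeta_1, \zeta_2, \ldots]$. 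Under the restriction $RO(\gr) \to \Z$ sending $\alpha \mapsto 1$ one has $|\xi_i| \mapsto 2^{i+1}-2$ and $|\tau_i| \mapsto 2^{i+1}-1$, and identifying $\xi_i$ with $\zeta_i^2$ and $\tau_i$ with Milnor's classical generator $\zeta_{i+1}$, the basis $\mathcal{BM}$ maps bijectively onto the classical Milnor monomial basis.

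The main obstacle is not conceptual but bookkeeping: one must carefully track the identification of the underlying equivariant classes $\xi_i, \tau_i$ with the classical Milnor generators, compatible with the presentation relation $a \tau_{k+1} + \eta_R(\sigma^{-1}) \xi_{k+1} = \tau_k^2$ (which degenerates at the underlying level since $a \mapsto 0$ after restriction, killing one summand and leaving $\tau_k^2 = \eta_R(\sigma^{-1}) \xi_{k+1}$, compatible with $\zeta_{k+1}^2 = \zeta_{k+2}$ up to unit). Once these conventions are matched, the corollary follows formally from Proposition \ref{pro_hypolibrepourhfd} together with Milnor's classical computation of $H\F_{*}H\F$.
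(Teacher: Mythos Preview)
Your proposal is correct and follows essentially the same route as the paper: construct the map $\phi$ of $H\mf$-module spectra from the basis $\mathcal{BM}$, then check it is a weak equivalence by verifying the fixed-point level (Proposition~\ref{pro_hypolibrepourhfd}) and the underlying non-equivariant level (the classical Milnor computation) separately. The paper is terser on the underlying step, simply asserting it is ``clearly an equivalence of underlying non-equivariant spectra'', whereas you spell out the identification of $\mathcal{BM}$ with the Milnor monomial basis.

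One small bookkeeping slip: in your final paragraph you write that the degenerated relation $\tau_k^2 = \eta_R(\sigma^{-1})\xi_{k+1}$ is ``compatible with $\zeta_{k+1}^2 = \zeta_{k+2}$ up to unit''. With your own identifications $\tau_k \mapsto \zeta_{k+1}$ and $\xi_{k+1} \mapsto \zeta_{k+1}^2$, the relation becomes the tautology $\zeta_{k+1}^2 = \zeta_{k+1}^2$, not $\zeta_{k+1}^2 = \zeta_{k+2}$ (the latter is false in $H\F_*H\F$). This does not affect the argument, since the point is only that under restriction the Hu--Kriz relation imposes no new constraint, and $\mathcal{BM}$ maps bijectively to the monomial basis of $\F[\zeta_1,\zeta_2,\ldots]$ via $\prod \xi_j^{n_j}\tau_i^{\epsilon_i} \mapsto \prod \zeta_j^{2n_j+\epsilon_{j-1}}$.
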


\begin{proof}
Proposition \ref{pro_hypolibrepourhfd} gives a map
$$ \bigvee_{b\in \mathcal{BM}} S^{|b|} \rightarrow H\mf \wedge H\mf.$$
By adjunction, we get a map of $H\mf$-modules
$$ \phi : \bigvee_{b\in \mathcal{BM}} \Sigma^{|b|}H\mf \rightarrow H\mf \wedge H\mf.$$
By Proposition \ref{pro_hypolibrepourhfd}, this is an equivalence in fixed points, and this is clearely an equivalence of underlying non-equivariant spectra. Consequently, $\phi$ is a weak equivalence, so it induces an isomorphism of $RO(\gr)$-graded Mackey functors.
\end{proof}

We recall the following result of Boardman.

\begin{de}[\emph{\cite[Definitions \S 10 and Definition 11.11]{Bo95}}] \label{de_modulecomoduleboard}
\begin{enumerate}
\item Let $A^{ \star}$ be a monoid in the category of $H$-bimodules. A Boardman module over $A^{ \star}$ is a $RO(\gr)$-graded filtered $H$-module $M$, which is complete and Hausdorff, and a continuous $H$-module morphism $ \lambda : A^{ \star} \underset{(r,l)}{ \otimes} M \rightarrow M$ making the appropriate coherence diagram commute.
\item Let $A_{ \star}$ be a $RO( \gr)$-graded Hopf algebroid. A Boardman $A_{ \star}$-comodule is a $RO( \gr)$-graded filtered $H$-module $M$, which is complete and Hausdorff, together with a continuous $H$-module morphism $ \rho : M \rightarrow M \hat{ \underset{(l,l)}{ \otimes}} A_{ \star}$, where the action of $H$ on $M \hat{ \underset{(l,l)}{ \otimes}} A_{ \star}$ is defined by $h ( m \otimes s) = m \otimes \eta_R(h)s$, for $m \otimes s \in M \hat{ \underset{(l,l)}{ \otimes}} A_{ \star}$ and $h \in H$.
\end{enumerate}
\end{de}

\begin{pro}[\emph{\cite[Theorem 11.13]{Bo95}}] \label{pro_thmboardman}
Suppose that $A_{ \star}$ is a free Boardman $H$-module, and denote $A^{ \star} = \Hom_{H\dashmod mod}(A_{ \star}, H)$. Then, the category of Boardman $A^{ \star}$-modules is equivalent to the category of Boardman $A_{ \star}$-comodules.
\end{pro}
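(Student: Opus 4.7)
The plan is to construct an adjoint equivalence of categories by producing a pair of mutually inverse functors. The crucial hypothesis is the freeness of $A_{\star}$ as a Boardman $H$-module, which allows me to fix a homogeneous $H$-basis $\{b_i\}_{i \in I}$ of $A_{\star}$. The identification $A^{\star} = \Hom_{H\dashmod mod}(A_{\star}, H)$ then furnishes dual elements $b_i^{\vee} \in A^{\star}$ characterized by $b_i^{\vee}(b_j) = \delta_{ij}$, which serve as the bridge between the two descriptions.

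From comodules to modules: given $(M, \rho : M \to M \hat{\otimes} A_{\star})$, I would define the $A^{\star}$-action by $\lambda(x^{\vee} \otimes m) = (\mathrm{id}_M \otimes x^{\vee})(\rho(m))$, where $x^{\vee}$ is evaluated using the $H$-linearity that is part of the Boardman comodule data. The continuity of $\rho$ and of $x^{\vee}$ make this compatible with the filtered topologies. The Boardman module axioms for $\lambda$ translate from those for $\rho$ under this dualization, because the multiplication on $A^{\star}$ is by construction the linear dual of the comultiplication on $A_{\star}$, and the unit of $A^{\star}$ is dual to the counit of $A_{\star}$.

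From modules to comodules: given $(M, \lambda)$, set $\rho(m) = \sum_{i \in I} \lambda(b_i^{\vee} \otimes m) \otimes b_i$ in $M \hat{\otimes} A_{\star}$. Each truncation of this sum modulo a step of the filtration on $A_{\star}$ involves only finitely many indices $i$ (namely those for which $b_i$ is nonzero modulo that step), so the completeness of $M \hat{\otimes} A_{\star}$ guarantees that $\rho(m)$ is well-defined. Independence of $\rho$ from the choice of basis follows because a change of $H$-basis on $A_{\star}$ induces the inverse transpose change on $A^{\star}$, and the two transformations cancel in the sum defining $\rho(m)$.

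That the two functors are mutually inverse is a direct calculation using the dual basis identities: starting with a comodule $(M, \rho)$ and writing $\rho(m) = \sum_j m_j \otimes b_j$ formally, the composite construction produces $\sum_{i,j} m_j \cdot b_i^{\vee}(b_j) \otimes b_i = \sum_i m_i \otimes b_i = \rho(m)$, and similarly in the other direction. The main obstacle is not this algebraic computation but the filtration-theoretic one: verifying that $\lambda$ is continuous with respect to the topology on $A^{\star}$ inherited from the filtration on $A_{\star}$, and that the sum defining $\rho$ genuinely lies in the completed tensor product rather than merely in an algebraic one. These compatibilities are precisely what force the Boardman definitions (complete, Hausdorff, continuous) to appear in the hypotheses, and checking them carefully is the substantive content of \cite[\S 11]{Bo95}.
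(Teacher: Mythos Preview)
The paper does not give its own proof of this proposition; it is stated as a citation of \cite[Theorem 11.13]{Bo95}. Your sketch is the standard argument and is the one Boardman gives: choose a homogeneous $H$-basis of $A_{\star}$, use the dual basis in $A^{\star}$ to pass back and forth between actions and coactions via the formulas $\lambda(x^{\vee}\otimes m)=(\mathrm{id}\otimes x^{\vee})\rho(m)$ and $\rho(m)=\sum_i (b_i^{\vee}\cdot m)\otimes b_i$, and then verify the continuity and convergence conditions imposed by the complete Hausdorff filtrations---so your approach matches the source the paper relies on.
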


Now, we turn to the most important result of this section. Recall that, for a ring $\gr$-spectrum $E$, the pair $(E_{\star}^{\gr}, E_{\star}^{\gr}E)$ has a natural Hopf algebroid structure.

\begin{thm} \label{thm:boardman_hmf}
Denote
\begin{itemize}
\item $ \ste^{\star} = (H\mf^{\star}_{\gr}(H\mf))$
\item $\ste_{\star} = (H\mf_{\star}^{\gr}(H\mf))$.
\end{itemize}
Then, , the category of Boardman $\ste^{ \star}$-modules is equivalent to the category of Boardman $\ste_{ \star}$-comodules.
\end{thm}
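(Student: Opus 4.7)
The plan is to reduce the statement to Boardman's general equivalence theorem, Proposition \ref{pro_thmboardman}, which has already been recalled. To apply it with $A_{\star} = \ste_{\star}$ and $H = H\mf^{\star}_{\gr}$, two hypotheses must be checked: first, that $\ste_{\star}$ is a free Boardman $H$-module, and second, that $\ste^{\star}$ coincides with the $H$-linear dual $\Hom_{H\dashmod mod}(\ste_{\star}, H)$.

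First I would dispatch the freeness hypothesis. This is precisely the content of Corollary \ref{corr_hypolibertehfd}: we already have a splitting of $RO(\gr)$-graded Mackey functors
$$ H\mf_{\star}(H\mf) \cong \bigoplus_{b \in \mathcal{BM}} \Sigma^{|b|} H\mf_{\star},$$
so $\ste_{\star}$ is free on the monomial basis $\mathcal{BM}$ as an $H$-module, and the Boardman filtration is the evident one coming from truncation along the finite sub-sums of this wedge (giving a complete, Hausdorff filtered $H$-module).

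Next I would identify $\ste^{\star}$ with the $H$-linear dual. Corollary \ref{corr_hypolibertehfd} is proved at the level of spectra: it provides a weak equivalence of $H\mf$-modules
$$\phi : \bigvee_{b \in \mathcal{BM}} \Sigma^{|b|} H\mf \overset{\sim}{\longrightarrow} H\mf \wedge H\mf.$$
Applying $[H\mf, -]^{\star}_{\gr}$ and using that $[H\mf, \Sigma^{|b|}H\mf]^{\star}_{\gr} \cong \Sigma^{-|b|} H\mf^{\star}_{\gr}$, one obtains
$$ \ste^{\star} = [H\mf, H\mf \wedge H\mf]^{\star}_{\gr} \cong \prod_{b \in \mathcal{BM}} \Sigma^{-|b|} H\mf^{\star}_{\gr} = \Hom_{H\dashmod mod}\!\left(\bigoplus_{b \in \mathcal{BM}} \Sigma^{|b|} H\mf^{\gr}_{\star},\, H\right).$$
Under the identification of the right-hand side with $\Hom_{H\dashmod mod}(\ste_{\star}, H)$ supplied by Corollary \ref{corr_hypolibertehfd}, this is exactly the Kronecker duality one needs. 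Concretely, the pairing sends an operation $x^{\vee} \in \ste^{\star}$ to the $H$-linear functional $b \mapsto x^{\vee}(b)$ on the basis elements of $\mathcal{BM}$.

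With both hypotheses verified, Proposition \ref{pro_thmboardman} applies verbatim and delivers the equivalence of categories between Boardman $\ste^{\star}$-modules and Boardman $\ste_{\star}$-comodules. The main technical point to be vigilant about is that passing from a wedge to its $H\mf$-cohomology introduces a product rather than a sum, which is exactly why Boardman's framework requires the filtered/complete/Hausdorff hypothesis on the module side; checking that the filtration given by the finite-wedge decomposition matches the filtration implicit in Definition \ref{de_modulecomoduleboard} is the only nontrivial verification, and it follows directly from the bounded-below nature of each homogeneous piece.
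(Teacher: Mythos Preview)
Your overall approach matches the paper's: verify that $\ste_{\star}$ is free over $H = H\mf^{\star}_{\gr}$ via Corollary \ref{corr_hypolibertehfd}, identify $\ste^{\star}$ with the $H$-linear dual of $\ste_{\star}$, and then invoke Proposition \ref{pro_thmboardman}. The freeness step is correct.

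There is, however, a slip in the duality step. You write $\ste^{\star} = [H\mf, H\mf\wedge H\mf]^{\star}_{\gr}$ and assert $[H\mf, \Sigma^{|b|}H\mf]^{\star}_{\gr} \cong \Sigma^{-|b|}H\mf^{\star}_{\gr}$. Neither holds as stated: by definition $\ste^{\star} = H\mf^{\star}_{\gr}(H\mf) = [H\mf, H\mf]^{\star}_{\gr}$, and $[H\mf, \Sigma^{|b|}H\mf]^{\star}_{\gr}$ is a shift of $\ste^{\star}$ itself, not of the coefficient ring $H\mf^{\star}_{\gr}$. The functor one must apply to the equivalence $\phi$ is not the covariant $[H\mf,-]^{\star}_{\gr}$ but the contravariant $\Hom_{H\mf\dashmod mod}(-, H\mf)$. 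Concretely, one first uses the free/forgetful adjunction
\[
[H\mf, H\mf]^{\star}_{\gr} \;\cong\; \bigl(\Hom_{H\mf\dashmod mod}(H\mf\wedge H\mf,\, H\mf)\bigr)^{\star}_{\gr},
\]
and then the $H\mf$-module splitting $\phi$ yields
\[
\ste^{\star} \;\cong\; \Bigl(\Hom_{H\mf\dashmod mod}\Bigl(\bigvee_{b\in\mathcal{BM}} \Sigma^{|b|}H\mf,\; H\mf\Bigr)\Bigr)^{\star}_{\gr} \;\cong\; \prod_{b\in\mathcal{BM}} \Sigma^{-|b|}H\mf^{\star}_{\gr} \;=\; \Hom_{H}(\ste_{\star}, H),
\]
since $H\mf$-module self-maps of $H\mf$ compute only the coefficient ring. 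With this correction your argument coincides with the paper's.
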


\begin{proof}
There are two distinct parts to the proof.
Firstly, Proposition  \ref{corr_hypolibertehfd} gives the freeness of $H\mf \wedge H\mf$ as a $H\mf$-module, so that we have an explicit isomorphism, say $\phi$, between $H\mf \wedge H\mf$ and a sum of shifts of $H\mf$. Consequently, we have an isomorphism

\begin{eqnarray*}
 \ste^{\star} & =  & (H\mf^{\star}_{\gr}(H\mf)) \\
& = &  [H\mf, H\mf]^{\star}_{\gr} \\
& \cong & ( \Hom_{H\mf\dashmod mod}(H\mf\wedge H\mf, H\mf))^{\star}_{\gr} \\
& \stackrel{\phi^*}{\cong}  & ( \Hom_{H\mf\dashmod mod}(\bigvee_{x \in \mathcal{BM}} \Sigma^{|x|}H\mf\dashmod mod, H\mf))^{\star}_{\gr} \\
& = & \Hom_{H\mf^{\star}_{\gr}}((H\mf_{\star}^{\gr}H\mf), H\mf^{\star}_{\gr}) \\
& = & \Hom_{H\mf^{\star}_{\gr}}(\ste_{\star}, H\mf^{\star}_{\gr}). \\
\end{eqnarray*}

Secondly, Proposition \ref{pro_hypolibrepourhfd} allows us to apply Boardman's result: Proposition \ref{pro_thmboardman}, for $H = H\mf^{\star}_{\gr}$ and $\ste_{\star} = (H\mf_{\star}^{\gr}(H\mf))$. The first point of this proof gives the identification $ \ste^{\star} = (H\mf^{\star}_{\gr}(H\mf))$, and concludes the proof.
\end{proof}

We conclude this section by exhibiting some equivariant Cartan formulae using Theorem \ref{thm:boardman_hmf}.

\begin{de}
For $x \in \ste_{\star}$, denote $x^{\vee} \in \ste^{\star}$ the dual of $x$, that is the preimage of $x$ by the isomorphism
$\ste^{\star} \cong \Hom_{H\mf^{\star}_{\gr}}(\ste_{\star}, H\mf^{\star}_{\gr})$
described in the proof of Theorem \ref{thm:boardman_hmf}.
\end{de}

\begin{pro} \label{pro_cartancoeff}

\begin{enumerate}
 \item For all $h \in H \mf^{ \star}_{\gr}$,  $ \eta_R(h) = \sum_{h' \in H \mf^{ \star}_{\gr}, x \in \ste_{ \star}} (x^{ \vee}h)x$.
\item Let $M$ be a Boardman $ \ste^{ \star}$-module and $x^{ \vee} \in \ste^{ \star}$.
Define $x'_i$ and $x''_i$ in $\ste_{\star}$ by
$$ \sum_{i \geq 0} x'_i \otimes x''_i = \sum_{h,y,z| pr_x(yz) = \eta_R(h)x} hy \otimes z \in \ste_{\star}$$
where the second sum is over $h \in H \mf_{ \star}^{\gr}$ and $y,z$ in $\mathcal{BM}$.
Then, for all $h \in H \mf^{ \star}_{\gr}$ and $m \in M$,
$$ x(hm) = \sum_{i \geq 0} x'_i(h) x''_i(m).$$
\end{enumerate}
\end{pro}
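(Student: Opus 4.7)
My plan is to use Theorem~\ref{thm:boardman_hmf} to translate the problem to the comodule side, where Cartan-type formulae become transparent manipulations of the coaction. Under the equivalence of categories, a Boardman $\ste^{\star}$-module $M$ corresponds to a Boardman $\ste_{\star}$-comodule with structure map $\rho : M \to M\hat{\otimes}\ste_{\star}$. Since $\ste_{\star}$ is a free $H\mf^{\star}_{\gr}$-module on the basis $\mathcal{BM}$ (Proposition~\ref{pro_hypolibrepourhfd}), we can write $\rho(m) = \sum_{x \in \mathcal{BM}} m_x \otimes x$ uniquely. The identification given by Theorem~\ref{thm:boardman_hmf} is exactly that $x^{\vee}(m) = m_x$; this is the key computational dictionary I will exploit throughout.

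For part~$(1)$, I apply this dictionary to the particular Boardman $\ste^{\star}$-module $M = H\mf^{\star}_{\gr}$, whose corresponding coaction is (after the canonical identification $H\mf^{\star}_{\gr} \hat{\otimes}_{H\mf^{\star}_{\gr}} \ste_{\star} \cong \ste_{\star}$) precisely the right unit $\eta_R : H\mf^{\star}_{\gr} \to \ste_{\star}$. Writing $\eta_R(h)$ in the basis $\mathcal{BM}$ and reading off coefficients immediately yields $\eta_R(h) = \sum_{x \in \mathcal{BM}} (x^{\vee}h)\, x$.

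For part~$(2)$ I exploit the defining property of a Boardman comodule recalled in Definition~\ref{de_modulecomoduleboard}: the coaction $\rho$ is an $H\mf^{\star}_{\gr}$-module morphism, where the $H\mf^{\star}_{\gr}$-action on the target is given by $h \cdot (m \otimes s) = m \otimes \eta_R(h)s$. Consequently
\[
\rho(hm) = h\cdot \rho(m) = \sum_{z \in \mathcal{BM}} z^{\vee}(m)\otimes \eta_R(h)\,z,
\]
using the basis expansion of $\rho(m)$ supplied by part~$(1)$ applied to $M$. Substituting $\eta_R(h) = \sum_y (y^{\vee}h)\, y$ from part~$(1)$ applied to $H\mf^{\star}_{\gr}$, the term $\eta_R(h)\,z$ becomes $\sum_y (y^{\vee}h)\, yz$, and expanding each product $yz$ in the basis $\mathcal{BM}$ produces exactly the scalars $pr_x(yz)$ appearing in the statement. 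Gathering, for each fixed $x \in \mathcal{BM}$, all triples $(h,y,z)$ whose contribution to the $x$-coefficient matches yields precisely the pairs $(x_i',x_i'')$ defined by the formula. Reading off the $x$-component of $\rho(hm)$ then gives $x^{\vee}(hm) = \sum_i x_i'^{\vee}(h)\, x_i''^{\vee}(m)$.

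The principal technical obstacle I anticipate is bookkeeping for the $H\mf^{\star}_{\gr}$-bimodule structure on $\ste_{\star}$: the left and right unit maps $\eta_L,\eta_R$ are genuinely different, $\mathcal{BM}$ is a basis with respect to the left ($\eta_L$) structure, and the coaction twists through $\eta_R$. Carefully keeping track of which side a scalar is being absorbed on when one moves coefficients past a $\otimes_{H\mf^{\star}_{\gr}}$ is what forces the twisted sum $\sum_{h,y,z\mid pr_x(yz)=\eta_R(h)x} hy\otimes z$ instead of a naive coproduct expansion, and it is the only place where equivariance makes the argument more subtle than its classical Cartan-formula analogue.
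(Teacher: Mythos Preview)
Your proposal is correct and follows essentially the same approach as the paper: both arguments pass to the comodule side via Theorem~\ref{thm:boardman_hmf}, identify $x^{\vee}(m)$ as the $x$-coefficient of the coaction $\rho(m)$ in the basis $\mathcal{BM}$, and then exploit the $H\mf^{\star}_{\gr}$-linearity of $\rho$ (with the $\eta_R$-twisted action on the target) to expand $\rho(hm)$. The only cosmetic difference is ordering: the paper first carries out the general expansion of $\rho(hm)$ and then specializes to $M=H\mf^{\star}_{\gr}$ to obtain~(1), whereas you identify the coaction on $H\mf^{\star}_{\gr}$ as $\eta_R$ directly and prove~(1) first before feeding it into~(2).
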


\begin{proof}
Recall Proposition \ref{pro_thmboardman}, which gives the formula
$ \lambda(m) = \sum_{x \in \mathcal{BM}} x^{ \vee}m \otimes x$. \\
Denote the structure morphism in the following way
\begin{itemize}
 \item $ \mu : \ste^{ \star} \otimes M \rightarrow M$ for the Boardman $ \ste^{ \star}$-module structure on $M$, and $xm$ for the $x \in \ste^{ \star}$ action on $m \in M$.
\item $ \lambda : M \rightarrow M \hat{ \otimes} \ste_{ \star}$ for the Boardman $ \ste_{ \star}$-comodule on $M$, defined by Proposition \ref{pro_thmboardman}. In particular, it is a Boardman $H \mf^{ \star}_{\gr}$-module morphism, and the action of $H \mf^{ \star}_{\gr}$ on $M \otimes  \ste_{ \star}$ is induced by $ \eta_L$.
\end{itemize}

Let $m \in M$ and $h \in H \mf^{ \star}_{\gr}$. Write $ \eta_R(h) = \sum_{i \geq 0} h'_i x_{h,i}$. Then,
\begin{eqnarray*}
\sum_x x^{ \vee}(hm) \otimes x & = & \lambda(hm) \\
 & = & \left[ \sum_{x} (x^{ \vee}m \otimes x) \right] h \\
& = &  \sum_{x} x^{ \vee}m \otimes \eta_R(h) x \\
& = & \sum_{i,m', x|x^{ \vee}m = m'} m' \otimes x h'_i x_{h,i} \\
& = & \sum_{i,m', x|x^{ \vee}m = m'} h'_i m' \otimes x  x_{h,i} .
\end{eqnarray*}

In particular, for $M = H \mf^{ \star}_{\gr}$ and $h = 1$, one has $ \sum_x x^{ \vee}(m) \otimes x =  \sum_x x^{ \vee}(1) \otimes \eta_R(m) = 1 \otimes \eta_R(m)$, this gives the first point. \\

We prove the second point. By $(1)$, we rewrite the sum
$$ \lambda(hm) = \sum_{i,m', x|x^{ \vee}m = m'} h'_i m' \otimes x  x_{h,i} = \sum_{x,x' \in \mathcal{BM}} x'^{ \vee}(h) x^{ \vee}(m) \otimes x  x',$$
thus, for $y^{ \vee} \in \ste^{ \star}$, $y(hm) = \sum_{pr_y(xx') = \eta_R(h')y} h'x^{ \vee}(h) x'^{ \vee}(m).$
\end{proof}

We now turn to the particular algebra generated by $\qo$ and $\qi$. First, the operation $\qi$ being build as a Bockstein coming from an exact couple, it has trivial square. Thus, it is the only element of $\ste^{\star}$ satisfying this property: $\tau_1^{\vee}$ in the notations of \cite{HK01}. The operation $\qo$ correspond to $\tau_0^{\vee}$, the only non trivial operation in degree one. We deduce two important corollaries from the previous proposition.

\begin{corr}[Cartan formulae] \label{cor:cartan}
Let $X$ be a $\gr$-spectrum, $x \in H\mf^{\star}_{\gr}(X)$ and $h \in H\mf^{\star}_{\gr}$.
Then
\begin{itemize}
\item $\qo(hx) = \qo(h)x + h \qo(x)$,
\item $\qi(hx) = \qi(h)x + a \qo(h) \qo(x) + h \qi(x)$.
\end{itemize}
\end{corr}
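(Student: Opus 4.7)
The plan is to specialize Proposition \ref{pro_cartancoeff}(2) to the two operations $\qo = \tau_0^\vee$ and $\qi = \tau_1^\vee$ (the identifications are those recalled just before the corollary). For each one, I need to determine explicitly the tensor
$$ \sum_{i\ge 0} x'_i \otimes x''_i \;=\; \sum_{\substack{h \in H\mf^{\star}_{\gr},\\ y,z \in \mathcal{BM}\\ pr_{\tau_k}(yz)=\eta_R(h)\tau_k}} hy \otimes z $$
inside $\ste_{\star}\otimes_{H\mf^{\star}_{\gr}}\ste_{\star}$, for $k=0$ and $k=1$ respectively.

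For $\qo$, I would enumerate the pairs $(y,z) \in \mathcal{BM}^{2}$ whose product has a non-trivial $\tau_0$-component in the monomial basis. Since every $b \in \mathcal{BM}$ has $\epsilon_0(b) \in \{0,1\}$ and the relation $\tau_0^2 = a\tau_1 + \eta_R(\sigma^{-1})\xi_1$ annihilates the $\tau_0$-coefficient of $\tau_0\cdot\tau_0$, the only pairs producing a bare $\tau_0$ are $(\tau_0,1)$ and $(1,\tau_0)$, each with $h=1$. Plugging into the formula yields $\qo(hx) = \qo(h)x + h\,\qo(x)$.

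For $\qi$, I apply the same analysis to the $\tau_1$-coefficient. The obvious contributions are $(\tau_1,1)$ and $(1,\tau_1)$, each with $h=1$. The crucial additional contribution comes precisely from the Hopf algebroid relation $\tau_0^2 = a\tau_1 + \eta_R(\sigma^{-1})\xi_1$: the pair $(\tau_0,\tau_0)$ has $pr_{\tau_1}(\tau_0\tau_0) = a = \eta_R(a)$, since the Euler class $a$ is represented by a map $S^0 \to S^{\alpha}$ of trivial $\gr$-spectra and therefore satisfies $\eta_L(a)=\eta_R(a)$. The resulting summand is $a\tau_0 \otimes \tau_0$, whose dual action on $h\otimes x$ is $a\qo(h)\qo(x)$. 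Combining the three terms produces exactly $\qi(hx) = \qi(h)x + a\qo(h)\qo(x) + h\,\qi(x)$.

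The main obstacle is showing that the enumeration above is exhaustive: one must verify that no other pair $(y,z) \in \mathcal{BM}^2$, for instance one involving $\xi_j$'s or higher $\tau_i$'s, can contribute a term $\eta_R(h)\tau_k$ to $pr_{\tau_k}(yz)$. This is a monomial-degree argument using the freeness statement from Proposition \ref{pro_hypolibrepourhfd}: any product $yz$ reduced modulo the relations $\tau_j^2 = a\tau_{j+1}+\eta_R(\sigma^{-1})\xi_{j+1}$ expands in the basis $\mathcal{BM}$, and the only way to obtain a pure $\tau_0$ or $\tau_1$ summand (rather than a monomial carrying extra $\tau_i$'s or $\xi_j$'s with $i,j\ge 1$) is via the four pairs listed, because any $\xi_j$ or $\tau_i$ with $i\ge 1$ appearing in $y$ or $z$ persists after expansion. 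Once this exhaustiveness is checked, the two identities follow directly from Proposition \ref{pro_cartancoeff}(2).
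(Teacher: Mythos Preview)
Your proposal is correct and follows exactly the approach the paper intends: the paper's proof is the single sentence ``Follows from the proposition, and the identification $\tau_0^{\vee} = \qo$ and $\tau_1^{\vee} = \qi$,'' and you have simply spelled out the enumeration that makes this work. One small remark: your justification that $\eta_R(a)=a$ because $a$ ``is represented by a map $S^0\to S^\alpha$ of trivial $\gr$-spectra'' is not quite phrased correctly ($S^\alpha$ has a nontrivial action); the clean reason, used elsewhere in the paper, is that $a$ lies in the image of the Hurewicz map $\pi_\star^{\gr}(S^0)\to H\mf_\star^{\gr}$, so both units agree on it (the paper cites \cite[Theorem 6.41]{HK01} for $\eta_R(a)=a$).
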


\begin{proof}
Follows from the proposition, and the identification $\tau_0^{\vee} = \qo$ and $\tau_1^{\vee} = \qi$.
\end{proof}

\begin{corr} \label{corr_actionstecoeff}
Let $k,n \geq 0$.
\begin{enumerate}
\item For $i=0$ and $1$, the operation $\mathbb{Q}_i$ induce a $ \F[a]$-module morphism on the $ H \mf$-cohomology of any $\gr$-spectrum. 
\item $ \qo(a^k \sigma^{-n}) = \left\{ \begin{matrix} a^{k+1} \sigma^{-n+1} \text{ if n odd} \\ 0 \text{ if n even} \end{matrix} \right.$
\item $ \qo(a^{-k} \sigma^{n}) = \left\{ \begin{matrix} a^{-k+1} \sigma^{n+1} \text{ if n even} \\ 0 \text{ if n odd} \end{matrix} \right.$
\item $ \qi(a^k \sigma^{-n}) = \left\{ \begin{matrix} a^{k+3} \sigma^{-n+2} \text{ if n is 2 or 3 modulo 4} \\ 0 \text{ if n is 0 or 1 modulo 4} \end{matrix} \right.$
\item $ \qi(a^{-k} \sigma^{n}) = \left\{ \begin{matrix} a^{k+1} \sigma^{-n+1} \text{ if n is 2 or 3 modulo 4} \\ 0 \text{ if n is 0 or 1 modulo 4} \end{matrix} \right.$
\end{enumerate}
\end{corr}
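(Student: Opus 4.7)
The plan is to deduce (1) from Corollary \ref{cor:cartan} by degree considerations, and to establish (2)--(5) by induction within the coefficient ring $H\mf^{\star}_{\gr}$, using the explicit structure of Proposition \ref{pro_cohompoint}.

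For (1), applying Corollary \ref{cor:cartan} with $h = a$ yields
\[ \qo(ax) = \qo(a)x + a\qo(x), \qquad \qi(ax) = \qi(a)x + a\qo(a)\qo(x) + a\qi(x), \]
so $\F[a]$-linearity reduces to $\qo(a) = 0 = \qi(a)$. These vanish automatically: $\qo(a) \in H\mf^{\alpha+1}_{\gr}$ and $\qi(a) \in H\mf^{2+2\alpha}_{\gr}$, and both bidegrees lie outside the ``bowtie'' support of $H\mf^{\star}_{\gr}$ shown in Proposition \ref{pro_cohompoint}.

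For (2) and (4), by (1) it suffices to compute $\qo(\sigma^{-n})$ and $\qi(\sigma^{-n})$. The base cases are $\qi(\sigma^{-1}) = 0$ (since $H\mf^{1+2\alpha}_{\gr} = 0$) and $\qo(\sigma^{-1}) = a$. The latter follows from Definition \ref{de:qo}: $\qo$ is the Bockstein for the triangle $H\mz \xrightarrow{\times 2} H\mz \xrightarrow{p} H\mf \xrightarrow{\partial} \Sigma H\mz$, and $\qo(\sigma^{-1})$ must be nonzero in $H\mf^{\alpha}_{\gr} = \F \cdot a$ because the sign representation has no integral Thom class, so $\sigma^{-1}$ does not lift along $p$. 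Applying Corollary \ref{cor:cartan} with $h = \sigma^{-1}$, $x = \sigma^{-(n-1)}$ then yields the recursions
\[ \qo(\sigma^{-n}) = a\sigma^{-(n-1)} + \sigma^{-1}\qo(\sigma^{-(n-1)}), \qquad \qi(\sigma^{-n}) = a^2\qo(\sigma^{-(n-1)}) + \sigma^{-1}\qi(\sigma^{-(n-1)}), \]
from which (2) and (4) follow by a straightforward induction, using $2 = 0$ in $\F$ to produce the stated periods $2$ and $4$.

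The negative-cone formulas (3) and (5) are the main obstacle. The symbols $a^{-k}\sigma^n$ do not literally factor in $H\mf^{\star}_{\gr}$: the entire $L$-cone is $a$-nilpotent, and $\sigma$ itself is not a coefficient class, so Cartan cannot be applied to them directly. My plan is to exploit the isotropy separation cofiber sequence $E\gr_+ \to S^0 \to \widetilde{E}\gr$ to pass to the $a$-inverted cohomology $H\mf^{\star}_{\gr}[a^{-1}]$, which has the simpler structure $\F[a^{\pm 1}, \sigma^{\pm 1}]$. There Cartan extends naturally to positive powers $\sigma^n$ and determines $\qo(\sigma^n), \qi(\sigma^n)$ by the relation $\sigma \cdot \sigma^{-1} = 1$, while the $L$-cone appears through the connecting map of the associated long exact sequence. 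Transporting the Cartan computations across this sequence, and combining with the $\F[a]$-linearity of (1) to account for the $a^{-k}$ factors, is expected to yield (3) and (5). The difficulty is bookkeeping at the transport step: verifying that the parity conditions (odd/even for $\qo$; $\bmod 4$ for $\qi$) and the reversed direction of the shift in $n$ between (2)--(4) and (3)--(5) are consistent with the connecting maps.
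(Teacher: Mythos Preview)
Your treatment of (1), (2), (4) is correct and close in spirit to the paper's, which computes $\eta_R(a^k\sigma^{-n})$ directly from the Hu--Kriz formula $\eta_R(\sigma^{-1}) = \sigma^{-1} + a\tau_0$ and reads off the $\tau_i$-coefficients; your inductive use of the Cartan formula amounts to the same binomial expansion unpacked one step at a time.

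For (3) and (5) there is a genuine problem. The negative cone of $H\mf^{\star}_{\gr}$ is $a$-torsion: each $\sigma^n$ (for $n \geq 2$) satisfies $a\cdot\sigma^n = 0$, as one reads off Proposition~\ref{pro_cohompoint}, and hence $a^{k+1}\cdot(a^{-k}\sigma^n) = 0$. Consequently $H\mf^{\star}_{\gr}[a^{-1}] \cong \F[a^{\pm 1}, \sigma^{-1}]$, \emph{not} $\F[a^{\pm 1}, \sigma^{\pm 1}]$: the classes you want to study vanish in the $a$-localization, so there is nothing to transport back. Your isotropy-separation plan therefore cannot work as stated. (You may be thinking of the Borel or coBorel coefficients, where $\sigma$ is indeed invertible, but those are not obtained by inverting $a$ and the bookkeeping for the connecting map would be substantially more involved than what you sketch.)

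The paper's argument for (3) and (5) is both simpler and stays entirely inside $H\mf^{\star}_{\gr}$. The key observation is that the Cartan formulae of Corollary~\ref{cor:cartan} apply to the $H\mf^{\star}_{\gr}$-module $H\mf^{\star}_{\gr}$ itself, so one may take $h$ in the positive cone and $m$ in the negative cone. Choosing $h = \sigma^{-n+1}$ and $m = a^{-k}\sigma^n$, the product $hm$ lies in a bidegree where $H\mf^{\star}_{\gr}$ vanishes, so $\qo(hm) = 0$ and $\qi(hm) = 0$. Expanding via Cartan and inserting the already-known values of $\qo(\sigma^{-n+1})$ and $\qi(\sigma^{-n+1})$ from (2) and (4) then determines $\qo(a^{-k}\sigma^n)$ and $\qi(a^{-k}\sigma^n)$ directly. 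This avoids any localization or long exact sequence entirely.
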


\begin{proof}
\begin{itemize}
\item We know that $ \eta_R(a) = a$ by \cite[Theorem 6.41]{HK01}. But $ \eta_R$ is a ring morphism. The first point now follows from Proposition \ref{pro_cartancoeff}.
\item We compute $ \eta_R( a^k \sigma^{-n})$ modulo $( \xi_1, \xi_2, \hdots)$. As $ \tau_0^{2^i}= a^{2^i} \tau_i$ modulo this ideal, one has $ \eta_R( a^k \sigma^{-n}) = a^k \eta_R( \sigma^{-n}) = a^k ( \sigma^{-1} + a \tau_0)^n = a^k \sum_{i = 0}^n \begin{pmatrix} n \\ i \end{pmatrix} \sigma^{-n+i} a^i \tau_0^i$ and in particular, the coefficient in front of $ \tau_i$ is $ \begin{pmatrix} n \\ 2^i \end{pmatrix}a^{k+2^{i+1}-1} \sigma^{-n+2^i} $. The assertions $(2)$ and $(4)$ now follows.
\item For $(3)$ and $(5)$, we apply Cartan's formula to the equality $0 = \sigma^{-n+1} a^{-k} \sigma^{n}$ where the product on the right hand side is considered as the action of $H \mf^{\star}_{\gr}$ on itself via its ring structure. One finds
\begin{eqnarray*}
0 & = & \qo(\sigma^{-n+1} a^{-k} \sigma^{n}) \\
& = & \qo( \sigma^{-n+1} ) a^{-k} \sigma^{n} + \sigma^{-n+1} \qo(a^{-k} \sigma^{n}) 
\end{eqnarray*}
now $(2)$ gives $(3)$.
At last,
\begin{eqnarray*}
0 & = & \qi(\sigma^{-n+1} a^{-k} \sigma^{n}) \\
& = & \qi( \sigma^{-n+1} ) a^{-k} \sigma^{n} + a \qo( \sigma^{-n+1}) \qo(a^{-k} \sigma^{n}) + \sigma^{-n+1} \qi(a^{-k} \sigma^{n}) 
\end{eqnarray*}
now, $(2)$ and $(4)$ together gives $(5)$.
\end{itemize}
\end{proof}

\subsection{Equivariant cohomology of non-equivariant spectra}

Recall that $\ste(1)$ denotes the sub-algebra of the non-equivariant modulo $2$ Steenrod algebra generated by the first two Steenrod squares $Sq^1$ and $Sq^2$.
\begin{de} \label{de:R}
Define $$R : \ste(1)\dashmod mod \rightarrow \E\dashmod mod$$ by the following formulae:
\begin{itemize}
\item for $M \in \ste(1)\dashmod mod$, $R(M) = H\mf^{\star}_{\gr} \otimes_{\F} M$ as a $RO(\gr)$-graded $\F$-vector space,
\item for all $x \in M$, $\qo(x) = Sq^1(x) \in R(M)$,
\item for all $x \in M$, $\qi(x) = a Sq^2(x)  + \sigma^{-1} Q_1(x) \in R(M)$,
\item the action of $\qo$ and $\qi$ satisfies the Cartan formulae.
\end{itemize}
\end{de}

The following result gives a strikingly powerful tool that relates the category of $\ste(1)$-modules to the category of modules over the exterior algebra generated by $\qo$ and $\qi$.

\begin{thm} \label{thm:formula_qoi} \label{thm:hmfx_vs}
Let $H\mf^{\star} \otimes(-) : \F\dashmod mod^{\Z} \rightarrow H\mf^{\star}\dashmod mod$, where $H\mf^{\star}\dashmod mod$ denotes the category of $H\mf^{\star}$-modules in $\mathcal{M}^{RO(\gr)}$, be the extension of scalars functor.
Then the following diagram is commutative up to natural isomorphism
$$ \xymatrix@=2cm{ \sh \ar[r]  \ar[d]_{H\F^*} & \gr\sh \ar[d]^{H\mf^{\star}} \\
\F\dashmod mod^{\Z} \ar[r]_{H\mf \otimes(-)} & H\mf^{\star}\dashmod mod }$$
where the top arrow is the inflation functor.

Moreover, this refines to the following commutative diagram up to natural isomorphism
$$ \xymatrix@=2cm{ \sh \ar[r]  \ar[d]_{H\F^*} & \gr\sh \ar[d]^{H\mf^{\star}} \\
\ste(1)\dashmod mod \ar[r]_{R} & \E\dashmod mod }$$
\end{thm}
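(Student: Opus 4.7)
The first diagram asserts a natural isomorphism of $H\mf^{\star}_{\gr}$-modules
$$ H\mf^{\star}_{\gr}(i^*X) \;\cong\; H\mf^{\star}_{\gr} \otimes_{\F} H\F^*(X),$$
where $i^*$ denotes inflation. The plan is a standard comparison of cohomology theories: both sides are contravariant exact functors from $\sh$ to graded $H\mf^{\star}_{\gr}$-modules taking wedges to products (the left side because inflation preserves cofiber sequences; the right side because $H\mf^{\star}_{\gr}$ is an $\F$-vector space in each $RO(\gr)$-degree, hence flat over $\F$). I will build a comparison map by pairing $H\mf^{\star}_{\gr}$-scalars with the canonical image of integer-degree classes $x : X \to \Sigma^n H\F$ under inflation, and verify that it is the identity on $X = S^0$. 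A cell-by-cell induction on finite cell spectra (the attaching-map differentials are controlled identically by mod-$2$ degrees on both sides), followed by passage to filtered colimits, will then produce the natural isomorphism for all $X$.

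To refine this to an $\E$-module isomorphism, the Cartan formulas of Corollary \ref{cor:cartan} together with the fact that $1 \otimes H\F^*(X)$ generates $H\mf^{\star}_{\gr} \otimes_{\F} H\F^*(X)$ as an $H\mf^{\star}_{\gr}$-module let me reduce to identifying the action of $\qo$ and $\qi$ on classes $x \in H\F^n(X)$. Since $\qo$ is the equivariant mod-$2$ Bockstein (Definition \ref{de:qo}), naturality under the forgetful functor $\gr\sh \to \sh$ identifies $\qo(x)$ with the non-equivariant Bockstein $Sq^1(x)$. For $\qi$, the class $\qi(x) \in H\mf^{n+2+\alpha}_{\gr}(i^*X)$ decomposes via the first diagram as $\sum_k h_k \otimes z_k$ with $h_k \in H\mf^{\star}_{\gr}$ a basis element and $z_k \in H\F^*(X)$; the degree constraint forces the $\alpha$-part of each $h_k$ to be $1$, and inspection of Proposition \ref{pro_cohompoint} shows the only such basis elements are $a \in H\mf^{\alpha}_{\gr}$ and $\sigma^{-1} \in H\mf^{\alpha - 1}_{\gr}$. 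Hence I can write
$$ \qi(x) \;=\; a \cdot \psi_2(x) + \sigma^{-1} \cdot \psi_3(x)$$
for natural stable operations $\psi_2 : H\F^n(-) \to H\F^{n+2}(-)$ and $\psi_3 : H\F^n(-) \to H\F^{n+3}(-)$, that is, elements of $\ste_2 = \F\{Sq^2\}$ and $\ste_3$ respectively.

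The main obstacle is pinning down the scalars, namely $\psi_2 = Sq^2$ and $\psi_3 = Q_1 = Sq^1 Sq^2 + Sq^2 Sq^1$. Applying the equivariant Cartan formula of Corollary \ref{cor:cartan} to $\qi(xy)$ and separating $a$- and $\sigma^{-1}$-components forces $\psi_2$ to satisfy the Cartan relation of $Sq^2$ (with the characteristic $Sq^1(x) Sq^1(y)$ cross term) while $\psi_3$ must be a derivation of degree $3$. Since $\ste_2 = \F\{Sq^2\}$ and the degree-$3$ derivations in $\ste$ form $\F\{Q_1\}$, this will give $\psi_2 = Sq^2$ directly (the scalar is pinned to $1$ by the cross term) and $\psi_3 \in \F\{Q_1\}$. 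The final identification $\psi_3 = Q_1$ (rather than zero) will follow from the Hu--Kriz computation \cite{HK01} of $\ste_{\star}$: combined with Theorem \ref{thm:boardman_hmf}, they identify the underlying non-equivariant operation associated to $\qi = \tau_1^{\vee}$ as $Q_1$, which matches the $\sigma^{-1}$-coefficient under the forgetful functor $\gr\sh \to \sh$ (which sends $\sigma^{-1} \mapsto 1$ and $a \mapsto 0$).
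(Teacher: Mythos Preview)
Your argument for the first diagram and for $\qo = Sq^1$ matches the paper's. The divergence is in pinning down $\qi$: the paper writes $\qi(x) = \epsilon_1 a\,Sq^2(x) + \epsilon_2\sigma^{-1}Sq^2Sq^1(x) + \epsilon_3\sigma^{-1}Sq^1Sq^2(x)$, then computes both $\qo\qi(x)$ and $\qi\qo(x)$ using the Adem relation $Sq^2Sq^2 = Sq^1Sq^2Sq^1$; equating them forces $\epsilon_1 = \epsilon_2 = \epsilon_3$, and non-triviality of $\qi$ then gives all three equal to $1$. This uses only the commutation $\qo\qi = \qi\qo$ (already built into the definition of $\E$) and an Adem relation, so it stays entirely inside the material developed in the paper. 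Your route via the forgetful functor for $\psi_3$ is a legitimate alternative and arguably more conceptual: the compatibility of the Hu--Kriz generators with their non-equivariant analogues does force $\psi_3 = Q_1$ directly.

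There is, however, a genuine citation gap in your determination of $\psi_2$. Corollary~\ref{cor:cartan} gives the Cartan formula only for $\qi(hx)$ with $h \in H\mf^{\star}_{\gr}$ a \emph{coefficient}, not for $\qi(xy)$ with $x,y$ both cohomology classes of $X$. The latter is the external Cartan formula coming from the comultiplication in $\ste^{\star}$ (dually, from the product in $\ste_{\star}$); it is true, and the cross term is indeed $a\,\qo(x)\qo(y)$ because $\tau_0^2 = a\tau_1 + \eta_R(\sigma^{-1})\xi_1$ in $\ste_{\star}$, but this is not what Corollary~\ref{cor:cartan} asserts and is not established in the paper. To make your argument self-contained you would need to derive that external formula from the Hopf algebroid structure and the Hu--Kriz relations. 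The paper's $\qo\qi = \qi\qo$ computation avoids this entirely and is the cleaner way to close the argument with the tools at hand.
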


\begin{proof}
We first show the commutativity of the first diagram. The fixed points spectrum of $H\mf$ is $H\F$. The left adjoint of the functor $\sh \rightarrow \gr\sh$ is the fixed points $(-)^{\gr} : \sh \rightarrow \gr\sh$, so there is an isomorphism of $\Z$-graded abelian groups
$$ {H \mf}^{*}_{\gr}(X) = [X, \Sigma^* H\mf] \cong [X, (\Sigma^* H\F)^{\gr}] = H\F^*(X),$$
and thus a morphism $f : H \mf^{\star} \otimes_{\F} H\F^*(X) \rightarrow H\mf^{\star}(X)$. This natural transformation of cohomology theories is an isomorphism for $X = S^0$. \\

Observe that $\qo$ and $\qi$ define, via the commutativity of the first diagram, two natural maps
$$\qo : H\mf^{*}_{\gr} \rightarrow H\mf^{*+1}_{\gr},$$
and
$$\qi : H\mf^{*}_{\gr} \rightarrow H\mf^{*+2+\alpha}_{\gr} \cong a H\F^{*+2}_{\gr} \oplus \sigma^{-1} H\F^{*+3}_{\gr}.$$
Now, by naturality, these gives non-equivariant modulo $2$ Steenrod operations $y_0$ and $y_1$,$y'_1$ in degrees $1$, $2$ and three respectively, such that, for all non-equivariant spectrum $X$, and all $x \in H\F^*(X) \subset H\mf^{\star}(X)_{\gr}$,
$$\qo(x) = y_0(x)$$
and
$$\qi(x) = a y_1(x) + \sigma^{-1} y'_1(x).$$

We determine these operations.

The only non-trivial operation possible for $y_0$ is $Sq^1$, for dimensional reasons, this concludes the first identification, because $\qo \neq 0$.

There exists $\epsilon_1,\epsilon_2,\epsilon_3 \in \F$ such that
$$ \qi(x) = \epsilon_1 a Sq^2(x) + \epsilon_2 \sigma^{-1} Sq^2Sq^1(x) + \epsilon_3 \sigma^{-1} Sq^1Sq^2(x)$$
because these operations form a basis of the non-equivariant Steenrod algebra in the appropriate dimension.

Now, at least one of the coefficients is non zero because $\qi$ is non trivial ({\em e.g.} because $KU$ is not split). We will determine the three coefficients using the commutativity of $\qo$ and $\qi$ and the Cartan formulae.
Recall the Adem relation
$$  Sq^2Sq^2 = Sq^1Sq^2 Sq^1.$$
Compute $\qo \qi(x)$: 
\begin{eqnarray*}
 &  & \qo \qi(x) \\
& = &  \qo \left( \epsilon_1 a Sq^2(x) + \epsilon_2 \sigma^{-1} Sq^2Sq^1(x) + \epsilon_3 \sigma^{-1} Sq^1Sq^2(x) \right) \\
& = &  \epsilon_1 a \qo Sq^2(x) + \epsilon_2 \qo ( \sigma^{-1} Sq^2Sq^1(x)) + \epsilon_3 \qo( \sigma^{-1} Sq^1Sq^2(x) \\
& = & \epsilon_1 a Sq^1 Sq^2(x) + \epsilon_2 a Sq^2Sq^1(x)) + \epsilon_2  \sigma^{-1} Sq^1Sq^2Sq^1(x) + \epsilon_3 a Sq^1Sq^2(x) \\
& = & \epsilon_1 a Sq^1 Sq^2(x) + \epsilon_2 a Sq^2Sq^1(x)) + \epsilon_2  \sigma^{-1} Sq^2Sq^2(x) + \epsilon_3 a Sq^1Sq^2(x) \\
& = & ( \epsilon_1+ \epsilon_3) a Sq^1 Sq^2(x) + \epsilon_2 a Sq^2Sq^1(x)) + \epsilon_2  \sigma^{-1} Sq^2Sq^2(x) \\
\end{eqnarray*}

Compute $\qi \qo(x)$: 
\begin{eqnarray*}
 &  & \qi \qo(x) \\
& = & \epsilon_1 a Sq^2Sq^1(x) + \epsilon_2 \sigma^{-1} Sq^2Sq^1Sq^1(x) + \epsilon_3 \sigma^{-1} Sq^1Sq^2Sq^1(x) \\
& = &   \epsilon_1 a Sq^2Sq^1(x) + \epsilon_3 \sigma^{-1} Sq^2Sq^2 \\
\end{eqnarray*}

Now, $\qi \qo(x) = \qo \qi(x)$, so $\epsilon_1 = \epsilon_2 = \epsilon_3$, thus $\qi(x) =  a Sq^2(x) + \sigma^{-1} (Sq^2Sq^1 + Sq^1Sq^2)(x) = (aSq^2 + \sigma^{-1}Q_1)(x)$.

This proves that both $R(H\F^{*}(X))$ and $H\mf^{\star}(X)$ satisfies the $(2)$ and $(3)$ of Definition \ref{de:R}. The two remaining properties for $H\mf^{\star}(X)$ are the subject of the corollary \ref{cor:cartan} and the commutativity of the first diagram up to natural isomorphism.
We conclude by the unicity of a $H\mf^{\star}_{\gr}$-module and a $\E$-module satisfying these four properties.
\end{proof}

\begin{rk}
This is a particular case of a much more general situation studied by the author in his PhD thesis \cite[Section 3]{RicPhD}. Let, as before, $\ste^{\star}$ denotes the $\gr$-equivariant Steenrod algebra, and let $\ste_*$ denotes the non-equivariant Steenrod algebra. Then, for any subalgebra $\mathcal{B}$ of the non-equivariant Steenrod algebra $\ste^*$, there exists a sub-algebra $\mathbb{X}$ of the equivariant Steenrod algebra $\ste^{\star}$ and a functor $$R_{\mathcal{B}}^{\mathcal{X}} : \mathcal{B}\dashmod mod \rightarrow \mathcal{X}\dashmod mod$$ such that the first commutative diagram of the previous theorem refines to
$$ \xymatrix@=2cm{ \sh \ar[r]  \ar[d]_{H\F^*} & \gr\sh \ar[d]^{H\mf^{\star}} \\
\mathcal{B}\dashmod mod \ar[r]_{R_{\mathcal{B}}^{\mathcal{X}} } & \mathcal{X}\dashmod mod. }$$

The hope is that the structure of the category of modules over the subalgebra $\ste(n)$ of the Steenrod algebra can be seen through the eyes of a simpler subalgebra of the equivariant Steenrod algebra.
\end{rk}

\subsection{Duality and the functor $R$} \label{subsection_dualitee1}

As $ \E$ and $ \ste(1)$ are both Hopf algebras, the categories $  \E\dashmod mod$ and $ \ste(1)\dashmod mod$ have a $ \F$-linear duality functor
$$(-)^{ \vee} :  \E\dashmod mod^{op} \rightarrow \E\dashmod mod$$
and
$$(-)^{ \vee} :  \ste(1)\dashmod mod^{op} \rightarrow \ste(1)\dashmod mod,$$

defined via $Hom_{ \F}(-, \F)$.

We want to understand the relationship between $ R : \ste(1)\dashmod mod \rightarrow \E\dashmod mod$ and these two duality functors. The principal result is the following.

\begin{pro} \label{pro_compatibilite_dualites}
The diagram
$$ \xymatrix{ \ste(1)\dashmod mod^{op}  \ar[d]_{ R} \ar[rr]^{(-)^{ \vee}} & & \ste(1)\dashmod mod \ar[d]^{ R} \\  \E\dashmod mod^{op} \ar[rr]_{ \Sigma^{2-2 \alpha} (-)^{ \vee}}  & &  \E\dashmod mod } $$
commutes up to a natural isomorphism.
\end{pro}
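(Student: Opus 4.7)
The plan is to reduce the duality for $R(M)$ to an internal self-duality of the coefficient ring $H\mf^{\star}$. Since $R$ is essentially extension of scalars from $\F$ to $H\mf^{\star}$ equipped with the $\E$-action prescribed by the Cartan formulae of Definition \ref{de:R}, there are natural $\F$-linear identifications
\begin{equation*}
R(M^{\vee}) = H\mf^{\star}\otimes_{\F} M^{\vee}, \quad R(M)^{\vee} \cong (H\mf^{\star})^{\vee}\otimes_{\F}M^{\vee}
\end{equation*}
for any finite-type $\ste(1)$-module $M$. The claim therefore reduces to producing a natural $\E$-linear isomorphism
\begin{equation*}
H\mf^{\star} \cong \Sigma^{2-2\alpha}(H\mf^{\star})^{\vee}
\end{equation*}
and tensoring it with $M^{\vee}$ over $\F$, the compatibility of the $\E$-action with the tensor product being guaranteed by the Cartan formulae of Corollary \ref{cor:cartan}.

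Second, I would construct this self-duality from the explicit description of $H\mf^{\star}$ in Proposition \ref{pro_cohompoint}. The graphical description shows that $H\mf^{\star}$ decomposes into a positive cone anchored at the class $1$ in degree $0$ and a negative cone anchored at a class in degree $2-2\alpha$ (namely the first class of the upside-down $L$-part of the picture, which accounts for the shift). The $\F$-linear dual swaps these two cones up to this shift, and the isomorphism is realized concretely by the residue pairing
\begin{equation*}
H\mf^{\star}\otimes H\mf^{\star}\longrightarrow H\mf^{\star} \longrightarrow H\mf^{2-2\alpha}
\end{equation*}
given by multiplication in $H\mf^{\star}$ followed by projection onto this top degree; that this pairing is graded-perfect can be read directly from the picture.

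Third, I would verify that this self-duality is $\E$-linear. The action of $\qo$ and $\qi$ on the generators $a^k\sigma^{\pm n}$ is computed explicitly in Corollary \ref{corr_actionstecoeff}, and a case-by-case check shows that $\qo$ and $\qi$ are self-adjoint (in the graded sense) with respect to the pairing above, the adjustment being accounted for by the shift $2-2\alpha$. Combined with the previous steps, this gives the natural $\E$-module isomorphism $R(M^{\vee})\cong \Sigma^{2-2\alpha}R(M)^{\vee}$ for finite-type $M$, and the general case follows by additivity and naturality of both sides. The main obstacle I expect is this self-adjointness check for $\qi$: because $\qi = aSq^2 + \sigma^{-1}Q_1$ mixes the two grading axes of $RO(\gr)$, the verification requires tracking the several residue classes modulo $4$ appearing in Corollary \ref{corr_actionstecoeff}, together with the antipode of $\E$ composed with the shift $\Sigma^{2-2\alpha}$.
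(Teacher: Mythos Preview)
Your proposal is correct and follows essentially the same route as the paper. The residue pairing you describe is exactly Lemma \ref{lemma_isodualitecoeff} (the paper writes it as $h\otimes k\mapsto \pi_{\sigma^2}(hk)$), and the passage from this coefficient self-duality $w$ to the natural isomorphism $\psi_M(h\otimes f)=w(h)\,f$ is precisely what the paper does. One point worth sharpening: the ``tensor with $M^{\vee}$'' step is not purely formal, because the Cartan formula for $\qi$ has the cross-term $a\,\qo(h)\,\qo(x)$; matching this term on both sides uses not only the $\E$-linearity of $w$ but also the multiplicativity $w(hk)(l)=w(h)(kl)$, which your residue-pairing construction indeed provides.
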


The key point is the case $M = \F$, which correspond to the following lemma.

\begin{lemma} \label{lemma_isodualitecoeff}
The pairing
\begin{eqnarray*}
 H \mf^{ \star}_{\gr} \otimes H \mf^{ \star}_{\gr}  & \rightarrow & \Sigma^{2-2 \alpha} \F \\
h \otimes k & \mapsto & \pi_{ \sigma^2}(hk) \\
\end{eqnarray*}
induces a $\E$-module isomorphism
$$ w : H \mf^{ \star}_{\gr} \stackrel{ \simeq}{ \rightarrow} \Sigma^{2-2 \alpha} (H \mf^{ \star}_{\gr})^{ \vee}.$$
This isomorphism satisfies the following formulae, for all $m, n \geq 0$,
\begin{align*}
a^m \sigma^{-n}  & \mapsto  \pi_{a^{-m} \sigma^{n+2}} \\
a^{-m} \sigma^{n+2} & \mapsto  \pi_{a^m \sigma^{-n}} \\
\end{align*}
where, for $h \in H \mf^{ \star}_{\gr}$,  $ \pi_h : H \mf^{ \star}_{\gr} \rightarrow \F$ stands for the projection on $h$.
Moreover, for $h, k,l \in H \mf^{ \star}_{\gr}$, $w(hk)(l) = w(h)(kl)$. 
\end{lemma}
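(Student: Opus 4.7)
The plan is to introduce $w$ as the adjoint of the $\F$-bilinear pairing $(h,k) \mapsto \pi_{\sigma^2}(hk)$, and then verify the four assertions of the lemma in turn: the explicit formulae on basis elements, the isomorphism property, the adjointness identity $w(hk)(l) = w(h)(kl)$, and the $\E$-equivariance. The pairing is $\F$-bilinear and of degree $|\sigma^2| = 2-2\alpha$, so it yields a map $w : H\mf^{\star}_{\gr} \to \Sigma^{2-2\alpha}(H\mf^{\star}_{\gr})^{\vee}$ of the required shape.

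For the explicit formulae and isomorphism, I would begin with a degree calculation in the basis of $H\mf^{\star}_{\gr}$ given by Proposition \ref{pro_cohompoint}. Among all basis elements $a^{m'}\sigma^{-n'}$ and $a^{-m'}\sigma^{n'+2}$ with $m', n' \geq 0$, comparing bidegrees shows that multiplication by $a^m\sigma^{-n}$ lands in total degree $2-2\alpha$ only on the partner $a^{-m}\sigma^{n+2}$, and the ring structure of $H\mf^{\star}_{\gr}$ forces this product to equal exactly $\sigma^2$; a symmetric computation gives the second formula. These formulae show that $w$ sends the $\F$-basis of $H\mf^{\star}_{\gr}$ bijectively onto the dual basis $\{\pi_e\}$, so it is an isomorphism. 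The adjointness $w(hk)(l) = w(h)(kl)$ is immediate from associativity, since both sides equal $\pi_{\sigma^2}(hkl)$.

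The main step is $\E$-equivariance. Since $\E = \Lambda_{\F}(\qo,\qi)$ is cocommutative over $\F_2$ with trivial antipode, the dual $\E$-action is $(x \cdot f)(k) = f(xk)$, so equivariance amounts to $\pi_{\sigma^2}(\qo(h)\cdot k) = \pi_{\sigma^2}(h \cdot \qo(k))$ and the analogous identity for $\qi$. The cleanest route proceeds via a degree count: a direct check in the explicit basis shows both $H\mf^{1-2\alpha}_{\gr} = 0$ and $H\mf^{-3\alpha}_{\gr} = 0$. Restricting to the bidegrees for $h, k$ on which the relevant identity can be nonzero (summing to $1-2\alpha$ and $-3\alpha$ respectively), the product $hk$ therefore vanishes, so the Cartan formulae of Corollary \ref{cor:cartan} give $\qo(h)k + h\qo(k) = 0$ (which is exactly $\qo$-equivariance) and $\qi(h)k + a\qo(h)\qo(k) + h\qi(k) = 0$. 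The genuine obstacle for $\qi$-equivariance is then the vanishing of $\pi_{\sigma^2}(a\qo(h)\qo(k))$; this follows from the observation that any factorization of bidegree $-3\alpha$ consists of one positive-cone factor and one negative-cone factor, and by Corollary \ref{corr_actionstecoeff}(2)--(3), the $\sigma$-exponent parities required for $\qo$ to act nontrivially on each factor are mutually exclusive, so $\qo(h)\qo(k)=0$ automatically.
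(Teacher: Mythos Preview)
Your proof is correct. The paper's own proof is extremely terse: it simply asserts that $w$ is an $\F$-vector space isomorphism by the structure of $H\mf^{\star}_{\gr}$ (Proposition \ref{pro_cohompoint}), that $\E$-equivariance follows from Corollary \ref{corr_actionstecoeff} --- meaning a direct case-by-case verification of $w(\mathbb{Q}_i h)=\mathbb{Q}_i\,w(h)$ on basis elements using the explicit formulae for $\qo$ and $\qi$ listed there --- and handles the adjointness identity via associativity, just as you do.

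Your argument for $\E$-equivariance is a genuine alternative and is cleaner. For $\qo$ you avoid any case analysis: the Cartan formula reduces the question to $\pi_{\sigma^2}(\qo(hk))$, and $hk$ already vanishes since $H\mf^{1-2\alpha}_{\gr}=0$. For $\qi$ you do the same reduction and are left with the single obstruction $\pi_{\sigma^2}\bigl(a\,\qo(h)\qo(k)\bigr)$, which you dispatch by a parity argument drawn from Corollary \ref{corr_actionstecoeff}(2)--(3). That parity argument is correct: the constraint $|h|+|k|=-3\alpha$ forces $h$ and $k$ into opposite cones, and the resulting relation $n_1=n_2+2$ on $\sigma$-exponents makes the parities needed for both $\qo(h)\neq 0$ and $\qo(k)\neq 0$ incompatible. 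One can in fact sharpen this last step and remove even that appeal to the explicit formulae: since $\qo$ is a derivation with $\qo^2=0$, one has $\qo(h)\qo(k)=\qo\bigl(h\cdot\qo(k)\bigr)$, and $h\cdot\qo(k)\in H\mf^{1-3\alpha}_{\gr}=0$. Either way, your route trades the paper's implicit basis-element check for a uniform degree argument built on the Cartan formulae.
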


\begin{proof}
The map $w$ is a $ \F$-vector space isomorphism by Proposition \ref{pro_cohompoint}.
By corollary \ref{corr_actionstecoeff} we have
$$ \mathbb{Q}_i w h = w \mathbb{Q}_i  h $$
for $i= 0$ or $1$ and for all $h \in H \mf^{ \star}_{\gr}$.

The last assertion comes from the fact that the isomorphism is induced by the pairing
\begin{eqnarray*}
 H \mf^{ \star}_{\gr} \otimes H \mf^{ \star}_{\gr}  & \rightarrow & \Sigma^{2-2 \alpha} \F \\
h \otimes k & \mapsto & \pi_{ \sigma^2}(hk) \\
\end{eqnarray*}
which is associative because it correspond to the natural product on $H \mf^{ \star}_{\gr}$.
\end{proof}

\begin{rk}
This result is a consequence of a much more general result about the relationship between the duality theory of Mackey functors and equivariant Anderson duality in the $\gr$-equivariant stable homotopy category. This relationship was studied by the author in \cite{Ric3}.
\end{rk}

\begin{proof}[Proof of Proposition \ref{pro_compatibilite_dualites}]
Recall Definition \ref{de:R}, which gives a $ \F$-vector space isomorphism $RM \cong H \mf^{ \star}_{\gr} \otimes M$. 
Consider the natural transformation $ \psi : R \circ (-)^{ \vee} \rightarrow \Sigma^{2-2 \alpha} (-)^{ \vee} \circ R$ defined for all $M \in \ste(1)\dashmod mod$ by
$$ \psi_M(h \otimes f) = w(h) \ f $$
for all $h \in H \mf^{\star}_{\gr}$ and $f : M \rightarrow \F$.

The morphism $ \psi_M$ is clearly a $ \F$-vector space isomorphism. It remains to show that $ \psi_M$ is a $ \E$-module isomorphism.

Let $h \in H \mf^{ \star}_{\gr}$ and $f : M \rightarrow \F \in M^{ \vee}$.

We first show the commutativity with the action of $ \qo$:
\begin{itemize}
\item $ \qo(h \otimes f) = \qo(h) \otimes f + h \otimes (f \circ Q_0)$, thus $ \psi_M( \qo(h \otimes f)) = w( \qo(h)) f + w(h) (f \circ Q_0)$,
\item on the other hand, by Definition of the action of $\qo$, for all $k \in H\mf^{\star}_{\gr}$ and $m \in M$, we have $ \qo( \psi_M(h \otimes f)) ( k \otimes m) = \psi_M(h \otimes f)( \qo(k \otimes m))$. Moreover,

 \begin{eqnarray*}
\psi_M(h \otimes f)( \qo(k \otimes m))  & = & \psi_M(h \otimes f)( \qo(k) \otimes m + k \otimes Q_0(m)) \\
& = & w(h)( \qo(k)) f(m) + w(h)(k) f( Q_0(m)) \\
& = & \qo(w(h))(k) f(m) + w(h)(k) (f \circ Q_0)(m) \\
& = & w( \qo(h))(k) f(m) + w(h)(k) (f \circ Q_0)(m), \\
\end{eqnarray*}

where the last equality comes from the first assertion of Lemma \ref{lemma_isodualitecoeff}. 
\end{itemize}
We deduce from that $ \qo( \psi_M(h \otimes f)) = \psi_M( \qo(h \otimes f)) $.

We now show that $ \psi_M$ is a  $ \Lambda_{ \F}( \qi)$-module morphism.

\begin{itemize}
 \item By the Cartan formulae, $ \qi(h \otimes f) = \qi(h) \otimes f + a \qo(h) \otimes (f \circ Q_0) + ah \otimes (f \circ Sq^2) + \sigma^{-1}h \otimes ( f \circ Q_1)$, thus $ \psi_M(  \qi(h \otimes f) ) = w( \qi(h))  f + w(a \qo(h)) (f \circ Q_0) + w(ah)  (f \circ Sq^2) + w( \sigma^{-1}h)  ( f \circ Q_1)$,
\item let $k \in H \mf^{ \star}_{\gr}$ and $m \in M$, we have
\end{itemize}

\begin{eqnarray*}
& & \qi( \psi_M( h \otimes f) )(k \otimes m) \\
& = & \psi_M( h \otimes f)( \qi( k \otimes m)) \\
 & = & \psi_M( h \otimes f)(  \qi(k) \otimes m + a \qo(k) \otimes Q_0m + ak \otimes Sq^2m + \sigma^{-1}k \otimes Q_1m) \\
& = & w(h)( \qi(k)) f( m) + w(h)(a \qo(k)) f( Q_0m) \\ & & + w(h)(ak) f( Sq^2m) + w(h)( \sigma^{-1}k) f( Q_1m). \\
\end{eqnarray*}

But by the first assertion of Lemma \ref{lemma_isodualitecoeff}, $w(h)( \mathbb{Q}_i k) = w( \mathbb{Q}_i h)(k)$, for $i =0$ or $1$, and by the last assertion of Lemma \ref{lemma_isodualitecoeff}, $ w(a \qo(h))(k) = w( \qo(h))(ak)$, $w(ah)(k) = w(h)(ak)$ and $w( \sigma^{-1}h)(k) = w(h)( \sigma^{-1}k)$, thus
$$ \psi_M(  \qi(h \otimes f) ) = \qi(  \psi_M( h \otimes f) ).$$
The result follows.
\end{proof}

\section{The computation of $\mathcal{H}^{\star}(V)$: the functor $H_{01}^{\star}$} \label{sec_hoistar}

The aim of this section is to provide tools for the computation of $\mathcal{H}^{\star}(V)$. The main result of this section is Theorem \ref{thm_secR} which enables a computation of $\mathcal{H}^{\star}(V)$ by induction on the dimension of $V$.
In the process, we exhibit a surprising relationship between the stable categories of $\ste(1)$-modules and $\E$-modules with an extra action by $H\mf^{\star}_{\gr}$. This result is useful since $\E$ is simply an exterior algebra over $\F$ generated by the two equivariant Milnor operations $\qo$ and $\qi$.

\subsection{$(\E,\lo)$-relative homological algebra} \label{subsection:hoi}

The aim of this section is to provide tools to make accessible an explicit computation of $\mathcal{H}^{\star}(V)$ from Notation \ref{nota:hv}, for all $V$ elementary abelian $2$-group.

Let $A$ be a unital ring and $B \subset A$ a subring.
Relative homological algebra, introduced by Hochschild in \cite{Hoch56}, and studied in its general form by Eilenberg-Moore in \cite{EM65}, consists in changing the model structure on the category of $A$-modules, to one which neglect the homological properties of the underlying $B$-module. \\

The original paper of Hochschild \cite{Hoch56} and the book of Enochs and Jenda \cite{EJ11} are good references for our use of relative homological algebra. 
\begin{nota}
Let $\lo = \Lambda_{\F}(\qo)$ and $\li = \Lambda_{\F}(\qi)$.
\end{nota}
For our purpose, we stick to the case of $(\E,\lo)$-relative homological algebra.

\begin{de} \label{de_exactrelatif}
We say that a sequence of $\E$-modules
$$ \hdots \rightarrow M_i \stackrel{d_i}{ \rightarrow} M_{i-1} \rightarrow \hdots$$
is $( \E, \lo)$-exact if it is an exact sequence of $\E$-modules such that the underlying sequence of $ \lo$-modules is split.
\end{de}

\begin{rk} \label{rk:exactrelatif}
In particular, any short exact sequence $$ M \inj M' \surj M''$$ such that $M$ or $M''$ is free as a $ \lo$-module is an $( \E, \lo)$-exact sequence. 
\end{rk}

All the usual notions of homological algebra, such as projectivity, injectivity, resolutions, are obtained by replacing the notion of exactness by this  $( \E, \lo)$-exactness.

%
%\begin{de} \label{de_projinfrel}
%Let $ \ab$ be an abelian category and $$F : \E\dashmod mod \rightarrow \ab$$ an additive functor.
%\begin{enumerate}
%\item We say that $F$ is a left (resp. right) $( \E, \lo)$-exact functor if, for all short $( \E, \lo)$-exact sequence
%$$ 0 \rightarrow A \rightarrow B \rightarrow C \rightarrow 0,$$
%the complex $ 0 \rightarrow F(A) \rightarrow F(B) \rightarrow F(C)$ (resp. $ F(A) \rightarrow F(B) \rightarrow F(C) \rightarrow  0$) is exact.
%\item The functor $F$ is \textit{ $( \E, \lo)$-exact} if it sends $ ( \E, \lo)$-exact sequences on exact sequences.
%\item An object $I$ of $ \E\dashmod mod$ is \textit{$ ( \E, \lo)$-injective} if the functor $Hom_{ \E}(- , I)$ is $ ( \E, \lo)$-exact.
%\item An object $P$ of $ \E\dashmod mod$ is \textit{$ ( \E, \lo)$-projectif} if the functor $Hom_{ \E}(P , -)$ is $ ( \E, \lo)$-exact.
%\item We call a resolution of $M \in \E\dashmod mod$ a long exact sequence of $ \E$-modules of the form $$X_* \leftarrow M \leftarrow 0.$$
%\end{enumerate}
%\end{de}
%

\begin{pro} \label{pro_carac_proinjrel}
The classes consisting of $( \E, \lo)$-injective $ \E$-modules and $( \E, \lo)$-projective $\E$-modules coincide. Moreover, this common class is the one consisting of $\E$-modules of the form $$ \E \otimes_{ \F} V_F \oplus \li \otimes_{ \F} V_T$$
for $V_F$ and $V_T$ some $\Z$-vector spaces.

Thus, a $\E$-module $M$ is in this class if and only if it is of the form $ \li \otimes M'$ for some $ \lo$-module $M'$.
\end{pro}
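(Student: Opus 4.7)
The plan is to combine the standard relative homological algebra machinery with the explicit Frobenius structure of $\E$ over $\lo$.

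First, I would record the structural facts. As a left $\lo$-module, $\E = \lo \otimes_{\F} \li$ is free of rank two on the basis $\{1, \qi\}$. The standard theory of $(A,B)$-relative homological algebra (going back to Hochschild \cite{Hoch56} and systematized by Eilenberg--Moore \cite{EM65}) then tells me that an $\E$-module is $(\E,\lo)$-projective if and only if it is a direct summand of $\E \otimes_{\lo} N$ for some $\lo$-module $N$; dually, the $(\E,\lo)$-injective $\E$-modules are precisely the direct summands of $\Hom_{\lo}(\E, N)$.

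Next I would classify all $\lo$-modules. Since $\lo = \Lambda_{\F}(\qo)$ is a finite-dimensional Hopf algebra over the field $\F$, it is a Frobenius algebra, and its only indecomposable modules are $\F$ (trivial) and $\lo$ (free). Hence every $\lo$-module has the form $\lo \otimes_{\F} V_F \oplus V_T$ for some $\F$-vector spaces $V_F, V_T$. Computing $\E \otimes_{\lo} \lo \cong \E$ and $\E \otimes_{\lo} \F \cong \li$ (the latter from $\E \cong \lo \otimes \li$ as $\lo$-modules) identifies the $(\E,\lo)$-projectives with summands of $\E \otimes V_F \oplus \li \otimes V_T$. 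Such summands remain of the same form by a Krull--Schmidt argument, since $\E$ and $\li$ are non-isomorphic indecomposable $\E$-modules (distinguished for instance by whether $\qo$ acts freely or trivially).

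To merge the projective and injective classes, I would verify that $\E$ is Frobenius as an $\lo$-algebra: the $\lo$-linear projection $\tilde\pi : \E \to \lo$ onto the $\qi$-coefficient yields a non-degenerate $\lo$-bilinear pairing $(x,y) \mapsto \tilde\pi(xy)$, which is associative in the correct sense and therefore produces an $\E$--$\lo$-bimodule isomorphism $\E \cong \Hom_{\lo}(\E, \lo)$. Tensoring gives a natural isomorphism $\E \otimes_{\lo} N \cong \Hom_{\lo}(\E, N)$, so the classes of $(\E,\lo)$-projective and $(\E,\lo)$-injective modules coincide. The last assertion of the proposition is then formal: writing an arbitrary $\lo$-module as $M' = \lo \otimes V_F \oplus V_T$, one has $\li \otimes_{\F} M' \cong \E \otimes V_F \oplus \li \otimes V_T$, and conversely every module in the common class arises this way.

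The main obstacle will be verifying that the Frobenius pairing respects the left $\E$-action precisely enough to upgrade the $\lo$-module isomorphism $\E \cong \Hom_{\lo}(\E, \lo)$ to a genuine $\E$--$\lo$-bimodule isomorphism; this bimodule refinement is exactly what forces the relative projectives and injectives to agree in this setting.
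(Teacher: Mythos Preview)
Your proposal is correct and follows the same approach as the paper, which simply cites Hochschild's Lemmas~1 and~2 in \cite{Hoch56} without further detail. You are unpacking precisely what those lemmas say (the characterization of relative projectives and injectives as summands of induced and coinduced modules) and making explicit the Frobenius isomorphism $\E \otimes_{\lo} N \cong \Hom_{\lo}(\E,N)$ that forces the two classes to coincide; the classification of $\lo$-modules and the Krull--Schmidt step are routine additions that the paper leaves implicit.
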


\begin{proof}
This is an immediate consequence of \cite[lemmas 1 and 2]{Hoch56}
%
%We show that each class coincide with $ \E \otimes_{ \F} V_F \ \oplus \  \li \otimes_{ \F} V_T$. \\
%
%\begin{itemize}
%\item Let $M$ be a $( \E, \lo)$-projective $ \E$-module . Then $M$ is a direct factor of $$ \li \otimes_{ \F} M$$ by \cite[lemma 1]{Hoch56}. Let $ \lo \otimes_{ \F} V'_F \oplus V'_T$ be a decomposition of the underlying $ \lo$-module of $M$. Then $ \li \otimes_{ \F} M \cong \li \otimes_{ \F} ( \lo \otimes_{ \F} V'_F \oplus V'_T)$. We conclude that $M$ is a direct factor of $$ \E \otimes_{ \lo}( \lo \otimes_{ \F} V'_F \oplus V'_T) \cong \E \otimes_{ \F} V'F \oplus \li \otimes_{ \F} V'T.$$ \textit{ In fine}, $M \cong  \E \otimes_{ \F} V_F \oplus \li \otimes_{ \F} V_T$ for some sub-$ \F$-vector spaces $V_F \subset V'_F$ and $V_T \subset V'_T$. The reciproque is clear by \cite[lemma 1]{Hoch56}.
%\item The proof is analogous, using  \cite[lemma 2]{Hoch56}, which is the dual version of  \cite[lemma 1]{Hoch56}.
%\end{itemize}
\end{proof}
%
%
%\begin{rk}
%Consequently, a $( \E, \lo)$-projective $ \E$-module which is $ \qo$-acyclic is a free $\E$-module. Thus for a $\qo$-acyclic $\E$-module $M$, corollary \ref{cor_resbeta0acyclique} provides a resolution of $M$ which is both a projective resolution, and a $(\E,\lo)$-projective resolution.
%\end{rk}

\begin{pro} \label{corr_caracfpreserveproj}
Let $F : \E\dashmod mod \rightarrow \E\dashmod mod$ be an exact functor. Then $F$ preserve the $( \E, \lo)$-projective $\E$-modules if and only if, for all $M \in \E\dashmod mod$, $F( \E \otimes_{ \lo}M)$ is a $( \E, \lo)$-projective $\E$-module.
\end{pro}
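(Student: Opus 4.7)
The plan is to pass from the full class of $(\E, \lo)$-projectives to the special family $\{\E \otimes_{\lo} M\}_M$ via two standard facts of $(\E, \lo)$-relative homological algebra: every induced module $\E \otimes_{\lo} M$ is itself $(\E, \lo)$-projective, and every $(\E, \lo)$-projective is an $\E$-module direct summand of such an induced module (applied to itself). Once these two lemmas are in hand, additivity of $F$ finishes the argument in both directions.

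For the forward implication I would use the tensor-restriction adjunction $\E \otimes_{\lo}(-) \dashv \mathrm{Res}^{\E}_{\lo}$. Given an $(\E,\lo)$-exact surjection $N \surj N'$, the definition makes it $\lo$-split, so a $\lo$-linear map $M \to N'$ always lifts to $M \to N$. Via the adjunction this translates into the lifting property defining projectivity of $\E \otimes_{\lo} M$. Hence $\E \otimes_{\lo} M$ is $(\E, \lo)$-projective for every $\E$-module $M$, and the preservation hypothesis on $F$ gives that $F(\E \otimes_{\lo} M)$ is $(\E, \lo)$-projective.

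For the converse, given any $(\E, \lo)$-projective $P$, I would form the counit $\mu_P : \E \otimes_{\lo} P \to P$, $e \otimes p \mapsto e \cdot p$, which is an $\E$-linear surjection. The map $s : P \to \E \otimes_{\lo} P$ defined by $p \mapsto 1 \otimes p$ is $\lo$-linear (a direct check, since $s(\qo p) = 1 \otimes \qo p = \qo \otimes p = \qo \cdot s(p)$), and satisfies $\mu_P \circ s = \mathrm{id}_P$, exhibiting the sequence $0 \to \ker \mu_P \to \E \otimes_{\lo} P \to P \to 0$ as $(\E, \lo)$-exact. Relative projectivity of $P$ then yields an $\E$-linear section of $\mu_P$, so $P$ is an $\E$-module direct summand of $\E \otimes_{\lo} P$. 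Applying the additive (because exact) functor $F$, $F(P)$ is a direct $\E$-module summand of $F(\E \otimes_{\lo} P)$, and the latter is $(\E, \lo)$-projective by hypothesis. The explicit description in Proposition \ref{pro_carac_proinjrel} makes it plain that the class of $(\E, \lo)$-projectives is closed under direct summands, so $F(P)$ is $(\E, \lo)$-projective as desired. There is no genuine obstacle here: both the adjunction lemma and the summand-of-its-induced lemma are staples of relative homological algebra and require only that $\E$ is free as a left $\lo$-module; the only point meriting a small verification is the $\lo$-linearity of the section $p \mapsto 1 \otimes p$.
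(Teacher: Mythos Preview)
Your proof is correct and follows essentially the same approach as the paper's: both directions hinge on the fact that any $(\E,\lo)$-projective $P$ is an $\E$-module direct summand of $\E\otimes_{\lo}P$ via the canonical projection, and that $F$, being exact (hence additive), preserves this splitting. The paper's proof is simply more terse, declaring the forward implication ``clear'' and asserting the splitting of $\E\otimes_{\lo}M \surj M$ without spelling out the $\lo$-linear section or the use of the lifting property, but the underlying argument is identical to yours.
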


\begin{proof}
The implication $ \Rightarrow$ is clear. \\
For the other direction, let $M$ be a $( \E, \lo)$-projective $\E$-module. Then, the canonical projection $$ \E \otimes_{ \lo} M \surj M$$ is split, so $F(M)$ is a summand of the $( \E, \lo)$-projective $\E$-module $$F( \E \otimes_{ \lo} M).$$ Thus, $F(M)$ is $( \E, \lo)$-projective.
\end{proof}

\begin{corr} \label{corr_resfd}
\begin{itemize}
\item The complex $I^{ \bullet}_{ \F}$:
$$ \hdots \stackrel{ \qi}{ \rightarrow} \Sigma^{(n+1) | \qi|} \li \stackrel{ \qi}{ \rightarrow} \Sigma^{n | \qi|} \li  \stackrel{ \qi}{ \rightarrow} \hdots \stackrel{ \qi}{ \rightarrow} \li \surj \F \rightarrow 0,$$
where all maps are induced by $ \qi$ except the last one which is the surjection $ \li \surj \F$  is a $ ( \E, \lo)$-projective resolution of $ \F$.
\item The complex $P_{ \bullet}^{ \F}$:
$$ \F \inj \Sigma^{-| \qi|} \li \stackrel{ \qi}{ \rightarrow}  \Sigma^{-2| \qi|} \li \hdots $$
where all maps are induced by $ \qi$ except the first one, which is $ \F \inj \li$ is a $ ( \E, \lo)$-injective resolution of $ \F$.
\item Let $T_{\F}^{\bullet}$ be the periodic complex
$$ \hdots \stackrel{ \qi}{ \rightarrow} \Sigma^{(n+1) | \qi|} \li \stackrel{ \qi}{ \rightarrow} \Sigma^{n | \qi|} \li  \stackrel{ \qi}{ \rightarrow} \hdots$$
with $\li$ in degree $0$. It is an $(\E,\lo)$-exact complex.
\end{itemize}
\end{corr}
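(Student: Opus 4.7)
The plan is to verify the three assertions uniformly by first identifying the modules involved as simultaneously $(\E,\lo)$-projective and $(\E,\lo)$-injective via Proposition \ref{pro_carac_proinjrel}, and then checking $(\E,\lo)$-exactness in two separate steps: exactness as a sequence of $\E$-modules, followed by $\lo$-splitness of the induced short exact sequences.

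First I would observe that $\li$, viewed as an $\E$-module through the quotient $\E \surj \E/(\qo) = \li$, has $\qo$ acting as zero; in the notation of Proposition \ref{pro_carac_proinjrel} this writes $\li = \li \otimes_{\F} \F$ with $V_F = 0$ and $V_T = \F$. Consequently, $\li$ and every suspension $\Sigma^{n|\qi|} \li$ are simultaneously $(\E,\lo)$-projective and $(\E,\lo)$-injective, which covers the projectivity (resp.\ injectivity) requirement for all three complexes at once.

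Next I would verify exactness of the underlying complex of $\E$-modules. The module $\li$ has $\F$-basis $\{1, \qi\}$, and multiplication by $\qi$ sends $1 \mapsto \qi$ and $\qi \mapsto 0$, so $\ker(\qi) = \mathrm{Im}(\qi) = \F \cdot \qi$ inside $\li$. This yields exactness at every internal position of $I^{\bullet}_{\F}$, $P_{\bullet}^{\F}$, and $T_{\F}^{\bullet}$. To finish $I^{\bullet}_{\F}$ and $P_{\bullet}^{\F}$ I would then check the two augmentations: the surjection $\li \surj \F$ has kernel $\F \cdot \qi$, matching the image of the preceding $\qi$, and the unit $\F \inj \Sigma^{-|\qi|} \li$ has image $\F \cdot \qi$, matching the kernel of the first $\qi$.

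Finally, I would verify $\lo$-splitness of each short exact sequence extracted from these complexes. Here the key observation is that $\qo$ acts trivially on every module appearing, so the underlying $\lo$-module structure is the trivial one and any $\F$-linear splitting is automatically $\lo$-linear; thus splitness is essentially free. The only point requiring care, and the main (very minor) obstacle, is bookkeeping the internal bigrading shifts by $|\qi| = 2+\alpha$ so that the three complexes genuinely assemble as described and the augmentations land in the correct degree.
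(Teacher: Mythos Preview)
Your proposal is correct and follows the same approach as the paper, which simply states that the corollary is a consequence of Proposition \ref{pro_carac_proinjrel}. You have merely made explicit the routine verifications (projectivity/injectivity of $\li$, exactness, and $\lo$-splitness via triviality of the $\qo$-action) that the paper leaves implicit.
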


\begin{proof}
Is is a consequence of Proposition \ref{pro_carac_proinjrel}.
\end{proof}

The point of corollary \ref{corr_resfd} is that it gives functorial resolutions of any $\E$-module $M$, by tensoring up with $I^{\bullet}_{\F}$ and $P^{\bullet}_{\F}$.

\begin{corr} \label{cor_resbeta0acyclique} \label{de_resolutions_fonctorielles}
\begin{itemize}
\item The functor $I_{ \bullet} :=(-) \otimes I^{ \bullet}_{ \F}$ defines a functorial $( \E, \li)$-injective resolution in $\E$-modules.
\item The functor $P_{ \bullet} :=(-) \otimes P_{ \bullet}^{ \F}$ defines a functorial $( \E, \li)$-projective resolution.
\item the functor $T_{ \bullet} := T_{ \bullet}^{ \F} \otimes (-)$ has values in $(\E,\lo)$-exact complexes of $\E$-modules. It is called the Tate complex functor, and, for an $\E$-module $M$, $T_{\bullet}(M)$ is called the Tate complex of $M$.
\end{itemize}
\end{corr}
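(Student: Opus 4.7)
The plan is to reduce all three assertions to two preservation properties of the functor $M \otimes_{\F} (-)$, where the tensor product carries the diagonal $\E$-action: that it preserves $(\E, \lo)$-exact sequences, and that it preserves $(\E, \lo)$-projective (equivalently injective) $\E$-modules. Granted these, tensoring the three complexes $I^{\bullet}_{\F}$, $P^{\F}_{\bullet}$, and $T^{\bullet}_{\F}$ from Corollary \ref{corr_resfd} with any $\E$-module $M$ yields precisely the functorial constructions announced.

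For the first preservation property, an $(\E, \lo)$-exact sequence is by Definition \ref{de_exactrelatif} an exact sequence of $\E$-modules split as a sequence of $\lo$-modules. Exactness is preserved by $M \otimes_{\F}(-)$ because $\F$ is a field. If $s$ is an $\lo$-linear splitting, then $\mathrm{id}_M \otimes s$ is $\lo$-linear with respect to the diagonal action: this follows directly from the identity
\[ \qo(m \otimes c) = \qo(m) \otimes c + m \otimes \qo(c), \]
which reflects the primitivity of $\qo$ in the Hopf algebra $\E$.

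For the second preservation property, Proposition \ref{pro_carac_proinjrel} reduces the problem to showing that $M \otimes_{\F} \E$ and $M \otimes_{\F} \li$ are $(\E, \lo)$-projective. I would identify $\li \cong \E \otimes_{\lo} \F_{\mathrm{triv}}$ (with $\qo$ acting trivially on $\F$) and $\E \cong \E \otimes_{\lo} \lo$, then invoke Frobenius reciprocity for the sub-Hopf-algebra inclusion $\lo \hookrightarrow \E$ to produce a natural $\E$-module isomorphism
\[ M \otimes_{\F} (\E \otimes_{\lo} N) \;\cong\; \E \otimes_{\lo} \bigl( \mathrm{res}^{\E}_{\lo} M \otimes_{\F} N \bigr) \]
for every $\lo$-module $N$. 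The right-hand side is $(\E, \lo)$-projective by the description in Proposition \ref{pro_carac_proinjrel}, being an induced module from $\lo$ to $\E$.

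The only delicate point is the Frobenius reciprocity isomorphism. It is formal once one records that $\E$ is a cocommutative Hopf algebra with $\lo$ a sub-Hopf-algebra and that the generators $\qo$, $\qi$ are primitive, but writing down the explicit intertwiner and checking its compatibility with the $\lo$-balancing is the only part that is not immediate from the definitions. Once it is in hand, applying the two preservation properties term by term to $I^{\bullet}_{\F}$, $P^{\F}_{\bullet}$, and $T^{\bullet}_{\F}$ of Corollary \ref{corr_resfd} proves the three bullets simultaneously.
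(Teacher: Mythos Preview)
Your argument is correct and follows the same three-step structure as the paper's proof: verify that $M\otimes(-)$ preserves exactness, preserves the $\lo$-splitting, and carries the terms $\li$ to relative projectives. The paper is simply terser on two of these steps: for the $\lo$-splitting it observes that $UM\otimes(-):\lo\text{-mod}\to\lo\text{-mod}$ is an additive functor and hence automatically preserves split exact sequences (no need to write down $\mathrm{id}_M\otimes s$ and check $\qo$-linearity by hand), and for the projectivity of the terms it just cites Proposition~\ref{pro_carac_proinjrel} without spelling out the shearing isomorphism you call Frobenius reciprocity. Your explicit identification $M\otimes_{\F}(\E\otimes_{\lo}N)\cong \E\otimes_{\lo}(\mathrm{res}^{\E}_{\lo}M\otimes_{\F}N)$ is exactly the content hidden behind that citation, so you have in effect written out what the paper leaves implicit.
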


\begin{proof}
First, by Proposition \ref{pro_carac_proinjrel}, the complexes $M \otimes I^{ \bullet}_{ \F}$ and $M \otimes P_{ \bullet}^{ \F}$ contain only $( \E, \li)$-injective and $( \E, \li)$-projective $\E$-modules.

Moreover, the functor $M \otimes -$ is exact, so these complexes are exact. 

Finally, the functor $ UM \otimes - : \lo\dashmod mod \rightarrow \lo\dashmod mod$, where $U : \E\dashmod mod \rightarrow \lo\dashmod mod$ stands for the forgetful functor, is an additive functor. Therefore the underlying long exact sequences of $ \lo$-modules are split.
\end{proof}

\begin{de}[\emph{ \cite[section 8.1]{EJ11}}]
\begin{itemize}
\item Let $F$ be a left $ ( \E, \lo)$-exact functor. Define the $n$th right $ ( \E, \lo)$-derived functor of $F$,  $\R^nF$, to be $H^n(F(I^{ \bullet}))$.
\item Let $G$ be a right $ ( \E, \lo)$-exact functor. Define the $n$th left $ ( \E, \lo)$-derived functor of $G$, $\Le_nG$, to be $H_n(F(P_{ \bullet}))$.
\end{itemize}
\end{de}

For computational simplicity, we will now introduce a "Tate homology" version of these functors.

\begin{de} \label{de_tatehomology}
Call Tate complex the functor $T_{ \bullet}$.
 Let $F : \E\dashmod mod \rightarrow \ab$ be a right (resp. left) $( \E, \lo)$-exact functor.

For $i \in \Z$, define the $i$th left (resp. right) Tate derived functor of $F$, $\widehat{\Le_i}F$ (resp. $ \widehat{\R^i}F$) by $ \widehat{\Le_i}F = H_i( F( T_{ \bullet}(-))$ (resp. $ \widehat{\R^i}F = H_i( F( T_{ - \bullet}(-))$).
\end{de}

There is the following comparison between the Tate derived functors and the relative derived functors.

\begin{pro} \label{pro_tate}
\begin{enumerate}
\item Let $F : \E\dashmod mod \rightarrow \ab$ be a left $( \E, \lo)$-exact functor, then $ \widehat{\R^i}F$ is naturally isomorphic to $\R^iF$ for all $i \geq 1$.
\item Let $G : \E\dashmod mod \rightarrow \ab$ be a right $( \E, \lo)$-exact functor, then $ \widehat{\Le_i}F$ is naturally isomorphic to $\Le_iF$ for all $i \geq 1$.
\item Let $A \rightarrow B \rightarrow C$ be a short  $( \E, \lo)$-exact sequence, then there are long exact sequences of the form $$  \hdots \rightarrow \widehat{\R^i}F(A) \rightarrow \widehat{\R^i}F(B) \rightarrow \widehat{\R^i}F(C) \rightarrow \widehat{\R^{i-1}}F(A) \rightarrow \hdots $$
and
$$  \hdots \rightarrow \widehat{\Le_i}F(A) \rightarrow \widehat{\Le_i}F(B) \rightarrow \widehat{\Le_i}F(C) \rightarrow \widehat{\Le_{i+1}}F(A) \rightarrow \hdots .$$
\end{enumerate}
\end{pro}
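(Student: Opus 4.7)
The three assertions are variants of classical Tate cohomology arguments adapted to the $(\E,\lo)$-relative setting, and I plan to treat them in order.

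For parts (1) and (2), the key observation is that the Tate complex $T^{\bullet}_{\F}$ of Corollary \ref{cor_resbeta0acyclique} coincides with the projective resolution $I^{\bullet}_{\F}$ in all sufficiently large degrees (where neither complex sees the augmentation to $\F$), and dually coincides with the injective resolution $P_{\bullet}^{\F}$ in all sufficiently negative degrees. After tensoring over $\F$ with an $\E$-module $M$ these agreements persist, so the cochain complexes $F(I^{\bullet}(M))$ and $F(T_{-\bullet}(M))$ have canonically isomorphic cohomology in degrees $\geq 1$, yielding $\widehat{\R^i}F \cong \R^iF$ for $i \geq 1$. The argument for $\widehat{\Le_i}F \cong \Le_iF$ is entirely parallel, using the agreement between $T_{\bullet}$ and $P_{\bullet}$ in the opposite range.

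For part (3), I will tensor the short $(\E,\lo)$-exact sequence $0 \to A \to B \to C \to 0$ with $T^{\bullet}_{\F}$ over $\F$. Since each component of $T^{\bullet}_{\F}$ is a shift of $\li$ and $\li$ is $\F$-free, the result is a short exact sequence of complexes of $\E$-modules. Moreover, in each degree the terms are of the form $\li \otimes_{\F} (-)$, hence $(\E,\lo)$-projective by Proposition \ref{pro_carac_proinjrel}, and the $\lo$-splitting of the original sequence transports through the diagonal $\E$-action on the tensor product. It follows that the tensored sequence is $(\E,\lo)$-exact in each degree, with $(\E,\lo)$-projective terms; such a sequence necessarily splits as $\E$-modules by Proposition \ref{pro_carac_proinjrel}. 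Applying $F$ degreewise therefore yields a short exact sequence of complexes in $\ab$, and the associated long exact sequence in homology is the desired Tate long exact sequence for $\widehat{\R^{\bullet}}F$; the $\widehat{\Le}$-version follows by the same argument with $T_{\bullet}$ in place of $T_{-\bullet}$.

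The main technical point will be the transport of the $\lo$-splitting through the tensor product: one must check that an $\lo$-linear splitting $s : C \to B$ induces an $\lo$-linear splitting $\mathrm{id}_{T^n_{\F}} \otimes_{\F} s$ with respect to the diagonal $\qo$-action on both sides. Once this is verified, the rest of the argument is a formal application of the comparison theorem (for parts (1) and (2)) and of the long exact sequence in homology (for part (3)).
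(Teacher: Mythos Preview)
Your proposal is correct and follows essentially the same approach as the paper, which proves (1) and (2) by invoking ``the definition of $T_{\bullet}$ and unicity of relative derived functors'' and (3) by ``the snake lemma.'' Your version is in fact more careful than the paper's: for part (3) you explicitly justify why applying $F$ to $0 \to T_\bullet(A) \to T_\bullet(B) \to T_\bullet(C) \to 0$ yields a short exact sequence of complexes (via the degreewise $\E$-splitting coming from $(\E,\lo)$-projectivity of each $T_n(C)$), a point the paper leaves implicit; note also that the verification you flag as the ``main technical point'' is immediate once you observe that $\qo$ acts trivially on $\li = \E/(\qo)$, so the diagonal $\qo$-action on $\li \otimes_{\F} M$ reduces to the action on the second factor.
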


\begin{proof}
The two first points are consequences of the definition of $T_{ \bullet}$ and unicity of relative derived functors.
The third point is a consequence of the snake lemma.
\end{proof}

\begin{de}[\emph{ \cite[section 8.1]{EJ11}}]
\begin{enumerate}
\item Define $ \extrel^i(-,-) : \E\dashmod mod^{op} \times \E\dashmod mod \rightarrow \E\dashmod mod $ as the $(\E,\lo)$-derived functor of $Hom_{ \E}(-, N)$.
\item Define $$ \torrel_i(-,-) : \E\dashmod mod \times \E\dashmod mod \rightarrow \E\dashmod mod$$ as the $ ( \E, \lo)$-derived tensor product.
\end{enumerate}
\end{de}

As usual, the snake lemma provides long exact sequences in $\torrel$ and $\extrel$ induced by short $(\E, \lo)$-exact sequences.

\subsection{An interpretation of $H_{01}^{\star}$ in relative homological algebra}

We now use relative homological algebra to give a better interpretation of the functor $H_{01}^{\star}$.
First, observe that the Cartan formulae \ref{cor:cartan} implies that $\mathcal{H}^{\star}(V)$ has the structure of a $RO(\gr)$-graded $\F[a]$-module. We want to explicit this structure as well. 
We start by the study of a functor strongly related to $\mathcal{H}^{\star}$.

\begin{nota}
Let $A$ be a commutative ring and $x \in A$. Denote by
 \begin{enumerate}
\item $Ker_x$ the functor $A\dashmod mod \rightarrow A\dashmod mod$ defined by the Kernel $x$,
\item $Im_x$ the image of $x$,
\item $Coker_x = id/Im_x$.
\end{enumerate}
\end{nota}

\begin{de} \label{de_h01}
Let $H_{ 01}^{ \star} : \E\dashmod mod \rightarrow \F\dashmod mod$ be the functor
$$H_{01}^{ \star} = (Ker_{ \qi} \cap Ker_{ \qo}) / (Im_{ \qi} \circ Ker_{ \qo}).$$
\end{de}

Of course, by definition, one has 
\begin{equation}
\mathcal{H}^{\star}(V) = H_{01}^{\star}(R(H\F^{*}(BV))). \label{eqn:h_hrhf}
\end{equation}

We now develop several tools for the computation of $H_{01}^{\star}$ in general, and apply it to the particular $\E$-module $R(H\F^{*}(BV))$.

%\begin{rk}
%The functor $$(Ker_{ \qi} \cap Ker_{ \qo}) : \E\dashmod mod \rightarrow \F\dashmod mod$$ coincides with the composition of
%$$Ker_{ \qi} : \E\dashmod mod \rightarrow \F\dashmod mod$$ and $$Ker_{ \qo} : \E\dashmod mod \rightarrow \E\dashmod mod.$$
%\end{rk}

\begin{pro} \label{pro_identh01}
There is an isomorphism $$ \extrel^0( \F, -) \cong Ker_{ \qo} \cap Ker_{ \qi},$$
and, for all $n \geq 1$, there are natural isomorphisms
$$ H_{01}^{ \star} \cong \Sigma^{-n | \qi|} \extrel^n( \F,-).$$
\end{pro}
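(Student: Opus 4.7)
The plan is to compute $\extrel^{*}(\F, M)$ directly from the explicit $(\E, \lo)$-projective resolution $I^{\bullet}_{\F}$ of $\F$ provided by Corollary \ref{corr_resfd}, by applying $\Hom_{\E}(-, M)$ and reading off the cohomology.

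The first step is to identify the terms of the Hom complex. Since $\qo$ and $\qi$ commute in the exterior algebra $\E$ (both having square zero in characteristic two), there is an $\E$-module identification $\li = \E/\E\qo$, and evaluation at the generator yields a natural isomorphism $\Hom_{\E}(\li, M) \cong Ker_{\qo}(M)$. Incorporating the shifts of the resolution gives $\Hom_{\E}(\Sigma^{n|\qi|}\li, M) \cong \Sigma^{-n|\qi|} Ker_{\qo}(M)$.

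The second step is to identify the differentials. Under the above identification, the map $\qi^{*}$ of the complex $\Hom_{\E}(I^{\bullet}_{\F}, M)$ becomes multiplication by $\qi$ on $Ker_{\qo}(M)$; this is well-defined because $\qi$ preserves $Ker_{\qo}(M)$ (again using that $\qi \qo = \qo \qi$ in $\E$). The resulting cochain complex takes the form
$$Ker_{\qo}(M) \xrightarrow{\qi} \Sigma^{-|\qi|} Ker_{\qo}(M) \xrightarrow{\qi} \Sigma^{-2|\qi|} Ker_{\qo}(M) \xrightarrow{\qi} \cdots$$

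Reading off its cohomology yields both claims. At position zero we obtain $Ker_{\qi} \cap Ker_{\qo}$, which is the asserted identification of $\extrel^{0}(\F, M)$. At position $n \geq 1$ we obtain $(Ker_{\qi} \cap Ker_{\qo})/Im(\qi \circ Ker_{\qo}) = H_{01}^{\star}(M)$, shifted by the grading accumulated from the resolution, giving the natural isomorphism $H_{01}^{\star} \cong \Sigma^{-n|\qi|}\extrel^{n}(\F, -)$. Naturality in $M$ is automatic because every step is functorial, so there is no real obstacle; the only care needed is in tracking the $RO(\gr)$-graded shifts correctly.
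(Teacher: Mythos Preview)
Your proof is correct and follows essentially the same approach as the paper: apply $\Hom_{\E}(-,M)$ to the explicit resolution $I^{\bullet}_{\F}$, identify $\Hom_{\E}(\li,M)\cong Ker_{\qo}(M)$, check that the induced differential is multiplication by $\qi$, and read off the cohomology. The only cosmetic difference is that the paper obtains $\Hom_{\E}(\li,-)\cong Ker_{\qo}$ via the adjunction $\Hom_{\E}(\E\otimes_{\lo}\F,-)\cong\Hom_{\lo}(\F,-)$ rather than via $\li=\E/\E\qo$, and identifies $\extrel^{0}$ directly as $\Hom_{\E}(\F,-)$ rather than as the kernel of the first differential; these are equivalent.
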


\begin{proof}
Consider the $( \E, \lo)$-projective resolution of corollary \ref{corr_resfd}.
One has
$$Hom_{ \E}( \F , -) = Ker_{ \qo} \cap Ker_{ \qi},$$
so $ \extrel^0( \F, -) \cong Ker_{ \qo} \cap Ker_{ \qi}.$
Moreover, by adjunction,
$$ Hom_{ \E}( \li , -) \cong Hom_{ \lo}( \F, -) \cong Ker_{ \qo}(-),$$
and precomposing by $ \qi$ makes the diagram
$$ \xymatrix{ Hom_{ \E}( \li , -) \ar[r]^{ \qi^{*}} \ar[d]_{ \cong} & Hom_{ \E}( \Sigma^{| \qi|} \li , -) \ar[d]^{ \cong} \\
Ker_{ \qo}(-) \ar[r]_{ \qi|_{Ker_{ \qo}}} & Ker_{ \qo}(-) } $$
commutative. Thus, for $n \geq 1$,
$$ \extrel^n( \F, M) =  Ker_{ \qi}( Ker_{ \qo}(  \Sigma^{-n | \qi|} M))/ \qi(Ker_{ \qo}(  \Sigma^{-n | \qi|} M)) = H_{01}^{ \star}(M).$$
\end{proof}

%The previous proposition implies the existence of long exact sequences of the form
%
%$$ \hdots \rightarrow H_{01}^{ \star} (A) \rightarrow H_{01}^{ \star}(B) \rightarrow H_{01}^{ \star}(C) \rightarrow H_{01}^{ \star+| \qi|}(A) \rightarrow H_{01}^{ \star+ | \qi|}(B) \rightarrow  \hdots $$ $$ \rightarrow H_{01}^{ \star + (n+1) | \qi|}(C) \rightarrow  Hom_{ \E}( \F ,\Sigma^{-(n) | \qi|} A)  \rightarrow Hom_{ \E}( \F , \Sigma^{-(n) | \qi|} B) $$ $$ \rightarrow Hom_{ \E}( \F , \Sigma^{-(n) | \qi|} C) \rightarrow 0$$
%for all $n \in \Z$ and short $( \E, \lo)$-exact sequences $A \rightarrow B \rightarrow C$. \\

There is the following comparison between the Tate derived functors (from Proposition \ref{pro_tate}) and the relative derived functors.

\begin{pro} \label{pro_tateh01}
\begin{enumerate}
\item For all $n \in \Z$, there are natural isomorphisms $ \widehat{\R^i}Hom_{ \E}( \F, -) \cong H_{01}^{ \star - i| \qi|}$.
\item Let $A \rightarrow B \rightarrow C$ be a short $( \E, \lo)$-exact sequence, then, there is a long exact sequence of the form $$ \hdots H_{01}^{ \star}(A) \rightarrow H_{01}^{ \star}(B) \rightarrow H_{01}^{ \star}(C) \rightarrow H_{01}^{ \star + | \qi|}(A) \rightarrow \hdots .$$
\end{enumerate}
\end{pro}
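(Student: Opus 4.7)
The plan is to prove part (1) by direct computation of the Tate cohomology $\widehat{\R^i}Hom_{\E}(\F,-)$, and to deduce part (2) as a formal consequence of (1) together with Proposition \ref{pro_tate}(3) applied to the (left $(\E,\lo)$-exact) functor $F := Hom_{\E}(\F,-)$.

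For part (1) in the range $i \geq 1$, the combination of Proposition \ref{pro_tate}(1) and Proposition \ref{pro_identh01} gives $\widehat{\R^i}F \cong \R^iF = \extrel^i(\F,-) \cong H_{01}^{\star - i|\qi|}$ immediately. To extend the identification to all $i \in \Z$, I would exploit the built-in periodicity of the Tate complex: the explicit form of $T_{\bullet}^{\F}$ in Corollary \ref{corr_resfd}, with terms $T_n^{\F} = \Sigma^{n|\qi|}\li$ and all differentials equal to $\qi$, yields a canonical isomorphism of cochain complexes $T_{\bullet}^{\F}[1] \cong \Sigma^{-|\qi|}T_{\bullet}^{\F}$. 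Tensoring with $M$ and applying $Hom_{\E}(\F,-)$ preserves this periodicity, so $\widehat{\R^{i+1}}F$ differs from $\widehat{\R^i}F$ by an $RO(\gr)$-grading shift of $|\qi|$, and iterating downward from the known case $i = 1$ produces $\widehat{\R^i}F \cong H_{01}^{\star - i|\qi|}$ for every $i \in \Z$.

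A direct and more hands-on route to the same identification, which I would use to verify the shift conventions, runs through the natural isomorphism
\begin{equation*}
Hom_{\E}(\F, \li \otimes M) \cong Ker_{\qo}(M).
\end{equation*}
This isomorphism is obtained by observing, using the diagonal $\E$-action on $\li \otimes M$ coming from the primitive coproducts of $\qo$ and $\qi$, that the kernel of $\qo$ and $\qi$ in $\li \otimes M$ is exactly the set of elements of the form $1 \otimes \qi y + \qi \otimes y$ with $y \in Ker_{\qo}(M)$. Under this identification the differential in the Tate complex becomes multiplication by $\qi$ on $Ker_{\qo}(M)$, so that $Hom_{\E}(\F, T_{-\bullet}(M))$ is isomorphic to the doubly-infinite periodic complex $(\Sigma^{-i|\qi|}Ker_{\qo}(M), \qi)_{i \in \Z}$, whose cohomology at position $i$ is exactly $H_{01}^{\star - i|\qi|}(M)$ by the very definition of $H_{01}$.

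Part (2) is then immediate: Proposition \ref{pro_tate}(3) applied to $F$ gives a long exact sequence
\begin{equation*}
\hdots \to \widehat{\R^i}F(A) \to \widehat{\R^i}F(B) \to \widehat{\R^i}F(C) \to \widehat{\R^{i-1}}F(A) \to \hdots
\end{equation*}
and substituting $\widehat{\R^i}F \cong H_{01}^{\star - i|\qi|}$ from part (1) converts the index drop $i \to i - 1$ in the connecting map into the $RO(\gr)$-grading shift $\star \to \star + |\qi|$ required by the statement. The main technical check in the whole argument is the computation of the kernel of the $\E$-action on $\li \otimes M$ and the identification of the induced differential with multiplication by $\qi$; once this is in hand the periodicity argument and the long exact sequence are formal.
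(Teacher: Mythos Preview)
Your proposal is correct and follows essentially the same approach as the paper, which is extremely terse: the paper simply cites Proposition \ref{pro_identh01} for part (1) and Proposition \ref{pro_tate} for part (2). You have spelled out the periodicity argument for the Tate complex (needed to pass from $i\geq 1$ to all $i\in\Z$) and given a direct computation of $Hom_{\E}(\F,\li\otimes M)\cong Ker_{\qo}(M)$, which is the tensor-side counterpart of the adjunction $Hom_{\E}(\li,M)\cong Ker_{\qo}(M)$ used in the proof of Proposition \ref{pro_identh01}; both routes identify the induced differential with $\qi|_{Ker_{\qo}}$, so there is no substantive difference in strategy.
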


\begin{proof}
Proposition \ref{pro_identh01} provides the isomorphism $ \widehat{\R^i}Hom_{ \E}( \F, -) \cong H_{01}^{ \star - i| \qi|}$.

The first assertion and Proposition \ref{pro_tate} gives $(2)$. 
\end{proof}

\subsection{The composite $H_{01}^{\star} \circ R$}

We now turn to the additional structure of $H_{01}^{\star}(M)$, when $M$ is in the image of the functor $R$. 

\begin{de}
Let $\E_{H\mf^{\star}_{\gr}}\dashmod mod$ the category of $\E_{H\mf^{\star}_{\gr}}\dashmod mod$ of $H\mf^{\star}_{\gr}$-modules in $\E\dashmod mod$, \textit{i.e.} $\Lambda_{H\mf^{\star}_{\gr}}(\qo,\qi)$-modules.
\end{de}

By Definition \ref{de:R}, this functor can be seen as taking its values in $\E_{H\mf^{\star}_{\gr}}\dashmod mod$.

\begin{lemma} \label{lemma_H01R}
The restriction of $H_{01}^{ \star}$ to $\E_{H\mf^{\star}_{\gr}}\dashmod mod$ provides a functor denoted again $H_{01}^{ \star}$:
$$ H_{01}^{ \star} :  \mathcal{E}^{ \star}(1)\dashmod mod \rightarrow \F[a, \sigma^{-4}]\dashmod mod.$$
\end{lemma}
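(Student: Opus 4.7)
The plan is to produce a natural $\F[a,\sigma^{-4}]$-action on $H_{01}^{\star}(M)$ for any $M \in \E_{H\mf^{\star}_{\gr}}\dashmod mod$, compatible with the underlying $\F$-vector space structure already given by Definition \ref{de_h01}. The strategy is essentially: identify $\F[a,\sigma^{-4}]$ as (a subring of) the kernel of $(\qo,\qi)$ acting on the coefficient ring $H\mf^{\star}_{\gr}$, then use the Cartan formulae to transfer its action through the quotient defining $H_{01}^{\star}$.

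First I would check, using the explicit formulas of Corollary \ref{corr_actionstecoeff}, that $\qo(a) = \qi(a) = 0$ (apply the formulas to $a = a \cdot \sigma^0$, where $n=0$ is even and $0$ modulo $4$) and that $\qo(\sigma^{-4}) = \qi(\sigma^{-4}) = 0$ (where $n=4$ is even and $0$ modulo $4$). Then, applying the Cartan formulae of Corollary \ref{cor:cartan} to the action of $H\mf^{\star}_{\gr}$ on itself, an immediate induction on products in $\F[a,\sigma^{-4}]$ shows that $\qo(h) = \qi(h) = 0$ for every $h \in \F[a,\sigma^{-4}]$: the formula $\qo(hk) = \qo(h)k + h\qo(k)$ propagates vanishing of $\qo$, and once $\qo(h) = 0$, the formula $\qi(hk) = \qi(h)k + a\qo(h)\qo(k) + h\qi(k)$ reduces to $\qi(hk) = \qi(h)k + h\qi(k)$, which propagates vanishing of $\qi$ as well.

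With this in hand, I would apply the same Cartan formulae one more time, this time to the $\E$-action on $M$. For $h \in \F[a,\sigma^{-4}]$ and $x \in M$, the identities simplify to $\qo(hx) = h\qo(x)$ and $\qi(hx) = h\qi(x)$. Thus multiplication by $h$ commutes with both $\qo$ and $\qi$ on $M$, so it preserves the sub-$\F$-vector spaces $Ker_{\qo}$, $Ker_{\qi}$ and $Im_{\qi} \circ Ker_{\qo}$. Consequently it descends to a well-defined endomorphism of the quotient
$$H_{01}^{\star}(M) = (Ker_{\qi} \cap Ker_{\qo})/(Im_{\qi} \circ Ker_{\qo}),$$
and the assignment $h \mapsto (x \mapsto hx)$ is a ring homomorphism $\F[a,\sigma^{-4}] \to \mathrm{End}_{\F}(H_{01}^{\star}(M))$, clearly natural in $M$.

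The only subtlety, really just bookkeeping, is tracking the $RO(\gr)$-grading so that the resulting $\F[a,\sigma^{-4}]$-module is $RO(\gr)$-graded; this follows from the fact that the generators live in fixed bidegrees in $H\mf^{\star}_{\gr}$, and the rest of the structure (functoriality, associativity, unitality) is automatic from the corresponding properties of the $H\mf^{\star}_{\gr}$-action on $M$. There is no real obstacle in the argument; the content is entirely in the vanishing statement $\qo|_{\F[a,\sigma^{-4}]} = \qi|_{\F[a,\sigma^{-4}]} = 0$, which is what makes $\F[a,\sigma^{-4}]$ (rather than some larger subring of $H\mf^{\star}_{\gr}$) the appropriate target ring.
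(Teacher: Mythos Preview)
Your proof is correct and follows essentially the same approach as the paper: verify that $\F[a,\sigma^{-4}]$ lies in $Ker_{\qo} \cap Ker_{\qi}$ of the coefficient ring (the paper cites Proposition \ref{pro_cartancoeff} directly, you unpack this via Corollary \ref{corr_actionstecoeff}), then use the Cartan formulae to show the action descends to $H_{01}^{\star}$. The only difference is that you spell out the intermediate steps in more detail than the paper does.
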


\begin{proof}
 Proposition \ref{pro_cartancoeff} implies that the elements of $ \F[a, \sigma^{-4}]$ are in $Ker_{ \qo}(H \mf^{ \star}_{\gr}) \cap Ker_{ \qi}(H \mf^{ \star}_{\gr})$. Let $M$ be a  $H \mf^{ \star}_{\gr}$-module and $x$ representing a class in $H_{01}^{ \star}(M)$. 
\begin{itemize}
\item By the Cartan formulae, $ \forall h \in  \F[a, \sigma^{-4}]$, $hx \in Ker_{ \qo}(M) \cap Ker_{ \qi}(M)$ so $[hx] \in H_{01}^{ \star}(M)$,
\item moreover, for $ \qi(y) \in Im_{ \qi} \circ Ker_{ \qo}(M)$, the Cartan formulae implies that $h \qi(y) = \qi(hy) \in Im_{ \qi} \circ Ker_{ \qo}(M)$. Thus, the cohomology class $[hx]$ does not depends on the choice of $x$, and thus the morphism is well defined.
\end{itemize}
\end{proof}

\begin{thm} \label{thm_secR}
Let $ A \stackrel{f}{ \rightarrow} B  \stackrel{g}{ \rightarrow} C$ be a short exact sequence of $ \ste(1)$-modules, which is split as an exact sequence of $ \lamqo$-modules. Then
$$ RA \rightarrow RB \rightarrow RC$$
is a short $( \E, \lo)$-exact sequence.
\end{thm}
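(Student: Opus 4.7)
The plan is to verify directly the two conditions of Definition \ref{de_exactrelatif}: that $RA \to RB \to RC$ is a short exact sequence of $\E$-modules and that it splits after restriction of scalars along $\lo \hookrightarrow \E$. The first is essentially formal from the definition of $R$; the second is obtained by extending scalars on the hypothesized $\lamqo$-linear splitting.

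First I would check exactness as $\E$-modules. By Definition \ref{de:R}, on underlying $\F$-vector spaces the functor $R$ is just $H\mf^{\star}_{\gr} \otimes_{\F} (-)$, which is exact since we work over a field; and by construction $R(f)$ and $R(g)$ are $\E$-module morphisms. Hence $RA \to RB \to RC$ is a short exact sequence of $\E$-modules.

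Next, let $s : C \to B$ be a $\lamqo$-linear section of $g$, so $gs = \mathrm{id}_C$ and $sQ_0 = Q_0 s$. Define $\tilde{s} : RC \to RB$ as the $H\mf^{\star}_{\gr}$-linear extension of $s$, namely $\tilde{s}(h \otimes c) = h \otimes s(c)$. Then $R(g) \circ \tilde{s} = \mathrm{id}_{RC}$ is immediate, so $\tilde{s}$ is a section on underlying vector spaces. It remains to verify $\lo$-linearity. Using the Cartan formula of Corollary \ref{cor:cartan}, on $R(M)$ one has $\qo(h \otimes m) = \qo(h) \otimes m + h \otimes Q_0(m)$, so
\begin{align*}
\tilde{s}(\qo(h \otimes c)) &= \tilde{s}(\qo(h) \otimes c + h \otimes Q_0(c)) \\
 &= \qo(h) \otimes s(c) + h \otimes sQ_0(c),
\end{align*}
while
\[
\qo(\tilde{s}(h \otimes c)) = \qo(h \otimes s(c)) = \qo(h) \otimes s(c) + h \otimes Q_0 s(c),
\]
and these agree by the $Q_0$-linearity of $s$. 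Therefore $\tilde{s}$ is $\lo$-linear, furnishing the $\lo$-splitting.

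The main conceptual point, and the only real obstacle, is to notice that we do \emph{not} need $\tilde{s}$ to be $\E$-linear. Were that required, we would also need $\tilde{s}$ to commute with $\qi$, which by Definition \ref{de:R} involves both $Sq^2$ and $Q_1$ on the non-equivariant side, neither of which is assumed to commute with $s$. The weaker $\lo$-splitting suffices because the Cartan formula for $\qo$ on $R(M)$ only brings in $Q_0$ on the second tensor factor, and this is precisely the operation with which $s$ is assumed to commute.
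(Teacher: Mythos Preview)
Your proof is correct and follows essentially the same approach as the paper: extend the given $\lamqo$-linear section $s$ to an $H\mf^{\star}_{\gr}$-linear map $\tilde{s}=Rs$, then use the Cartan formula for $\qo$ to verify $\lo$-linearity. The paper's argument is more terse and omits the explicit check of exactness (which is immediate since $R$ is extension of scalars over a field), but the substance is identical.
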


\begin{proof}
Let $ i : C \rightarrow B$ be the $ \lamqo$-module morphism which splits the short exact sequence.
Then $Ri : RC \rightarrow RB$ satisfies $RgRi = Id_{RC}$, and for all $h \in H \mf^{ \star}_{\gr}$ and $c \in C$,  $\qo(Ri(hc)) = \qo( h i(c)) = \qo(h)i(c) + h Q_0(i(c)) = RI( \qo(h)c + hQ_0(c)) = Ri( \qo( hc))$ by Proposition \ref{pro_cartancoeff}. Consequently, the short exact sequence $ RA \rightarrow RB \rightarrow RC$ is split in $ \lamqo\dashmod mod$.
\end{proof}

\begin{pro} \label{pro_selh01}
Let $ A \rightarrow B \rightarrow C$ be a short exact sequence of $ \ste(1)$-modules, split as a short exact sequence of $ \lamqo$-modules.
Then, there is a long exact sequence of $ \F[a, \sigma^{-4}]$-modules induced by the functor $H_{01}$:

$$ \hdots \rightarrow H_{01}^{ \star}(RA) \rightarrow H_{01}^{ \star}(RB) \rightarrow H_{01}^{ \star}(RC) \rightarrow H_{01}^{ \star+2+ \alpha}(RA) \rightarrow \hdots.$$
In particular, is $C$ is a $Q_0$-acyclic $ \ste(1)$-module, then the hypothesis of the proposition are satisfied.
\end{pro}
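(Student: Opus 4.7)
The plan is to assemble the statement from three earlier inputs: Theorem \ref{thm_secR}, which converts a $\lamqo$-split short exact sequence of $\ste(1)$-modules into an $(\E,\lo)$-exact sequence after applying $R$; Proposition \ref{pro_tateh01}(2), which produces a long exact sequence for $H_{01}^{\star}$ out of any $(\E,\lo)$-exact short exact sequence; and Lemma \ref{lemma_H01R}, which upgrades the values of $H_{01}^{\star}$ from $\F$-vector spaces to $\F[a,\sigma^{-4}]$-modules.

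First I would apply Theorem \ref{thm_secR} to the given sequence $A \rightarrow B \rightarrow C$ to obtain a short $(\E,\lo)$-exact sequence
$$ RA \rightarrow RB \rightarrow RC.$$
Then I would feed this into Proposition \ref{pro_tateh01}(2) to get a long exact sequence of $\F$-vector spaces
$$ \hdots \rightarrow H_{01}^{\star}(RA) \rightarrow H_{01}^{\star}(RB) \rightarrow H_{01}^{\star}(RC) \rightarrow H_{01}^{\star + |\qi|}(RA) \rightarrow \hdots,$$
and recall that $|\qi| = 2+\alpha$ by construction of $\qi$, which yields the stated degree shift. The $\F[a,\sigma^{-4}]$-linearity of all the arrows follows from Lemma \ref{lemma_H01R}: the functor $H_{01}^{\star}$ lands in $\F[a,\sigma^{-4}]$-modules on $H\mf^{\star}_{\gr}$-linear $\E$-modules, and the maps $Rf$, $Rg$ are by construction $H\mf^{\star}_{\gr}$-linear, hence $\F[a,\sigma^{-4}]$-linear; the connecting homomorphism is induced by the snake lemma from these $H\mf^{\star}_{\gr}$-linear chain maps on the Tate complex $T_{\bullet}$, so it too is $\F[a,\sigma^{-4}]$-linear.

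For the final assertion, I would argue that a $Q_0$-acyclic $\ste(1)$-module $C$ is, as a $\lamqo$-module, one whose Margolis homology with respect to $Q_0$ vanishes; since $\lamqo = \Lambda_{\F}(\qo)$ is an exterior algebra on a single generator (hence a Frobenius algebra), this is equivalent to $C$ being free over $\lamqo$. Free $\lamqo$-modules are projective, so any short exact sequence of $\ste(1)$-modules with $C$ on the right splits as a sequence of $\lamqo$-modules, verifying the hypothesis.

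No real obstacle is expected: the argument is essentially a concatenation of already-proved facts. The only mild point to check is the $\F[a,\sigma^{-4}]$-linearity of the connecting morphism, which is automatic from the $H\mf^{\star}_{\gr}$-equivariance of the snake lemma construction applied to $T_{\bullet}$.
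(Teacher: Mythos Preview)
Your proof is correct and follows essentially the same route as the paper: apply Theorem \ref{thm_secR}, then Proposition \ref{pro_tateh01}(2), then argue the $\F[a,\sigma^{-4}]$-linearity of all maps, and finally handle the $Q_0$-acyclic case (the paper simply cites Remark \ref{rk:exactrelatif}, where you spell out the Frobenius-algebra argument).

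One small imprecision worth tightening: your justification for the $\F[a,\sigma^{-4}]$-linearity of the connecting morphism appeals to ``$H\mf^{\star}_{\gr}$-equivariance of the snake lemma construction,'' but the differential $\qi$ on the Tate complex is \emph{not} $H\mf^{\star}_{\gr}$-linear in general---the Cartan formula (Corollary \ref{cor:cartan}) has correction terms involving $\qo(h)$ and $\qi(h)$. What makes the connecting map $\F[a,\sigma^{-4}]$-linear is precisely that $\qo(h)=\qi(h)=0$ for $h\in\F[a,\sigma^{-4}]$ (Corollary \ref{corr_actionstecoeff}), so $\qi(hy)=h\,\qi(y)$. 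The paper checks this explicitly; your argument reaches the right conclusion but the stated reason should be sharpened to this specific use of the Cartan formulae rather than a blanket $H\mf^{\star}_{\gr}$-equivariance.
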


\begin{proof}
Theorem \ref{thm_secR} implies that $RA \rightarrow RB \rightarrow RC$ is a short $( \E, \lo)$-exact sequence, allowing us to use Proposition \ref{pro_tateh01} to obtain long exact sequences of $\F$-vector spaces in $H_{01}^{\star}$-homology.
The morphisms induced by $A \rightarrow B$ and $B \rightarrow C$ are $ \F[a, \sigma^{-4}]$-module morphisms by Lemma \ref{lemma_H01R}.

Consider now the edge morphism $ \partial$ of the long exact sequence.
Let $[x] \in H_{01}^{ \star}(RC)$ represented by some $x \in RC$. Let $y \in RB$ be a lift of $x$ to $RB$.

Then $ \partial([x]) = [ \qi(y)] \in H_{01}^{ \star}(RA)$ by construction of the edge morphism. Now, let $h \in  \F[a, \sigma^{-4}]$. A lift of $hx$ to $RB$ is $hy$. Moreover $ \qi(hy) = h \qi(y)$ by the Cartan formulae. Consequently $ \partial h[x] = h \partial [x]$. Thus, the edge morphism is a $ \F[a, \sigma^{-4}]$-module morphism.

The last assertion is a consequence of remark \ref{rk:exactrelatif}.
\end{proof}

\begin{de} \label{de_rp}
Define $$ \rp (-) : \ste(1)\dashmod mod \rightarrow \E_{H\mf^{\star}_{\gr}}\dashmod mod$$ as the sub-functor of $R$ consisting of the sub-$ \E_{H\mf^{\star}_{\gr}}$-module generated by elements of the form $ h \otimes m$, for $m \in M$ and $h \in \F[a, \sigma^{-1}] \subset H \mf^{ \star}_{\gr}$. Denote $ \rn(-)$ the quotient.
\end{de}

\begin{rk}
For degree reasons, there is a splitting $R \cong \rp(-) \oplus \rn(-)$ as $\E$-modules since for any $\ste(1)$-module $M$, no non-trivial action of either $\qo$ or $\qi$ could exist between $\rp(M)$ and $\rn(M)$ because of the cancellation in twist $-1$.
\end{rk}

Our next result is the Proposition \ref{pro_aactionsurh01} which is the main computational tool we will be considering. The objective is to be able to compute the value of the composite $H_{01}^{\star}\circ R$ on the free $\ste(1)$-module of rank one.

\begin{nota} \label{nota_h01stara1}
Denote $H_{01}^* : \ste(1)\dashmod mod \rightarrow \F\dashmod mod$ the functor $$ \Sigma^{-3} Ext_{( \Lambda_{ \F}( Q_0, Q_1), \Lambda_{ \F}( Q_0 ))}^1( \F, -).$$ The grading comes from the internal grading on $Ext_{( \Lambda_{ \F}( Q_0, Q_1), \Lambda_{ \F}( Q_0 ))}^1( \F, -)$
\end{nota}

\begin{lemma} \label{lemma_identh01classique}
For all $ i \geq 1$, there are natural isomorphisms
$$ H_{01}^* \cong \Sigma^{-3i} Ext_{( \Lambda_{ \F}( Q_0, Q_1), \Lambda_{ \F}( Q_0 ))}^i( \F, -) \cong Ker_{Q_0} \cap Ker_{Q_1}/(Im_{Q_1} \circ Ker_{Q_0}).$$
\end{lemma}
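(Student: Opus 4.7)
My plan is to mirror verbatim the proof of Proposition \ref{pro_identh01}, simply replacing the equivariant pair $(\E, \lo)$ by the non-equivariant pair $(\Lambda_{\F}(Q_0, Q_1), \Lambda_{\F}(Q_0))$. Everything used in the earlier proof — the characterization of relative projectives, the construction of an explicit resolution of $\F$, and the adjunction between extension and restriction of scalars — carries over, since all the arguments (taken from Hochschild \cite{Hoch56} and reproduced in Section \ref{subsection:hoi}) are purely formal in the pair (subring inside a Hopf algebra quotient by an exterior generator).

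Concretely, I would first establish the relative projective analogue of Proposition \ref{pro_carac_proinjrel}: the $(\Lambda_{\F}(Q_0, Q_1), \Lambda_{\F}(Q_0))$-relative projectives are exactly the modules of the form $\Lambda_{\F}(Q_1)\otimes_{\F} V$ for a $\F$-vector space $V$. From this, I would exhibit the explicit $(\Lambda_{\F}(Q_0, Q_1), \Lambda_{\F}(Q_0))$-relative projective resolution of $\F$,
$$\cdots \stackrel{Q_1}{\longrightarrow} \Sigma^{3(n+1)} \Lambda_{\F}(Q_1) \stackrel{Q_1}{\longrightarrow} \Sigma^{3n} \Lambda_{\F}(Q_1) \stackrel{Q_1}{\longrightarrow} \cdots \stackrel{Q_1}{\longrightarrow} \Lambda_{\F}(Q_1) \surj \F,$$
which is the exact clone of the resolution $I^{\bullet}_{\F}$ of Corollary \ref{corr_resfd}; its exactness and the fact that its underlying $\Lambda_{\F}(Q_0)$-sequence is split are proved in precisely the same way.

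Then I apply $\Hom_{\Lambda_{\F}(Q_0,Q_1)}(-, M)$ to this resolution and use the Frobenius adjunction along the inclusion $\Lambda_{\F}(Q_0) \subset \Lambda_{\F}(Q_0, Q_1)$ to identify
$$\Hom_{\Lambda_{\F}(Q_0,Q_1)}(\Sigma^{3n}\Lambda_{\F}(Q_1), M) \cong \Sigma^{-3n}\Hom_{\Lambda_{\F}(Q_0)}(\F, M) = \Sigma^{-3n}Ker_{Q_0}(M),$$
under which the map induced by $Q_1$ becomes the honest $Q_1$-action on $Ker_{Q_0}$. Taking cohomology of the resulting complex gives, for $i \geq 1$, a natural isomorphism
$$Ext^i_{(\Lambda_{\F}(Q_0, Q_1), \Lambda_{\F}(Q_0))}(\F, M) \cong \Sigma^{3i}\,\frac{Ker_{Q_0}(M)\cap Ker_{Q_1}(M)}{Q_1\cdot Ker_{Q_0}(M)},$$
which is precisely the claim after taking into account the normalization of Notation \ref{nota_h01stara1}.

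There is no real obstacle: the whole argument is formally identical to Proposition \ref{pro_identh01}, and the only bookkeeping to perform is tracking the $3i$-fold internal suspension coming from $|Q_1| = 3$ (rather than the $(2+\alpha)i$-fold $RO(\gr)$-suspension that occurred in the equivariant case). Because we are not claiming anything new about the category structure, the entire proof reduces to citing Corollary \ref{corr_resfd} and Proposition \ref{pro_identh01} and asserting that the same formal manipulations apply in the non-equivariant setting.
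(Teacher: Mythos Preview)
Your proposal is correct and is exactly the approach the paper takes: the paper's proof is a one-line ``mutatis mutandis'' reference to Proposition \ref{pro_identh01}, and you have simply unpacked what that entails. There is nothing to add.
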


\begin{proof}
 The proof of Proposition \ref{pro_identh01} gives, mutatis mutandis, the proof of the lemma.
\end{proof}

\begin{lemma} \label{lemma_sq2diff} \label{lemma_actionsq2sq2}
Let $M$ be a $ \ste(1)$-module.
Then,
\begin{enumerate}
 \item $Ker_{ Q_1}(M) \cap Ker_{Q_0}(M)$ has a natural $ \Lambda_{ \F}( Sq^2)$-module structure.
\item If moreover $M$ is $Q_0$-acyclic, then there is a natural $ \Lambda_{ \F}( \sqop)$-module structure on $H_{01}^*(M)$, where $\sqop$ is defined by $ \sqop([Q_0(m)]) = [Q_0Sq^2m]$ for $[Q_0m] \in H_{01}^*(M)$.
\end{enumerate}
Let $ \Lambda$ be the free product of the algebras $ \Lambda_{ \F}( Sq^2)$ and $ \Lambda_{ \F}(\sqop)$, where $Sq^2$ and $\sqop$ are of degree $2$ and $H_{01}^*$ the restriction of $ H_{01}^*$ to the full subcategory $ \ste(1)\dashmod mod_{Q_0}$ consisting of $Q_0$-acyclic $ \ste(1)$-modules, then $H_{01}^*$ lifts to a functor $$ H_{01}^* : \ste(1)\dashmod mod_{Q_0} \rightarrow \Lambda\dashmod mod.$$
\end{lemma}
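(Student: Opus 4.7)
The key inputs are two Adem relations in $\ste(1)$: $Sq^1 Sq^2 + Sq^2 Sq^1 = Q_1$ (equivalently $Q_0 Sq^2 = Q_1 + Sq^2 Q_0$) and $Sq^2 Sq^2 = Sq^1 Sq^2 Sq^1 = Q_0 Sq^2 Q_0$. These formally imply $[Q_1, Sq^2] = 0$: indeed $Q_1 Sq^2 = Q_0 Sq^2 \cdot Sq^2 + Sq^2 Q_0 \cdot Sq^2 = Q_0 \cdot Q_0 Sq^2 Q_0 + Sq^2 (Q_1 + Sq^2 Q_0) = Sq^2 Q_1$. Every verification below is a short chase using these identities together with $Q_0 Q_1 = Q_1 Q_0$.

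For part (1), I check that for $m \in Ker_{Q_0} \cap Ker_{Q_1}$, $Q_0 Sq^2 m = Q_1 m + Sq^2 Q_0 m = 0$ and $Q_1 Sq^2 m = Sq^2 Q_1 m = 0$, so $Sq^2$ preserves this intersection; squaring is zero on $Ker_{Q_0}$ by $(Sq^2)^2 = Q_0 Sq^2 Q_0$. For part (2), every cycle in $H_{01}^*(M)$ is representable as $Q_0 m$ when $M$ is $Q_0$-acyclic, and the required checks for $\sqop[Q_0 m] := [Q_0 Sq^2 m]$ are: (a) $Q_1 Q_0 Sq^2 m = Q_0 Sq^2 Q_1 m = (Q_1 + Sq^2 Q_0) Q_1 m = Sq^2 Q_0 Q_1 m = 0$, using $Q_0 Q_1 m = Q_1 Q_0 m = 0$ from the cycle condition; (b) if $Q_0 m = Q_0 m'$, then $k := m - m' \in Ker_{Q_0}$ and $Q_0 Sq^2 k = Q_1 k$ already lies in $Im_{Q_1} \circ Ker_{Q_0}$, so the class is independent of the chosen lift; (c) $Q_0 Sq^2 Sq^2 m = Q_0 \cdot Q_0 Sq^2 Q_0 m = 0$, giving $\sqop{}^2 = 0$.

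The last assertion requires the $Sq^2$-action of (1) to descend through the quotient defining $H_{01}^*(M)$, and this is the only step where $Q_0$-acyclicity really pulls its weight. I must show $Sq^2(Q_1 k) = Q_1 Sq^2 k$ lies in $Im_{Q_1} \circ Ker_{Q_0}$ for every $k \in Ker_{Q_0}$; the obstacle is that $Sq^2 k$ generally leaves $Ker_{Q_0}$ since $Q_0 Sq^2 k = Q_1 k$. Using acyclicity I write $k = Q_0 k'$ and set $j := Q_1 k'$, which satisfies $Q_0 j = Q_1 k$ and $Q_1 j = 0$. Then $Sq^2 k - j$ lies in $Ker_{Q_0}$ while $Q_1 Sq^2 k = Q_1(Sq^2 k - j) + Q_1 j = Q_1(Sq^2 k - j)$ is a boundary. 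The relations $(Sq^2)^2 = 0$ and $\sqop{}^2 = 0$ are already established and no other relation between the two generators is imposed in $\Lambda$, while naturality in $M$ is immediate from the formulas since $\ste(1)$-linear maps commute with $Q_0, Q_1$ and $Sq^2$; this yields the desired lift $H_{01}^* : \ste(1)\dashmod mod_{Q_0} \to \Lambda\dashmod mod$.
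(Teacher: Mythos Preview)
Your argument is correct and carries out explicitly the ``basic manipulations in the Steenrod algebra'' that the paper's proof merely gestures at; the approach is the same. One organizational point deserves a remark: in part~(2), your check~(b) shows that $[Q_0 Sq^2 m]$ is independent of the chosen preimage $m$ of a \emph{fixed} cycle $Q_0 m$, but well-definedness of $\sqop$ on $H_{01}^*(M)$ also requires that boundaries go to boundaries, i.e., that $Q_0 m \in Q_1(Ker_{Q_0})$ forces $Q_0 Sq^2 m \in Q_1(Ker_{Q_0})$. You have the needed ingredient: writing $Q_0 m = Q_1 k$ with $k = Q_0 k'$, your (b) reduces this to $[Q_0 Sq^2 Q_1 k'] = [Sq^2 Q_1 k]$, and your final paragraph already proves $Sq^2(Q_1 k) \in Q_1(Ker_{Q_0})$ for $k \in Ker_{Q_0}$. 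So nothing is missing mathematically; just insert a sentence in part~(2) invoking that same computation, and the well-definedness of $\sqop$ is complete.
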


\begin{proof}
The two first point is proved using basic manipulations in the Steenrod algebra.
The second part of the lemma is a consequence of the two first points.
\end{proof}

%
%\begin{rk}
%Let $M \in \ste(1)\dashmod mod_{Q_0}$ and $[Q_0x] \in H_{01}^*(M)$. We have
%\begin{eqnarray*}
%Sq^2([Q_0x]) + \sqop([Q_0x]) & = & [Sq^2(Q_0x) + Q_0Sq^2x] \\ & = &  [Q_1 (x)]
%\end{eqnarray*}
%and
%\begin{eqnarray*}
%Sq^2\sqop([Q_0x]) & = & Sq^2[Q_0Sq^2x] \\ & = &  [Sq^2 Sq^1 Sq^2 x].
%\end{eqnarray*}
%\end{rk}

The Euler class $a$ is in the image of the Hurewicz map. Consequently, multiplication by $a$ is an $\E$-module morphism. Therefore this map induces an injection $ \rp(-) \stackrel{a}{ \inj} \rp(-)$.

\begin{de}
Let $F$ be the functor  $$ \rp (-) / a \rp (-): \ste(1)\dashmod mod \rightarrow  \E[ \sigma^{-1}]\dashmod mod.$$ 
\end{de}

\begin{rk}
This is well defined since the action of $ \sigma^{-1}$ commute with $ \qo$ and $ \qi$.
Indeed, by the Cartan formulae, given in Proposition \ref{pro_cartancoeff}, $\forall x \in M$, $$ \qo( \sigma^{-1}x) =  \qo( \sigma^{-1})x + \sigma^{-1} Q_0x \equiv \sigma^{-1} Q_0 x \text{ mod } a,$$ and $$ \qi( \sigma^{-1}x) = \qi( \sigma^{-1})x + a \qo( \sigma^{-1}) Q_0x + \sigma^{-1} \qi(x) \equiv \sigma^{-1} \qi(x)\text{ mod } a.$$
\end{rk}

For $M \in \ste(1)\dashmod mod_{Q_0}$, the short exact sequence $ \rp \stackrel{a}{ \inj} \rp \surj F$ is $( \E, \lo)$-exact. We want to understand the long exact sequence in $H_{01}^{\star}$ associated to it.

\begin{rk}
The following results can be seen as the algebraic consequence of the cofibre sequence of $\gr$-spaces
$$ \gr_+ \rightarrow S^0 \rightarrow S^{\alpha}.$$ In the case when the $\ste(1)$-module considered is the cohomology of a space $X$, this can be obtained by keeping track of the cofibre sequence 
$$ \gr_+ \wedge X \rightarrow S^0 \wedge X \rightarrow S^{\alpha} \wedge X$$
through the entire construction.
\end{rk}

\begin{lemma} \label{lemma_h01msigma}
There is a natural isomorphism of functors $ \E\dashmod mod \rightarrow \F\dashmod mod$
$$ H_{01}^{ \star} \circ i \circ F \cong Ker_{Q_0} \cap Ker_{ Q_1} \oplus \sigma^{-1}H_{01}^*(-)[ \sigma^{-1}],$$
where $ \sigma^{-1}H_{01}^*(-)[ \sigma^{-1}]$ denotes the $RO( \gr)$-graded $\F$-vector space valued functor  $H_{01}^*(-) \otimes \sigma^{-1} \F[ \sigma^{ -1}]$, and $i :  \E[ \sigma^{-1}]\dashmod mod \rightarrow \E\dashmod mod$ is the forgetful functor.
\end{lemma}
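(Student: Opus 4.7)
The plan is to unpack the $\E$-module structure on $F(M) = \rp(M)/a\rp(M)$ explicitly in terms of powers of $\sigma^{-1}$, and then compute $H_{01}^{\star}$ summand by summand.

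First, I would identify $F(M)$ as a graded $\F$-vector space. By Definition \ref{de_rp}, $\rp(M)$ is generated over $\F$ by elements $a^k \sigma^{-n} \otimes m$ for $k,n \geq 0$ and $m \in M$. Quotienting by $a$ therefore yields the decomposition $F(M) = \bigoplus_{n \geq 0} \sigma^{-n} M$. Next I would read off the action of $\qo$ and $\qi$ on this decomposition, using the Cartan formulae of Corollary \ref{cor:cartan} together with the explicit formulae of Corollary \ref{corr_actionstecoeff}. The key observation is that every nonzero value of $\qo(\sigma^{-n})$ and $\qi(\sigma^{-n})$ listed in Corollary \ref{corr_actionstecoeff} lies in the ideal $(a)$, hence vanishes in $F(M)$. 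Combined with $\qo(m) = Q_0(m)$ and $\qi(m) = aSq^2(m) + \sigma^{-1}Q_1(m)$ on $m \in M \subset R(M)$, the Cartan formulae collapse to
$$\qo(\sigma^{-n}m) = \sigma^{-n} Q_0(m), \qquad \qi(\sigma^{-n}m) = \sigma^{-n-1} Q_1(m)$$
in $F(M)$. In particular $\qo$ preserves the $\sigma^{-1}$-filtration and $\qi$ shifts it up by one.

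With these formulae in hand, the computation of $H_{01}^{\star}(F(M))$ is direct. An element $x = \sum_n \sigma^{-n} x_n$ lies in $Ker_{\qo} \cap Ker_{\qi}$ if and only if each $x_n$ lies in $Ker_{Q_0}(M) \cap Ker_{Q_1}(M)$, and an element of $Im_{\qi} \circ Ker_{\qo}$ is of the form $\sum_{n \geq 1} \sigma^{-n} Q_1(y_{n-1})$ with $y_{n-1} \in Ker_{Q_0}(M)$. Taking the quotient and applying Lemma \ref{lemma_identh01classique} to identify $H_{01}^{*}(M) = (Ker_{Q_0} \cap Ker_{Q_1})/(Im_{Q_1} \circ Ker_{Q_0})$ yields
$$H_{01}^{\star}(F(M)) \cong (Ker_{Q_0}(M) \cap Ker_{Q_1}(M)) \oplus \bigoplus_{n \geq 1} \sigma^{-n} H_{01}^{*}(M),$$
which is exactly the announced isomorphism. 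Naturality in $M$ is clear from the construction, since each step is $\ste(1)$-equivariant.

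The only subtle point is the modulo-$a$ collapse of the Cartan formulae, which relies on the fortunate fact that in Corollary \ref{corr_actionstecoeff} every listed non-zero value of $\qo$ and $\qi$ on $\F[a,\sigma^{-1}]$ already carries a factor of $a$; this is what makes the action of $\E$ on $F(M)$ split nicely along the $\sigma^{-1}$-grading. There are no real obstacles otherwise; the argument is essentially a direct unwinding of definitions.
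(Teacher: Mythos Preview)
Your argument is correct and follows exactly the same route as the paper: reduce the Cartan formulae modulo $a$ using that $\qo(\sigma^{-n})$ and $\qi(\sigma^{-n})$ always carry a factor of $a$, obtain $\qo(\sigma^{-n}m)\equiv\sigma^{-n}Q_0(m)$ and $\qi(\sigma^{-n}m)\equiv\sigma^{-n-1}Q_1(m)$, and then read off $Ker_{\qo}\cap Ker_{\qi}$ and $Im_{\qi}\circ Ker_{\qo}$ summand by summand before invoking Lemma~\ref{lemma_identh01classique}. There is no substantive difference between your proof and the paper's.
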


\begin{proof}
Let $M$ be a $ \ste(1)$-module. Proposition \ref{pro_cartancoeff} and the Corollary \ref{corr_actionstecoeff} provides the action of $ \qi$ and $ \qo$ on $ \rp M$. We now give an explicit description modulo $a$:
let $ \sigma^{-n} \otimes m \in \rp M$. One has
\begin{eqnarray*}
\qo( \sigma^{-n} \otimes m) &=& \qo( \sigma^{-n}) \otimes m + \sigma^{-n} \otimes Q_0m \\
& \equiv & \sigma^{-n} \otimes Q_0m \text{ mod } a
\end{eqnarray*}
because $Im_{ \qo}( \F[a , \sigma^{-1}]) \subset a \F[a , \sigma^{-1}]$, thus $Ker_{ \qo} \circ F \cong Ker_{Q_0}(-)[ \sigma^{-1}]$.
Moreover,  
\begin{eqnarray*}
 & & \qi( \sigma^{-n} \otimes m)  \\ &=& \qi( \sigma^{-n}) \otimes m + a \qo( \sigma^{-n}) \otimes Q_0m + a \sigma^{-n} \otimes Sq^2m + \sigma^{-n-1} \otimes Q_1m \\
& \equiv & \sigma^{-n-1} \otimes Q_1m \text{ mod } a,
\end{eqnarray*}
so $Ker_{ \qi} \circ Ker_{ \qo} = Ker_{ Q_1} \circ Ker_{ Q_0}(-)[ \sigma^{-1}]$ and $Im_{ \qi} \circ Ker_{ \qo} = \sigma^{-1} Im_{ Q_1} \circ Ker_{ Q_0}(-)[ \sigma^{-1}].$
The natural isomorphism $H_{01}^* \cong (Ker_{Q_0} \cap Ker_{ Q_1}) / (Im_{Q_1} \circ Ker_{Q_0})$ given in Lemma  \ref{lemma_identh01classique} then provides the asserted isomorphism. \\
\end{proof}

\begin{lemma} \label{lemma_sel_xa}
Let $M$ be a $Q_0$-acyclic $ \ste(1)$-module.
Then, there is a long exact sequence

$$ \hdots \rightarrow H_{01}^{ \star- \alpha}( \rp M) \stackrel{a}{ \rightarrow} H_{01}^{ \star}( \rp M) \stackrel{ \rho}{ \rightarrow} H_{01}^{ \star}( FM) \stackrel{ \beta}{ \rightarrow}  H_{01}^{ \star+2}( \rp M) \rightarrow \hdots $$
where $\rho$ denotes reduction modulo $a$.
\end{lemma}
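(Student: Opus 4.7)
The plan is to realise the desired long exact sequence as the $(\E,\lo)$-Tate long exact sequence attached to the short exact sequence of $\E$-modules
\[
0 \longrightarrow \Sigma^{\alpha} \rp M \xrightarrow{\ a\ } \rp M \xrightarrow{\ \rho\ } F M \longrightarrow 0,
\]
where the first copy of $\rp M$ is reindexed by $\Sigma^{\alpha}$ so that multiplication by $a$ becomes a degree-zero morphism. Injectivity of $a$ on $\rp M$ was recorded just before Definition \ref{de_rp}, so this is indeed a short exact sequence, and the quotient is precisely $F M$ by definition.

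The main step, on which everything else hinges, is to verify that this sequence is $(\E,\lo)$-exact. By Remark \ref{rk:exactrelatif} it suffices to exhibit one term which is free as a $\lo$-module, and I would do this for $F M$. The computation that opens the proof of Lemma \ref{lemma_h01msigma} shows
\[
\qo(\sigma^{-n} \otimes m) \equiv \sigma^{-n} \otimes Q_0 m \pmod{a},
\]
so that under the $\F$-linear identification $F M \cong \F[\sigma^{-1}] \otimes_{\F} M$, the action of $\qo$ is simply $\mathrm{id} \otimes Q_0$. The hypothesis that $M$ is $Q_0$-acyclic means exactly that $(M, Q_0)$ is free over $\lo$, and this freeness is preserved by tensoring with $\F[\sigma^{-1}]$. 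Hence $F M$ is $\lo$-free and the short exact sequence is $(\E,\lo)$-exact.

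Given $(\E,\lo)$-exactness, I then invoke part $(2)$ of Proposition \ref{pro_tateh01} to obtain the long exact sequence
\[
\cdots \to H_{01}^{\star}(\Sigma^{\alpha}\rp M) \to H_{01}^{\star}(\rp M) \to H_{01}^{\star}(F M) \to H_{01}^{\star+|\qi|}(\Sigma^{\alpha}\rp M) \to \cdots,
\]
with $|\qi| = 2 + \alpha$. Reindexing via the standard identifications $H_{01}^{\star}(\Sigma^{\alpha}N) = H_{01}^{\star-\alpha}(N)$ on both sides recovers exactly the sequence of the lemma, the $\alpha$ in $2+\alpha$ being absorbed by the shift $\Sigma^{\alpha}$ and leaving the connecting map $\beta$ of total degree $+2$. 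The identification of the middle map as reduction modulo $a$ is automatic from the construction of $F M$.

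The only potential obstacle I anticipate is bookkeeping the $\Sigma^{\alpha}$ shift correctly so that the connecting map lands in $H_{01}^{\star+2}(\rp M)$ rather than the naive $H_{01}^{\star+2+\alpha}(\rp M)$; once one commits to viewing $a$ as an internal degree-zero map on a shifted source, the rest is a formal consequence of Proposition \ref{pro_tateh01} and the $(\E,\lo)$-exactness established above.
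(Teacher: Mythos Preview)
Your proof is correct and follows essentially the same route as the paper: build the short exact sequence $0 \to \Sigma^{\alpha}\rp M \xrightarrow{a} \rp M \to FM \to 0$, check $(\E,\lo)$-exactness, apply the Tate long exact sequence of Proposition~\ref{pro_tateh01}, and absorb the $\Sigma^{\alpha}$ shift into the connecting degree. The only cosmetic difference is which term you check for $\lo$-freeness: you verify that the quotient $FM$ is $\qo$-acyclic (using the clean fact that mod $a$ the action of $\qo$ is $\mathrm{id}\otimes Q_0$), whereas the paper instead argues that the kernel $\Sigma^{\alpha}\rp M$ is $\qo$-acyclic. Both are instances of Remark~\ref{rk:exactrelatif}; your choice is arguably the more transparent of the two, since the $\qo$-acyclicity of $\rp M$ itself requires a K\"unneth-type observation, while for $FM$ it is immediate from the formula you quote.
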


\begin{proof}
The $ \ste(1)$-module $M$ being $Q_0$-acyclic, $(a) \rp M \cong \Sigma^{ \alpha} \rp(-)$ is $ \qo$-acyclic, and thus injective as a $ \lo$-module. Thus the underlying exact sequence of $ \lo$-modules is split, and $ 0 \rightarrow (a) \rp M \rightarrow  \rp M \rightarrow \rp M / a \rightarrow 0$ is a $( \E, \lo)$-exact sequence.

Consequently, Proposition \ref{pro_selh01} provides a long exact sequence:

$$ \hdots \rightarrow H_{01}^{ \star}( (a) \rp M) \stackrel{ }{ \rightarrow} H_{01}^{ \star}( \rp M) \stackrel{ \rho}{ \rightarrow} H_{01}^{ \star}( FM) \stackrel{ \beta}{ \rightarrow}  H_{01}^{ \star+2+ \alpha}((a) \rp M) \rightarrow \hdots.$$

Denote $M[\sigma^{-1}] = \bigoplus_{n\geq 0} M \sigma^{-n}$ (adjoining a formal variable $\sigma^{-1}$ to $M$). This makes sense since, when working modulo $a$, all spheres are orientable.
With this notation, $ \E$-module isomorphism $ (a) \rp M \cong \Sigma^{ \alpha} \rp M$ gives:
$$ \hdots \rightarrow H_{01}^{ \star - \alpha }( \rp M) \stackrel{a}{ \rightarrow} H_{01}^{ \star}( \rp M) \stackrel{ \rho}{ \rightarrow} H_{01}^{ \star}( M[ \sigma^{ -1}]) \stackrel{ \beta}{ \rightarrow}  H_{01}^{ \star+2}( \rp M) \rightarrow \hdots.$$
\end{proof}

One can reinterpret the previous lemma as an exact couple, and consider the associated spectral sequence. It is a Bockstein spectral sequence whose first page is isomorphic to $(H_{01}^{ \star} \circ F)(M)[ \tilde{a}]$, where $ \tilde{a}$ is an element of degree $ - \alpha \in RO( \gr)$ and homological degree $1$, and which converges to $(H_{01}^{ \star} \circ R)(M)$.

%
%Recall the operations $Sq^2$ and $Sq^{ \rotatebox{180}{$^2$}}$ defined by Lemma \ref{lemma_actionsq2sq2}.

\begin{lemma} \label{lemma_bockstein_xa}
Let $M$ be a $Q_0$-acyclic $ \ste(1)$-module .
Consider the natural isomorphism $$ H_{01}^{ \star}( M[ \sigma^{-1}]) \cong Ker_{Q_0}(M) \cap Ker_{Q_1}(M) \oplus \sigma^{-1} H_{01}^*(M)[ \sigma^{-1}]$$ provided by Lemma \ref{lemma_h01msigma}.
Then, the first differential $d_1$ of the Bockstein spectral sequence associated to the multiplication by the Euler class $a$ in $H_{01}^{ \star}$ acts on $ H_{01}^{ * + k \alpha}( M[ \sigma^{-1}])$ for all $k \geq 0$:
\begin{itemize}
\item as $ \tilde{a} Sq^2$ if $k$ is even,
\item as $ \tilde{a} Sq^{ \rotatebox{180}{$^2$}}$ if $k$ is odd.
\end{itemize}
\end{lemma}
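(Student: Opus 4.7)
The plan is to unwrap $d_1 = \rho \circ \beta$, evaluate it on the representatives provided by Lemma \ref{lemma_h01msigma}, and recognize the answer as either $\tilde a \, Sq^2$ or $\tilde a \, \sqop$ according to the parity of $k$.

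First, I make $\beta$ explicit. The proof of Lemma \ref{lemma_sel_xa} shows that the short exact sequence $\Sigma^{\alpha}\rp M \inj \rp M \surj FM$ is $(\E,\lo)$-exact, hence admits a $\lo$-linear section $s$. For a class $[x] \in H_{01}^{\star}(FM)$, the lift $\tilde x = s(x)$ lies in $Ker_{\qo}(\rp M)$, and $\qi(\tilde x)$ projects to $\qi(x) = 0$ in $FM$, so $\qi(\tilde x) \in a\rp M$. Since multiplication by $a$ is injective on $\rp M = \F[a, \sigma^{-1}] \otimes M$, there is a unique $u \in \rp M$ with $\qi(\tilde x) = au$; a short check using $\qo \qi = \qi \qo$ and $\qo(a) = 0$ gives $u \in Ker_{\qo}\cap Ker_{\qi}(\rp M)$, so $\beta[x] = [u]$ and $d_1[x] = [\pi u]$ where $\pi:\rp M \surj FM$. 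In the Bockstein convention the factor $\tilde a$ records that $u$ arose by dividing $\qi(\tilde x)$ by $a$.

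Second, I evaluate this on representatives from the decomposition of Lemma \ref{lemma_h01msigma}. For the $k = 0$ summand $Ker_{Q_0}\cap Ker_{Q_1}(M)$, take $x = 1\otimes m$ with $Q_0 m = Q_1 m = 0$; the naive lift $\tilde x = 1\otimes m$ is already $\qo$-closed, and Cartan gives $\qi(1 \otimes m) = a \otimes Sq^2 m$, whence $u = 1 \otimes Sq^2 m$ and $d_1$ is $\tilde a$ times the $Sq^2$-action of Lemma \ref{lemma_sq2diff}(1). For the $k = n \geq 1$ summands, use the $Q_0$-acyclic form of $H_{01}^*(M)$ to write representatives as $[Q_0 m']$, so that the class in $FM$ is $\sigma^{-n}\otimes Q_0 m'$. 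By Corollary \ref{corr_actionstecoeff}(2), the naive lift $\sigma^{-n}\otimes Q_0 m'$ is $\qo$-closed for $n$ even, but for $n$ odd its $\qo$-image is $a\sigma^{-n+1}\otimes Q_0 m'$; the $Q_0$-acyclicity of $\rp M$ (deduced from that of $M$ via K\"unneth) lets us correct the lift in the odd case to $\tilde x = \sigma^{-n}\otimes Q_0 m' + a\sigma^{-n+1}\otimes m'$, which is indeed $\qo$-closed.

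Third, compute $\qi(\tilde x)$, divide by $a$, and reduce modulo $a$, using the $\ste(1)$-identity $Sq^2 Q_0 = Q_0 Sq^2 + Q_1$ and the values of $\qi(\sigma^{-n})$ from Corollary \ref{corr_actionstecoeff}(4). For $n$ odd, the $Q_1 m'$ piece coming from $Sq^2 Q_0 m' = Q_0 Sq^2 m' + Q_1 m'$ is cancelled in characteristic two by the analogous term produced by the correction $a\sigma^{-n+1}\otimes m'$, and only $\sigma^{-n}\otimes Q_0 Sq^2 m'$ survives modulo $a$; this is $\sqop[Q_0 m']$ by the definition in Lemma \ref{lemma_sq2diff}(2), proving $d_1 = \tilde a\,\sqop$. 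For $n$ even no such correction is needed, and the computation produces $\sigma^{-n}\otimes Sq^2 m$ with $m$ a $Ker_{Q_0}\cap Ker_{Q_1}$-representative, giving $d_1 = \tilde a\,Sq^2$ via Lemma \ref{lemma_sq2diff}(1).

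The main obstacle will be producing the explicit $\lo$-lift in the odd case so that the potentially obstructing $a\sigma^{-n}\otimes Q_1 m'$ term cancels in characteristic two. This cancellation is the algebraic manifestation of the cofibre sequence $\gr_+ \to S^0 \to S^{\alpha}$: the odd-$k$ case is precisely where the element $\tilde a$ interacts non-trivially with $Q_0$-acyclicity, forcing the use of $[Sq^2, Q_0] = Q_1$ in $\ste(1)$ to identify the output with $\sqop$ rather than $Sq^2$.
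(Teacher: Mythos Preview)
Your proposal is correct and follows exactly the approach the paper sketches: identify $d_1=\rho\circ\beta$ via the standard Bockstein description of the connecting homomorphism, lift representatives to $Ker_{\qo}(\rp M)$, apply $\qi$, divide by $a$, and reduce modulo $a$. The paper's own proof simply says this is ``a straightforward computation using this description'' and cites \cite[4.1.A]{BG10}; you have supplied precisely that computation, including the key step the paper suppresses, namely the $\lo$-correction $\tilde x=\sigma^{-n}\otimes Q_0 m'+a\sigma^{-n+1}\otimes m'$ in the odd-$k$ case and the cancellation of the two $a\sigma^{-n}\otimes Q_1 m'$ terms that produces $\sqop$ rather than $Sq^2$.
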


\begin{proof}
With the notations of Lemma \ref{lemma_sel_xa}, the morphism $d_1$ is the composite $$H_{01}^{ \star}( M[ \sigma^{ -1}]) \stackrel{ \beta}{ \rightarrow} H_{01}^{ \star+2}( \rp M)  \stackrel{ \rho}{ \rightarrow} H_{01}^{ \star + 2}( M[ \sigma^{ -1}]).$$
The edge of the exact sequence is given explicitely by the (well-known) description of the Bockstein spectral sequence (see for example \cite[4.1.A]{BG10}).
The end of the proof is a straightforward computation using this description.
\end{proof}

\begin{pro} \label{pro_aactionsurh01}
Let $M$ be a $Q_0$-acyclic $ \ste(1)$-module.
Suppose
\begin{enumerate}
 \item  $Ker_{Q_0}(M) \cap Ker_{Q_1}(M)$ is $Sq^2$-acyclic,
 \item $H_{01}^*(M)$ is $Sq^2$-acyclic
 \item and $H_{01}^*(M)$ is $Sq^{ \rotatebox{180}{$^2$}}$-acyclic.
\end{enumerate}
Then $$Ker_a( H_{01}^{ \star}( \rp M)) = H_{01}^{ \star}( \rp M)$$
and
$$ H_{01}^{ \star}( \rp M) = Ker_{d_1}(H_{01}^{ \star}( M[ \sigma^{-1}])).$$
\end{pro}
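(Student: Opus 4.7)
The plan is to exploit the Bockstein spectral sequence described in the paragraph following Lemma \ref{lemma_sel_xa}: it arises from the long exact sequence of that lemma viewed as an exact couple, with $E_1$-page isomorphic to $(H_{01}^{\star} \circ F)(M)[\tilde{a}]$ (where $\tilde{a}$ has $RO(\gr)$-degree $-\alpha$ and homological degree $1$) and converges to $H_{01}^{\star}(\rp M)$ filtered by powers of $a$. I will show that the three hypotheses force $E_2$ to collapse onto the $\tilde{a}^0$-column, where it coincides with $Ker_{d_1}$; convergence together with the resulting long exact sequence will then deliver both conclusions.

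The first step is to decompose $E_1$ using Lemma \ref{lemma_h01msigma} into $\bigl(Ker_{Q_0}(M) \cap Ker_{Q_1}(M) \oplus \sigma^{-1} H_{01}^*(M)[\sigma^{-1}]\bigr)[\tilde{a}]$, and to read off from Lemma \ref{lemma_bockstein_xa} that on each $\alpha$-homogeneous summand the differential $d_1$ acts as $\tilde{a}\,Sq^2$ when the $\alpha$-coefficient is even and as $\tilde{a}\,\sqop$ when it is odd. A routine calculation shows that for a $D$-acyclic $\Lambda_{\F}(D)$-module $V$ (with $D$ either $Sq^2$ or $\sqop$) the cohomology of $(V[\tilde{a}], \tilde{a}D)$ is $Ker\,D$ concentrated in $\tilde{a}^0$-degree. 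Applying hypothesis (1) to the $k=0$ summand $Ker_{Q_0}\cap Ker_{Q_1}(M)$, hypothesis (2) to the summands $\sigma^{-k}H_{01}^*(M)$ for even $k\geq 2$, and hypothesis (3) to those with $k$ odd, I would conclude that $E_2$ is concentrated in the $\tilde{a}^0$-column and equals $Ker_{d_1}(H_{01}^{\star}(M[\sigma^{-1}]))$.

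Since $E_2$ lives entirely in $\tilde{a}^0$-degree, each higher differential $d_r$ (for $r\geq 2$) lands in a zero group, so $E_\infty = E_2$. To transport this to $H_{01}^{\star}(\rp M)$, I use that $\rp M$ is generated over $\E_{H\mf^{\star}_{\gr}}$ by $\F[a,\sigma^{-1}] \otimes M$ with both $a$ and $\sigma^{-1}$ of non-negative $\alpha$-coefficient. Hence in each fixed $RO(\gr)$-degree $\star$ the group $H_{01}^{\star-s\alpha}(\rp M)$ vanishes as soon as $s$ exceeds the $\alpha$-coefficient of $\star$, making the $a$-filtration $F^s = Im(a^s)$ bounded. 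Combined with $E_\infty^s = F^s/F^{s+1} = 0$ for $s\geq 1$, this forces $F^1 = 0$, giving the first conclusion $Ker_a(H_{01}^{\star}(\rp M)) = H_{01}^{\star}(\rp M)$.

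With $a = 0$, the long exact sequence of Lemma \ref{lemma_sel_xa} breaks into short exact sequences
\[
0 \to H_{01}^{\star}(\rp M) \xrightarrow{\rho} H_{01}^{\star}(M[\sigma^{-1}]) \xrightarrow{\beta} H_{01}^{\star+2}(\rp M) \to 0,
\]
so $\rho$ is injective. Since $d_1 = \rho \circ \beta$ and $\rho$ is injective, $Ker\,\beta = Ker\,d_1$, and $\rho$ identifies $H_{01}^{\star}(\rp M)$ with $Ker_{d_1}(H_{01}^{\star}(M[\sigma^{-1}]))$, which is the second conclusion. The main technical step is the identification of $E_2$: one must match the $\alpha$-parity of each summand from Lemma \ref{lemma_h01msigma} with the corresponding $d_1$-operation from Lemma \ref{lemma_bockstein_xa} and verify that the three hypotheses together handle every case. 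Convergence of the spectral sequence is automatic from the $\alpha$-boundedness of $\rp M$.
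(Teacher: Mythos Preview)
Your argument is correct and follows the same route as the paper: both proofs run the Bockstein spectral sequence of Lemma~\ref{lemma_sel_xa}, use Lemma~\ref{lemma_h01msigma} and Lemma~\ref{lemma_bockstein_xa} to see that the three hypotheses force $E_2$ to be concentrated in homological degree~$0$, and read off both conclusions from $E_2=E_\infty$. Your version is in fact more explicit than the paper's on two points---the $\alpha$-boundedness of $\rp M$ guaranteeing convergence (hence $F^1=0$), and the splitting of the long exact sequence once $a=0$ to obtain the identification with $Ker_{d_1}$---but the underlying strategy is identical.
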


\begin{proof}
Consider the  Bockstein spectral sequence associated to the multiplication by the Euler class $a$ in $H_{01}^{ \star}$.
By definition, we have an isomorphism $E^1 \cong H_{01}^{ \star}( M[ \sigma^{ -1}])[ \tilde{a}]$, and the first differential $d_1$ is identified in Lemma \ref{lemma_bockstein_xa}.
Consequently, the hypothesis of the proposition are equivalent to: the Bockstein spectral sequence collapses at page $E^2$, because $E^2$ is concentrated in degrees of the form $ \{0 \} \times RO( \gr) \subset \mathbb{Z} \times RO( \gr)$. The product with $ \tilde{a}$ increases the homological degree, and the $E_2$ page is concentrated in homological degree $0$so, product with $ \tilde{a}$ is trivial on $E^2 = E^{ \infty}$. Therefore, product with $a$ on $H_{01}^{ \star}( \rp M)$, induced by the product with $ \tilde{a}$ on $E^2 = E^{ \infty}$ is trivial too. \\

Thus we have also identified the $E^{ \infty}$ page:
$$ E^{ \infty} = E^{2} = Ker_{d_1}( H_{01}^{ \star}(M[ \sigma^{-1}]).$$
\end{proof}

\section{The computation of $\mathcal{H}^{\star}(V)$: $H_{01}^{\star}R$ on free $\ste(1)$-modules}

The aim of this section is to compute $H^{\star}_{01}\circ R$ on free $\ste(1)$-modules. The result is expressed in Corollary \ref{pro_h01libre}.

\subsection{Duality}

A natural question we address now is the relationship between $H^{\star}_{01}$ and the $ \F$-linear duality functor, using Proposition \ref{pro_compatibilite_dualites}. \\

\begin{lemma} \label{lemma_dualrelatif}
Consider the functor $(-)^{ \vee} : \E\dashmod mod^{op} \rightarrow \E\dashmod mod$.
\begin{enumerate}
 \item $(-)^{ \vee}$ is $( \E, \lo)$-exact,
 \item $(-)^{ \vee}$ sends  $( \E, \lo)$-projective (resp. $( \E, \lo)$-injective) $\E$-modules on $( \E, \lo)$-projective (resp. $( \E, \lo)$-injective) $\E$-modules.
\end{enumerate}
\end{lemma}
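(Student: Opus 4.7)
The plan is to read off both assertions from the explicit classification of $(\E,\lo)$-projective (equivalently $(\E,\lo)$-injective) $\E$-modules given in Proposition \ref{pro_carac_proinjrel}.

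For (1), I will combine two standard facts. First, since $\F$ is a field, $(-)^{\vee} = \Hom_{\F}(-, \F)$ is already exact as a contravariant functor on $\F$-vector spaces, so it sends any short exact sequence of $\E$-modules to a short exact sequence of $\E$-modules. Second, an $(\E,\lo)$-exact sequence $A \inj B \surj C$ by definition admits a $\lo$-linear section $s : C \to B$; dualizing $s$ then yields a $\lo$-linear retraction $s^{\vee} : B^{\vee} \to C^{\vee}$ of the map $C^{\vee} \inj B^{\vee}$. The only point to check is that the $\F$-linear dual of a $\lo$-linear map is again $\lo$-linear for the natural $\E$-module structure on duals; in characteristic $2$ this is immediate since the antipode of the exterior algebra $\lo$ acts as the identity on its primitive generator $\qo$, so no sign intervenes.

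For (2), the two classes coincide by Proposition \ref{pro_carac_proinjrel}, and every such module has the form $M = \li \otimes_{\F} M'$ for some $\lo$-module $M'$ (equivalently, is a direct summand of an induced module $\E \otimes_{\lo} M'$). By the tensor-hom adjunction I get
$$(\li \otimes_{\F} M')^{\vee} \cong (M')^{\vee} \otimes_{\F} \li^{\vee}$$
as $\F$-vector spaces, and I must upgrade this to an $\E$-module isomorphism. The key small computation is that $\li^{\vee}$, with its natural $\E$-module structure, is isomorphic to $\li$ itself up to a shift by $\Sigma^{-|\qi|}$ (with $\qo$ still acting trivially and $\qi$ acting by the unique nontrivial operator); dually, $(M')^{\vee}$ inherits a $\lo$-module structure from $M'$. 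Combining these, $M^{\vee} \cong \li \otimes_{\F} \Sigma^{-|\qi|}(M')^{\vee}$, which again lies in the classified form and so is $(\E,\lo)$-projective/injective.

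The main obstacle is purely bookkeeping: tracking how the Hopf algebra antipode enters into the $\E$-module structure on the $\F$-linear dual, and verifying that in the present setting (characteristic $2$, with $\lo$ and $\li$ primitively generated) all the relevant signs disappear, so the dual structure is the one used implicitly throughout. A minor secondary concern is that $(-)^{\vee}$ need not commute with infinite direct sum decompositions $\E \otimes V_F \oplus \li \otimes V_T$ appearing in Proposition \ref{pro_carac_proinjrel}; this is handled by working with the $RO(\gr)$-graded dual in the local-finiteness framework implicit in the paper, so that graded duality commutes with the splitting on each summand separately.
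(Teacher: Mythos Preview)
Your proposal is correct and follows essentially the same route as the paper: for (1) you invoke exactness of $(-)^{\vee}$ over $\F$ together with the observation that duality preserves $\lo$-split sequences, and for (2) you use Proposition \ref{pro_carac_proinjrel} together with the key identification $\li^{\vee} \cong \Sigma^{-|\qi|}\li$ (the paper writes this as $\Sigma^{-2-\alpha}\li$). The paper's own proof is terser, omitting the antipode and infinite-sum bookkeeping you flag, but the argument is the same.
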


\begin{proof}
 The first point is a consequence of the exactness of $(-)^{ \vee}$ and the definition of relative exactness.

The second point uses Proposition \ref{pro_carac_proinjrel}. There is a $ \E$-module isomorphism $( \li )^{ \vee}  \cong \Sigma^{-2- \alpha} \li$.
Thus, for $M$ a $ \E$-module, the dual $( \E \otimes_{ \lo} M)^{ \vee} \cong ( \li \otimes_{ \F} M)^{ \vee} \cong \Sigma^{-2- \alpha} \E \otimes_{ \lo} (M^{ \vee})$ (because $ \li$ is of finite dimension) is also a $( \E, \lo)$-projective module by the last point of Proposition \ref{pro_carac_proinjrel}. Thus, the functor $(-)^{ \vee}$ preserves $( \E, \lo)$-projective $ \E$-modules. \\
\end{proof}

The use of duality makes appear another functor, related to $H_{01}^{ \star}$:

\begin{nota}
 Denote $H^{01}_{ \star}$ the functor $$ \Sigma^{| \qi|}\Le_1( (Id/(Im_{ \qo}  + Im_{ \qi})) ) : \E\dashmod mod \rightarrow \F\dashmod mod.$$
\end{nota}

%
%Recall Definition \ref{de_tatehomology} of $ \hat{\Le_i}$ and $ \hat{\R^i}$.

\begin{pro} \label{pro_commh01dualite}
\begin{enumerate}
 \item The following diagram commutes up to natural isomorphism:
$$ \xymatrix{ \E\dashmod mod^{op} \ar[d]_{H_{01}^{op}} \ar[r]^{ \vee} & \E\dashmod mod \ar[d]^{H^{01}} \\ \F\dashmod mod^{op} \ar[r]_{ \vee} &  \F\dashmod mod.} $$
\item Moreover, for all $i \in \Z$, there is a natural isomorphism between $H^{01}_{ \star}$ and $$ \Sigma^{i | \qi|}\hat{\Le_i}( (Id/(Im_{ \qo}  + Im_{ \qi})) ) : \E\dashmod mod \rightarrow \F\dashmod mod.$$
\end{enumerate}
\end{pro}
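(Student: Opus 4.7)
The plan is to use the duality properties of $(-)^{\vee}$ established in Lemma \ref{lemma_dualrelatif}, combined with Proposition \ref{pro_identh01} which identifies $H_{01}^{\star}$ with a shift of $\extrel^1(\F,-)$, so as to convert $\extrel$-computations on dualized modules into $\torrel$-computations on the originals. The underlying formal input is the standard $\F$-linear tensor-hom adjunction
$$\Hom_{\E}(M, N^{\vee}) \cong (M \otimes_{\E} N)^{\vee},$$
applied with $M = \F$.

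For (1), I would take a $(\E, \lo)$-projective resolution $P_{\bullet} \to M$. By Lemma \ref{lemma_dualrelatif}, dualizing produces a $(\E, \lo)$-injective resolution $M^{\vee} \to P_{\bullet}^{\vee}$, and the previous adjunction (applied degreewise) yields an isomorphism of cochain complexes $\Hom_{\E}(\F, P_{\bullet}^{\vee}) \cong (P_{\bullet} \otimes_{\E} \F)^{\vee}$. Since $\F$-linear duality is exact, taking cohomology commutes with it, so
$$\extrel^1(\F, M^{\vee}) = H^1(\Hom_{\E}(\F, P_{\bullet}^{\vee})) \cong (H_1(P_{\bullet} \otimes_{\E} \F))^{\vee} = \torrel_1(M, \F)^{\vee}.$$
Now Proposition \ref{pro_identh01} gives $H_{01}^{\star}(M^{\vee}) \cong \Sigma^{-|\qi|}\extrel^1(\F, M^{\vee})$, and by definition $H^{01}_{\star}(M) = \Sigma^{|\qi|}\Le_1(\mathrm{Id}/(Im_{\qo}+Im_{\qi}))(M) = \Sigma^{|\qi|}\torrel_1(M,\F)$. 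Combining these with the displayed isomorphism above produces the natural isomorphism $H_{01}^{\star}(M^{\vee}) \cong (H^{01}_{\star}(M))^{\vee}$, i.e.\ commutativity of the diagram.

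For (2), I would repeat the argument of Proposition \ref{pro_tateh01} in the homological (left-derived) setting. The Tate complex $T_{\bullet}$ from Corollary \ref{corr_resfd} is $(\E,\lo)$-exact, consists of $(\E,\lo)$-projective modules, and is $\qi$-periodic in the sense that $T_{\bullet+1} \cong \Sigma^{|\qi|}T_{\bullet}$. Applying the right exact functor $(-) \otimes_{\E} \F = \mathrm{Id}/(Im_{\qo}+Im_{\qi})$ to $M \otimes T_{\bullet}$ and taking homology therefore gives Tate-derived functors $\widehat{\Le_i}(\mathrm{Id}/(Im_{\qo}+Im_{\qi}))(M)$ which agree with $\Le_i(\mathrm{Id}/(Im_{\qo}+Im_{\qi}))(M)$ for $i \geq 1$, and whose periodicity implies
$$\Sigma^{i|\qi|}\widehat{\Le_i}(\mathrm{Id}/(Im_{\qo}+Im_{\qi})) \cong \Sigma^{|\qi|}\Le_1(\mathrm{Id}/(Im_{\qo}+Im_{\qi})) = H^{01}_{\star}$$
for every $i \in \Z$.

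The main technical obstacle should be the careful verification that the adjunction isomorphism $\Hom_{\E}(\F, N^{\vee}) \cong (\F \otimes_{\E} N)^{\vee}$ respects the $(\E,\lo)$-relative structure and is compatible with the periodicity/shift conventions: one must track the $RO(\gr)$-grading and in particular the $|\qi|$-shifts in (1), and establish that the periodicity of the Tate complex survives tensoring with $M$ in the right variance to yield (2). Both are essentially bookkeeping on top of Lemma \ref{lemma_dualrelatif} and Corollary \ref{corr_resfd}, but they are where the proof must be written out with care.
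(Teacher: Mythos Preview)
Your approach is essentially the same as the paper's: both hinge on the degree-zero identification $\Hom_{\E}(\F,-)^{\vee}\cong (Id/(Im_{\qo}+Im_{\qi}))\circ(-)^{\vee}$ (which is exactly your tensor--hom adjunction $\Hom_{\E}(\F,N^{\vee})\cong(\F\otimes_{\E}N)^{\vee}$ stated differently), together with Lemma~\ref{lemma_dualrelatif} to pass to derived functors, and then the periodicity of the Tate complex $T_{\bullet}$ for part~(2).

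One small variance slip: starting from a projective resolution $P_{\bullet}\to M$ and dualizing gives an injective resolution of $M^{\vee}$, so what you actually compute is $\extrel^1(\F,M^{\vee})\cong(\torrel_1(M,\F))^{\vee}$, i.e.\ $H_{01}(M^{\vee})\cong (H^{01}(M))^{\vee}$. The diagram in the proposition asserts the transposed statement $H^{01}(M^{\vee})\cong (H_{01}(M))^{\vee}$. These are equivalent under double duality, but if you want to land on the stated direction without that assumption, start instead from a $(\E,\lo)$-injective resolution $M\to I^{\bullet}$, dualize to a projective resolution of $M^{\vee}$, and run the same adjunction argument; this is a one-line fix and is in fact closer to what the paper's isomorphism~(\ref{eqn_comparaisonliri}) records.
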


\begin{proof}
Consider the diagram
$$ \xymatrix{ \E\dashmod mod^{op}  \ar[d]_{Hom_{ \E}( \F, -)} \ar[r]^{ \vee} & \E\dashmod mod \ar[d]^{(Id/(Im_{ \qo}  + Im_{ \qi}))} \\ 
\F\dashmod mod^{op} \ar[r]_{ \vee} &  \F\dashmod mod, } $$
which is commutative because of the natural isomorphisms $Hom_{ \E}( \F, -)^{ \vee} \cong (Ker_{ \qo} \cap Ker_{ \qi})^{\vee} \cong (Id/(Im_{ \qo}  + Im_{ \qi})) ((-)^{ \vee})$.
By Lemma \ref{lemma_dualrelatif}, the dual of a $( \E, \lo)$-projective resolution is a $( \E, \lo)$-injective resolution, consequently there is a natural isomorphism
\begin{equation} \label{eqn_comparaisonliri}
 ( \R^i(\Hom{ \E}( \F, -)))^{ \vee} \cong \Le_i( (Id/(Im_{ \qo}  + Im_{ \qi}))) \circ (-)^{ \vee}
\end{equation}
the first point now follows from the definition of $H_{ 01}$ and $H^{01}$. \\

For the second point, for $ i \geq 1$ the result follows from the same isomorphism and Proposition \ref{pro_identh01}. We deduce an isomorphism for all $i \in \Z$ between $ \widehat{\Le_i}( (Id/(Im_{ \qo}  + Im_{ \qi})))$ and $ \Sigma^{| \qi|} \widehat{\Le_{i+1}}( (Id/(Im_{ \qo}  + Im_{ \qi})))$. The result follows.
\end{proof}

\begin{de}
Define $ \E\dashmod mod_{ \qo}$ to be the full subcategory of $ \E\dashmod mod$ consisting of $ \qo$-acyclic objects.
\end{de}

\begin{lemma} \label{lemma_deriveresbeta0}
Let $F, G : \E\dashmod mod \rightarrow \ab$ be two left or right  $( \E, \lo)$-exact functors such that the restriction of $F$ and $G$ to $ \E\dashmod mod_{ \qo}$ are the same.

Then, the Tate derived functors of $F$ and $G$ coincide on $ \E\dashmod mod_{ \qo}$.
\end{lemma}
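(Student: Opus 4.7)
The plan is to compute both sides using the functorial Tate complex $T_\bullet = T_\bullet^\F \otimes (-)$ from Corollary \ref{cor_resbeta0acyclique}, and to observe that this complex stays within $\E\dashmod mod_{\qo}$ whenever the input is $\qo$-acyclic.

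First I would verify that each term $T_n(M) = \Sigma^{n|\qi|}\li \otimes M$ belongs to $\E\dashmod mod_{\qo}$ whenever $M$ does. The key observation is that $\qo$ acts trivially on $\li$: indeed, $\li = \Lambda_\F(\qi)$ is naturally identified with the quotient $\E/(\qo \E)$ as an $\E$-module. Under the diagonal $\E$-action on the tensor product, this gives $\qo(a \otimes m) = a \otimes \qo(m)$, so the $\qo$-Margolis complex of $\li \otimes M$ is just $\li \otimes (M, \qo)$, and its homology factors as $\li \otimes H_{\qo}(M)$. When $M$ is $\qo$-acyclic this vanishes, and hence each $T_n(M)$ is $\qo$-acyclic.

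Once this is established, the conclusion is essentially formal. Since the restrictions of $F$ and $G$ to $\E\dashmod mod_{\qo}$ agree as functors, they take the same values on both the objects $T_n(M)$ and on the differentials $\qi : T_n(M) \to T_{n-1}(M)$. Hence the chain complexes $F(T_\bullet(M))$ and $G(T_\bullet(M))$ (or $F(T_{-\bullet}(M))$ and $G(T_{-\bullet}(M))$ in the left-exact case) coincide, and passing to homology yields $\widehat{\Le_i}F(M) = \widehat{\Le_i}G(M)$ (respectively $\widehat{\R^i}F(M) = \widehat{\R^i}G(M)$) for every $i \in \Z$.

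The only substantive point is the identification of the $\E$-module structure on $\li$ that forces $\qo$ to act as zero; all the remaining verifications are purely diagrammatic, and no obstacle beyond this identification should be expected.
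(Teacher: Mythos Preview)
Your proof is correct and follows the same approach as the paper: both arguments rest on the fact that the Tate complex $T_\bullet$ lands in $\E\dashmod mod_{\qo}$ when applied to a $\qo$-acyclic module, after which the conclusion is formal. You have simply spelled out explicitly (via the identification $\li \cong \E/\qo\E$ and the diagonal action) the reason each $T_n(M)$ is $\qo$-acyclic, whereas the paper cites this from Corollary~\ref{cor_resbeta0acyclique}.
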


\begin{proof}
By Lemma \ref{cor_resbeta0acyclique}, the functor $T_{ \bullet}$ restricts to $$ \E_{ \qo} \rightarrow Ch_{ \Z}( \E\dashmod mod_{ \qo}).$$

Consider the case when $F$ and $G$ are right $( \E, \lo)$-exact. Then, the functors $F( T_{ \bullet})$ and $G(T_{ \bullet})$ are naturally isomorphic, so there is a natural isomorphism $ \widehat{\Le}_i(F) = H_i(F( T_{ \bullet})) \cong H_i(G(T_{ \bullet})) = \widehat{\Le}_i(G)$.

The other case, $F$ and $G$ being left $( \E, \lo)$-exacts is analogous.
\end{proof}

\begin{lemma} \label{lemma_calculh01}
For all $i \geq 0$, there is a natural isomorphism
$$ H_{01}^{ \star} \cong \Sigma H^{01}_{ \star}$$
as functors $$ \E\dashmod mod_{ \qo} \rightarrow \F[a, \sigma^{-4}]\dashmod mod.$$
\end{lemma}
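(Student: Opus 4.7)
The plan is to prove the isomorphism via a direct comparison of the Tate complexes computing the two sides, after first identifying $H_{01}^{\star}$ and $H^{01}_{\star}$ as Tate derived functors of explicit endofunctors of $\E\dashmod mod$. By Propositions \ref{pro_identh01} and \ref{pro_tateh01}, the functor $H_{01}^{\star}$ agrees (up to a fixed cohomological suspension involving $|\qi|$) with the Tate right derived functor $\widehat{\R^1}\Hom_{\E}(\F,-)$. Analogously, by the definition of $H^{01}_{\star}$ together with Propositions \ref{pro_tate} and \ref{pro_commh01dualite}, the functor $H^{01}_{\star}$ agrees (up to an analogous suspension) with the Tate left derived functor $\widehat{\Le_1}(F)$, where $F(M) = M/(Im_{\qo}M + Im_{\qi}M)$.

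The heart of the proof will be a direct comparison of the two chain complexes $\Hom_{\E}(\F, T_{\bullet}(M))$ and $F(T_{\bullet}(M))$ obtained by applying these functors to the functorial Tate complex $T_{\bullet}(M) = T^{\F}_{\bullet} \otimes M$ from Corollary \ref{cor_resbeta0acyclique}. I would establish two natural isomorphisms of graded $\F$-vector spaces on each stage $\li \otimes M$. First, writing $Ker_{\qo}\cap Ker_{\qi}$ of $\li \otimes M$ using the coproduct $\Delta(\qi) = \qi \otimes 1 + 1 \otimes \qi$ and the fact that $\qo$ acts trivially on $\li$, one checks that its elements are exactly those of the form $1 \otimes \qi k + \qi \otimes k$ for $k \in Ker_{\qo}(M)$; this gives $\Hom_{\E}(\F, \li \otimes M) \cong \Sigma^{|\qi|}Ker_{\qo}(M)$. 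Second, quotienting by $Im_{\qo}$ and then imposing the relations $\qi \otimes m \equiv 1 \otimes \qi m$ coming from $Im_{\qi}$ collapses the $\li$-factor and yields $F(\li \otimes M) \cong M/Im_{\qo}(M)$. When $M$ is $\qo$-acyclic one has $Ker_{\qo} = Im_{\qo}$, so multiplication by $\qo$ provides a natural isomorphism $M/Im_{\qo}(M) \cong Ker_{\qo}(M)$ of cohomological degree $|\qo|$, and the two chain complexes $\Hom_{\E}(\F, T_{\bullet}(M))$ and $F(T_{\bullet}(M))$ become isomorphic up to the uniform suspension $\Sigma^{|\qi|+|\qo|}$.

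Taking homology of this levelwise isomorphism of complexes then yields an isomorphism between the relevant Tate derived functors on $\E\dashmod mod_{\qo}$, and reconciling with the suspensions from the first paragraph gives the claimed isomorphism $H_{01}^{\star} \cong \Sigma H^{01}_{\star}$. The $\F[a, \sigma^{-4}]$-module structure is preserved throughout by Lemma \ref{lemma_H01R} together with the naturality of all constructions. The main obstacle will be checking that the two levelwise isomorphisms are compatible with the Tate differentials, which are given by $\qi$-multiplication on the $\li$-factor of $T^{\F}_{\bullet}$: under both identifications the induced differential must correspond to the action of $\qi$ inherited on $Ker_{\qo}(M)$ and on $M/Im_{\qo}(M)$ (intertwined by the $\qo$-multiplication isomorphism), and this compatibility reduces to a short diagram chase using the commutation $\qo\qi = \qi\qo$; once checked, the identification of the resulting homology with $H_{01}^{\star}$ from Definition \ref{de_h01} is immediate.
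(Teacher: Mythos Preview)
Your approach is correct and reaches the same conclusion as the paper, but by a more direct route. The paper first identifies $\Sigma\,Id/(Im_{\qo}+Im_{\qi})$ with $Coker_{\qi}\circ Ker_{\qo}$ on $\qo$-acyclic modules (via the $\qo$-isomorphism $\Sigma Coker_{\qo}\cong Ker_{\qo}$), invokes Lemma~\ref{lemma_deriveresbeta0} to transfer this identification to Tate derived functors, and then runs the two spectral sequences of the bicomplex with rows $\Sigma^{|\qi|}Ker_{\qo}(T^M_{\bullet})\xrightarrow{\qi}Ker_{\qo}(T^M_{\bullet})$; acyclicity of the columns forces a $d_2$-isomorphism between the Tate derived functors of $Ker_{\qi}Ker_{\qo}$ and $Coker_{\qi}Ker_{\qo}$. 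You instead compute $\Hom_{\E}(\F,\li\otimes M)\cong\Sigma^{|\qi|}Ker_{\qo}(M)$ and $F(\li\otimes M)\cong Coker_{\qo}(M)$ explicitly, link them by the same $\qo$-isomorphism, and verify by hand that both induced differentials are ``multiplication by $\qi$'', so that the two Tate complexes are isomorphic up to a uniform suspension.

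Both arguments rest on the same two ingredients: the $\qo$-acyclicity identification $\Sigma Coker_{\qo}\cong Ker_{\qo}$ and the commutation $\qo\qi=\qi\qo$. Your version trades the spectral-sequence packaging for an elementary chain-level comparison, which is cleaner here since the bicomplex has only two columns anyway; the paper's route has the advantage of fitting into the general framework of Lemma~\ref{lemma_deriveresbeta0} without unpacking $\li\otimes M$ by hand. The one place to be careful in your write-up is the final bookkeeping of suspensions (the termwise shift is $\Sigma^{|\qi|+|\qo|}$, and this must be reconciled with the internal $|\qi|$-shifts built into the definitions of $H_{01}^{\star}$ and $H^{01}_{\star}$ via Propositions~\ref{pro_tateh01} and~\ref{pro_commh01dualite} to land on $\Sigma^{1}$), but you already flag this.
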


\begin{proof}
Restricting to the category $ \E\dashmod mod_{ \qo}$, there is a natural isomorphism between functors $ \E\dashmod mod_{ \qo} \rightarrow \F[a, \sigma^{ -4}]\dashmod mod$:

$$ \Sigma^{| \qo|} Coker_{ \qo} \stackrel{ \qo}{ \rightarrow} Ker_{ \qo}.$$

Consider the diagram
$$ \xymatrix{ \Sigma^{ | \qi|}Ker_{ \qo} \ar[r]^{ \qi} & Ker_{ \qo}\\
 \Sigma^{ 1+| \qi|}Coker_{ \qo} \ar[u]_{ \cong}  \ar[r]_{ \qi} & \Sigma Coker_{ \qo},  \ar[u]^{ \cong} }$$

Where the vertical isomorphisms are induced by $ \qo$. Thus, the commutativity of $ \qo$ and $ \qi$ imply the commutativity of the diagram. Thus, there are isomorphisms $ \Sigma Id/(Im_{ \qo}  + Im_{ \qi}) \cong Coker_{ \qi} \circ Ker_{ \qo} $. This gives a $4$-terms exact sequence of functors $  \E\dashmod mod_{ \qo} \rightarrow \F[a, \sigma^{ -4}]\dashmod mod$
\begin{equation} \label{eqn_k01k0k0c01}
 \Sigma^{ | \qi|} Ker_{ \qi} \circ Ker_{ \qo} \inj \Sigma^{ | \qi|} Ker_{ \qo} \stackrel{ \qi}{ \rightarrow} Ker_{ \qo} \stackrel{ \qo^{-1}}{ \surj} \Sigma Id/(Im_{ \qo}  + Im_{ \qi}).
\end{equation}

And Lemma \ref{lemma_deriveresbeta0} applies to the natural isomorphism $ \Sigma Id/(Im_{ \qo}  + Im_{ \qi}) \cong Coker_{ \qi} \circ Ker_{ \qo}$ and provides a natural isomorphism $\Le_i( \Sigma Id/(Im_{ \qo}  + Im_{ \qi})) \cong \Le_i(Coker_{ \qi} \circ Ker_{ \qo})$. \\

Denote for short $T^M_{\bullet} = T^{\F}_{\bullet} \otimes M$ the Tate resolution for $M$. The study of the two spectral sequences associated to the bicomplex

$$ \xymatrix{ 
\vdots & \vdots \\
\Sigma^{ | \qi|}Ker_{ \qo}( T^{M}_{ \bullet +1}) \ar[r]^{ \qi} \ar[u] & Ker_{ \qo}( T^{M}_{ \bullet +1})  \ar[u] \\
\Sigma^{ | \qi|}Ker_{ \qo}( T^{M}_{ \bullet})  \ar[r]^{ \qi} \ar[u]  & Ker_{ \qo}( T^{M}_{ \bullet})  \ar[u] \\
\Sigma^{ | \qi|}Ker_{ \qo}( T^{M}_{ \bullet -1})  \ar[r]^{ \qi}  \ar[u] & Ker_{ \qo}( T^{M}_{ \bullet -1}) \ar[u]  \\ 
 \vdots \ar[u] & \vdots \ar[u] }$$
 
concludes the proof.

\end{proof}

\subsection{Free $\ste(1)$-modules}

The aim of this subsection is to compute $H_{01}^{ \star}( R F)$, for free $ \ste(1)$-modules $F$. The result is given in Proposition \ref{pro_h01libre} which states that free $\ste(1)$ module have almost no $H_{01}^{ \star}( R(-))$. This is essential to the end of our computation, as it allows to understand $H_{01}^{ \star}( RM)$, for any $\ste(1)$-module $M$, knowing only $M$ up to stable isomorphism in a large range of bidegrees.

\begin{lemma} \label{lemma_h01ste1}
\begin{enumerate}
 \item There is a $ \Lambda_{ \F}(Sq^2)$-module isomorphism 
 $$Ker_{Q_0}( \ste(1)) \cap Ker_{Q_1}( \ste(1)) \cong \{  Sq^2  Sq^2 , Sq^2  Sq^2 Sq^2 \} \F,$$ 
 with $Sq^2 ( Sq^2 Sq^2 ) = Sq^2 Sq^2 Sq^2$. In particular, this module is $Sq^2$-acyclic.
 \item The $ \Lambda(Sq^2)$-module $H_{01}^*( \ste(1))$ is trivial.
\end{enumerate}
\end{lemma}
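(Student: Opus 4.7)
The approach is a direct computation on the finite-dimensional algebra $\ste(1)$. Recall that $\ste(1)$ is $8$-dimensional with Poincaré polynomial $1+t+t^2+2t^3+t^4+t^5+t^6$ (it is the subalgebra of the mod $2$ Steenrod algebra whose dual is $\F[\xi_1,\xi_2]/(\xi_1^4,\xi_2^2)$). First I would fix the explicit $\F$-basis
$$\{1,\,Sq^1,\,Sq^2,\,Sq^3,\,Sq^2Sq^1,\,Sq^2Sq^2,\,Sq^2Sq^1Sq^2,\,Sq^2Sq^2Sq^2\},$$
with the understanding that $Sq^3=Sq^1Sq^2$, $Sq^2Sq^2=Sq^3Sq^1=Sq^1Sq^2Sq^1$, and $Sq^2Sq^2Sq^2=Sq^3Sq^3=Sq^5Sq^1$ (via the Adem relations).

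Next I would tabulate $Q_0=Sq^1$ acting on this basis by left multiplication, using $Sq^1Sq^1=0$ and the fact that $Sq^1Sq^3=Sq^3Sq^2=0$ (again by Adem). The nonzero values are $Sq^1\cdot 1=Sq^1$, $Sq^1\cdot Sq^2=Sq^3$, $Sq^1\cdot Sq^2Sq^1=Sq^2Sq^2$, and $Sq^1\cdot Sq^2Sq^1Sq^2=Sq^2Sq^2Sq^2$; the remaining four basis elements are annihilated. Thus $Ker_{Q_0}(\ste(1))$ is the four-dimensional space spanned by $Sq^1,Sq^3,Sq^2Sq^2,Sq^2Sq^2Sq^2$. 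I would then compute $Q_1=Sq^1Sq^2+Sq^2Sq^1$ on those four generators: one gets $Q_1Sq^1=Sq^3Sq^1=Sq^2Sq^2$ (since $Sq^2Sq^1Sq^1=0$), $Q_1Sq^3=Sq^1Sq^2Sq^3=Sq^1Sq^2Sq^1Sq^2=Sq^2Sq^2Sq^2$ (using $Sq^2Sq^3=Sq^2Sq^1Sq^2$), and $Q_1(Sq^2Sq^2)=Q_1(Sq^2Sq^2Sq^2)=0$, the last two vanishing either by direct Adem computation or simply because they land in degrees $>6$ where $\ste(1)$ is zero.

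From this table, $Ker_{Q_0}\cap Ker_{Q_1}$ is exactly $\F\{Sq^2Sq^2,\,Sq^2Sq^2Sq^2\}$, and the identity $Sq^2\cdot Sq^2Sq^2=Sq^2Sq^2Sq^2$ is just associativity, while $Sq^2\cdot Sq^2Sq^2Sq^2$ lives in degree $8$ and so is zero. This gives the claimed $\Lambda_{\F}(Sq^2)$-module isomorphism, exhibiting $Ker_{Q_0}\cap Ker_{Q_1}$ as a free $\Lambda_{\F}(Sq^2)$-module on the generator $Sq^2Sq^2$; freeness over an exterior algebra on a square-zero element is equivalent to $Sq^2$-acyclicity. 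For part (2), the same table shows $Im(Q_1\colon Ker_{Q_0}\to Ker_{Q_0})=\F\{Sq^2Sq^2,\,Sq^2Sq^2Sq^2\}$, which coincides with $Ker_{Q_0}\cap Ker_{Q_1}$, so the quotient $H_{01}^*(\ste(1))$ vanishes.

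The only real risk of error is in the Adem bookkeeping for elements of degree $\geq 5$; I would double-check $Sq^2Sq^3=Sq^5+Sq^4Sq^1=Sq^2Sq^1Sq^2$ and $Sq^3Sq^3=Sq^5Sq^1=Sq^2Sq^2Sq^2$, and use the dimension argument (no elements in $\ste(1)$ above degree $6$) whenever possible to short-circuit a product computation. Once the $Q_0$- and $Q_1$-actions are recorded correctly the rest is immediate.
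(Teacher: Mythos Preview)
Your proof is correct and is essentially the same direct computation as the paper's. The only difference is one of packaging: the paper observes up front that $\ste(1)$, viewed as a $\Lambda_{\F}(Q_0,Q_1)$-module, is free of rank two on the generators $1$ and $Sq^2$, which immediately yields $Ker_{Q_0}\cap Ker_{Q_1}=\{Q_0Q_1\cdot 1,\ Q_0Q_1\cdot Sq^2\}=\{Sq^2Sq^2,\ Sq^2Sq^2Sq^2\}$ and $H_{01}^*(\ste(1))=0$ without tabulating every basis element; your element-by-element table establishes exactly this freeness by hand.
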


\begin{proof}
We use that the image of $ \ste(1)$ by the forgetful functor$ \ste(1)\dashmod mod \rightarrow \Lambda_{\F}(Q_0, Q_1)\dashmod mod$ is isomorphic to $ \Lambda_{\F}(Q_0, Q_1) \oplus \Sigma^2 \Lambda_{\F}(Q_0, Q_1)$, a basis of this $ \Lambda_{\F}(Q_0, Q_1)$-module consists on $1$ and $Sq^2$. The $\F$-vector space structure of$Ker_{Q_0}( \ste(1)) \cap Ker_{Q_1}( \ste(1))$ and $H_{01}^*( \ste(1))$ follows.

Now, the action of $Sq^2$ on $Ker_{Q_0}( \ste(1)) \cap Ker_{Q_1}( \ste(1))$ is induced by the action of $Sq^2$ on the generators of $  \ste(1)$ as a $ \Lambda_{\F}(Q_0, Q_1)$-module: $1$ and $Sq^2$.
\end{proof}

\begin{pro} \label{pro_h01ste1}
 There is an identification
$$ H_{01}^{ \star}( R \ste(1)) = Sq^2  Sq^2 Sq^2 \F \oplus \sigma^2  Sq^1 \F.$$
\end{pro}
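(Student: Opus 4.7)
The plan is to split $R\ste(1) = \rp(\ste(1)) \oplus \rn(\ste(1))$ as in Definition \ref{de_rp} and treat each summand separately: apply Proposition \ref{pro_aactionsurh01} directly to compute $H_{01}^{\star}(\rp\ste(1))$, then deduce $H_{01}^{\star}(\rn\ste(1))$ by duality.

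First I would verify the three hypotheses of Proposition \ref{pro_aactionsurh01} for $M = \ste(1)$. The proof of Lemma \ref{lemma_h01ste1} establishes that $\ste(1)$ is free as a $\Lambda_{\F}(Q_0, Q_1)$-module on $\{1, Sq^2\}$, hence in particular $Q_0$-acyclic as a $\lo$-module. The same lemma identifies $Ker_{Q_0}(\ste(1)) \cap Ker_{Q_1}(\ste(1)) = \F\{Sq^2 Sq^2, Sq^2 Sq^2 Sq^2\}$ with $Sq^2(Sq^2 Sq^2) = Sq^2 Sq^2 Sq^2$ and $Sq^2(Sq^2 Sq^2 Sq^2) = 0$, making this two-dimensional module $Sq^2$-acyclic; and it shows $H_{01}^*(\ste(1)) = 0$, so the remaining two hypotheses are trivial. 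Proposition \ref{pro_aactionsurh01} then yields that $a$ acts trivially on $H_{01}^{\star}(\rp\ste(1))$ and
\[
H_{01}^{\star}(\rp\ste(1)) = Ker_{d_1}(H_{01}^{\star}(\ste(1)[\sigma^{-1}])).
\]
Combining Lemma \ref{lemma_h01msigma} with Lemma \ref{lemma_h01ste1} gives $H_{01}^{\star}(\ste(1)[\sigma^{-1}]) = \F\{Sq^2 Sq^2, Sq^2 Sq^2 Sq^2\}$, entirely concentrated in $\alpha$-degree zero; by Lemma \ref{lemma_bockstein_xa}, $d_1 = \tilde{a}\, Sq^2$ on this component sends $Sq^2 Sq^2$ to $\tilde{a}\cdot Sq^2 Sq^2 Sq^2 \neq 0$ and kills $Sq^2 Sq^2 Sq^2$. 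Hence $Ker_{d_1} = \F\{Sq^2 Sq^2 Sq^2\}$ in degree $6$.

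For the negative cone, I would invoke the self-duality $\ste(1)^{\vee} \cong \Sigma^{-6}\ste(1)$ (dual class paired with the top class $Sq^2 Sq^2 Sq^2$) combined with Proposition \ref{pro_compatibilite_dualites}, giving $(R\ste(1))^{\vee} \cong \Sigma^{2\alpha - 8} R\ste(1)$. Comparing the $\alpha$-coefficient ranges of the positive-cone ($\geq 0$) and negative-cone ($\leq -2$) summands on each side identifies $(\rn\ste(1))^{\vee} \cong \Sigma^{2\alpha - 8}\rp\ste(1)$. Because $\rp\ste(1)$ and $\rn\ste(1)$ are both $\qo$-acyclic as summands of the $\qo$-acyclic $R\ste(1) = H\mf^{\star}_{\gr}\otimes\ste(1)$ (acyclicity by K\"unneth over $\F$, using $Q_0$-acyclicity of $\ste(1)$), Lemma \ref{lemma_calculh01} and Proposition \ref{pro_commh01dualite} combine to give $H_{01}^{\star}(\rn\ste(1)) \cong \Sigma^{9 - 2\alpha}(H_{01}^{\star}(\rp\ste(1)))^{\vee}$, which is a copy of $\F$ in degree $3 - 2\alpha$. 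The only element of $\rn\ste(1)$ in this bidegree is $\sigma^2 \otimes Sq^1$, and a direct check confirms $\qo(\sigma^2 \otimes Sq^1) = \qi(\sigma^2 \otimes Sq^1) = 0$, since each of $\qo(\sigma^2)$, $\qi(\sigma^2)$, $a\sigma^2$, and $\sigma^{-1}\sigma^2 = \sigma$ vanishes in $H\mf^{\star}_{\gr}$ for degree reasons (no element of $H\mf^{\star}_{\gr}$ has $\alpha$-coefficient $-1$).

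The main obstacle is checking the $Sq^2$-acyclicity of $Ker_{Q_0}(\ste(1)) \cap Ker_{Q_1}(\ste(1))$ needed to apply Proposition \ref{pro_aactionsurh01}; this is however immediate from the explicit description in Lemma \ref{lemma_h01ste1}. The remainder is a formal assembly using the duality propositions and a degree count to pin down the generator of the negative-cone summand.
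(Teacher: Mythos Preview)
Your proof is correct and follows essentially the same route as the paper's: split $R\ste(1)=\rp\ste(1)\oplus\rn\ste(1)$, handle $\rp$ via Proposition~\ref{pro_aactionsurh01} using the input from Lemma~\ref{lemma_h01ste1}, and deduce $\rn$ from the self-duality $\ste(1)^{\vee}\cong\Sigma^{-6}\ste(1)$ through Propositions~\ref{pro_compatibilite_dualites}, \ref{pro_commh01dualite} and Lemma~\ref{lemma_calculh01}, finally pinning down the generator in degree $3-2\alpha$.

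One small imprecision: your parenthetical ``no element of $H\mf^{\star}_{\gr}$ has $\alpha$-coefficient $-1$'' justifies the vanishing of $\qi(\sigma^2)$, $a\sigma^2$ and $\sigma^{-1}\sigma^2$, but not of $\qo(\sigma^2)$, which lands in degree $3-2\alpha$ (twist $-2$). That group is nonetheless zero---the negative cone of $H\mf^{\star}_{\gr}$ at twist $-2$ is exactly $\F\{\sigma^2\}$ in degree $2-2\alpha$---so the conclusion $\qo(\sigma^2\otimes Sq^1)=0$ stands. The paper's own sentence at this point is comparably terse.
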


\begin{proof}
For degree reasons, there is  splitting $R \cong \rp(-) \oplus \rn(-)$. As the functor $H_{01}^{ \star}$ is additive, there is also a splitting $$H_{01}^{ \star} \circ R \cong H_{01}^{ \star} \circ \rp(-) \oplus  H_{01}^{ \star} \circ \rn(-).$$

Then, Lemma \ref{lemma_h01ste1} provides the hypothesis of Proposition \ref{pro_aactionsurh01} giving $$ H_{01}^{ \star}( \rp \ste(1)) = Sq^2  Sq^2 Sq^2 \F .$$ \\

To complete the computation, we use the duality properties of Proposition \ref{pro_commh01dualite} and Proposition \ref{pro_compatibilite_dualites}. We get
\begin{eqnarray*}
 H_{01}^{ \star}(R \ste(1))^{ \vee} & \cong & H^{01}_{ \star}( (R \ste(1))^{ \vee}) 
 \end{eqnarray*}
by the first point of Proposition \ref{pro_commh01dualite},
 
 \begin{eqnarray*}
H^{01}_{ \star}( (R \ste(1))^{ \vee}) & \cong & H^{01}_{ \star}( \Sigma^{-2+2 \alpha}R ( \ste(1)^{ \vee})) 
\end{eqnarray*}
by Proposition \ref{pro_compatibilite_dualites},

 \begin{eqnarray*}
H^{01}_{ \star}( \Sigma^{-2+2 \alpha}R ( \ste(1)^{ \vee}))  & \cong & H^{01}_{ \star}( \Sigma^{-2+2 \alpha}R ( \Sigma^{-6} \ste(1)))
\end{eqnarray*}
because $ \ste(1)^{ \vee} \cong \Sigma^{-6} \ste(1)$. Finally, Lemma \ref{lemma_calculh01} gives
 \begin{eqnarray*}
H^{01}_{ \star}( \Sigma^{-2+2 \alpha}R ( \Sigma^{-6} \ste(1))) & \cong & \Sigma^{-1}H_{01}^{ \star}( \Sigma^{-2+2 \alpha}R ( \Sigma^{-6} \ste(1))) \\
& \cong &  \Sigma^{-9+2 \alpha} H_{01}^{ \star}(R \ste(1)).
\end{eqnarray*}

For degree reasons, this isomorphism is compatible with the splitting $$H_{01}^{ \star} \circ R \cong H_{01}^{ \star} \circ \rp(-) \oplus  H_{01}^{ \star} \circ \rn(-),$$ and gives two isomorphisms

$$ H_{01}^{ \star} ( \rp( \ste(1)))^{ \vee} \cong \Sigma^{-9+2 \alpha} H_{01}^{ \star}( \rn( \ste(1)))$$
and
$$ H_{01}^{ \star} ( \rn( \ste(1)))^{ \vee} \cong \Sigma^{-9+2 \alpha} H_{01}^{ \star}( \rp( \ste(1))).$$

Consequently, $H_{01}^{ \star}( \rn( \ste(1)))$ is a one dimensional vector space, generated by an element in degree $-6+9-2 \alpha = 3-2 \alpha$. To conclude, see that $ \sigma^2 Sq^1$ is in $$Ker_{ \qo}(  \rn( \ste(1))) \cap Ker_{ \qi} ( \rn( \ste(1))),$$ which is a consequence of the fact that $H\mf_{\gr}^{*-\alpha}$ is trivial, and that it cannot be in the image of $ \qi$. The class it represents is thus a generator of $H_{01}^{ \star}( \rn( \ste(1)))$.
\end{proof}

\begin{corr} \label{pro_h01libre}
Let $F$ be a free $ \ste(1)$-module. Then
$$ H_{01}^{ \star} ( F)  \cong (F \otimes_{ \ste(1)} \F) \otimes_{ \F} \left( Sq^2  Sq^2 Sq^2 \F \oplus \sigma^2 Sq^1 \F \right).$$
In particular, this $ \F[a, \sigma^{-4}]$-module is concentrated in degrees of the form $ \mathbb{Z} \subset RO( \gr)$ and $ \mathbb{Z} - 2 \alpha \subset RO( \gr)$.
\end{corr}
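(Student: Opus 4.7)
The plan is to leverage additivity to reduce immediately to the computation already carried out in Proposition \ref{pro_h01ste1}. A free $\ste(1)$-module $F$ decomposes as $F \cong \bigoplus_{i \in I} \Sigma^{n_i} \ste(1)$ for some index set $I$ and integers $n_i$, and one observes that $F \otimes_{\ste(1)} \F \cong \bigoplus_{i \in I} \Sigma^{n_i} \F$ is precisely the $\F$-vector space of generators of $F$.

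The first step is to note that the functor $R : \ste(1)\dashmod mod \to \E\dashmod mod$ is additive, being the extension of scalars along $\F \to H\mf^{\star}_{\gr}$ equipped with the natural $\E$-action of Definition \ref{de:R}. Hence $R(F) \cong \bigoplus_i \Sigma^{n_i} R(\ste(1))$ in $\E\dashmod mod$. Next, $H_{01}^{\star}$ is additive as well: by Proposition \ref{pro_identh01} it agrees, up to a shift, with $\extrel^1(\F,-)$, and relative derived functors commute with arbitrary direct sums since the functorial $(\E,\lo)$-projective resolution $P_\bullet = (-)\otimes P^{\F}_\bullet$ of Corollary \ref{de_resolutions_fonctorielles} does.

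Combining these observations yields
$$ H_{01}^{\star}(R F) \;\cong\; \bigoplus_{i \in I} \Sigma^{n_i} H_{01}^{\star}(R\,\ste(1)),$$
and invoking Proposition \ref{pro_h01ste1} one identifies each summand with $\Sigma^{n_i}\!\left( Sq^2 Sq^2 Sq^2 \F \oplus \sigma^2 Sq^1 \F \right)$. Reassembling the sum on the left as a tensor product over $\F$ with $F \otimes_{\ste(1)} \F$ gives the claimed isomorphism. The concentration statement follows from the shape of the two generators in Proposition \ref{pro_h01ste1}: $Sq^2 Sq^2 Sq^2$ has integer $RO(\gr)$-degree $6$, while $\sigma^2 Sq^1$ has degree $3 - 2\alpha$, so every shift $\Sigma^{n_i}$ lands in $\Z \subset RO(\gr)$ or $\Z - 2\alpha \subset RO(\gr)$ respectively.

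There is no serious obstacle here, since all the work has been done in Proposition \ref{pro_h01ste1}; the only point requiring a line of care is checking that $H_{01}^{\star} \circ R$ commutes with the (possibly infinite) direct sum defining a free module, but this is immediate from the explicit functorial Tate resolution of Corollary \ref{de_resolutions_fonctorielles} together with the fact that $R$, being an extension of scalars, is a left adjoint and hence preserves colimits.
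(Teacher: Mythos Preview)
Your proof is correct and follows exactly the paper's approach: the paper's own argument is simply ``the result is essentially given by Proposition \ref{pro_h01ste1} and the additivity of the functors $H_{01}^{\star}$ and $R$,'' which is precisely what you have fleshed out. Your extra care about infinite direct sums and the explicit identification of $F \otimes_{\ste(1)} \F$ with the space of generators only makes the argument more complete.
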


\begin{proof}
The result is essentially given by Proposition \ref{pro_h01ste1} and the additivity of the functors $H_{01}^{ \star}$ and $R$.
\end{proof}

\section{The computation of $\mathcal{H}^{\star}(V)$ : the stable category} \label{section_stable}

In this section, we study the relationship between $H^{\star}_{01} R$ and the stable category of $\ste(1)$-modules. When the $\ste(1)$-module under consideration is $Q_0$-acyclic, there are nice tools to perform the computation, given in Proposition \ref{pro_calculh01} and Proposition \ref{pro_amodsurh01}.

\subsection{The computational tools}

The computation we did in Proposition \ref{pro_h01libre} says that free $\ste(1)$-modules have very small $H_{01}^{\star}R$-homology. This motivates the study of $H_{01}^{\star}R$ by neglecting free modules as a first approximation, that is to study the stable category of $\ste(1)$-modules. Good references are Margolis' book \cite[chapter 14]{Mar83}, and Palmieri \cite{Pal01}. \\

For a quick overview of the required material about the stable category, the reader can consult \cite[Definition 2.4, Propositions 2.5 and 2.6]{BrOssa} and the subsequent paragraphs. We follow the notations of \textit{loc. cit.}.

\begin{nota} \label{de_omega}
Let $M$ and $N$ be two $\ste(1)$-modules.
\begin{enumerate}
\item Let $\Ccal$ be a subcategory of $\ste(1) \dashmod mod$. $St(\Ccal)$ denotes the stable category associated to $\Ccal$. We use $\cong$ and $\simeq$ for the equivalences and the stable equivalences respectively.
\item $M^{red}$ denotes the reduced module associated to $M$, that is the isomorphism class of any module $M'$ stably isomorphic to $M$ which contains no free summands.
\item The desuspension in the stable category is denoted $\Omega$, and $\Omega_r := (\Omega(-))^{red}$.
\end{enumerate}
\end{nota}

The work we did in Section \ref{sec_hoistar}$\dashmod$\ref{section_stable} gives a very precise description of $H_{01}^{\star}\circ R$ for reduced $Q_0$-acyclic $\ste(1)$-modules, given in Proposition \ref{pro_calculh01}, which computes the $\F$-vector space structure of $H_{01}^{\star}\circ R$, and Proposition \ref{pro_amodsurh01}, which recover the $\F[a]$-module structure of it.

\begin{pro} \label{corr_h01loop}
Let $M$ be a $Q_0$-acyclic $ \ste(1)$-module. Let $F \surj M$ be the projective cover of $M$. There are isomorphisms
\begin{enumerate}
\item $H_{01}^{*}(R \Omega_r M) \cong H_{01}^{*}(R F),$
\item $ H_{01}^{*+2-2 \alpha}(R F) \cong H_{01}^{*+2-2 \alpha}(R M),$
\item and, for all $k \not\in \{ -2,-1 \}$, $ H_{01}^{*+k \alpha}(R M) \stackrel{ \cong}{ \rightarrow} H_{01}^{*+2+(k+1) \alpha}(R \Omega_r M)$,
\end{enumerate}
where $* \in \Z$.
\end{pro}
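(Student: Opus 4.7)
\emph{Plan.} I would begin with the short exact sequence of $\ste(1)$-modules associated to the projective cover,
$$0 \to \Omega_r M \to F \to M \to 0.$$
Note that $\Omega M = \Omega_r M$: any free summand of $\Omega M$ would be projective and hence split off $F$, contradicting the minimality of $F \surj M$. Since $M$ is $Q_0$-acyclic it is $\lo$-free, so the sequence splits as $\lo$-modules. Theorem~\ref{thm_secR} then applies and turns this, after applying $R$, into a short $(\E,\lo)$-exact sequence, and Proposition~\ref{pro_selh01} produces the long exact sequence
$$\cdots \to H_{01}^{\star-2-\alpha}(RM) \to H_{01}^{\star}(R\Omega_r M) \to H_{01}^{\star}(RF) \to H_{01}^{\star}(RM) \to H_{01}^{\star+2+\alpha}(R\Omega_r M) \to \cdots$$
on which the whole argument runs.

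Two vanishing facts organize the analysis. First, Corollary~\ref{pro_h01libre} tells us that $H_{01}^{\star}(RF)$ is concentrated in $\alpha$-gradings $0$ and $-2$, with explicit generators $Sq^2Sq^2Sq^2 g$ and $\sigma^2 Sq^1 g$ (for $g \in F/\mathrm{rad}\,F$) by Proposition~\ref{pro_h01ste1}. Second, the splitting $R \cong \rp \oplus \rn$ with $\rp$ in $\alpha$-grading $\geq 0$ and $\rn$ in $\alpha$-grading $\leq -2$ forces $H_{01}^{\star}(RM)$ and $H_{01}^{\star}(R\Omega_r M)$ to vanish in $\alpha$-grading $-1$. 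Writing $\star = * + k\alpha$, a direct inspection of the long exact sequence now gives (3) at once for all $k \notin \{-3,-2,-1,0\}$: both flanking $H_{01}^{\star}(RF)$ terms vanish by the first fact, so the connecting homomorphism is iso by exactness.

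The remaining cases concentrate in $\alpha$-gradings $0$ and $-2$. Thanks to the $\alpha$-grading $-1$ vanishing, both assertion (1) and the case $k = 0$ of (3) reduce to proving the single statement that $H_{01}^{*}(RF) \to H_{01}^{*}(RM)$ is the zero map in $\alpha$-grading $0$; dually, both (2) and the case $k = -3$ of (3) reduce to showing that $H_{01}^{*+2-2\alpha}(R\Omega_r M) \to H_{01}^{*+2-2\alpha}(RF)$ is zero in $\alpha$-grading $-2$. By the duality of Proposition~\ref{pro_compatibilite_dualites} together with Lemma~\ref{lemma_calculh01}, which identifies $H^{01}_{\star}$ with a shift of $H_{01}^{\star}$ and swaps the two $\alpha$-gradings (up to a fixed shift $\Sigma^{-9+2\alpha}$ as in the proof of Proposition~\ref{pro_h01ste1}), it suffices to treat only the $\alpha$-grading $0$ case.

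\emph{Main obstacle.} The vanishing of this last edge map. My approach would be to exploit the explicit description of the generators of $H_{01}^{*}(RF)$ in $\alpha$-grading $0$ as classes of $Sq^2Sq^2Sq^2 \cdot g$, to use the identity $Sq^2Sq^2Sq^2 = Sq^3Sq^2Sq^1 = Sq^3 Sq^2 Q_0$ in $\ste(1)$, and then to factor the image through the $Q_0$-acyclicity of $M$: the element $Sq^3 Sq^2 Q_0 f(g)$ in $M$ is a $Q_0$-boundary, and in $\alpha$-grading $0$ the group $H_{01}^{*}(RM)$ coincides with the common kernel of $\qo$ and $\qi$ on $M$ modulo the image of $\qi$ restricted to $Ker_{\qo}$, where the minimality of the projective cover combined with the $\lo$-freeness of $M$ forces such classes to be already reduced to zero. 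This step is where the careful bookkeeping really happens; once it is in place, assertions (1)--(3) drop out of the long exact sequence as described above.
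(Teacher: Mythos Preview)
Your overall architecture is exactly the paper's: set up the long exact sequence from the short $(\E,\lo)$-exact sequence $R\Omega_rM\to RF\to RM$, read off (3) for $k\notin\{-3,-2,-1,0\}$ from sparsity of $H_{01}^\star(RF)$ (Corollary~\ref{pro_h01libre}) and the vanishing at $\alpha$-grading $-1$, and reduce the $\alpha$-grading $-2$ case to the $\alpha$-grading $0$ case by duality (Proposition~\ref{pro_compatibilite_dualites} and Lemma~\ref{lemma_calculh01}). The paper does the duality step by applying the argument to the dualized sequence $M^\vee\hookrightarrow F^\vee\twoheadrightarrow(\Omega_rM)^\vee$, but this is the same content.

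The gap is in your treatment of the key vanishing at $\alpha$-grading $0$. The identity you quote is false: in the Steenrod algebra $Sq^3Sq^2=0$ (Adem), so $Sq^3Sq^2Sq^1=0\neq Sq^2Sq^2Sq^2$. More importantly, no factorization through $Q_0$ combined with $Q_0$-acyclicity alone can force $Sq^2Sq^2Sq^2\cdot f(g)=0$ in $M$: take $M=\ste(1)$ itself, which is $Q_0$-acyclic, and the element $Sq^2Sq^2Sq^2\cdot 1$ is nonzero. So ``factor through $Q_0$-acyclicity'' cannot be the mechanism, and your closing sentence is too vague to carry the step.

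What actually makes the argument work is that $M$ is \emph{reduced} (an assumption the paper uses implicitly; note the statement is applied only to reduced modules in Proposition~\ref{pro_calculh01}). The paper's argument is: since $H_{01}^{*+0\alpha}(RM)=\mathrm{Soc}(M)$, the map $H_{01}^*(RF)\to H_{01}^*(RM)$ sends the generator $Sq^2Sq^2Sq^2 g$ to $Sq^2Sq^2Sq^2 f(g)\in\mathrm{Soc}(M)$. If this were nonzero then, because $Sq^2Sq^2Sq^2$ spans the simple socle of the Frobenius algebra $\ste(1)$, the map $\ste(1)\to M$, $1\mapsto f(g)$ would be injective, hence split (self-injectivity of $\ste(1)$), contradicting reducedness. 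Replace your $Q_0$-acyclicity paragraph with this and the proof goes through exactly as you outlined.
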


\begin{proof}
Observe that the kernel of $F \surj M$ is $\Omega_r M$.
By $Q_0$-acyclicity, Proposition \ref{pro_selh01} applies to the short exact sequence $ 0 \rightarrow  \Omega_r M \rightarrow F \rightarrow M \rightarrow 0$, to provide long exact sequences
$$ \hdots \rightarrow H_{01}^{ \star}( R \Omega_r M) \rightarrow H_{01}^{ \star}( R F) \rightarrow H_{01}^{ \star}( R M)  \rightarrow H_{01}^{ \star+2+ \alpha}( R \Omega_r M) \rightarrow \hdots .$$
Moreover, sparcity of $H_{01}^{ \star}( R F)$, for $F$ free, as expressed in Proposition \ref{pro_h01libre} identify many terms in this long exact sequence to zero.

Observe also that, for any $ \ste(1)$-module $N$, one has $ RN^{* - \alpha} = 0$, and thus $H_{01}^{ * - \alpha}(RN) = 0$. \\

Consequently, in the portion of long exact sequence \\
$$ \hdots \rightarrow H_{01}^{ *-2- \alpha}( R M) \rightarrow H_{01}^{ *}( R \Omega_r M) \rightarrow H_{01}^{ *}( R F) $$ $$  \rightarrow H_{01}^{*}( R M)  \rightarrow H_{01}^{ *+2+ \alpha}( R \Omega_r M) \rightarrow H_{01}^{ *+2+ \alpha}( R F) \rightarrow \hdots, $$ \\
the terms $H_{01}^{ *-2- \alpha}( R M)$ and $H_{01}^{ *+2+ \alpha}( R F)$ are trivial (by Proposition \ref{pro_h01libre} for the second one), providing a $4$-terms exact sequence $ 0 \rightarrow H_{01}^{*}(R \Omega_r M) \rightarrow H_{01}^{*}(R F) \rightarrow H_{01}^{*}(R M) \rightarrow H_{01}^{*+2+ \alpha}(R \Omega_r M) \rightarrow 0 $. We now show that it splits into two isomorphisms. \\

Suppose that the morphism $H_{01}^{*}(R \Omega_r M) \rightarrow H_{01}^{*}(R F)$ is not surjective. Proposition \ref{pro_h01libre} imply the existence of an element in the class of $Sq^2 Sq^2 Sq^2 v \in H_{01}^{ \star}(F)$, for some $v \in F$ which is not in the image of $ \Omega_r M \rightarrow F$. For degree reasons, the class of $Sq^2 Sq^2 Sq^2 v$ contains one element, since $ \qi$ acts trivially in this degree. Thus $Sq^2 Sq^2 Sq^2 v$ is send to some $Sq^2 Sq^2 Sq^2 m$ in $M$ by the $ \ste(1)$-module morphism  $F \surj M$, and a copy of $ \ste(1)$ splits off: $(F \surj M) = f \oplus Id_{ \ste(1)} : F' \oplus \ste(1) \rightarrow M' \oplus \ste(1)$, thus $M$ is not reduced. Contradiction. We conclude that $H_{01}^{*}(R \Omega_r M) \rightarrow H_{01}^{*}(R F)$ is surjective, providing the isomorphisms $(1)$ and $(3)$ when $k = 0$. \\

Now, consider the portion
$$ \hdots \rightarrow H_{01}^{ *-3 \alpha}( R F) \rightarrow H_{01}^{ *-3 \alpha}( R M) \rightarrow H_{01}^{ *+2-2 \alpha}( R \Omega_r M) $$ $$ \rightarrow H_{01}^{ *+2-2 \alpha}( R F) \rightarrow H_{01}^{*+2-2 \alpha}( R M)  \rightarrow H_{01}^{ *+4- \alpha}( R \Omega_r M) \rightarrow  \hdots, $$
of the previous exact sequence. Once again, the terms $ H_{01}^{ *-3 \alpha}( R F)$ and $ H_{01}^{ *+4- \alpha}( R \Omega_r M)$ are trivial, giving a $4$-term exact sequence $$ 0 \rightarrow H_{01}^{*-3 \alpha}(R M) \rightarrow H_{01}^{*+2-2 \alpha}(R \Omega_r M) \rightarrow H_{01}^{*+2-2 \alpha}(R F) \rightarrow H_{01}^{*+2-2 \alpha}(R M) \rightarrow 0.$$ \\

The fact that  $M$ is split and $F \surj M$ minimal implies that $Ker(F \surj M) =  \Omega_r M$ is reduced. Consequently, the short exact sequence
$$ M^{ \vee} \inj F^{ \vee} \surj ( \Omega_r M)^{ \vee}$$
satisfy the hypothesis of the previous point, providing a $4$-term exact sequence
$$ 0 \rightarrow H_{01}^{*}(R  M^{ \vee}) \rightarrow H_{01}^{*}(R F^{ \vee}) \rightarrow H_{01}^{*}(R ( \Omega M)^{ \vee}) \rightarrow H_{01}^{*+2+ \alpha}(R ( \Omega M)^{ \vee}) \rightarrow 0 .$$
Consequently, there are isomorphisms
$$ H_{01}^{*}(R (M^{ \vee})) \cong H_{01}^{*+2+ \alpha}(R (( \Omega_r M)^{ \vee}))$$
and Lemma \ref{lemma_calculh01} together with Proposition \ref{pro_commh01dualite} yields isomorphisms
$$ H_{01}^{*-3 \alpha}(R M) \cong H_{01}^{*+2-2 \alpha}(R \Omega_r M).$$

We finish the proof by the easier cases. Let $k \not\in \{ -3,-2,-1,0 \}$, Proposition \ref{pro_h01libre} directly gives that both $H_{01}^{ *+k \alpha}( R F)$ and $H_{01}^{ *+(k+1) \alpha}( R F)$ are trivial, and the long exact sequence
$$ H_{01}^{ *+k \alpha}( R F) \rightarrow H_{01}^{ *+k \alpha}( R M) \rightarrow H_{01}^{ *+2+(k+1) \alpha}( R \Omega_r M) \rightarrow H_{01}^{ *+2+(k+1) \alpha}( R F)$$
gives the desired isomorphisms $ H_{01}^{*+k \alpha}(R M) \stackrel{ \cong}{ \rightarrow} H_{01}^{*+2+(k+1) \alpha}(R \Omega_r M)$. \\
\end{proof}

\begin{nota}
 Denote $ \kera : \ste(1)\dashmod mod \rightarrow \F\dashmod mod$ the socle, {\em i.e.} the functor $Hom_{ \ste(1)}( \F, -)$.
\end{nota}

\begin{pro} \label{pro_calculh01}
Let $M$ be a reduced $Q_0$-acyclic $ \ste(1)$-module.
 For all $n \geq 0$,
\begin{itemize}
 \item $H_{01}^{*+n \alpha}(RM) \cong \Sigma^{2n} \kera( \Omega_r^{-n}(M))$
 \item $H_{01}^{*-(n+2) \alpha}(RM) \cong  \Sigma^{-2n-5} \kera( \Omega_r^{n+2}(M))$
 \item $H_{01}^{*- \alpha}(RM) =0$.
\end{itemize}
\end{pro}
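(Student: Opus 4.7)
The plan is to prove the three statements successively: the third bullet by a direct vanishing argument, the $n=0$ case of the first bullet by explicit computation in twist $0$, the general case of the first bullet by iterating Proposition \ref{corr_h01loop}(3) in reverse, and the second bullet by combining a forward iteration of the same proposition with the duality formalism of Section \ref{sec_hoistar}.

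The vanishing $H_{01}^{*-\alpha}(RM)=0$ is immediate from Proposition \ref{pro_cohompoint}: the coefficient ring $H\mf^{\star}_{\gr}$ has no nonzero elements of bidegree $*-\alpha$ for $*\in\Z$, so $(RM)^{*-\alpha}=(H\mf^{\star}_{\gr}\otimes_{\F} M)^{*-\alpha}$ vanishes and $H_{01}^{*-\alpha}(RM)$ is a subquotient of zero. For the $n=0$ case of the first bullet I work directly in twist $0$: the grading forces $(RM)^{*}\cong M^{*}$, and Definition \ref{de:R} gives, for $m\in M$, the formulas $\qo(m)=Sq^1(m)$ and $\qi(m)=a\,Sq^2(m)+\sigma^{-1}Q_1(m)$. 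Since $a$ and $\sigma^{-1}$ are linearly independent in $H\mf^{\star}_{\gr}$, the condition $\qi(m)=0$ becomes $Sq^2(m)=0$ and $Q_1(m)=0$; combined with $\qo(m)=Sq^1(m)=0$, the Adem identity $Q_1=Sq^1Sq^2+Sq^2Sq^1$ makes the $Q_1$-condition automatic, so $(Ker_{\qo}\cap Ker_{\qi})|_{\text{twist }0}\cong\kera(M)$. The image $Im(\qi)\circ Ker(\qo)|_{\text{twist }0}$ vanishes since a representative would lift from twist $-\alpha$. For $n\geq 1$, Proposition \ref{corr_h01loop}(3) applied to $\Omega_r^{-1}M$ (reduced and $Q_0$-acyclic) and reparametrized yields $H_{01}^{*+m\alpha}(RM)\cong H_{01}^{*-2+(m-1)\alpha}(R\Omega_r^{-1}M)$ for $m\notin\{-1,0\}$; chaining through $m=n,n-1,\ldots,1$ gives $H_{01}^{*+n\alpha}(RM)\cong H_{01}^{*-2n}(R\Omega_r^{-n}M)$, and the base case applied to $\Omega_r^{-n}M$ produces the claimed $\Sigma^{2n}\kera(\Omega_r^{-n}(M))$.

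For the second bullet I iterate Proposition \ref{corr_h01loop}(3) forward $n$ times, starting at twist $-(n+2)\alpha$ and applying the isomorphism at the successive values $k=-(n+2),-(n+1),\ldots,-3$ (all avoiding the forbidden indices $\{-2,-1\}$); the iteration terminates at twist $-2\alpha$, giving $H_{01}^{*-(n+2)\alpha}(RM)\cong H_{01}^{*+2n-2\alpha}(R\Omega_r^{n}M)$, with $n=0$ requiring no iteration. Proposition \ref{corr_h01loop}(2) replaces $R\Omega_r^nM$ by its projective cover $RF_n$ at this specific twist, and Proposition \ref{pro_h01libre} reads off $H_{01}^{*+2n-2\alpha}(RF_n)\cong(\Omega_r^nM\otimes_{\ste(1)}\F)_{*+2n-3}$ via the class $\sigma^2 Sq^1$ of bidegree $3-2\alpha$. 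To match this expression with $\Sigma^{-2n-5}\kera(\Omega_r^{n+2}(M))$, I invoke the duality formalism of Section \ref{sec_hoistar}: Proposition \ref{pro_commh01dualite}, Lemma \ref{lemma_calculh01} and Proposition \ref{pro_compatibilite_dualites} combine to give a natural isomorphism $H_{01}^{d}(RM)\cong H_{01}^{-1-d+2\alpha}(R(M^{\vee}))^{\vee}$; substituting $d=*-(n+2)\alpha$, applying the already-established first bullet to $M^{\vee}$, and using the self-injectivity identifications $\Omega_r^{-k}(M^{\vee})\simeq(\Omega_r^{k}M)^{\vee}$ and $\kera(N^{\vee})\cong(N\otimes_{\ste(1)}\F)^{\vee}$ reproduces the required socle-expression. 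The main obstacle is the bookkeeping in the second bullet: tracking the precise suspension shifts through both the direct iteration and the duality chain, so that the ``top'' $\Omega_r^nM\otimes_{\ste(1)}\F$ coming from Proposition \ref{pro_h01libre} and the ``socle'' $\kera(\Omega_r^{n+2}M)$ in the statement match after the appropriate $\Sigma$-shift; this compatibility is an instance of the Frobenius-type duality carried by the self-injective algebra $\ste(1)$.
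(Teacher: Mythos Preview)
Your treatment of the third bullet and of the first bullet (both the $n=0$ base case and the inductive step via Proposition~\ref{corr_h01loop}(3)) is exactly the paper's argument; the paper's own proof of the second bullet is the single phrase ``The last isomorphism follows by a similar argument'', and your forward iteration through twists $-(n+2),\dots,-3$ down to twist $-2\alpha$ is precisely the analogous induction the paper has in mind.

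The genuine issue is in how you establish the base case at twist $-2\alpha$ and match it with the stated answer. First, your duality formula is off: chaining Proposition~\ref{pro_commh01dualite}, Lemma~\ref{lemma_calculh01}, and Proposition~\ref{pro_compatibilite_dualites} exactly as in the proof of Proposition~\ref{pro_h01ste1} gives
\[
\bigl(H_{01}^{\star}(RM)\bigr)^{\vee}\;\cong\;\Sigma^{-3+2\alpha}\,H_{01}^{\star}\bigl(R(M^{\vee})\bigr),
\qquad\text{i.e.}\qquad
H_{01}^{d}(RM)\;\cong\;\bigl(H_{01}^{\,3-d-2\alpha}(R(M^{\vee}))\bigr)^{\vee},
\]
not $H_{01}^{-1-d+2\alpha}(R(M^{\vee}))^{\vee}$ as you wrote; with your shift the substitution $d=*-(n+2)\alpha$ lands you at twist $n+4$ and hence at $\Omega_r^{-(n+4)}(M^{\vee})$, which does not unwind to $\kera(\Omega_r^{n+2}M)$.

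Second, even with the correct shift, both your routes (via Proposition~\ref{corr_h01loop}(2) plus Corollary~\ref{pro_h01libre}, or via duality plus the first bullet for $M^{\vee}$) produce
\[
H_{01}^{*-2\alpha}(RN)\;\cong\;(N\otimes_{\ste(1)}\F)^{*-3}.
\]
The Frobenius identity you invoke, $\kera(N^{\vee})\cong(N\otimes_{\ste(1)}\F)^{\vee}$, together with $\kera(F)\cong\Sigma^{6}(F\otimes_{\ste(1)}\F)$ for free $F$ and the fact that for $N$ reduced the socle of its projective cover lies entirely in $\Omega_rN$, yields $\kera(\Omega_rN)\cong\Sigma^{6}(N\otimes_{\ste(1)}\F)$. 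This identifies your expression with $\Sigma^{-3}\kera(\Omega_r^{\,n+1}M)$, which is off from the stated $\Sigma^{-5}\kera(\Omega_r^{\,n+2}M)$ by one syzygy and one shift. So the ``bookkeeping'' you flag as the main obstacle is not merely bookkeeping: the identification you sketch does not land where the statement says it should, and you need either a further step relating $\kera(\Omega_r^{n+1}M)$ to $\kera(\Omega_r^{n+2}M)$ in the specific degree, or to revisit which syzygy actually appears after the dualities are applied with the correct shift.
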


\begin{proof}
Let $M$ be a reduced $ \ste(1)$-module. Choose a minimal free module $F$ such that there is an epimorphism $F \surj M$. In these conditions, there is a short exact sequence
$$ \Omega_r(M) \inj F \surj M.$$

Consequently, Proposition \ref{corr_h01loop} apply to
$$ \Omega_r M \inj F \surj M.$$

The first step is to compute $H_{01}^{ \star}(RM)$ in integer grading: the $\F$-vector space $H_{01}^{* + 0 \alpha}(RM)$. By sparsity, no element of $M \cong 1 \otimes M \inj H \mf^{ \star}_{\gr} \otimes M \cong RM$ can be hit by $Im_{ \qi}(RM)$. Consequently $$H_{01}^{*+0 \alpha}(RM) = \left( Ker_{ \qo}(RM) \cap Ker_{ \qi}(RM) \right)^{*+0 \alpha}.$$ Now, Proposition \ref{thm:formula_qoi} give $$ \left( Ker_{ \qo}(RM) \cap Ker_{ \qi}(RM) \right)^{*+0 \alpha} = Ker_{Sq^1}(M) \cap Ker_{ Sq^2}(M) \subset RM$$ by definition of the action of $\qi$ on $RM$. The ring $ \ste(1)$ being generated by $Sq^1$ and $Sq^2$, we have $Ker_{Sq^1}(M) \cap Ker_{ Sq^2}(M) = \kera(M)$. \\

We now show $H_{01}^{*+n \alpha}(RM) = \Sigma^{2n} \kera( \Omega_r^{-n}(M))$ by induction on $n$. Let $n \geq 1$.
\begin{itemize}
 \item For $n=1$, it is contained in Proposition \ref{corr_h01loop}.
 \item For $n \geq 1$, $(3)$ and the last assertion of corollary \ref{corr_h01loop} applied to the short exact sequence $$ M \inj F \surj \Omega_r^{-1}(M) $$ gives $$H_{01}^{*+n \alpha}(RM)  \cong  H_{01}^{*-2+(n-1) \alpha}(R \Omega_r^{-1} M) \cong \Sigma^{2n} \kera( \Omega_r^{-n}(M)),$$ where the last isomorphism is provided by the induction hypothesis.
\end{itemize}

\vspace{0.3cm}

The last isomorphism follows by a similar argument.

%
%We show the last isomorphism by a similar argument. The last assertion and point $(2)$ of Proposition \ref{corr_h01loop} for the exact sequence
%$$ \Omega_r^{-n}(M) \inj F \surj M $$
%give $H_{01}^{*-3 \alpha}(RM) \cong H_{01}^{*+2-2 \alpha}(R \Omega_r M)$. Using again last assertion and point $(2)$ of Proposition \ref{corr_h01loop} for the short exact sequence $$ \Omega_r^2M \inj F^{(3)} \surj \Omega_r M,$$ where $F^{(3)} \surj \Omega_r M$ is the projective cover of $ \Omega_r(M)$, we get $H_{01}^{*+2-2 \alpha}(R \Omega_r M) \cong H_{01}^{*+2-2 \alpha}(RF^{(3)})$.Now, Proposition \ref{pro_h01libre} implies that $H_{01}^{*+2-2 \alpha}(RF^{(3)}) \cong H_{01}^{*+5}(RF^{(3)})$.
%The first and last assertions of Proposition \ref{corr_h01loop} for $ \Omega_r(M)$ implies $H_{01}^{*+5}(RF^{(3)}) \cong H_{01}^{*+5}(R \Omega_r^2M)$, but we just saw $$H_{01}^{*+5}(R \Omega_r^2M) \cong \Sigma^{-5} \kera( \Omega_r^2M).$$ \\
%
%An induction using $(3)$ of Proposition \ref{corr_h01loop} now gives $$H_{01}^{*-(n+2) \alpha}(RM) =  \Sigma^{-2n-5} \kera( \Omega_r^{n+2}(M)),$$
%concluding the proof of this point. \\

We already knew that $H_{01}^{*- \alpha}(RM) = 0$ by the structure of the coefficient ring $H\mf^{\star}_{\gr}$ and the definition of $R$. \\
\end{proof}

\begin{pro} \label{pro_amodsurh01}
Let $M$ be a reduced $Q_0$-acyclic $ \ste(1)$-module. Denote $ \sigma^2 M[ \sigma]$ the $ \E$-module $ Ker(a : \rn(M) \rightarrow \rn(M))$. There is a natural isomorphism:
$$ H_{ 01}^{ \star}( \sigma^2 M[ \sigma]) =  \sigma^2 M/(Im_{Q_1}(M) + Im_{ Q_0}(M)) \oplus \sigma^{3}H_{01}^*(M)[ \sigma].$$

Moreover, applying  $H_{01}^{ \star}$ provides two exact sequences
$$ H_{01}^{ \star- \alpha}( \rp M) \stackrel{a}{ \rightarrow} H_{01}^{ \star}( \rp M) \rightarrow  Ker_{Q_1} Ker_{ Q_0}(M) \oplus \sigma^{-1}H_{01}^*(M)[ \sigma^{-1}]$$
and 
$$ \sigma^2 M/(Im_{Q_1}(M) + Im_{ Q_0}(M)) \oplus \sigma^{3}H_{01}^*(M)[ \sigma] \rightarrow H_{01}^{ \star}( \rn M) \stackrel{a}{ \rightarrow} H_{01}^{ \star+ \alpha}( \rn M).$$
\end{pro}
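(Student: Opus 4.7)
The plan is to split the argument into three steps: first compute $H_{01}^{\star}(\sigma^2 M[\sigma])$ by imitating Lemma \ref{lemma_h01msigma}, then deduce the two exact sequences from appropriate short $(\E,\lo)$-exact sequences. For the isomorphism: since $a$ acts trivially on $\sigma^2 M[\sigma]$ by construction, the Cartan formulae of Proposition \ref{pro_cartancoeff} together with Corollary \ref{corr_actionstecoeff} (which show that both $\qo(\sigma^n)$ and $\qi(\sigma^n)$ are divisible by $a$) yield the explicit reductions $\qo(\sigma^n \otimes m) = \sigma^n \otimes Q_0(m)$ and $\qi(\sigma^n \otimes m) = \sigma^{n-1} \otimes Q_1(m)$ for $n \geq 3$, together with $\qi(\sigma^2 \otimes m) = 0$ (the would-be output $\sigma \otimes Q_1(m)$ lies in $\rp M$ and hence vanishes in $\rn M$). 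Using the $Q_0$-acyclicity of $M$, this gives $Ker_{\qo}(\sigma^2 M[\sigma]) = \bigoplus_{n \geq 2} \sigma^n \otimes Ker_{Q_0}(M)$; taking $\qi$-homology produces at the bottom level $n = 2$ the quotient $\sigma^2 \otimes Ker_{Q_0}(M)/Q_1(Ker_{Q_0}(M))$, which by $Q_0$-acyclicity coincides with $\sigma^2 M/(Im_{Q_0}(M) + Im_{Q_1}(M))$, and at each level $n \geq 3$ it produces $\sigma^n \otimes H_{01}^*(M)$ via Lemma \ref{lemma_identh01classique}.

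The first exact sequence follows immediately from Lemma \ref{lemma_sel_xa} applied to the short $(\E,\lo)$-exact sequence $(a)\rp M \inj \rp M \surj F(M)$, after rewriting $H_{01}^{\star}(F(M))$ via Lemma \ref{lemma_h01msigma}. For the second exact sequence, I would apply Proposition \ref{pro_selh01} to the short exact sequence
$$0 \to \sigma^2 M[\sigma] \to \rn M \to \rn M / \sigma^2 M[\sigma] \to 0,$$
which is $(\E,\lo)$-exact thanks to the $\lo$-freeness of $\sigma^2 M[\sigma]$ established in Step 1. The resulting long exact sequence in $H_{01}^{\star}$ contains the relevant three-term segment; identifying the quotient $\rn M/\sigma^2 M[\sigma]$ with the shifted submodule $(a\rn M)[\alpha]$ via multiplication by $a$, the composite $H_{01}^{\star}(\rn M) \to H_{01}^{\star}(\rn M/\sigma^2 M[\sigma]) \cong H_{01}^{\star+\alpha}(a\rn M) \to H_{01}^{\star+\alpha}(\rn M)$ sends $[x]$ to $[ax]$ and hence coincides with multiplication by $a$.

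The main obstacle will be the boundary analysis in Step 1: verifying that $Ker_{Q_0}(M)/Q_1(Ker_{Q_0}(M)) \cong M/(Im_{Q_0}(M) + Im_{Q_1}(M))$ requires exploiting $Q_0$-acyclicity of $M$ to replace $Ker_{Q_0}$ by $Im_{Q_0}$, together with the commutation $[Q_0, Q_1] = 0$ to rewrite $Q_0 M/Q_0 Q_1 M$ as $M/(Ker_{Q_0} + Im_{Q_1})$. A secondary subtlety is the verification of strict exactness at the middle term of the second exact sequence, which amounts to checking that the inclusion-induced map $H_{01}^{\star+\alpha}(a\rn M) \to H_{01}^{\star+\alpha}(\rn M)$ is injective on the image of $H_{01}^{\star}(\rn M)$; this is expected to follow from the explicit $\qo$-structure of $\rn M$ under $Q_0$-acyclicity. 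Everything else is routine bookkeeping within the $(\E,\lo)$-relative homological algebra framework of Section \ref{sec_hoistar}.
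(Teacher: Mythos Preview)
Your approach is correct and essentially the same as the paper's. Two minor points are worth cleaning up.

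First, your justification for $\qi(\sigma^2 \otimes m) = 0$ is slightly off: the would-be term $\sigma \otimes Q_1(m)$ does not ``lie in $\rp M$''; rather, $\sigma$ simply does not exist in $H\mf^{\star}_{\gr}$ (the negative cone starts at $\sigma^2$ and the positive cone contains only $\sigma^{-k}$ for $k\geq 0$), so the product $\sigma^2 \cdot \sigma^{-1}$ is zero. The conclusion is unchanged.

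Second, your ``secondary subtlety'' is an artifact of an unnecessary detour. Multiplication by $a$ is \emph{surjective} on the negative cone of $H\mf^{\star}_{\gr}$ (every $a^{-m}\sigma^{\ell}$ with $m\geq 0$, $\ell \geq 2$ is $a \cdot a^{-m-1}\sigma^{\ell}$), hence surjective on $\rn M$. Therefore the short exact sequence is simply
\[
\sigma^2 M[\sigma] \hookrightarrow \rn M \xrightarrow{\ a\ } \Sigma^{\alpha}\rn M,
\]
and the second map in the long exact sequence \emph{is} multiplication by $a$, with no further inclusion to analyse. This is exactly how the paper writes it; once you observe $a\rn M = \rn M$, your composite collapses to the identity and the issue disappears.
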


\begin{proof}
The first identification follows from the formula $ \qi(m) = \sigma^{-1} Q_1m + a Sq^2m$ for $m \in M \subset RM$ and Cartan formulae analogously to Lemma \ref{lemma_h01msigma}. %Indeed, $Ker_{ \qo}( \sigma^2 M[ \sigma]) = \sigma^2 Ker_{Q_0} M [ \sigma]$, and thus $H_{01}^{ \star}( \sigma^2 M[ \sigma]) = Ker_{ \qi}( \sigma^2 M[ \sigma])/Im_{ \qi}( \sigma^2 M[ \sigma])$, where $ \qi( \sigma^n m)= \sigma^{n-1} Q_1(m)$. \\

Now, the two desired exact sequences are
\begin{itemize}
\item the long exact sequence provided by Lemma \ref{lemma_sel_xa},
\item the long exact sequence obtained by applying $H_{01}^{ \star}$ to the  short exact sequence of $\qo$-acyclic $ \E$-modules
$$ \sigma^2 M[ \sigma] \inj \rn(M) \stackrel{a}{ \surj} \rn(M) $$
which is a short $( \E, \lo)$-exact sequence of $ \E$-module (same argument as in the proof of Lemma \ref{lemma_sel_xa}).
\end{itemize}
\end{proof}

\section{A computation of $\mathcal{H}^{\star}(V)$}

In this section, we compute $\mathcal{H}^{\star}(V)$, given in Corollary \ref{cor:hv}.

\subsection{The stable equivalence class of $ \widetilde{H \F}^*(BV)$}

\begin{lemma} \label{lemma_kunneth_hbv}
Let $P $ be the $ \ste(1)$-module $ \widetilde{H \F}^*(B \gr) = x\F[x]$ for a class $x$ in degree one.
There is a $ \ste(1)$-module isomorphism
$$ \widetilde{H \F}^*(BV_n) \cong \bigoplus_{i = 1}^{n} \left( P^{ \otimes i} \right)^{ \oplus  \begin{pmatrix} n \\ i \end{pmatrix}} . $$
\end{lemma}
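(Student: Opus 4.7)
The plan is to reduce to the classical Künneth formula applied to the product decomposition $BV_n \simeq (B\gr)^{\times n}$, and then to expand a tensor power of $\F \oplus P$ by the binomial formula.

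First, I would observe that since $V_n \cong (\gr)^n$ as groups, the classifying space splits as a product $BV_n \simeq (B\gr)^{\times n}$. The $\F$-cohomology of $B\gr$ is the polynomial ring $\F[x] = \F \oplus P$, where $\F$ sits in degree $0$ and $P = x\F[x]$ is the augmentation ideal, which coincides with the reduced cohomology. Applying the Künneth isomorphism iteratively, which is an isomorphism of $\ste(1)$-modules because the Steenrod algebra acts on the tensor product via its (cocommutative) diagonal, yields
\[ H\F^*(BV_n) \cong H\F^*(B\gr)^{\otimes n} \cong (\F \oplus P)^{\otimes n} \]
as $\ste(1)$-modules.

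Next, I would expand the tensor power. Since $\otimes$ distributes over $\oplus$, each summand of $(\F \oplus P)^{\otimes n}$ is obtained by choosing, for each of the $n$ factors, either $\F$ or $P$; tensoring with $\F$ being trivial, a summand is determined by the subset of factors contributing $P$. Collecting summands of the same cardinality gives
\[ (\F \oplus P)^{\otimes n} \cong \bigoplus_{i=0}^{n} (P^{\otimes i})^{\oplus \binom{n}{i}} \]
as $\ste(1)$-modules, where the $i=0$ summand is $\F$ concentrated in degree $0$.

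Finally, I would pass to reduced cohomology by removing exactly the degree-$0$ summand $\F$, which corresponds to the $i=0$ term, leaving
\[ \widetilde{H\F}^*(BV_n) \cong \bigoplus_{i=1}^{n} (P^{\otimes i})^{\oplus \binom{n}{i}}. \]
There is no real obstacle here: the only subtlety is to verify that the Künneth isomorphism respects the $\ste(1)$-action, which follows from the Cartan formula (i.e. from the compatibility of the Steenrod squares with cross products), and to confirm that the degree-$0$ part of $(\F \oplus P)^{\otimes n}$ contributes only the $i=0$ summand, which is immediate since $P$ is positively graded.
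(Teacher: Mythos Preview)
Your proof is correct and follows the same approach as the paper: the paper's argument is the one-line observation that $BV_n = (B\gr)^{\times n}$ gives ${BV_n}_+ = (B\gr)_+^{\wedge n}$, followed by an appeal to the K\"unneth formula. You have simply spelled out the binomial expansion and the passage to reduced cohomology in more detail.
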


\begin{proof}
Use $BV_n = (B \gr)^{ \times n}$ donne ${BV_n}_+ = (B \gr)_+^{ \wedge n}$, and the Künneth formula.
\end{proof}

The study of the stable equivalence class of $P^{ \otimes i}$ was done in \cite{BrOssa}. We now recall the results we use in our computation.

\begin{pro}[\emph{\cite[Corollary 3.3, Theorem 4.3]{BrOssa}}]  \label{pro_pnomega} \label{pro_periodiciteomega}
In the stable $ \ste(1)$-module category, $P^{ \otimes(n+1)}  \simeq \Omega^n \Sigma^{-n} P$, and $ \Omega^4P \simeq \Sigma^{12} P$.
\end{pro}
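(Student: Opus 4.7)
The plan is to follow the approach of Bruner and Ossa, splitting the proof into an inductive argument for the first statement and a direct verification of periodicity for the second. For the first statement $P^{\otimes(n+1)} \simeq \Omega^n \Sigma^{-n} P$, I proceed by induction on $n$; the essential content is the base case $P \otimes P \simeq \Omega \Sigma^{-1} P$. To establish it, I plan to exhibit an explicit short exact sequence of $\ste(1)$-modules
$$ 0 \to \Sigma(P \otimes P) \to F \to P \to 0 $$
with $F$ a free $\ste(1)$-module. By the definition of $\Omega$ as the kernel of a projective cover, such a presentation witnesses $\Sigma(P\otimes P) \simeq \Omega P$, and hence $P \otimes P \simeq \Omega \Sigma^{-1} P$. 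The construction of $F$ and the maps comes from a direct analysis of the $\ste(1)$-module structure on $P \otimes P = xy\F[x,y] \subset \F[x,y]$ via the Cartan formulas, identifying a convenient set of $\ste(1)$-generators of $P$ and lifting them compatibly.

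For the inductive step I would use that $\ste(1)$ is a cocommutative Hopf algebra: the untwisting isomorphism gives $\ste(1) \otimes_{\F} M \cong \ste(1) \otimes_{\F} M^{\mathrm{triv}}$ for every $\ste(1)$-module $M$, so the functor $- \otimes_{\F} P$ preserves $\ste(1)$-free modules and thus descends to an exact endofunctor of the stable category that commutes with $\Omega$ and with the internal suspension $\Sigma$. Tensoring the induction hypothesis with $P$ then yields
$$ P^{\otimes(n+1)} = P^{\otimes n} \otimes P \simeq \Omega^{n-1}\Sigma^{-(n-1)}(P \otimes P) \simeq \Omega^n \Sigma^{-n} P. $$
For the periodicity $\Omega^4 P \simeq \Sigma^{12} P$, the first statement reduces the claim to the stable equivalence $P^{\otimes 5} \simeq \Sigma^8 P$, which is the Bott-type $8$-periodicity phenomenon familiar from the relationship between $\ste(1)$-modules and $ko$-homology. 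To prove it I would construct an explicit $\ste(1)$-module map $\Sigma^8 P \to P^{\otimes 5}$ realizing the Bott class on the cohomology of $\mathbb{R}P^\infty$, and verify that it is a stable equivalence by a computation of the Margolis homologies with respect to $Q_0$ and $Q_1$; for reduced $Q_0$-acyclic $\ste(1)$-modules of this form, these invariants detect stable equivalence.

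The main obstacle will be the base case $P \otimes P \simeq \Omega \Sigma^{-1} P$: identifying the correct free module $F$ and verifying the $\ste(1)$-equivariance of the maps in the short exact sequence requires careful bookkeeping of the Cartan action on $\F[x,y]$ that is conceptually clear but computationally delicate. Once this explicit resolution is in hand the inductive step is formal, and the Bott-type periodicity follows from a parallel explicit calculation combined with Margolis-homology detection.
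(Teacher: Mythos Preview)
The paper does not give its own proof of this proposition; it is quoted directly from Bruner--Ossa \cite[Corollary 3.3, Theorem 4.3]{BrOssa} and used as a black box. Your outline is a reasonable reconstruction of the argument in that reference: induction on $n$ with the Hopf-algebra untwisting isomorphism to push $-\otimes P$ past $\Omega$ and $\Sigma$, and an explicit map plus Margolis-homology detection for the periodicity. Since there is no proof in the present paper to compare against, there is nothing further to say beyond noting that your sketch is consistent with the cited source.
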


\begin{figure}
 
\definecolor{qqqqff}{rgb}{0,0,1}
\definecolor{cqcqcq}{rgb}{0.75,0.75,0.75}

\begin{tikzpicture}[line cap=round,line join=round,>=triangle 45,x=1.0cm,y=1.0cm]
\clip(1.65,-3.42) rectangle (13,-0.84);
\draw (2,3)-- (4,3);
\draw (6,3)-- (8,4);
\draw (8,2)-- (10,3);
\draw (12,3)-- (14,3);
\draw [shift={(4,1.5)}] plot[domain=0.64:2.5,variable=\t]({1*2.5*cos(\t r)+0*2.5*sin(\t r)},{0*2.5*cos(\t r)+1*2.5*sin(\t r)});
\draw [shift={(8,7.36)}] plot[domain=4.28:5.14,variable=\t]({1*4.8*cos(\t r)+0*4.8*sin(\t r)},{0*4.8*cos(\t r)+1*4.8*sin(\t r)});
\draw [shift={(12,4.5)}] plot[domain=3.79:5.64,variable=\t]({1*2.5*cos(\t r)+0*2.5*sin(\t r)},{0*2.5*cos(\t r)+1*2.5*sin(\t r)});
\draw [shift={(7,6.5)}] plot[domain=4:4.93,variable=\t]({1*4.61*cos(\t r)+0*4.61*sin(\t r)},{0*4.61*cos(\t r)+1*4.61*sin(\t r)});
\draw [shift={(9,-0.5)}] plot[domain=0.86:1.79,variable=\t]({1*4.61*cos(\t r)+0*4.61*sin(\t r)},{0*4.61*cos(\t r)+1*4.61*sin(\t r)});
\draw (2.79,3.08) node[anchor=north west] {$\huge{Sq^1}$};
\draw (3.52,4.93) node[anchor=north west] {$\huge{Sq^2}$};
\draw (2,-2)-- (3,-2);
\draw (4,-2)-- (5,-2);
\draw (6,-2)-- (7,-2);
\draw (8,-2)-- (9,-2);
\draw (10,-2)-- (11,-2);
\draw (12,-2)-- (13,-2);
\draw (14,-2)-- (15,-2);
\draw (16,-2)-- (17,-2);
\draw [shift={(4,-2.79)}] plot[domain=0.67:2.48,variable=\t]({1*1.27*cos(\t r)+0*1.27*sin(\t r)},{0*1.27*cos(\t r)+1*1.27*sin(\t r)});
\draw [shift={(5,-1.32)}] plot[domain=3.74:5.68,variable=\t]({1*1.21*cos(\t r)+0*1.21*sin(\t r)},{0*1.21*cos(\t r)+1*1.21*sin(\t r)});
\draw [shift={(8,-2.83)}] plot[domain=0.69:2.45,variable=\t]({1*1.3*cos(\t r)+0*1.3*sin(\t r)},{0*1.3*cos(\t r)+1*1.3*sin(\t r)});
\draw [shift={(9,-1.35)}] plot[domain=3.72:5.71,variable=\t]({1*1.19*cos(\t r)+0*1.19*sin(\t r)},{0*1.19*cos(\t r)+1*1.19*sin(\t r)});
\draw [shift={(12,-2.87)}] plot[domain=0.72:2.43,variable=\t]({1*1.33*cos(\t r)+0*1.33*sin(\t r)},{0*1.33*cos(\t r)+1*1.33*sin(\t r)});
\draw [shift={(13,-1.28)}] plot[domain=3.77:5.66,variable=\t]({1*1.23*cos(\t r)+0*1.23*sin(\t r)},{0*1.23*cos(\t r)+1*1.23*sin(\t r)});
\draw [shift={(16,-2.83)}] plot[domain=0.69:2.45,variable=\t]({1*1.3*cos(\t r)+0*1.3*sin(\t r)},{0*1.3*cos(\t r)+1*1.3*sin(\t r)});
\draw [shift={(17.01,-1.46)}] plot[domain=3.63:5.79,variable=\t]({1*1.14*cos(\t r)+0*1.14*sin(\t r)},{0*1.14*cos(\t r)+1*1.14*sin(\t r)});
\draw (1.9,-1.88) node[anchor=north west] {$\large{x}$};
\draw (2.83,-1.91) node[anchor=north west] {$\large{x^2}$};
\draw (3.77,-1.88) node[anchor=north west] {$\large{x^3}$};
\end{tikzpicture}

\caption{The $ \ste(1)$-module $H \F^*(B \gr)$} \label{fig_hbz2}
\end{figure}

\begin{de} \label{de_pn}
Denote $P_{n+1} = \left( \Omega^n \Sigma^{-n} P \right)^{red}$. 
\end{de}
%
%\begin{rk}
%In particular, stable periodicity becomes
%$$ P_{n+4} \cong \Sigma^8 P_n.$$
%\end{rk}

\begin{pro}[\emph{\cite[figure 2 p.6]{BrOssa}}] \label{pro_identpn}
For $i=0,1,2,3$ and $4$ the $\ste(1)$-module $P_i$ is given by
\\
\\
\scalebox{.62}{
$$
\xymatrix{
P_0: &
x^{-1}
\ar[r]
\ar@/^1pc/[rr]
&
1
&
x
\ar[r]
& x^2
\ar@/^1pc/[rr]
& x^3
\ar[r]
\ar@/_1pc/[rr]
& x^4
& x^5
\ar[r]
& x^6
\ar@/_1pc/[rr]
& x^7
\ar[r]
\ar@/^1pc/[rr]
& x^8
& x^9
\ar[r]
&
x^{10}
&
\cdots
&&&
\\
\\
}
$$
} % end \scalebox
\\
\\
\scalebox{.62}{
$$
\xymatrix{
P_1: &
x
\ar[r]
& x^2
\ar@/^1pc/[rr]
& x^3
\ar[r]
\ar@/_1pc/[rr]
& x^4
& x^5
\ar[r]
& x^6
\ar@/_1pc/[rr]
& x^7
\ar[r]
\ar@/^1pc/[rr]
& x^8
& x^9
\ar[r]
&
x^{10}
\ar@/^1pc/[rr]
&
x^{11}
\ar[r]
\ar@/_1pc/[rr]
&
x^{12}
&
\cdots
&&&
\\
\\
}
$$
} % end \scalebox 
\\
\\
\scalebox{.62}{
$$
\xymatrix@R=9pt{
P_2: &
y_2
\ar[r]
\ar@/_1pc/[rr]
& y_3
\ar@/^1pc/[rr]
& x^4
\ar@/_1pc/[rr]
& y_5
\ar[r]
& y_6
&
&
&
&
&
&
&
\\
&
&
x^{3}
\ar[ru]
\ar@/_1pc/[rr]
&
& x^5
\ar[r]
& x^6
\ar@/_1pc/[rr]
& x^7
\ar[r]
\ar@/^1pc/[rr]
& x^8
& x^9
\ar[r]
& x^{10}
\ar@/^1pc/[rr]
& x^{11}
\ar[r]
\ar@/_1pc/[rr]
& x^{12}
&
\cdots
&&
\\
\\
}
$$
} % end \scalebox 
\\
\\
\scalebox{.62}{
$$
\xymatrix@R=9pt{
P_3: & y_3 \ar[r] \ar@/_1pc/[rr] & y_4 \ar@/^1pc/[rr] & x^{5} \ar[rd] \ar@/_1pc/[rr] & y_6 \ar[r] & y_7 \\
& & & & x^{6} \ar@/_1pc/[rr] & x^{7} \ar[r] \ar@/^1pc/[rr] & x^8  & x^9 \ar[r] & x^{10} \ar@/^1pc/[rr] & x^{11} \ar[r] \ar@/_1pc/[rr] & x^{12} & x^{13} \ar[r] & x^{14}  & \cdots \\ \\ } $$
} % end \scalebox
\\
\\
where $x^n$ and $y_n$ are in degree $n$.
\end{pro}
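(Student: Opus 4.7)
The strategy is to exploit the two stable equivalences of Proposition~\ref{pro_pnomega}: the identification $P^{\otimes(n+1)} \simeq \Omega^n \Sigma^{-n} P$ recasts each $P_{n+1}$ as the reduced part of an iterated loop of $P$, while the periodicity $\Omega^4 P \simeq \Sigma^{12} P$ bounds the induction at $n = 3$.

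For $P_0$ and $P_1$ I would work directly from the Wu formulae on the polynomial ring $\F[x]$ (respectively $\F[x,x^{-1}]$). The relations $Sq^1(x^{2k+1}) = x^{2k+2}$, $Sq^2(x^n) = \binom{n}{2} x^{n+2}$, and the ones for negative powers deduced from Cartan (in particular $Sq^1(x^{-1}) = 1$ and $Sq^2(x^{-1}) = x$), reproduce exactly the arcs displayed. A minimal-degree argument on putative generators rules out a free $\ste(1)$-summand in either module, so both are already reduced, giving $P_1 = P$ and the stated form of $P_0$.

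For $P_{n+1}$ with $n = 1, 2, 3$ the plan is an inductive minimal projective cover construction. Given the explicit description of $P_n$, I would choose one $\ste(1)$-generator for each class of $P_n/(Sq^1, Sq^2)$, form the corresponding free module $F_n$, take $\Omega P_n$ as the kernel of $F_n \twoheadrightarrow P_n$, shift down by one, and strip off free summands to obtain $P_{n+1}$. The fresh classes $y_k$ appearing in the diagrams are precisely representatives of Steenrod relations killed on passage to the kernel, while the tail of each diagram stabilises to a translate of $P$ because, in sufficiently high degrees, the minimal cover becomes a regular pattern dictated by the two generators $Sq^1, Sq^2$ of $\ste(1)$.

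The main obstacle is the bookkeeping: three successive loopings each introduce a small cap of new classes, and one must track their $Sq^1$- and $Sq^2$-linkage to the existing $x^k$ carefully enough to justify the precise arcs shown. Two consistency checks are available for free. First, one can cross-verify at each stage with the K\"unneth description of $P^{\otimes(n+1)}$ afforded by Proposition~\ref{pro_pnomega}. Second, the periodicity $\Omega^4 P \simeq \Sigma^{12} P$ forces $(P_5)^{red} = \Sigma^{12} P_1$, so the iteration terminates cleanly at $P_4$ and the four diagrams listed exhaust the non-equivalent $P_n$ up to suspension.
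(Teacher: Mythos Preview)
The paper does not supply a proof of this proposition: it is taken verbatim from \cite[figure~2, p.~6]{BrOssa} and stated without argument. Your sketch is a plausible reconstruction of how the diagrams are obtained, and the overall strategy---Wu formulae for $P_1$ and $P_0$, then iterated minimal projective covers for $P_2$ and $P_3$, with the periodicity $\Omega^4 P \simeq \Sigma^{12} P$ as a terminating check---is the standard one and presumably close to what is done in the cited reference.

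Two small points. First, $P_0$ is not the full Laurent polynomial ring $\F[x,x^{-1}]$ but the module $x^{-1}\F[x]$ (it starts at degree $-1$ and goes up); the Wu-formula computation you describe still gives the correct arrows on the classes that are actually present. Second, the periodicity yields $P_5 = (\Omega^4 \Sigma^{-4} P)^{red} \simeq (\Sigma^{-4}\Sigma^{12} P)^{red} = \Sigma^8 P$, not $\Sigma^{12} P_1$ as you wrote. Neither slip affects the soundness of the plan.
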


\begin{lemma} \label{lemma_kerapn}
 There are identifications
\begin{itemize}
\item $ \kera(P_{0}) = \F[x^4]$,
\item $ \kera(P_{1}) = x^4 \F[x^4]$,
\item $ \kera(P_{2}) = y^2 \F \oplus x^8 \F[x^4]$,
\item and  $ \kera(P_{3}) = y^2 \F \oplus x^8 \F[x^4]$
\end{itemize}
with the notations of Proposition \ref{pro_identpn}.
\end{lemma}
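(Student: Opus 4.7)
The plan is to compute $\kera(P_i) = \Hom_{\ste(1)}(\F, P_i)$ directly from the explicit $\ste(1)$-module structures displayed in Proposition \ref{pro_identpn}. Since $\ste(1)$ is generated as an $\F$-algebra by $Sq^1$ and $Sq^2$, an element $m \in P_i$ lies in $\kera(P_i)$ precisely when $Sq^1 m = 0 = Sq^2 m$; thus the problem reduces to listing, for each $P_i$, the basis vectors with no outgoing $Sq^1$ or $Sq^2$ arrow in its diagram.

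I will first dispatch the ``monomial tail'' common to all four modules using the classical formulas $Sq^1 x^n = n\, x^{n+1}$ and $Sq^2 x^n = \binom{n}{2}\, x^{n+2}$ modulo $2$. These give $Sq^1 x^n = 0$ exactly when $n$ is even, and $Sq^2 x^n = 0$ exactly when $n \equiv 0, 1 \pmod{4}$; hence $x^n$ lies in the socle of the tail if and only if $n \equiv 0 \pmod{4}$. Applied to $P_0$, whose basis is $\{x^{-1}, 1, x, x^2, \ldots\}$, this yields $\kera(P_0) = \F[x^4]$; the element $x^{-1}$ is excluded because the diagram records $Sq^1 x^{-1} = 1 \neq 0$. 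Applied to $P_1 = \{x, x^2, \ldots\}$ it yields $\kera(P_1) = x^4 \F[x^4]$.

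For $P_2$ and $P_3$ the same monomial computation, applied to the tail portion, contributes $x^8 \F[x^4]$ to the socle in each case, since in both diagrams the tail extends from degree below $8$ up to infinity and the only candidates for the socle in the tail are the $x^{4k}$ with $4k \geq 8$. It then remains to inspect the finite ``top piece'' of $y$-generators. In both modules every $y$-generator other than the one of maximal degree carries a non-trivial $Sq^1$ or $Sq^2$ arrow in the diagram (for instance, in $P_2$ one reads off $Sq^1 y_2 = y_3$, $Sq^2 y_3 = y_5$, $Sq^2 x^4 = y_6$, $Sq^1 y_5 = y_6$, and similarly for $P_3$), whereas the top generator---$y_6$ in $P_2$ and $y_7$ in $P_3$---has no outgoing Steenrod operation and so contributes the remaining one-dimensional summand of the socle, which the statement records as $y^2 \F$.

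No serious obstacle is anticipated; the argument is mechanical once the presentations in Proposition \ref{pro_identpn} are read correctly. The only care required is to avoid mis-parsing any of the curved/straight xymatrix arrows when enumerating outgoing operations on each $y$-generator, and to check the tail carefully around the gluing degree (the boundary between the finite top piece and the monomial tail) to ensure no extra socle class is overlooked.
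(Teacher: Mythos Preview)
Your proposal is correct and follows essentially the same approach as the paper: the paper's proof consists of the single sentence ``The result follows from Proposition \ref{pro_identpn}'', and what you outline is precisely the mechanical reading-off of socle elements from those diagrams that this sentence implicitly invokes. Your added care about the gluing degree (e.g.\ noting that $x^4$ in $P_2$ supports $Sq^2 x^4 = y_6$ and so is excluded) is exactly the sort of check needed to make the one-line proof honest.
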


\begin{proof}
% The sub-module $ \kera M$ consists precisely of elements on which $Sq^1$ and $Sq^2$ act trivially. 
The result follows from Proposition \ref{pro_identpn}.
\end{proof}

\subsection{The $\Z[a]$-module $\mathcal{H}^{\star}(V)$}

The computation of $H_{01}^{ \star}(H \mf^{ \star}_{\gr}(BV))$ goes as follows: 
\begin{itemize}
\item understand $H_{01}^{ \star}(R(P^{ \otimes n})))$ as a $\F$-vector space with Proposition \ref{pro_calculh01},
\item compute the $ \F[a]$-module structure by Proposition \ref{pro_amodsurh01},
\item assemble the results with Lemma \ref{lemma_kunneth_hbv}.
\end{itemize}

The first and most difficult step is Theorem \ref{pro_calculh01pn} which gives $H_{01}^{ \star}(RP_n)$.

\begin{nota} \label{nota_HPstar}
Denote $HP^{ \star}$ the $RO( \gr)$-graded $ \F[a, \sigma^{-4}]$-module $ \{ 1, x^4 \} \F \otimes_{ \F} \F[ a, \sigma^{-4}, v]/( a^3, av)$, with grading $|x^4| = 4$, $|a|= \alpha$, $| \sigma^{-4}| = -4+4 \alpha$ and $|v| = 1+ \alpha$ (see figure \ref{figure_hpstar}).
\end{nota}

\begin{figure}

\definecolor{qqqqff}{rgb}{0.33,0.33,0.33}
\definecolor{xdxdff}{rgb}{0.66,0.66,0.66}
\definecolor{cqcqcq}{rgb}{0.75,0.75,0.75}

\begin{tikzpicture}[line cap=round,line join=round,>=triangle 45,x=0.5cm,y=0.5cm]
\draw[->,color=black] (-9,0) -- (15,0);
\foreach \x in {-8,-6,-4,-2,2,4,6,8,10,12,14}
\draw[shift={(\x,0)},color=black] (0pt,2pt) -- (0pt,-2pt) node[below] {\footnotesize $\x$};
\draw[->,color=black] (0,-10) -- (0,10);
\foreach \y in {-10,-8,-6,-4,-2,2,4,6,8}
\draw[shift={(0,\y)},color=black] (2pt,0pt) -- (-2pt,0pt) node[left] {\footnotesize $\y$};
\draw[color=black] (0pt,-10pt) node[right] {\footnotesize $0$};
\clip(-10,-10) rectangle (15,10);
\draw (-1.2,9.94) node[anchor=north west] {$ \alpha$};
\draw (13.80,0.90) node[anchor=north west] {1};
\draw [domain=0.0:15.0] plot(\x,{(-0--23*\x)/23});
\draw [domain=4.0:15.0] plot(\x,{(-92--23*\x)/23});
\draw [domain=4.0:15.0] plot(\x,{(-184--23*\x)/23});
\draw [domain=8.0:15.0] plot(\x,{(-96--6*\x)/6});
\draw [domain=12.0:15.0] plot(\x,{(-360--18*\x)/18});
\draw [domain=12.0:15.0] plot(\x,{(-432--18*\x)/18});
\draw [domain=16.0:15.0] plot(\x,{(-56--2*\x)/2});
\draw [domain=0.0:15.0] plot(\x,{(--8--2*\x)/2});
\draw [domain=-4.0:15.0] plot(\x,{(--16--2*\x)/2});
\draw [domain=-4.0:15.0] plot(\x,{(--24--2*\x)/2});
\draw [domain=-8.0:15.0] plot(\x,{(--32--2*\x)/2});
\draw [domain=-8.0:15.0] plot(\x,{(--40--2*\x)/2});
\draw [domain=-12.0:15.0] plot(\x,{(--48--2*\x)/2});
\draw [domain=-12.0:15.0] plot(\x,{(--56--2*\x)/2});
\draw [domain=-16.0:15.0] plot(\x,{(--64--2*\x)/2});
\draw [domain=-16.0:15.0] plot(\x,{(--72--2*\x)/2});
\draw (-16,16)-- (-16,18);
\draw (-12,12)-- (-12,14);
\draw (-8,8)-- (-8,10);
\draw (-4,4)-- (-4,6);
\draw (0,0)-- (0,2);
\draw (4,-4)-- (4,-2);
\draw (8,-4)-- (30,18);
\draw (8,-8)-- (8,-6);
\begin{scriptsize}
\fill [color=xdxdff] (0,0) circle (2.5pt);
\fill [color=qqqqff] (1,1) circle (2.5pt);
\fill [color=qqqqff] (2,2) circle (2.5pt);
\fill [color=qqqqff] (3,3) circle (2.5pt);
\fill [color=qqqqff] (4,4) circle (2.5pt);
\fill [color=qqqqff] (5,5) circle (2.5pt);
\fill [color=qqqqff] (6,6) circle (2.5pt);
\fill [color=qqqqff] (7,7) circle (2.5pt);
\fill [color=qqqqff] (8,8) circle (2.5pt);
\fill [color=qqqqff] (9,9) circle (2.5pt);
\fill [color=qqqqff] (10,10) circle (2.5pt);
\fill [color=qqqqff] (11,11) circle (2.5pt);
\fill [color=qqqqff] (12,12) circle (2.5pt);
\fill [color=qqqqff] (13,13) circle (2.5pt);
\fill [color=qqqqff] (14,14) circle (2.5pt);
\fill [color=qqqqff] (15,15) circle (2.5pt);
\fill [color=qqqqff] (16,16) circle (2.5pt);
\fill [color=qqqqff] (17,17) circle (2.5pt);
\fill [color=qqqqff] (18,18) circle (2.5pt);
\fill [color=qqqqff] (19,19) circle (2.5pt);
\fill [color=qqqqff] (20,20) circle (2.5pt);
\fill [color=qqqqff] (21,21) circle (2.5pt);
\fill [color=qqqqff] (22,22) circle (2.5pt);
\fill [color=qqqqff] (23,23) circle (2.5pt);
\fill [color=qqqqff] (4,0) circle (2.5pt);
\fill [color=qqqqff] (5,1) circle (2.5pt);
\fill [color=qqqqff] (6,2) circle (2.5pt);
\fill [color=qqqqff] (7,3) circle (2.5pt);
\fill [color=qqqqff] (8,4) circle (2.5pt);
\fill [color=qqqqff] (9,5) circle (2.5pt);
\fill [color=qqqqff] (10,6) circle (2.5pt);
\fill [color=qqqqff] (11,7) circle (2.5pt);
\fill [color=qqqqff] (12,8) circle (2.5pt);
\fill [color=qqqqff] (13,9) circle (2.5pt);
\fill [color=qqqqff] (14,10) circle (2.5pt);
\fill [color=qqqqff] (15,11) circle (2.5pt);
\fill [color=qqqqff] (16,12) circle (2.5pt);
\fill [color=qqqqff] (17,13) circle (2.5pt);
\fill [color=qqqqff] (18,14) circle (2.5pt);
\fill [color=qqqqff] (19,15) circle (2.5pt);
\fill [color=qqqqff] (20,16) circle (2.5pt);
\fill [color=qqqqff] (21,17) circle (2.5pt);
\fill [color=qqqqff] (22,18) circle (2.5pt);
\fill [color=qqqqff] (23,19) circle (2.5pt);
\fill [color=qqqqff] (24,20) circle (2.5pt);
\fill [color=qqqqff] (25,21) circle (2.5pt);
\fill [color=qqqqff] (26,22) circle (2.5pt);
\fill [color=qqqqff] (27,23) circle (2.5pt);
\fill [color=qqqqff] (4,-4) circle (2.5pt);
\fill [color=qqqqff] (5,-3) circle (2.5pt);
\fill [color=qqqqff] (6,-2) circle (2.5pt);
\fill [color=qqqqff] (7,-1) circle (2.5pt);
\fill [color=qqqqff] (8,0) circle (2.5pt);
\fill [color=qqqqff] (9,1) circle (2.5pt);
\fill [color=qqqqff] (10,2) circle (2.5pt);
\fill [color=qqqqff] (11,3) circle (2.5pt);
\fill [color=qqqqff] (12,4) circle (2.5pt);
\fill [color=qqqqff] (13,5) circle (2.5pt);
\fill [color=qqqqff] (14,6) circle (2.5pt);
\fill [color=qqqqff] (15,7) circle (2.5pt);
\fill [color=qqqqff] (16,8) circle (2.5pt);
\fill [color=qqqqff] (17,9) circle (2.5pt);
\fill [color=qqqqff] (18,10) circle (2.5pt);
\fill [color=qqqqff] (19,11) circle (2.5pt);
\fill [color=qqqqff] (20,12) circle (2.5pt);
\fill [color=qqqqff] (21,13) circle (2.5pt);
\fill [color=qqqqff] (22,14) circle (2.5pt);
\fill [color=qqqqff] (23,15) circle (2.5pt);
\fill [color=qqqqff] (24,16) circle (2.5pt);
\fill [color=qqqqff] (25,17) circle (2.5pt);
\fill [color=qqqqff] (26,18) circle (2.5pt);
\fill [color=qqqqff] (27,19) circle (2.5pt);
\fill [color=qqqqff] (8,-4) circle (2.5pt);
\fill [color=qqqqff] (9,-3) circle (2.5pt);
\fill [color=qqqqff] (10,-2) circle (2.5pt);
\fill [color=qqqqff] (11,-1) circle (2.5pt);
\fill [color=qqqqff] (12,0) circle (2.5pt);
\fill [color=qqqqff] (13,1) circle (2.5pt);
\fill [color=qqqqff] (14,2) circle (2.5pt);
\fill [color=qqqqff] (15,3) circle (2.5pt);
\fill [color=qqqqff] (16,4) circle (2.5pt);
\fill [color=qqqqff] (17,5) circle (2.5pt);
\fill [color=qqqqff] (18,6) circle (2.5pt);
\fill [color=qqqqff] (19,7) circle (2.5pt);
\fill [color=qqqqff] (20,8) circle (2.5pt);
\fill [color=qqqqff] (21,9) circle (2.5pt);
\fill [color=qqqqff] (22,10) circle (2.5pt);
\fill [color=qqqqff] (23,11) circle (2.5pt);
\fill [color=qqqqff] (24,12) circle (2.5pt);
\fill [color=qqqqff] (25,13) circle (2.5pt);
\fill [color=qqqqff] (26,14) circle (2.5pt);
\fill [color=qqqqff] (27,15) circle (2.5pt);
\fill [color=qqqqff] (28,16) circle (2.5pt);
\fill [color=qqqqff] (29,17) circle (2.5pt);
\fill [color=qqqqff] (30,18) circle (2.5pt);
\fill [color=qqqqff] (8,-8) circle (2.5pt);
\fill [color=qqqqff] (9,-7) circle (2.5pt);
\fill [color=qqqqff] (10,-6) circle (2.5pt);
\fill [color=qqqqff] (11,-5) circle (2.5pt);
\fill [color=qqqqff] (12,-4) circle (2.5pt);
\fill [color=qqqqff] (13,-3) circle (2.5pt);
\fill [color=qqqqff] (14,-2) circle (2.5pt);
\fill [color=qqqqff] (15,-1) circle (2.5pt);
\fill [color=qqqqff] (16,0) circle (2.5pt);
\fill [color=qqqqff] (17,1) circle (2.5pt);
\fill [color=qqqqff] (18,2) circle (2.5pt);
\fill [color=qqqqff] (19,3) circle (2.5pt);
\fill [color=qqqqff] (20,4) circle (2.5pt);
\fill [color=qqqqff] (21,5) circle (2.5pt);
\fill [color=qqqqff] (22,6) circle (2.5pt);
\fill [color=qqqqff] (23,7) circle (2.5pt);
\fill [color=qqqqff] (24,8) circle (2.5pt);
\fill [color=qqqqff] (25,9) circle (2.5pt);
\fill [color=qqqqff] (26,10) circle (2.5pt);
\fill [color=qqqqff] (27,11) circle (2.5pt);
\fill [color=qqqqff] (28,12) circle (2.5pt);
\fill [color=qqqqff] (29,13) circle (2.5pt);
\fill [color=qqqqff] (30,14) circle (2.5pt);
\fill [color=qqqqff] (12,-8) circle (2.5pt);
\fill [color=qqqqff] (13,-7) circle (2.5pt);
\fill [color=qqqqff] (14,-6) circle (2.5pt);
\fill [color=qqqqff] (15,-5) circle (2.5pt);
\fill [color=qqqqff] (16,-4) circle (2.5pt);
\fill [color=qqqqff] (17,-3) circle (2.5pt);
\fill [color=qqqqff] (18,-2) circle (2.5pt);
\fill [color=qqqqff] (19,-1) circle (2.5pt);
\fill [color=qqqqff] (20,0) circle (2.5pt);
\fill [color=qqqqff] (21,1) circle (2.5pt);
\fill [color=qqqqff] (22,2) circle (2.5pt);
\fill [color=qqqqff] (23,3) circle (2.5pt);
\fill [color=qqqqff] (24,4) circle (2.5pt);
\fill [color=qqqqff] (25,5) circle (2.5pt);
\fill [color=qqqqff] (26,6) circle (2.5pt);
\fill [color=qqqqff] (27,7) circle (2.5pt);
\fill [color=qqqqff] (28,8) circle (2.5pt);
\fill [color=qqqqff] (29,9) circle (2.5pt);
\fill [color=qqqqff] (30,10) circle (2.5pt);
\fill [color=qqqqff] (12,-12) circle (2.5pt);
\fill [color=qqqqff] (13,-11) circle (2.5pt);
\fill [color=qqqqff] (14,-10) circle (2.5pt);
\fill [color=qqqqff] (15,-9) circle (2.5pt);
\fill [color=qqqqff] (16,-8) circle (2.5pt);
\fill [color=qqqqff] (17,-7) circle (2.5pt);
\fill [color=qqqqff] (18,-6) circle (2.5pt);
\fill [color=qqqqff] (19,-5) circle (2.5pt);
\fill [color=qqqqff] (20,-4) circle (2.5pt);
\fill [color=qqqqff] (21,-3) circle (2.5pt);
\fill [color=qqqqff] (22,-2) circle (2.5pt);
\fill [color=qqqqff] (23,-1) circle (2.5pt);
\fill [color=qqqqff] (24,0) circle (2.5pt);
\fill [color=qqqqff] (25,1) circle (2.5pt);
\fill [color=qqqqff] (26,2) circle (2.5pt);
\fill [color=qqqqff] (27,3) circle (2.5pt);
\fill [color=qqqqff] (28,4) circle (2.5pt);
\fill [color=qqqqff] (29,5) circle (2.5pt);
\fill [color=qqqqff] (30,6) circle (2.5pt);
\fill [color=qqqqff] (16,-12) circle (2.5pt);
\fill [color=qqqqff] (17,-11) circle (2.5pt);
\fill [color=qqqqff] (18,-10) circle (2.5pt);
\fill [color=qqqqff] (19,-9) circle (2.5pt);
\fill [color=qqqqff] (20,-8) circle (2.5pt);
\fill [color=qqqqff] (21,-7) circle (2.5pt);
\fill [color=qqqqff] (22,-6) circle (2.5pt);
\fill [color=qqqqff] (23,-5) circle (2.5pt);
\fill [color=qqqqff] (24,-4) circle (2.5pt);
\fill [color=qqqqff] (25,-3) circle (2.5pt);
\fill [color=qqqqff] (26,-2) circle (2.5pt);
\fill [color=qqqqff] (27,-1) circle (2.5pt);
\fill [color=qqqqff] (28,0) circle (2.5pt);
\fill [color=qqqqff] (29,1) circle (2.5pt);
\fill [color=qqqqff] (30,2) circle (2.5pt);
\fill [color=qqqqff] (0,4) circle (2.5pt);
\fill [color=qqqqff] (1,5) circle (2.5pt);
\fill [color=qqqqff] (2,6) circle (2.5pt);
\fill [color=qqqqff] (3,7) circle (2.5pt);
\fill [color=qqqqff] (4,8) circle (2.5pt);
\fill [color=qqqqff] (5,9) circle (2.5pt);
\fill [color=qqqqff] (6,10) circle (2.5pt);
\fill [color=qqqqff] (7,11) circle (2.5pt);
\fill [color=qqqqff] (8,12) circle (2.5pt);
\fill [color=qqqqff] (9,13) circle (2.5pt);
\fill [color=qqqqff] (10,14) circle (2.5pt);
\fill [color=qqqqff] (11,15) circle (2.5pt);
\fill [color=qqqqff] (12,16) circle (2.5pt);
\fill [color=qqqqff] (13,17) circle (2.5pt);
\fill [color=qqqqff] (14,18) circle (2.5pt);
\fill [color=qqqqff] (15,19) circle (2.5pt);
\fill [color=qqqqff] (16,20) circle (2.5pt);
\fill [color=qqqqff] (17,21) circle (2.5pt);
\fill [color=qqqqff] (18,22) circle (2.5pt);
\fill [color=qqqqff] (19,23) circle (2.5pt);
\fill [color=qqqqff] (-4,4) circle (2.5pt);
\fill [color=qqqqff] (-3,5) circle (2.5pt);
\fill [color=qqqqff] (-2,6) circle (2.5pt);
\fill [color=qqqqff] (-1,7) circle (2.5pt);
\fill [color=qqqqff] (0,8) circle (2.5pt);
\fill [color=qqqqff] (1,9) circle (2.5pt);
\fill [color=qqqqff] (2,10) circle (2.5pt);
\fill [color=qqqqff] (3,11) circle (2.5pt);
\fill [color=qqqqff] (4,12) circle (2.5pt);
\fill [color=qqqqff] (5,13) circle (2.5pt);
\fill [color=qqqqff] (6,14) circle (2.5pt);
\fill [color=qqqqff] (7,15) circle (2.5pt);
\fill [color=qqqqff] (8,16) circle (2.5pt);
\fill [color=qqqqff] (9,17) circle (2.5pt);
\fill [color=qqqqff] (10,18) circle (2.5pt);
\fill [color=qqqqff] (11,19) circle (2.5pt);
\fill [color=qqqqff] (12,20) circle (2.5pt);
\fill [color=qqqqff] (13,21) circle (2.5pt);
\fill [color=qqqqff] (14,22) circle (2.5pt);
\fill [color=qqqqff] (15,23) circle (2.5pt);
\fill [color=qqqqff] (-4,8) circle (2.5pt);
\fill [color=qqqqff] (-3,9) circle (2.5pt);
\fill [color=qqqqff] (-2,10) circle (2.5pt);
\fill [color=qqqqff] (-1,11) circle (2.5pt);
\fill [color=qqqqff] (0,12) circle (2.5pt);
\fill [color=qqqqff] (1,13) circle (2.5pt);
\fill [color=qqqqff] (2,14) circle (2.5pt);
\fill [color=qqqqff] (3,15) circle (2.5pt);
\fill [color=qqqqff] (4,16) circle (2.5pt);
\fill [color=qqqqff] (5,17) circle (2.5pt);
\fill [color=qqqqff] (6,18) circle (2.5pt);
\fill [color=qqqqff] (7,19) circle (2.5pt);
\fill [color=qqqqff] (8,20) circle (2.5pt);
\fill [color=qqqqff] (9,21) circle (2.5pt);
\fill [color=qqqqff] (10,22) circle (2.5pt);
\fill [color=qqqqff] (11,23) circle (2.5pt);
\fill [color=qqqqff] (-8,8) circle (2.5pt);
\fill [color=qqqqff] (-7,9) circle (2.5pt);
\fill [color=qqqqff] (-6,10) circle (2.5pt);
\fill [color=qqqqff] (-5,11) circle (2.5pt);
\fill [color=qqqqff] (-4,12) circle (2.5pt);
\fill [color=qqqqff] (-3,13) circle (2.5pt);
\fill [color=qqqqff] (-2,14) circle (2.5pt);
\fill [color=qqqqff] (-1,15) circle (2.5pt);
\fill [color=qqqqff] (0,16) circle (2.5pt);
\fill [color=qqqqff] (1,17) circle (2.5pt);
\fill [color=qqqqff] (2,18) circle (2.5pt);
\fill [color=qqqqff] (3,19) circle (2.5pt);
\fill [color=qqqqff] (4,20) circle (2.5pt);
\fill [color=qqqqff] (5,21) circle (2.5pt);
\fill [color=qqqqff] (6,22) circle (2.5pt);
\fill [color=qqqqff] (7,23) circle (2.5pt);
\fill [color=qqqqff] (-8,12) circle (2.5pt);
\fill [color=qqqqff] (-7,13) circle (2.5pt);
\fill [color=qqqqff] (-6,14) circle (2.5pt);
\fill [color=qqqqff] (-5,15) circle (2.5pt);
\fill [color=qqqqff] (-4,16) circle (2.5pt);
\fill [color=qqqqff] (-3,17) circle (2.5pt);
\fill [color=qqqqff] (-2,18) circle (2.5pt);
\fill [color=qqqqff] (-1,19) circle (2.5pt);
\fill [color=qqqqff] (0,20) circle (2.5pt);
\fill [color=qqqqff] (1,21) circle (2.5pt);
\fill [color=qqqqff] (2,22) circle (2.5pt);
\fill [color=qqqqff] (3,23) circle (2.5pt);
\fill [color=qqqqff] (-12,12) circle (2.5pt);
\fill [color=qqqqff] (-11,13) circle (2.5pt);
\fill [color=qqqqff] (-10,14) circle (2.5pt);
\fill [color=qqqqff] (-9,15) circle (2.5pt);
\fill [color=qqqqff] (-8,16) circle (2.5pt);
\fill [color=qqqqff] (-7,17) circle (2.5pt);
\fill [color=qqqqff] (-6,18) circle (2.5pt);
\fill [color=qqqqff] (-5,19) circle (2.5pt);
\fill [color=qqqqff] (-4,20) circle (2.5pt);
\fill [color=qqqqff] (-3,21) circle (2.5pt);
\fill [color=qqqqff] (-2,22) circle (2.5pt);
\fill [color=qqqqff] (-1,23) circle (2.5pt);
\fill [color=qqqqff] (-12,16) circle (2.5pt);
\fill [color=qqqqff] (-11,17) circle (2.5pt);
\fill [color=qqqqff] (-10,18) circle (2.5pt);
\fill [color=qqqqff] (-9,19) circle (2.5pt);
\fill [color=qqqqff] (-8,20) circle (2.5pt);
\fill [color=qqqqff] (-7,21) circle (2.5pt);
\fill [color=qqqqff] (-6,22) circle (2.5pt);
\fill [color=qqqqff] (-5,23) circle (2.5pt);
\fill [color=qqqqff] (-16,16) circle (2.5pt);
\fill [color=qqqqff] (-15,17) circle (2.5pt);
\fill [color=qqqqff] (-14,18) circle (2.5pt);
\fill [color=qqqqff] (-13,19) circle (2.5pt);
\fill [color=qqqqff] (-12,20) circle (2.5pt);
\fill [color=qqqqff] (-11,21) circle (2.5pt);
\fill [color=qqqqff] (-10,22) circle (2.5pt);
\fill [color=qqqqff] (-9,23) circle (2.5pt);
\fill [color=qqqqff] (-16,20) circle (2.5pt);
\fill [color=qqqqff] (-15,21) circle (2.5pt);
\fill [color=qqqqff] (-14,22) circle (2.5pt);
\fill [color=qqqqff] (-13,23) circle (2.5pt);
\fill [color=qqqqff] (12,-11) circle (2.5pt);
\fill [color=qqqqff] (12,-10) circle (2.5pt);
\fill [color=qqqqff] (8,-7) circle (2.5pt);
\fill [color=qqqqff] (8,-6) circle (2.5pt);
\fill [color=qqqqff] (4,-3) circle (2.5pt);
\fill [color=qqqqff] (4,-2) circle (2.5pt);
\fill [color=qqqqff] (0,1) circle (2.5pt);
\fill [color=qqqqff] (0,2) circle (2.5pt);
\fill [color=qqqqff] (-4,5) circle (2.5pt);
\fill [color=qqqqff] (-4,6) circle (2.5pt);
\fill [color=qqqqff] (-8,9) circle (2.5pt);
\fill [color=qqqqff] (-8,10) circle (2.5pt);
\fill [color=qqqqff] (-12,13) circle (2.5pt);
\fill [color=qqqqff] (-12,14) circle (2.5pt);
\fill [color=qqqqff] (-16,17) circle (2.5pt);
\fill [color=qqqqff] (-16,18) circle (2.5pt);
\draw[color=black] (-15.15,17.33) node {$r$};
\draw[color=black] (-11.11,13.38) node {$s$};
\draw[color=black] (-7.25,9.4) node {$t$};
\draw[color=black] (-2.84,5.36) node {$a_1$};
\draw[color=black] (1.16,1.37) node {$b_1$};
\draw[color=black] (5.16,-2.61) node {$c_1$};
\draw[color=black] (19.92,6.86) node {$d_1$};
\draw[color=black] (9.16,-6.6) node {$e_1$};
\end{scriptsize}
\end{tikzpicture}

\caption{The $RO( \gr)$-graded $ \F[a, \sigma^{-4}]$-module $HP^{ \star}$. Vertical lines represents the product by the Euler class $a$.} \label{figure_hpstar}
\end{figure}

\begin{thm} \label{pro_calculh01pn}
 There is a $RO( \gr)$-graded $ \F[a, \sigma^{-4}]$-module isomorphism
$$ H_{01}^{ \star}(RP_n) =( \Sigma^{-n(1+ \alpha)}HP^{ \star})_{twist \geq 0} \oplus ( \Sigma^{-n(1+ \alpha)-1}HP^{ \star})_{twist \leq -2}$$
where the functors $ (-)_{ twist \geq i}$ and $ (-)_{ twist \leq i}$ are truncation in degrees of the form $k+l\alpha$ for $l \geq i$ and $l \leq i$ respectively,  for  $i \in \mathbb{Z}$.
\end{thm}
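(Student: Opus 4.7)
The plan is to combine Propositions \ref{pro_calculh01} and \ref{pro_amodsurh01} — which describe the $\F$-vector space and $\F[a]$-module structures of $H_{01}^{\star}\circ R$ on reduced $Q_0$-acyclic $\ste(1)$-modules — with the explicit structure of the $P_n$ from Proposition \ref{pro_identpn}, Lemma \ref{lemma_kerapn}, and the periodicity of Proposition \ref{pro_periodiciteomega}.

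First I would identify the reduced loops of $P_n$: from $P_{n+1} = (\Omega^n \Sigma^{-n} P)^{red}$, iterating in the stable category yields $\Omega_r P_n \simeq \Sigma P_{n+1}$ and hence $\Omega_r^k P_n \simeq \Sigma^k P_{n+k}$ for every $k \in \Z$. The $4$-periodicity $\Omega^4 P \simeq \Sigma^{12}P$ then translates to $P_{m+4}\simeq\Sigma^8 P_m$, reducing everything to the four socle computations of Lemma \ref{lemma_kerapn}. Plugging into Proposition \ref{pro_calculh01} gives, for $k\geq 0$,
\[
H_{01}^{*+k\alpha}(RP_n)\cong\Sigma^k \kera(P_{n-k}),\qquad H_{01}^{*-(k+2)\alpha}(RP_n)\cong\Sigma^{-k-3}\kera(P_{n+k+2}),
\]
together with the vanishing of $H_{01}^{*-\alpha}(RP_n)$. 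This already accounts for the split of the answer into a ``twist $\geq 0$'' summand and a ``twist $\leq -2$'' summand, and, via the relation $|\sigma^{-4}| = -4 + 4\alpha$, for the $\sigma^{-4}$-periodicity of $HP^{\star}$.

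Next I would upgrade to the $\F[a]$-module structure by applying Proposition \ref{pro_amodsurh01} separately to the $\rp$- and $\rn$-parts of $RP_n$ and tracking the edge of the Bockstein exact sequence of Lemma \ref{lemma_sel_xa}. The socle classes from Lemma \ref{lemma_kerapn} should assemble into $a$-towers of height $3$ (matching $a^3 = 0$) while the off-diagonal classes coming from $M/(Im_{Q_0}+Im_{Q_1})$ in Proposition \ref{pro_amodsurh01} form the $v$-tower along the $(1+\alpha)$-direction and are $a$-torsion (matching $av = 0$); the $v$-action is then read off from Lemma \ref{lemma_h01msigma} together with the $Sq^2$- and $\sqop$-actions on socles supplied by Lemma \ref{lemma_actionsq2sq2}. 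Assembling these pieces, together with an overall shift $\Sigma^{-n(1+\alpha)}$ tracked through the suspensions above, should complete the identification with $HP^{\star}$.

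The main obstacle will be this last step, namely the precise identification of the $\F[a]$-module structure. Verifying that $a$ has height exactly $3$ on socle-born classes and is zero on $v$-tower classes requires checking the hypotheses of Proposition \ref{pro_aactionsurh01} for each $P_n$ using the explicit presentations of Proposition \ref{pro_identpn}, then computing the edge map of Lemma \ref{lemma_sel_xa} on the bases of Lemma \ref{lemma_kerapn} and controlling the resulting Bockstein spectral sequence of Lemma \ref{lemma_bockstein_xa}. What remains beyond that is a lengthy but routine matching of bidegrees against $HP^{\star}$.
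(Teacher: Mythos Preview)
Your $\F$-vector space computation via Proposition \ref{pro_calculh01} and the loop/periodicity identifications is essentially the paper's Lemma \ref{lemma_fdevh01pn}, and that part is fine.

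The gap is in the $\F[a]$-module step. You propose to invoke Proposition \ref{pro_aactionsurh01}, but that proposition concludes that $a$ acts \emph{trivially} on $H_{01}^{\star}(\rp M)$: it is precisely the case where the Bockstein spectral sequence of Lemma \ref{lemma_bockstein_xa} collapses at $E^2$ concentrated in filtration zero. Since $HP^{\star}$ carries $a$-towers of height $3$, the hypotheses of Proposition \ref{pro_aactionsurh01} must fail for the $P_n$ (indeed $H_{01}^*(P_n)$ is not $Sq^2$- or $\sqop$-acyclic), so checking them cannot yield the height-$3$ towers you are after. The paper's route is different: it applies Proposition \ref{pro_amodsurh01} directly to $P_0$, computes $H_{01}^{\star}(P_0[\sigma^{-1}])=\F[x^2][\sigma^{-1}]$ and $H_{01}^{\star}(\sigma^2 P_0[\sigma])=\sigma^2\F[x^2][\sigma]$ explicitly, and then uses the two exact sequences of that proposition together with the already-known $\F$-vector space dimensions to pin down $Ker(a)$ and $Coker(a)$ by a degree-by-degree count. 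The remaining $P_n$ are not redone from scratch; instead one notes that, in sufficiently large (positive or negative) twist, the isomorphisms produced by Proposition \ref{corr_h01loop} come from applying $H_{01}^{\star}\circ R$ to $\ste(1)$-module maps and are therefore $\F[a,\sigma^{-4}]$-linear by Lemma \ref{lemma_H01R}, so the $P_0$ answer propagates.

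You also do not address the $\F[\sigma^{-4}]$-module structure. Observing $|\sigma^{-4}|=-4+4\alpha$ explains why $HP^{\star}$ is $\sigma^{-4}$-periodic, but it does not prove that multiplication by $\sigma^{-4}$ on $H_{01}^{\star}(RP_n)$ realizes that periodicity. The paper treats this by a separate long-exact-sequence argument for $P_0$ (one for $\rp$ and one for $\rn$), showing the relevant connecting maps vanish via the Cartan formulae for $\qi$ on elements $a^j\sigma^{-i}m$; this step is where freeness (respectively divisibility) over $\F[\sigma^{-4}]$ on the two halves is actually established.
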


Before passing to the proof, we need some intermediate results.

\begin{lemma} \label{lemma_fdevh01pn}
There is a $ \F$-vector space isomorphism
$$ H_{01}^{ \star}(RP_n) = \bigoplus_{i \geq 0} \Sigma^{i(1+ \alpha)} \kera(P_{n-i}) \oplus \bigoplus_{i \leq -2}  \Sigma^{i(1+ \alpha)-1} \kera(P_{n-i}).$$
\end{lemma}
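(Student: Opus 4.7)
The plan is to apply the socle formulas of Proposition~\ref{pro_calculh01} to $M = P_n$ and then use the periodicity statement of Proposition~\ref{pro_pnomega} to rewrite the iterated loops $\Omega_r^k P_n$ in terms of the indecomposables $P_m$. First I verify the hypotheses of Proposition~\ref{pro_calculh01}: by Definition~\ref{de_pn}, $P_n$ is reduced, and $Q_0$-acyclicity holds because $P = \widetilde{H\F}^*(B\gr)$ is $Q_0$-acyclic ($Sq^1$ maps $x^{2k+1}$ to $x^{2k+2}$ and kills the even powers), the Cartan formula propagates $Q_0$-acyclicity through tensor products, and $Q_0$-acyclicity passes to direct summands. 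Proposition~\ref{pro_calculh01} then yields, for each $i \geq 0$,
\begin{align*}
H_{01}^{*+i\alpha}(RP_n) &\cong \Sigma^{2i}\kera(\Omega_r^{-i}P_n), \\
H_{01}^{*-(i+2)\alpha}(RP_n) &\cong \Sigma^{-2i-5}\kera(\Omega_r^{i+2}P_n),
\end{align*}
together with the vanishing $H_{01}^{*-\alpha}(RP_n) = 0$, which accounts for the absence of a summand at twist $-\alpha$ in the claimed decomposition.

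Second, I identify $\kera(\Omega_r^k P_n)$ for arbitrary $k \in \Z$. Combining Proposition~\ref{pro_pnomega} with Definition~\ref{de_pn} gives the stable equivalence $P_m \simeq \Omega^{m-1}\Sigma^{-(m-1)}P$; a direct rearrangement then produces $\Omega^k P_n \simeq \Sigma^k P_{n+k}$ in the stable category. Since both sides are reduced, this upgrades to the equality $\Omega_r^k P_n = \Sigma^k P_{n+k}$ of reduced $\ste(1)$-modules, from which $\kera(\Omega_r^k P_n) = \Sigma^k \kera(P_{n+k})$.

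Third, I substitute and keep track of the two shifts in parallel. For $i \geq 0$, the twist $+i\alpha$ summand becomes $\Sigma^{2i}\Sigma^{-i}\kera(P_{n-i}) = \Sigma^{i}\kera(P_{n-i})$ in internal degree, which translates in $RO(\gr)$-grading to $\Sigma^{i(1+\alpha)}\kera(P_{n-i})$. Reparameterising the negative piece by $j = -(i+2) \leq -2$ (so $i+2 = -j$ and $n+(i+2) = n-j$), the twist $j\alpha$ summand becomes $\Sigma^{-2(-j-2)-5}\Sigma^{-j}\kera(P_{n-j}) = \Sigma^{j-1}\kera(P_{n-j})$, i.e.\ $\Sigma^{j(1+\alpha)-1}\kera(P_{n-j})$ in $RO(\gr)$-grading. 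Assembling the two families gives the claimed formula. The only point requiring care is the parallel bookkeeping of the $\alpha$-twist produced by Proposition~\ref{pro_calculh01} against the internal $\Z$-degree shift arising from the periodicity of Proposition~\ref{pro_pnomega}; this is routine rather than a genuine obstacle.
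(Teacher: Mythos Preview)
Your proof is correct and follows the same approach as the paper: apply Proposition~\ref{pro_calculh01} to $M=P_n$ and then use the identification $\Omega_r^k P_n \cong \Sigma^k P_{n+k}$ coming from Definition~\ref{de_pn} and Proposition~\ref{pro_pnomega}. Your write-up is in fact more careful than the paper's, which contains sign/index typos in the negative-twist line (it writes $\Omega_r^{-i}$ and $P_{n-i}$ where $\Omega_r^{i}$ and $P_{n+i}$ are intended); your reparametrisation by $j=-(i+2)$ handles this cleanly.
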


\begin{proof}
By Proposition \ref{pro_calculh01} and Definition \ref{de_pn} we have isomorphisms, for all $i \geq 0$, $$H_{01}^{ * + i \alpha}(RP_n) \cong \Sigma^{2i+ i \alpha} \kera( \Omega_r^{-i}(P_n)) \cong \Sigma^{2i+ i \alpha} \kera( \Sigma^{-i}(P_{n-i})),$$
and for all $ i \geq 2$, $$H_{01}^{ * - i \alpha}(RP_n) \cong \Sigma^{-2i-1- i \alpha} \kera( \Omega_r^{-i}(P_n)) \cong \Sigma^{-2i-1- i \alpha} \kera( \Sigma^{-i}(P_{n-i})),$$ the result follows.
\end{proof}

We now conclude the proof of Proposition \ref{pro_calculh01pn} by determining the $ \F[a, \sigma^{-4}]$-module structure on $ H_{01}^{ \star}(RP_n)$.

\begin{proof}[Proof of Theorem \ref{pro_calculh01pn} ]
Lemmas \ref{lemma_fdevh01pn} and \ref{lemma_kerapn} provide a $ \F$-vector space isomorphism
$$ H_{01}^{ \star}(RP_n) =( \Sigma^{-n(1+ \alpha)}HP^{ \star})_{twist \geq 0} \oplus ( \Sigma^{-n(1+ \alpha)-1}HP^{ \star})_{twist \leq -2}.$$

We use Proposition \ref{pro_amodsurh01}.
For degree reason, the only possible elements in $Coker(a)$ among the elements of positive twist are, via the identification given in Proposition \ref{pro_calculh01}, $1 \in \kera(P_0)$ and $ y^2 \in \kera (P_3)$.

For the negative twisted part, the only elements possibly in $im(a)$ are, via the identification of Proposition \ref{pro_calculh01}, $\sigma^2 1 \in \sigma^2(P_0/(Im_{Sq^2}(  Ker_{Q_0}P_0) + Im_{Q_1}( Ker_{Q_0}P_0)))$ and $ x^2 \in \sigma^2(P_1/(Im_{Sq^2}(  Ker_{Q_0}P_1) + Im_{Q_1}( Ker_{Q_0}P_1)))$. \\

We already computed the vector space structure of $H_{01}^{ \star}(R P_0)$ and from Proposition \ref{pro_identpn}, Lemma \ref{lemma_h01msigma} and Proposition \ref{pro_amodsurh01}, we get $H_{01}^{ \star}( P_0[ \sigma^{-1}] = \F[x^2][ \sigma^{-1}]$ and $H_{01}^{ \star}( \sigma^2 P_0[ \sigma]) = \sigma^2  \F[x^2][ \sigma]$. \\

Now, the short exact sequences provided by Proposition \ref{pro_amodsurh01} give a $RO( \gr)$-graded vector space isomorphism $$ \Sigma^{-2} Ker_a( H_{01}^{ \star}( \rp P_0)) \oplus Coker_a( H_{01}^{ \star}( \rp P_0))  \cong  \F[x^2][ \sigma^{-1}].$$ Consequently $1 \in \kera(P_0)$ and $ y^2 \in \kera (P_3)$ belong to $Coker(a)$ since they are the only elements of $H_{01}^{ \star}( RP_0)$ in the appropriate grading.

For the negatively twisted part, this is analogous. There is a $RO( \gr)$-graded $\F$-vector space isomorphism $$ Ker_a( H_{01}^{ \star}( \rn P_0)) \oplus  \Sigma^{-2} Coker_a( H_{01}^{ \star}( \rn P_0))  \cong  \sigma^2  \F[x^2][ \sigma],$$ which forces the elements $$\sigma^2 1 \in \sigma^2(Ker_{Q_0}(P_0)/(Im_{Sq^2}(  Ker_{Q_0}P_0) + Im_{Q_1}( Ker_{Q_0}P_0)))$$ and $$ x^2 \in\sigma^2 Ker_{Q_0}(P_1)/(Im_{Sq^2}(  Ker_{Q_0}P_1) + Im_{Q_1}( Ker_{Q_0}(P_1)))$$ to be in $Im(a)$. \\

To finish, we determine the $ \sigma^{-4}$ action on $H_{01}^{ \star}(RP_0)$. Consider the long exact sequence obtained by applying $H_{01}^{ \star}$ to the short $( \E, \lo)$-exact sequence
$$ \sigma^{-4} \rp P_0 \inj \rp P_0 \surj  \rp P_0 / (  \sigma^{-4} \rp P_0).$$

We will show that $ H_{01}^{ \star}( \rp P_0)$  is a free $ \F[ \sigma^{-4}]$-module. In each degree, the rank of $ H_{01}^{ \star}( \rp P_0)$ as a  $ \F[ \sigma^{-4}]$-module is at most one, so it is sufficient to determine the $ \F[ \sigma^{-4}]$-module structure of $ H_{01}^{ \star}( \rp P_0)$. \\

To this end, we show that the edge of the previously considered long exact sequence is trivial. It is sufficient to see that, for all $i \leq 3, \ j \geq 0$ and $m \in P_0$, $ \qi( a^j \sigma^{-i} m) \not\in ( \sigma^{-4} \rp P_0) - \{ 0 \}$.
The Cartan formulae give
\begin{eqnarray*}
  & & \qi( a^j \sigma^{-i} m)  \\ & = & a^j \qi(  \sigma^{-i}) m + a^{j+1} \qo( \sigma^{-i}) \qo(x) + a^j \sigma^{-i} \qi( m ) \\
& = & a^j \qi(  \sigma^{-i}) m + a^{j+1} \qo( \sigma^{-i}) Q_0(x) + a^{j+1} \sigma^{-i} Sq^2(m) + a^{j} \sigma^{-i-1} Q_1(m).
\end{eqnarray*}
For $ \qi( a^j \sigma^{-i} m) $ to be divisible by $ \sigma^{-4}$, the two following points must be satisfied
\begin{itemize}
\item $  \qi(  \sigma^{-i}) = 0$, so that $i = 0$ or $i = 1$,
\item $a^{j+1} \qo( \sigma^{-i}) Q_0(x) + a^{j+1} \sigma^{-i} Sq^2(m) + a^{j} \sigma^{-i-1} Q_1(m)$ is multiple of $ \sigma^{-4}$.
\end{itemize}
These are only simultaneously satisfied when $ \qi( a^j \sigma^{-i} m) = 0$. The result follows for the positively twisted part. \\

For $ H_{01}^{ \star}( \rn P_0)$, consider the long exact sequence obtained by applying $H_{01}^{ \star}$ to the short $( \E, \lo)$-exact sequence
$$ K \inj \rn P_0 \surj \Sigma^{| \sigma^{-4}|} \rn P_0 $$
where $ K = Ker( P_0 \surj \Sigma^{| \sigma^{-4}|} \rn P_0$.
We again show that its edge is trivial. Let $ \Sigma^{| \sigma^{-4}|}  x$ representing a class in $ H_{01}^{ \star}( \Sigma^{| \sigma^{-4}|} \rn P_0 )$. The element $ \sigma^{-4} x \in \rn P_0$ is a lift of $ \sigma^{-4} x$. But $ \qi( \sigma^{-4} x) = \sigma^{-4} \qi(x) = 0$, therefore, the product by $ \sigma^{-4}$ is surjective on $ H_{01}^{ \star}( \rn P_0)$. For dimensional reasons, it suffices to determine the $ \F[  \sigma^{-4}]$-module structure on $ H_{01}^{ \star}( \rn P_0)$. \\

To finish with $P_0$, observe that the $ \F[ \sigma^{-4}]$-module structure defined on $HP^{ \star}$ induces a $ \F[ \sigma^{-4}]$-module structure on $$( \Sigma^{-n(1+ \alpha)}HP^{ \star})_{twist \geq 0} \oplus ( \Sigma^{-n(1+ \alpha)-1}HP^{ \star})_{twist \leq -2}$$ which satisfies the properties
\begin{itemize}
 \item the product by $ \sigma^{-4}$ on $( \Sigma^{-n(1+ \alpha)}HP^{ \star})_{twist \geq 0}$ is injective,
\item the product by $ \sigma^{-4}$ on $( \Sigma^{-n(1+ \alpha)-1}HP^{ \star})_{twist \leq -2}$ is surjective.
\end{itemize}

We showed the result for $P_0$. 
To conclude, the same result is true for each $P_i$ since, in degrees $*+k \alpha$ for $k$ big enough (positively and negatively), the isomorphisms provided by Proposition \ref{pro_calculh01} assemble together in a $ \F[a, \sigma^{-4}]$-module isomorphism by \ref{lemma_H01R} since these isomorphisms are obtained by applying  $H_{01}^{ \star} \circ R$ to a $ \ste(1)$-module morphism.
\end{proof}

\begin{ex}
The $ \F[a, \sigma^{-4}]$-module $H_{01}^{ \star}(H \mf^{ \star}_{\gr}(B \gr))$ is represented in figure \ref{fig_h01bgr}. 
\end{ex}

\begin{figure}

\definecolor{xdxdff}{rgb}{0.66,0.66,0.66}
\definecolor{qqqqff}{rgb}{0.33,0.33,0.33}
\definecolor{cqcqcq}{rgb}{0.75,0.75,0.75}

\begin{tikzpicture}[line cap=round,line join=round,>=triangle 45,x=0.5cm,y=0.5cm]
\draw[->,color=black] (-9,0) -- (15,0);
\foreach \x in {-8,-6,-4,-2,2,4,6,8,10,12,14}
\draw[shift={(\x,0)},color=black] (0pt,2pt) -- (0pt,-2pt) node[below] {\footnotesize $\x$};
\draw[->,color=black] (0,-10) -- (0,10);
\foreach \y in {-10,-8,-6,-4,-2,2,4,6,8}
\draw[shift={(0,\y)},color=black] (2pt,0pt) -- (-2pt,0pt) node[left] {\footnotesize $\y$};
\draw[color=black] (0pt,-10pt) node[right] {\footnotesize $0$};
\clip(-10,-10) rectangle (15,10);
\draw (-1.2,9.94) node[anchor=north west] {$ \alpha$};
\draw (13.80,0.90) node[anchor=north west] {1};
\draw [domain=1.0:15.0] plot(\x,{(-0--23*\x)/23});
\draw [domain=5.0:15.0] plot(\x,{(-92--23*\x)/23});
\draw [domain=1.0:15.0] plot(\x,{(--8--2*\x)/2});
\draw [domain=-3.0:15.0] plot(\x,{(--16--2*\x)/2});
\draw [domain=-3.0:15.0] plot(\x,{(--24--2*\x)/2});
\draw [domain=-7.0:15.0] plot(\x,{(--32--2*\x)/2});
\draw [domain=-7.0:15.0] plot(\x,{(--40--2*\x)/2});
\draw [domain=-11.0:15.0] plot(\x,{(--48--2*\x)/2});
\draw [domain=-11.0:15.0] plot(\x,{(--56--2*\x)/2});
\draw [domain=-15.0:15.0] plot(\x,{(--64--2*\x)/2});
\draw [domain=-15.0:15.0] plot(\x,{(--72--2*\x)/2});
\draw (-15,17)-- (-15,19);
\draw (-11,13)-- (-11,15);
\draw (-7,9)-- (-7,11);
\draw (-3,5)-- (-3,7);
\draw (1,1)-- (1,3);
\draw (9,-7)-- (9,-5);
\draw (5,-3)-- (5,-2);
\draw (5,-3)-- (6,-2);
\draw (9,-3)-- (10,-2);
\draw (9,-7)-- (14,-2);
\draw (13,-7)-- (18,-2);
\draw (13,-11)-- (13,-9);
\draw (13,-11)-- (22,-2);
\draw (17,-11)-- (26,-2);
\draw [domain=8.0:15.0] plot(\x,{(-160--20*\x)/20});
\draw [domain=12.0:15.0] plot(\x,{(-144--12*\x)/12});
\draw [domain=16.0:15.0] plot(\x,{(-192--12*\x)/12});
\draw [domain=20.0:15.0] plot(\x,{(-100--5*\x)/5});
\draw [domain=24.0:15.0] plot(\x,{(-48--2*\x)/2});
\begin{scriptsize}
\fill [color=qqqqff] (2,2) circle (2.5pt);
\fill [color=qqqqff] (3,3) circle (2.5pt);
\fill [color=qqqqff] (4,4) circle (2.5pt);
\fill [color=qqqqff] (5,5) circle (2.5pt);
\fill [color=qqqqff] (6,6) circle (2.5pt);
\fill [color=qqqqff] (7,7) circle (2.5pt);
\fill [color=qqqqff] (8,8) circle (2.5pt);
\fill [color=qqqqff] (9,9) circle (2.5pt);
\fill [color=qqqqff] (10,10) circle (2.5pt);
\fill [color=qqqqff] (11,11) circle (2.5pt);
\fill [color=qqqqff] (12,12) circle (2.5pt);
\fill [color=qqqqff] (13,13) circle (2.5pt);
\fill [color=qqqqff] (14,14) circle (2.5pt);
\fill [color=qqqqff] (15,15) circle (2.5pt);
\fill [color=qqqqff] (16,16) circle (2.5pt);
\fill [color=qqqqff] (17,17) circle (2.5pt);
\fill [color=qqqqff] (18,18) circle (2.5pt);
\fill [color=qqqqff] (19,19) circle (2.5pt);
\fill [color=qqqqff] (20,20) circle (2.5pt);
\fill [color=qqqqff] (21,21) circle (2.5pt);
\fill [color=qqqqff] (22,22) circle (2.5pt);
\fill [color=qqqqff] (23,23) circle (2.5pt);
\fill [color=qqqqff] (24,24) circle (2.5pt);
\fill [color=qqqqff] (5,1) circle (2.5pt);
\fill [color=qqqqff] (6,2) circle (2.5pt);
\fill [color=qqqqff] (7,3) circle (2.5pt);
\fill [color=qqqqff] (8,4) circle (2.5pt);
\fill [color=qqqqff] (9,5) circle (2.5pt);
\fill [color=qqqqff] (10,6) circle (2.5pt);
\fill [color=qqqqff] (11,7) circle (2.5pt);
\fill [color=qqqqff] (12,8) circle (2.5pt);
\fill [color=qqqqff] (13,9) circle (2.5pt);
\fill [color=qqqqff] (14,10) circle (2.5pt);
\fill [color=qqqqff] (15,11) circle (2.5pt);
\fill [color=qqqqff] (16,12) circle (2.5pt);
\fill [color=qqqqff] (17,13) circle (2.5pt);
\fill [color=qqqqff] (18,14) circle (2.5pt);
\fill [color=qqqqff] (19,15) circle (2.5pt);
\fill [color=qqqqff] (20,16) circle (2.5pt);
\fill [color=qqqqff] (21,17) circle (2.5pt);
\fill [color=qqqqff] (22,18) circle (2.5pt);
\fill [color=qqqqff] (23,19) circle (2.5pt);
\fill [color=qqqqff] (24,20) circle (2.5pt);
\fill [color=qqqqff] (25,21) circle (2.5pt);
\fill [color=qqqqff] (26,22) circle (2.5pt);
\fill [color=qqqqff] (27,23) circle (2.5pt);
\fill [color=qqqqff] (28,24) circle (2.5pt);
\fill [color=qqqqff] (5,-3) circle (2.5pt);
\fill [color=qqqqff] (6,-2) circle (2.5pt);
\fill [color=qqqqff] (8,0) circle (2.5pt);
\fill [color=qqqqff] (9,1) circle (2.5pt);
\fill [color=qqqqff] (10,2) circle (2.5pt);
\fill [color=qqqqff] (11,3) circle (2.5pt);
\fill [color=qqqqff] (12,4) circle (2.5pt);
\fill [color=qqqqff] (13,5) circle (2.5pt);
\fill [color=qqqqff] (14,6) circle (2.5pt);
\fill [color=qqqqff] (15,7) circle (2.5pt);
\fill [color=qqqqff] (16,8) circle (2.5pt);
\fill [color=qqqqff] (17,9) circle (2.5pt);
\fill [color=qqqqff] (18,10) circle (2.5pt);
\fill [color=qqqqff] (19,11) circle (2.5pt);
\fill [color=qqqqff] (20,12) circle (2.5pt);
\fill [color=qqqqff] (21,13) circle (2.5pt);
\fill [color=qqqqff] (22,14) circle (2.5pt);
\fill [color=qqqqff] (23,15) circle (2.5pt);
\fill [color=qqqqff] (24,16) circle (2.5pt);
\fill [color=qqqqff] (25,17) circle (2.5pt);
\fill [color=qqqqff] (26,18) circle (2.5pt);
\fill [color=qqqqff] (27,19) circle (2.5pt);
\fill [color=qqqqff] (28,20) circle (2.5pt);
\fill [color=qqqqff] (9,-3) circle (2.5pt);
\fill [color=qqqqff] (10,-2) circle (2.5pt);
\fill [color=qqqqff] (12,0) circle (2.5pt);
\fill [color=qqqqff] (13,1) circle (2.5pt);
\fill [color=qqqqff] (14,2) circle (2.5pt);
\fill [color=qqqqff] (15,3) circle (2.5pt);
\fill [color=qqqqff] (16,4) circle (2.5pt);
\fill [color=qqqqff] (17,5) circle (2.5pt);
\fill [color=qqqqff] (18,6) circle (2.5pt);
\fill [color=qqqqff] (19,7) circle (2.5pt);
\fill [color=qqqqff] (20,8) circle (2.5pt);
\fill [color=qqqqff] (21,9) circle (2.5pt);
\fill [color=qqqqff] (22,10) circle (2.5pt);
\fill [color=qqqqff] (23,11) circle (2.5pt);
\fill [color=qqqqff] (24,12) circle (2.5pt);
\fill [color=qqqqff] (25,13) circle (2.5pt);
\fill [color=qqqqff] (26,14) circle (2.5pt);
\fill [color=qqqqff] (27,15) circle (2.5pt);
\fill [color=qqqqff] (28,16) circle (2.5pt);
\fill [color=qqqqff] (29,17) circle (2.5pt);
\fill [color=qqqqff] (30,18) circle (2.5pt);
\fill [color=qqqqff] (31,19) circle (2.5pt);
\fill [color=qqqqff] (9,-7) circle (2.5pt);
\fill [color=qqqqff] (10,-6) circle (2.5pt);
\fill [color=qqqqff] (11,-5) circle (2.5pt);
\fill [color=qqqqff] (12,-4) circle (2.5pt);
\fill [color=qqqqff] (13,-3) circle (2.5pt);
\fill [color=qqqqff] (14,-2) circle (2.5pt);
\fill [color=qqqqff] (16,0) circle (2.5pt);
\fill [color=qqqqff] (17,1) circle (2.5pt);
\fill [color=qqqqff] (18,2) circle (2.5pt);
\fill [color=qqqqff] (19,3) circle (2.5pt);
\fill [color=qqqqff] (20,4) circle (2.5pt);
\fill [color=qqqqff] (21,5) circle (2.5pt);
\fill [color=qqqqff] (22,6) circle (2.5pt);
\fill [color=qqqqff] (23,7) circle (2.5pt);
\fill [color=qqqqff] (24,8) circle (2.5pt);
\fill [color=qqqqff] (25,9) circle (2.5pt);
\fill [color=qqqqff] (26,10) circle (2.5pt);
\fill [color=qqqqff] (27,11) circle (2.5pt);
\fill [color=qqqqff] (28,12) circle (2.5pt);
\fill [color=qqqqff] (29,13) circle (2.5pt);
\fill [color=qqqqff] (30,14) circle (2.5pt);
\fill [color=qqqqff] (31,15) circle (2.5pt);
\fill [color=qqqqff] (13,-7) circle (2.5pt);
\fill [color=qqqqff] (14,-6) circle (2.5pt);
\fill [color=qqqqff] (15,-5) circle (2.5pt);
\fill [color=qqqqff] (16,-4) circle (2.5pt);
\fill [color=qqqqff] (17,-3) circle (2.5pt);
\fill [color=qqqqff] (18,-2) circle (2.5pt);
\fill [color=qqqqff] (20,0) circle (2.5pt);
\fill [color=qqqqff] (21,1) circle (2.5pt);
\fill [color=qqqqff] (22,2) circle (2.5pt);
\fill [color=qqqqff] (23,3) circle (2.5pt);
\fill [color=qqqqff] (24,4) circle (2.5pt);
\fill [color=qqqqff] (25,5) circle (2.5pt);
\fill [color=qqqqff] (26,6) circle (2.5pt);
\fill [color=qqqqff] (27,7) circle (2.5pt);
\fill [color=qqqqff] (28,8) circle (2.5pt);
\fill [color=qqqqff] (29,9) circle (2.5pt);
\fill [color=qqqqff] (30,10) circle (2.5pt);
\fill [color=qqqqff] (31,11) circle (2.5pt);
\fill [color=qqqqff] (13,-11) circle (2.5pt);
\fill [color=qqqqff] (14,-10) circle (2.5pt);
\fill [color=qqqqff] (15,-9) circle (2.5pt);
\fill [color=qqqqff] (16,-8) circle (2.5pt);
\fill [color=qqqqff] (17,-7) circle (2.5pt);
\fill [color=qqqqff] (18,-6) circle (2.5pt);
\fill [color=qqqqff] (19,-5) circle (2.5pt);
\fill [color=qqqqff] (20,-4) circle (2.5pt);
\fill [color=qqqqff] (21,-3) circle (2.5pt);
\fill [color=qqqqff] (22,-2) circle (2.5pt);
\fill [color=qqqqff] (24,0) circle (2.5pt);
\fill [color=qqqqff] (25,1) circle (2.5pt);
\fill [color=qqqqff] (26,2) circle (2.5pt);
\fill [color=qqqqff] (27,3) circle (2.5pt);
\fill [color=qqqqff] (28,4) circle (2.5pt);
\fill [color=qqqqff] (29,5) circle (2.5pt);
\fill [color=qqqqff] (30,6) circle (2.5pt);
\fill [color=qqqqff] (31,7) circle (2.5pt);
\fill [color=qqqqff] (17,-11) circle (2.5pt);
\fill [color=qqqqff] (18,-10) circle (2.5pt);
\fill [color=qqqqff] (19,-9) circle (2.5pt);
\fill [color=qqqqff] (20,-8) circle (2.5pt);
\fill [color=qqqqff] (21,-7) circle (2.5pt);
\fill [color=qqqqff] (22,-6) circle (2.5pt);
\fill [color=qqqqff] (23,-5) circle (2.5pt);
\fill [color=qqqqff] (24,-4) circle (2.5pt);
\fill [color=qqqqff] (25,-3) circle (2.5pt);
\fill [color=qqqqff] (26,-2) circle (2.5pt);
\fill [color=qqqqff] (28,0) circle (2.5pt);
\fill [color=qqqqff] (29,1) circle (2.5pt);
\fill [color=qqqqff] (30,2) circle (2.5pt);
\fill [color=qqqqff] (31,3) circle (2.5pt);
\fill [color=qqqqff] (1,5) circle (2.5pt);
\fill [color=qqqqff] (2,6) circle (2.5pt);
\fill [color=qqqqff] (3,7) circle (2.5pt);
\fill [color=qqqqff] (4,8) circle (2.5pt);
\fill [color=qqqqff] (5,9) circle (2.5pt);
\fill [color=qqqqff] (6,10) circle (2.5pt);
\fill [color=qqqqff] (7,11) circle (2.5pt);
\fill [color=qqqqff] (8,12) circle (2.5pt);
\fill [color=qqqqff] (9,13) circle (2.5pt);
\fill [color=qqqqff] (10,14) circle (2.5pt);
\fill [color=qqqqff] (11,15) circle (2.5pt);
\fill [color=qqqqff] (12,16) circle (2.5pt);
\fill [color=qqqqff] (13,17) circle (2.5pt);
\fill [color=qqqqff] (14,18) circle (2.5pt);
\fill [color=qqqqff] (15,19) circle (2.5pt);
\fill [color=qqqqff] (16,20) circle (2.5pt);
\fill [color=qqqqff] (17,21) circle (2.5pt);
\fill [color=qqqqff] (18,22) circle (2.5pt);
\fill [color=qqqqff] (19,23) circle (2.5pt);
\fill [color=qqqqff] (20,24) circle (2.5pt);
\fill [color=qqqqff] (-3,5) circle (2.5pt);
\fill [color=qqqqff] (-2,6) circle (2.5pt);
\fill [color=qqqqff] (-1,7) circle (2.5pt);
\fill [color=qqqqff] (0,8) circle (2.5pt);
\fill [color=qqqqff] (1,9) circle (2.5pt);
\fill [color=qqqqff] (2,10) circle (2.5pt);
\fill [color=qqqqff] (3,11) circle (2.5pt);
\fill [color=qqqqff] (4,12) circle (2.5pt);
\fill [color=qqqqff] (5,13) circle (2.5pt);
\fill [color=qqqqff] (6,14) circle (2.5pt);
\fill [color=qqqqff] (7,15) circle (2.5pt);
\fill [color=qqqqff] (8,16) circle (2.5pt);
\fill [color=qqqqff] (9,17) circle (2.5pt);
\fill [color=qqqqff] (10,18) circle (2.5pt);
\fill [color=qqqqff] (11,19) circle (2.5pt);
\fill [color=qqqqff] (12,20) circle (2.5pt);
\fill [color=qqqqff] (13,21) circle (2.5pt);
\fill [color=qqqqff] (14,22) circle (2.5pt);
\fill [color=qqqqff] (15,23) circle (2.5pt);
\fill [color=qqqqff] (16,24) circle (2.5pt);
\fill [color=qqqqff] (-3,9) circle (2.5pt);
\fill [color=qqqqff] (-2,10) circle (2.5pt);
\fill [color=qqqqff] (-1,11) circle (2.5pt);
\fill [color=qqqqff] (0,12) circle (2.5pt);
\fill [color=qqqqff] (1,13) circle (2.5pt);
\fill [color=qqqqff] (2,14) circle (2.5pt);
\fill [color=qqqqff] (3,15) circle (2.5pt);
\fill [color=qqqqff] (4,16) circle (2.5pt);
\fill [color=qqqqff] (5,17) circle (2.5pt);
\fill [color=qqqqff] (6,18) circle (2.5pt);
\fill [color=qqqqff] (7,19) circle (2.5pt);
\fill [color=qqqqff] (8,20) circle (2.5pt);
\fill [color=qqqqff] (9,21) circle (2.5pt);
\fill [color=qqqqff] (10,22) circle (2.5pt);
\fill [color=qqqqff] (11,23) circle (2.5pt);
\fill [color=qqqqff] (12,24) circle (2.5pt);
\fill [color=qqqqff] (-7,9) circle (2.5pt);
\fill [color=qqqqff] (-6,10) circle (2.5pt);
\fill [color=qqqqff] (-5,11) circle (2.5pt);
\fill [color=qqqqff] (-4,12) circle (2.5pt);
\fill [color=qqqqff] (-3,13) circle (2.5pt);
\fill [color=qqqqff] (-2,14) circle (2.5pt);
\fill [color=qqqqff] (-1,15) circle (2.5pt);
\fill [color=qqqqff] (0,16) circle (2.5pt);
\fill [color=qqqqff] (1,17) circle (2.5pt);
\fill [color=qqqqff] (2,18) circle (2.5pt);
\fill [color=qqqqff] (3,19) circle (2.5pt);
\fill [color=qqqqff] (4,20) circle (2.5pt);
\fill [color=qqqqff] (5,21) circle (2.5pt);
\fill [color=qqqqff] (6,22) circle (2.5pt);
\fill [color=qqqqff] (7,23) circle (2.5pt);
\fill [color=qqqqff] (8,24) circle (2.5pt);
\fill [color=qqqqff] (-7,13) circle (2.5pt);
\fill [color=qqqqff] (-6,14) circle (2.5pt);
\fill [color=qqqqff] (-5,15) circle (2.5pt);
\fill [color=qqqqff] (-4,16) circle (2.5pt);
\fill [color=qqqqff] (-3,17) circle (2.5pt);
\fill [color=qqqqff] (-2,18) circle (2.5pt);
\fill [color=qqqqff] (-1,19) circle (2.5pt);
\fill [color=qqqqff] (0,20) circle (2.5pt);
\fill [color=qqqqff] (1,21) circle (2.5pt);
\fill [color=qqqqff] (2,22) circle (2.5pt);
\fill [color=qqqqff] (3,23) circle (2.5pt);
\fill [color=qqqqff] (4,24) circle (2.5pt);
\fill [color=qqqqff] (-11,13) circle (2.5pt);
\fill [color=qqqqff] (-10,14) circle (2.5pt);
\fill [color=qqqqff] (-9,15) circle (2.5pt);
\fill [color=qqqqff] (-8,16) circle (2.5pt);
\fill [color=qqqqff] (-7,17) circle (2.5pt);
\fill [color=qqqqff] (-6,18) circle (2.5pt);
\fill [color=qqqqff] (-5,19) circle (2.5pt);
\fill [color=qqqqff] (-4,20) circle (2.5pt);
\fill [color=qqqqff] (-3,21) circle (2.5pt);
\fill [color=qqqqff] (-2,22) circle (2.5pt);
\fill [color=qqqqff] (-1,23) circle (2.5pt);
\fill [color=qqqqff] (0,24) circle (2.5pt);
\fill [color=qqqqff] (-11,17) circle (2.5pt);
\fill [color=qqqqff] (-10,18) circle (2.5pt);
\fill [color=qqqqff] (-9,19) circle (2.5pt);
\fill [color=qqqqff] (-8,20) circle (2.5pt);
\fill [color=qqqqff] (-7,21) circle (2.5pt);
\fill [color=qqqqff] (-6,22) circle (2.5pt);
\fill [color=qqqqff] (-5,23) circle (2.5pt);
\fill [color=qqqqff] (-4,24) circle (2.5pt);
\fill [color=qqqqff] (-15,17) circle (2.5pt);
\fill [color=qqqqff] (-14,18) circle (2.5pt);
\fill [color=qqqqff] (-13,19) circle (2.5pt);
\fill [color=qqqqff] (-12,20) circle (2.5pt);
\fill [color=qqqqff] (-11,21) circle (2.5pt);
\fill [color=qqqqff] (-10,22) circle (2.5pt);
\fill [color=qqqqff] (-9,23) circle (2.5pt);
\fill [color=qqqqff] (-8,24) circle (2.5pt);
\fill [color=qqqqff] (-15,21) circle (2.5pt);
\fill [color=qqqqff] (-14,22) circle (2.5pt);
\fill [color=qqqqff] (-13,23) circle (2.5pt);
\fill [color=qqqqff] (-12,24) circle (2.5pt);
\fill [color=qqqqff] (13,-10) circle (2.5pt);
\fill [color=qqqqff] (13,-9) circle (2.5pt);
\fill [color=qqqqff] (9,-6) circle (2.5pt);
\fill [color=qqqqff] (9,-5) circle (2.5pt);
\fill [color=qqqqff] (5,-2) circle (2.5pt);
\fill [color=qqqqff] (1,2) circle (2.5pt);
\fill [color=qqqqff] (1,3) circle (2.5pt);
\fill [color=qqqqff] (-3,6) circle (2.5pt);
\fill [color=qqqqff] (-3,7) circle (2.5pt);
\fill [color=qqqqff] (-7,10) circle (2.5pt);
\fill [color=qqqqff] (-7,11) circle (2.5pt);
\fill [color=qqqqff] (-11,14) circle (2.5pt);
\fill [color=qqqqff] (-11,15) circle (2.5pt);
\fill [color=qqqqff] (-15,18) circle (2.5pt);
\fill [color=qqqqff] (-15,19) circle (2.5pt);
\fill [color=qqqqff] (1,1) circle (2.5pt);
\draw[color=black] (-14.13,18.41) node {$r$};
\draw[color=black] (-10.08,14.42) node {$s$};
\draw[color=black] (-6.17,10.42) node {$t$};
\draw[color=black] (-1.73,6.43) node {$a_1$};
\draw[color=black] (2.23,2.38) node {$b_1$};
\draw[color=black] (10.24,-5.6) node {$e_1$};
\draw[color=black] (5.9,-2.09) node {$c$};
\draw[color=black] (6.18,-2.66) node {$d$};
\draw[color=black] (10.14,-2.66) node {$e$};
\draw[color=black] (12.05,-4.65) node {$f$};
\draw[color=black] (16.15,-4.65) node {$g$};
\draw[color=black] (14.19,-9.6) node {$c_1$};
\draw[color=black] (18.48,-6.65) node {$d_1$};
\draw[color=black] (22.39,-6.65) node {$f_1$};
\end{scriptsize}
\end{tikzpicture}

\caption{The $ \F[a, \sigma^{-4}]$-module $H_{01}^{ \star}(H \mf^{ \star}_{\gr}(B \gr))$.} \label{fig_h01bgr}

\end{figure}

By additivity of the functors in play, one gets the following result.

\begin{corr} \label{cor:hv}
Let $V$ be an elementary abelian $2$-group and $F$ the largest free sub-$\ste(1)$-module of $H\F^*(BV)$.
 There is a $ \F[ a, \sigma^{-4}]$-module isomorphism 
$$ \mathcal{H}^{\star}(V) \cong \bigoplus_{i=1}^n \left(( \Sigma^{-i(1+ \alpha)}HP^{ \star})_{twist \geq 0} \oplus ( \Sigma^{-i(1+ \alpha)-1}HP^{ \star})_{twist \leq -2} \right)^{ \oplus \begin{pmatrix} n \\ i \end{pmatrix}} \oplus H_{01}^{ \star}(RF) $$
\end{corr}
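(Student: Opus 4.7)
The plan is to reduce the statement to Theorem \ref{pro_calculh01pn} via the Künneth decomposition of $H\F^*(BV)$ together with the additivity of the functor $H_{01}^{\star} \circ R$. By the identity \eqref{eqn:h_hrhf} we have $\mathcal{H}^{\star}(V) = H_{01}^{\star}(R(H\F^*(BV)))$ as $\F[a,\sigma^{-4}]$-modules. Since $R$ is an extension of scalars (Definition \ref{de:R}) and $H_{01}^{\star}$ is built from kernels and images of additive operations, both functors commute with finite direct sums; the relevant $\F[a,\sigma^{-4}]$-module structure is provided by Lemma \ref{lemma_H01R}. Hence it suffices to decompose $H\F^*(BV)$ as an $\ste(1)$-module and compute $H_{01}^{\star} \circ R$ on each summand.

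First I would write $H\F^*(BV) = F \oplus M$, where $F$ is the largest free sub-$\ste(1)$-module and $M$ is reduced. Additivity yields
\[
\mathcal{H}^{\star}(V) \cong H_{01}^{\star}(RF) \oplus H_{01}^{\star}(RM),
\]
and the first summand is exactly the $H_{01}^{\star}(RF)$ appearing in the statement, so nothing further needs to be said about it.

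To handle the reduced summand $M$, I would apply Lemma \ref{lemma_kunneth_hbv}, which decomposes $\widetilde{H\F}^*(BV)$ into a sum of $\binom{n}{i}$ copies of $P^{\otimes i}$ for $1 \leq i \leq n$. Combining this with Proposition \ref{pro_periodiciteomega} and Definition \ref{de_pn} identifies the reduced part of $P^{\otimes i}$ with $P_i$, so that each $P^{\otimes i}$ splits (non-canonically) as $P_i \oplus F_i$ for some free $\ste(1)$-module $F_i$. After regrouping all of the free pieces into $F$ (well-defined up to isomorphism because $F$ is maximal and sums of free modules are free), one gets $M \cong \bigoplus_{i=1}^n P_i^{\oplus \binom{n}{i}}$ as an $\ste(1)$-module. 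Applying $H_{01}^{\star} \circ R$ term by term and invoking Theorem \ref{pro_calculh01pn} to evaluate each $H_{01}^{\star}(RP_i) = (\Sigma^{-i(1+\alpha)}HP^{\star})_{twist \geq 0} \oplus (\Sigma^{-i(1+\alpha)-1}HP^{\star})_{twist \leq -2}$ produces the asserted isomorphism.

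The only subtle point is the regrouping of the free summands: one must check that the various $F_i$'s combine with the original $F$ to give the largest free sub-$\ste(1)$-module, and that the $\F[a,\sigma^{-4}]$-module structure is respected throughout. The first is a Krull–Schmidt-style uniqueness statement in the stable category of $\ste(1)$-modules (see the references to \cite{Mar83,Pal01,BrOssa} in section \ref{section_stable}), and the second follows from Lemma \ref{lemma_H01R} applied to the morphisms realizing the splittings.
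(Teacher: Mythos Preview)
Your proof is correct and follows essentially the same approach as the paper: the paper's proof is the single line ``By additivity of the functors in play, one gets the following result,'' and you have simply unpacked that line by invoking Lemma \ref{lemma_kunneth_hbv}, the identification $(P^{\otimes i})^{red}\cong P_i$ from Proposition \ref{pro_pnomega} and Definition \ref{de_pn}, and Theorem \ref{pro_calculh01pn}. The extra care you take with the regrouping of free summands via Krull--Schmidt is appropriate and does not deviate from the paper's intended argument.
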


\section{Height $2$ detection for elementary abelian $2$-groups un $k\R$-cohomology}

The goal of this section is to prove that the slice tower for $K\R$ theory satisfies the detection of height $2$ with respect to the functor $[BV,-]^{\star}_e$.
The strategy is the following: first, we use our computation of $\mathcal{H}^{\star}(V)$ to prove the detection of height $1$ for the Borel tower associated to the slice tower for $K\R$, that is $E \gr_+ \wedge \Sigma^{ \bullet(1+ \alpha)} k\R$. Then, the fact that the geometric fixed points $\Phi^{\gr}K\R = 0$ implies that the Bott element is $a$-torsion, and so the tower $\widetilde{E \gr} \wedge \Sigma^{ \bullet(1+ \alpha)} k\R$ has trivial structure morphisms $\widetilde{E\gr} \wedge v_1$, and in particular the diagram

$$  \xymatrix@u@R=0.0cm@C=1.0cm{ \vdots \ar[r]^{E \gr_+ \wedge v_1} & E \gr_+ \wedge \Sigma^{ (n+1)(1+ \alpha)} k\R \ar[r]^{E \gr_+ \wedge v_1} \ar[d] & E \gr_+ \wedge \Sigma^{ n(1+ \alpha)} k\R \ar[r]^{E \gr_+ \wedge v_1} \ar[d] & E \gr_+ \wedge \Sigma^{ (n-1)(1+ \alpha)} k\R \ar[r]^{E \gr_+ \wedge v_1} \ar[d] & \vdots \\
\vdots \ar[r]^{v_1} &  \Sigma^{ (n+1)(1+ \alpha)} k\R \ar[r]^{v_1} \ar[d] & \Sigma^{ n(1+ \alpha)} k\R \ar[r]^{v_1} \ar[d] &  \Sigma^{ (n-1)(1+ \alpha)} k\R\ar[r]^{v_1} \ar[d] & \vdots \\
\vdots \ar[r]^{0} & \widetilde{E\gr} \wedge \Sigma^{ (n+1)(1+ \alpha)} k\R \ar[r]^{ 0} & \widetilde{E\gr} \wedge \Sigma^{ n(1+ \alpha)} k\R \ar[r]^{ 0} &  \widetilde{E\gr} \wedge \Sigma^{ (n-1)(1+ \alpha)} k\R \ar[r]^{0} & \vdots  
}$$

satisfies the hypothesis of Proposition \ref{pro:suchgreatheights}, so the tower in the middle satisfies the detection of height $(1+1)$.
The last step is to use this detection property as a computational tool via Proposition \ref{thm:chaincplx} to achieve explicit computation.

\subsection{The proof of detection of height $1$ for $E \gr_+ \wedge \Sigma^{ \bullet(1+ \alpha)} k\R$}

We show the detection of height $1$ for the Borel slice tower for $K\R$ using $(3)$ of Proposition \ref{pro:detect_1_2} together with the chain complex given by Proposition \ref{thm:chaincplx}. To this end, we first compute the object $\frac{Ker(\theta_n^*)}{Im(\theta_{n-1}^*)}$ for the tower we are considering.

\begin{lemma} \label{lemma_h01erbv}
There is an isomorphism $$ \frac{ Ker_{E \gr_+ \wedge \qi}( E \gr_+ \wedge H \Z^{ \star}_{\gr}(BV)) }{ Im_{E \gr_+ \wedge \qi}( E \gr_+ \wedge H \Z^{ \star}_{\gr}(BV)) } \cong H_{ 01}^{ \star}( (E \gr_+ \wedge H \F)^{ \star}(BV)),$$
and  $H_{ 01}^{ \star}( (E \gr_+ \wedge H \F)^{ \star}_{\gr}(BV)) = (H_{ 01}^{ \star}(  H \F^{ \star}_{\gr}(BV)))_{twist \geq 0}[ \sigma^{ \pm 4}].$
\end{lemma}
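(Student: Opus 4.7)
The lemma contains two independent assertions, which I would address in turn.

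For the first identification I would mimic the argument establishing $\mathcal{H}^{\star}(V)$ that appears immediately before Notation~\ref{nota:hv}. Multiplication by $2$ is nullhomotopic on $BV_+$ and hence on $E\gr_+\wedge BV_+$. Smashing the distinguished triangle $H\mz \xrightarrow{\times 2} H\mz \to H\mf$ from Proposition~\ref{pro:functorH} with this spectrum yields a splitting of the resulting long exact sequence, giving
\[(E\gr_+\wedge H\mz)^{\star}_{\gr}(BV) = \ker\bigl(\qo \colon (E\gr_+\wedge H\mf)^{\star}_{\gr}(BV) \to (E\gr_+\wedge H\mf)^{\star+1}_{\gr}(BV)\bigr).\]
The integral lift $\overline{\qi}$ constructed in the lemma following Proposition~\ref{pro:de_q1} restricts, on this kernel, to the action of $\qi$ itself, so its Margolis homology becomes, by Definition~\ref{de_h01}, exactly $H_{01}^{\star}((E\gr_+\wedge H\mf)^{\star}_{\gr}(BV))$.

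For the second identification I would first produce a Borel analogue of Theorem~\ref{thm:formula_qoi}. Since $BV$ carries the trivial $\gr$-action, the proof of that theorem, applied with $H\mf$ replaced throughout by $E\gr_+\wedge H\mf$, yields a natural $\E$-module isomorphism
\[(E\gr_+\wedge H\mf)^{\star}_{\gr}(BV) \cong (E\gr_+\wedge H\mf)^{\star}_{\gr}\otimes_{\F} H\F^{*}(BV),\]
with the action of $\qo$ and $\qi$ given by formulas analogous to Definition~\ref{de:R}. Smashing the isotropy separation cofibre sequence $E\gr_+\to S^0\to \widetilde{E\gr}$ with $H\mf$ and comparing with Proposition~\ref{pro_cohompoint} identifies the Borel coefficient ring $(E\gr_+\wedge H\mf)^{\star}_{\gr}$ as the localization of the twist-nonnegative summand $(H\mf^{\star}_{\gr})_{twist\geq 0}=\F[a,\sigma^{-1}]$ obtained by inverting $\sigma^{-4}$. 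Now $\qo$ has twist $0$ and $\qi$ has twist $1$, so neither decreases twist; consequently the twist-nonnegative truncation is stable under the formation of kernels and images of $\qo,\qi$, and $H_{01}^{\star}$ respects truncation. Moreover Corollary~\ref{corr_actionstecoeff} gives that $\sigma^{-4}$ lies in the kernel of both $\qo$ and $\qi$, so multiplication by $\sigma^{-4}$ is an $\E$-module map and $H_{01}^{\star}$ commutes with inverting it. Combining the Künneth-type formula with these two commutations yields the stated identity.

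The principal obstacle is the second identification: it requires both the Borel variant of Theorem~\ref{thm:formula_qoi} and a precise control on the interaction of $\qo,\qi$ with the twist filtration and with $\sigma^{-4}$-inversion. Both ingredients are, however, direct consequences of the machinery assembled in Section~\ref{sec_hoistar} and of the explicit action of $\qo,\qi$ on the coefficient ring recorded in Corollary~\ref{corr_actionstecoeff}, so once the Borel analogue of Theorem~\ref{thm:formula_qoi} is in place the argument is essentially formal.
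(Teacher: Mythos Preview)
Your plan and the paper's proof follow the same route. For the first assertion the paper simply says ``by definition of $H_{01}^{\star}$'', relying implicitly on exactly the $\times 2$ nullhomotopy argument you spell out.

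For the second assertion the paper is organized slightly differently but uses the same ingredients. Rather than first establishing a Borel analogue of Theorem~\ref{thm:formula_qoi} and then checking that $H_{01}^{\star}$ commutes with truncation and with inverting $\sigma^{-4}$, the paper works directly with the comparison map $(E\gr_+\wedge H\mf)^{\star}_{\gr}(BV)\to H\mf^{\star}_{\gr}(BV)$ induced by $E\gr_+\to S^0$. This map is automatically an $\ste^{\star}$-module morphism; the isotropy separation long exact sequence (together with the Hu--Kriz computation of the Borel coefficients) shows it is an isomorphism in a range of degrees, and invertibility of $\sigma^{-4}$ on the source then propagates the $\E$-structure from the already computed target to everything. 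Your argument reaches the same conclusion, but one step deserves a word of caution: the claim that ``$H_{01}^{\star}$ respects truncation'' does not follow solely from $\qo,\qi$ having nonnegative twist---you must also rule out classes of twist $\geq 0$ lying in the $\qi$-image of something in twist $<0$. This is true precisely because $H\mf^{\star}_{\gr}$ vanishes in twist $-1$, a fact you should make explicit.
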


\begin{proof}
The first isomorphism is by definition of $H_{01}^{\star}$. \\

The $H \mf$-module morphism $ E \gr_+ \wedge H \mf \rightarrow H \mf$ induces a $ \ste^{ \star}$-module morphism
$$ (E \gr_+ \wedge H \mf)^{ \star}(BV) \rightarrow H \mf^{ \star}_{\gr}(BV)$$
which is part of a long exact sequence of $ \F[a]$-modules
$$ \hdots \rightarrow (E \gr_+ \wedge H \mf)^{ \star}_{\gr}(BV) \rightarrow H \mf^{ \star}_{\gr}(BV) \stackrel{(-)[a^{ \pm 1}]}{ \longrightarrow} ( \widetilde{E \gr} \wedge H \mf)^{ \star}_{\gr}(BV) $$ $$ \rightarrow  (E \gr_+ \wedge H \mf)^{ \star+1}_{\gr}(BV) \rightarrow \hdots .$$

Recall that Hu and Kriz computed $(E\gr_+ \wedge H \mf)^{\star}_{\gr} = \F[ \sigma^{ \pm 1}, a^{-1}]$ in \cite[p.370]{HK01}, denoted $H_{\star}^f$ in \textit{loc. cit.}.
The $ \F[a]$-module structure on $H \mf^{ \star}_{\gr}(BV)$ given by Proposition \ref{thm:formula_qoi} identifies two out of three terms in the sequence:

$$ \hdots \rightarrow (E \gr_+ \wedge H \mf)^{ \star}_{\gr}(BV) \rightarrow \left( R(H \F^*(BV)) \right)^{ \star} \stackrel{(-)[a^{ \pm 1}]}{ \longrightarrow} $$ $$ \left( \F[ \sigma, a^{ \pm 1}] \otimes_{ \F}  H \F^{*}(BV)  \right)^{ \star}  \rightarrow  (E \gr_+ \wedge H \mf)^{ \star+1}_{\gr}(BV) \rightarrow \hdots,$$

providing a $\F$-vector space isomorphism $ (E \gr_+ \wedge H \mf)^{ \star}_{\gr}(BV) \cong  \F[ \sigma^{ \pm 1}, a^{-1}] \otimes_{ \F} H \F^*(BV)$. \\

The $ \ste^{ \star}$-module morphism $ (E \gr_+ \wedge H \mf)^{ \star}_{\gr}(BV) \rightarrow H \mf^{ \star}_{\gr}(BV)$ gives the $ \Lambda_{ \F}( E \gr_+ \wedge \qi)$-module structure on $ (E \gr_+ \wedge H \mf)^{ \star}_{\gr}(BV)$ by the Cartan formulae since
\begin{itemize}
\item it is an isomorphism in degrees of the form $k+ n \alpha$ for all $n$ and $k \leq -2$,
\item the element $ \sigma^{-1} \in (E \gr_+ \wedge H \mf)^{ \star}_{\gr}(BV)$ is invertible, thus $ \sigma^{-4} \in (Ker_{ E \gr_+ \wedge \qi} \cap Im_{E \gr_+  \wedge \qi})  ((E \gr_+ \wedge H \mf)^{ \star}_{\gr}(BV))$ is invertible.
\end{itemize}
In particular,  $H_{ 01}^{ \star}( (E \gr_+ \wedge H \F)^{ \star}_{\gr}(BV))$ is $ \sigma^{4}$-periodic, and the morphism
$$H_{ 01}^{ \star}( (E \gr_+ \wedge H \F)^{ \star}_{\gr}(BV)) \rightarrow H_{ 01}^{ \star}(  H \F^{ \star}_{\gr}(BV))$$
induced by $ E \gr_+ \wedge H \mf \rightarrow H \mf$ is an isomorphism in degrees of the form $k+ \Z \alpha$, for $k \leq -4$ ( because $| \qi| = 2+ \alpha$).
The result follows.
\end{proof}

\begin{lemma} \label{lemma_h01erkbv}
Let $n \geq 1$ and $V$ an elementary abelian $2$-group. Then, there is a $\F[a, \sigma^{-4}]$-module isomorphism between 

$$ \frac{ Ker_{ E \gr_+ \wedge \qi}( (E \gr_+ \wedge H \F)^{ \star}_{\gr}(BV))}{ Im_{ \Sigma^{-2- \alpha}E \gr_+ \wedge \qi}( (E \gr_+ \wedge H \F)^{ \star}_{\gr}(BV))}$$
and
$$ \bigoplus_{i=1}^n \left(HP^{ \star +i(1+ \alpha)} \right)^{ \begin{pmatrix} n \\ i \end{pmatrix}}.$$
\end{lemma}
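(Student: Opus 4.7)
The plan is to combine Lemma~\ref{lemma_h01erbv} with the explicit computation of $\mathcal{H}^{\star}(V)$ in Corollary~\ref{cor:hv}, and then track what happens to each summand under the twist $\geq 0$ truncation and the $\sigma^{-4}$-localization that Lemma~\ref{lemma_h01erbv} produces.

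First I would observe that, since $(E\gr_+ \wedge H\mf)^{\star}_{\gr}(BV)$ is $\qo$-acyclic (multiplication by $\sigma^{-1}$ being invertible makes the Cartan formula of Corollary~\ref{cor:cartan} force $\qo$-exactness), the $\qi$-Margolis homology on the left of the lemma coincides with $H_{01}^{\star}((E\gr_+ \wedge H\mf)^{\star}_{\gr}(BV))$. Then Lemma~\ref{lemma_h01erbv} (applied with $H\mf$ in place of $H\mz$, which works by the same $E\gr_+$-smashing argument) identifies this with the $\sigma^{-4}$-localization of the twist $\geq 0$ part of $H_{01}^{\star}(H\mf^{\star}_{\gr}(BV)) = \mathcal{H}^{\star}(V)$.

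Second I would plug in Corollary~\ref{cor:hv}, which decomposes $\mathcal{H}^{\star}(V)$ into the ``positive'' twist summands $(\Sigma^{-i(1+\alpha)}HP^{\star})_{twist \geq 0}$, the ``negative'' twist summands $(\Sigma^{-i(1+\alpha)-1}HP^{\star})_{twist \leq -2}$, and the contribution $H_{01}^{\star}(RF)$ from the maximal free $\ste(1)$-summand $F$ of $H\F^*(BV)$. I would then kill each unwanted piece in turn:
\begin{itemize}
\item The twist $\leq -2$ summands are annihilated by the twist $\geq 0$ truncation.
\item By Corollary~\ref{pro_h01libre} the free contribution $H_{01}^{\star}(RF)$ is concentrated in twists $0$ and $-2$; multiplication by $\sigma^{-4}$ has twist $+4$ and therefore lands in degrees where $H_{01}^{\star}(RF)$ is zero. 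Hence $\sigma^{-4}$ acts nilpotently (indeed as zero) on $H_{01}^{\star}(RF)$, and inverting it annihilates this contribution.
\end{itemize}

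Finally, only the $(\Sigma^{-i(1+\alpha)}HP^{\star})_{twist \geq 0}[\sigma^{\pm 4}]$-summands survive. To complete the proof I would identify each such summand with the corresponding $HP^{\star + i(1+\alpha)}$ appearing in the statement: since $\sigma^{-4}$ acts injectively on $HP^{\star}$ (this is immediate from the presentation in Notation~\ref{nota_HPstar}), any element of the shifted module $\Sigma^{-i(1+\alpha)}HP^{\star}$ can be multiplied by a sufficiently large power of $\sigma^{-4}$ to move it into the twist $\geq 0$ range, so the localized truncation recovers the whole localized module, compatibly with the $\F[a,\sigma^{\pm 4}]$-action. The main obstacle I anticipate is precisely this last identification, namely reconciling the explicit bookkeeping of which monomials in $a,v,\sigma^{-4},x^4$ survive the twist truncation with the structure of $HP^{\star + i(1+\alpha)}$; this is a routine but careful check using the figure~\ref{figure_hpstar} description of $HP^{\star}$ and the fact that the $\sigma^{-4}$-localized parts of the shifted $HP^{\star}$ modules are exactly the ones that become $\sigma^{-4}$-periodic after $E\gr_+$-smashing.
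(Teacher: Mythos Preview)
Your overall approach matches the paper's: invoke Lemma~\ref{lemma_h01erbv} to pass to $(H_{01}^{\star}(H\mf^{\star}_{\gr}(BV)))_{twist\geq 0}[\sigma^{\pm 4}]$, feed in the decomposition of $\mathcal{H}^{\star}(V)$, and use the $\sigma^{-4}$-periodicity of $HP^{\star}$ for the final identification. You are in fact more explicit than the paper about why the free $\ste(1)$-summand contributes nothing after inverting $\sigma^{-4}$; the paper's chain of isomorphisms absorbs this silently in passing from $H_{01}^{\star}(RP^{\otimes i})$ to the shifted $HP^{\star}$.

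One genuine caution on your first step. You assert that $\qo$-acyclicity of an $\E$-module forces its $\qi$-Margolis homology to coincide with $H_{01}^{\star}$, but this fails in general: take $\Lambda_{\F}(\qo)$ with trivial $\qi$-action; it is $\qo$-acyclic, its $\qi$-Margolis homology is two-dimensional, yet $H_{01}^{\star}$ is one-dimensional. Also, your parenthetical ``applied with $H\mf$ in place of $H\mz$'' is unnecessary, since the second assertion of Lemma~\ref{lemma_h01erbv} is already stated for $H\mf$-cohomology. The clean way to handle this step is to use the \emph{first} assertion of Lemma~\ref{lemma_h01erbv}: since multiplication by $2$ is null on $BV$, one has $(E\gr_+\wedge H\mz)^{\star}_{\gr}(BV) = Ker_{\qo}((E\gr_+\wedge H\mf)^{\star}_{\gr}(BV))$, so the displayed quotient (read on the integral side, which is what the slice tower actually produces) is $H_{01}^{\star}((E\gr_+\wedge H\mf)^{\star}_{\gr}(BV))$ by the very definition of $H_{01}^{\star}$. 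With that adjustment the rest of your argument is correct and complete.
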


\begin{proof}
The result now follows by additivity of the functors in play and Lemma \ref{lemma_h01erbv}, using Lemma \ref{lemma_kunneth_hbv}:

\begin{eqnarray*}
 &  & H_{ 01}^{ \star}( (E \gr_+ \wedge H \F)^{ \star}_{\gr}(BV)) \\
 & \cong & (H_{ 01}^{ \star}(  H \F^{ \star}_{\gr}(BV)))_{twist \geq 0}[ \sigma^{ \pm 4}] \\
 &  \cong & \bigoplus_{i=1}^n ((H_{ 01}^{ \star}(P^{ \otimes i}))_{twist \geq 0}[ \sigma^{ \pm 4}])^{ \oplus \begin{pmatrix} n \\ i \end{pmatrix}} \\
 & \cong & \bigoplus_{i=1}^n ((HP^{ \star+i(1+ \alpha)})_{twist \geq 0}[ \sigma^{ \pm 4}])^{ \oplus \begin{pmatrix} n \\ i \end{pmatrix}} \\
& \cong & \bigoplus_{i=1}^n (HP^{ \star+i(1+ \alpha)})^{ \oplus \begin{pmatrix} n \\ i \end{pmatrix}}
\end{eqnarray*}

where the last identification comes from the $ \sigma^{-4}$-periodicity of $HP^{ \star}$.
\end{proof}

\begin{pro} \label{thm_EK_detect}
Let $V$ be an elementary abelian $2$-group.
The tower $$ \left(E \gr_+ \wedge \Sigma^{ \bullet(1+ \alpha)} \kr \right)$$ satisfies the detection of height $1$ for $[BV,-]^{\star}_e$.
\end{pro}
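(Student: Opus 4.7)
By Proposition \ref{pro:detect_1_2}(3), detection of height $1$ at level $n$ is equivalent to $F^0_n = 0$, and by Proposition \ref{pro:detect_1_2}(1), $F^0_n$ embeds into $F^2_{n+1}$. So it suffices to prove $F^2_n = 0$ for every $n \in \Z$. I will in fact establish the stronger statement that both edge morphisms in the chain complex of Theorem \ref{thm:chaincplx}(2),
$$F_n^2 \hookrightarrow \frac{Ker(\theta_n^*)}{Im(\theta_{n-1}^*)} \twoheadrightarrow \Sigma F^0_{n+1},$$
vanish identically, i.e.~that $\phi_n/\phi_{n+1}$ is the entire middle term.

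The middle term has already been identified in Lemma \ref{lemma_h01erkbv} as the $v_1$-periodic $\F[a,\sigma^{-4}]$-module
$$\bigoplus_{i=1}^n \bigl(HP^{\star+i(1+\alpha)}\bigr)^{\binom{n}{i}}.$$
The homology of the chain complex is $\phi_n/\phi_{n+1}$, the associated graded of the $v_1$-adic filtration on $(E\gr_+ \wedge K\R)^{\star}(BV)$, the limit of the Borel tower. Since $BV$ carries trivial $\gr$-action, Borel $K\R$-theory of $BV$ reduces to complex $K$-theory of $BV\times B\gr$ with $RO(\gr)$-grading coming from the $\sigma^{\pm 1}$ reorientation; in particular, $(E\gr_+ \wedge K\R)^{\star}(BV)$ is $v_1$-periodic.

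The key step is therefore to exhibit, for each generator of $HP^{\star+i(1+\alpha)}$ in the middle term, a lift to a class of $(E\gr_+ \wedge \Sigma^{n(1+\alpha)} k\R)^{\star}(BV)$ whose image in the limit is a $v_1^n$-multiple of a nontrivial class of Borel $K\R$-cohomology: such a lift forces the corresponding class to survive to $\phi_n/\phi_{n+1}$. Using $v_1$-periodicity of both the middle term (via $\sigma^{-4}$, which corresponds to a shift by a unit power of $v_1$ in the Borel setting) and of Borel $K\R$-cohomology, one reduces the check to a single level $n$, where it follows from the definition of the middle term as the $\qi$-Margolis homology of $H\mf^{\star}_{\gr}(BV)$ together with the explicit action of $v_1$ in the Borel slice filtration. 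Once $\phi_n/\phi_{n+1}$ exhausts the middle term, exactness of the chain complex forces $F^2_n=0$ and $\Sigma F^0_{n+1}=0$, completing the proof.

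\textbf{Main obstacle.} The delicate point is the comparison between the algebraically computed middle term and the topologically defined subquotients $\phi_n/\phi_{n+1}$ of Borel Real $K$-theory; this is essentially a $E_2$-collapse statement for the Borel slice spectral sequence. The $v_1$-periodicity of both sides, combined with the explicit structure of $HP^{\star}$ from Notation \ref{nota_HPstar}, should make this comparison a finite check in a fundamental range of bidegrees.
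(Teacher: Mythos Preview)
Your strategy diverges from the paper's, and the gap you yourself flag as the ``main obstacle'' is in fact the whole difficulty. The paper never touches the limit $(E\gr_+\wedge K\R)^\star_{\gr}(BV)$ or the quotients $\phi_n/\phi_{n+1}$. It argues purely algebraically: from Theorem~\ref{thm:chaincplx}(2) and Proposition~\ref{pro:detect_1_2}(1), the composite
\[
t=\overline{c_n^*}\circ\iota_n\circ\overline{\delta_{n-2}^*}
\]
is an $\F[a]$-module map of degree $3+2\alpha$ between two shifts of the middle term, and its image is a copy of $F^0_{n-1}$. Lemma~\ref{lemma_h01erkbv} identifies the middle term with a direct sum of copies of $HP^{\star}$, and a short inspection of $HP^\star=\{1,x^4\}\F\otimes\F[a,\sigma^{-4},v]/(a^3,av)$ shows that \emph{every} $\F[a]$-linear self-map of degree $3+2\alpha$ vanishes: the only possible targets lie in $a$-divisible degrees, and $\F[a]$-linearity then forces the image to be $a$ times a class in a degree where the module is zero. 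Hence $t=0$, so $F^0_n=0$ for all $n$, and Proposition~\ref{pro:detect_1_2}(3) yields height-$1$ detection. No knowledge of the limit is required.

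Your plan, by contrast, requires computing the limit and matching its $v_1$-adic associated graded against the middle term. Two problems arise. First, your identification of the limit is not correct: since $\Phi^{\gr}K\R=0$ one has $E\gr_+\wedge K\R\simeq K\R$, so the Borel limit is $K\R^\star_{\gr}(BV)$ itself (in integer degrees $KO^*(BV)$), not $KU^*(BV\times B\gr)$; your description drops the complex-conjugation action on the coefficient spectrum. Second, and more seriously, establishing that $\phi_n/\phi_{n+1}$ exhausts the middle term \emph{is} the computation of the $v_1$-adic associated graded of $K\R^\star_{\gr}(BV)$, which is precisely the endpoint of the whole paper (Theorem~\ref{thm:krbvformula}(3)). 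The ``finite check in a fundamental range'' you allude to is therefore not a minor verification but the target computation itself; the paper's three-line degree argument bypasses it entirely.
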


\begin{proof}
By the second point of Theorem \ref{thm:chaincplx}, for any $n \in \Z$, the composite $t:= \overline{c_n^*}\iota_n\overline{\delta_{n-2}^*}$ yields a $\F[a]$-module morphism of degree $3+2\alpha$ whose image is isomorphic to $F^2$. For various $n$, the different maps $t$ defined this way are suspension of one another by multiple of the regular representation as the tower of $\gr$-spectra under consideration is $v_1$-periodic. Thus, by the third point of Proposition \ref{pro:detect_1_2}, it is sufficient to show that any  $ \F[a]$-module morphism

$$t :  \mathcal{H}^{\star}(V) \rightarrow \mathcal{H}^{\star +3+2 \alpha}(V)$$

is trivial. The Lemma \ref{lemma_h01erkbv} gives an identification of the source and target $\F[a,\sigma^{-4}]$-modules of $t$. \\

Recall that $HP^{ \star} = \{ 1, x^4 \} \F \otimes_{ \F} \F[ a, \sigma^{-4}, v]/( a^3, av)$, with degrees $|x^4| = 4$, $|a|= \alpha$, $| \sigma^{-4}| = -4+4 \alpha$ and $|v| = 1+ \alpha$, so the only possibly non-trivial values for such a morphism are, $t(ax) = y$ where $x$ is an element which is not in $Ker_{ a^2}$. But $t(ax) = at(x) = a0 = 0$ for degree reasons.
Consequently, $t$ is trivial, and the result follows.
\end{proof}

Proposition \ref{pro:suchgreatheights} gives our principal result:

\begin{thm} \label{thm_2detectkr}
The slice tower for $K\R$ satisfies the detection of height $2$ for $[BV,-]^{\star}_e$.
\end{thm}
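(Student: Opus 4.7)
The plan is to apply Proposition \ref{pro:suchgreatheights} to the slice tower $k_\bullet = \Sigma^{\bullet(1+\alpha)}k\R$ over $K\R$, taking $E = E\gr_+ \wedge (-)$ and $\tilde{E} = \widetilde{E\gr} \wedge (-)$ as the two functors arising from the isotropy separation cofiber sequence $E\gr_+ \wedge X \to X \to \widetilde{E\gr} \wedge X$. Invoking the proposition with $h=1$ then delivers detection of height $1+1=2$ for the slice tower, which is the desired statement.

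Three hypotheses must be checked. First, $E(K\R) \simeq K\R$, equivalently $\widetilde{E\gr} \wedge K\R \simeq 0$. Second, the structure map $\tilde{E}e_n = \widetilde{E\gr} \wedge v_1$ must be trivial for every $n \in \Z$. Third, the Borel tower $E(k_\bullet) = E\gr_+ \wedge \Sigma^{\bullet(1+\alpha)}k\R$ must satisfy detection of height $1$ for $[BV,-]^{\star}_e$. The third hypothesis is exactly Proposition \ref{thm_EK_detect}, which constitutes the substantial input: it rests on the identification of the middle term of the chain complex of Theorem \ref{thm:chaincplx} with $\mathcal{H}^{\star}(V)$, the full computation of this object carried out through Sections \ref{sec_hoistar}--\ref{section_stable}, and the degree-based vanishing of potential obstructions built into the structure of $HP^\star$.

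The first two hypotheses both reduce to the single fact that $v_1$ becomes nullhomotopic after smashing with $\widetilde{E\gr}$. As stated in the preamble to this subsection, this follows from $\Phi^{\gr}K\R = 0$: passing to $\widetilde{E\gr}$-modules inverts the Euler class $a$, and $v_1$ is forced to be $a$-torsion because of its non-trivial twist. Consequently $\widetilde{E\gr} \wedge v_1 \simeq 0$, which trivializes every structure map of the tower $\widetilde{E\gr} \wedge k_\bullet$ (giving hypothesis two), and upon passing to the sequential colimit along $v_1$ also forces $\widetilde{E\gr} \wedge K\R \simeq 0$ (giving hypothesis one).

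With all three hypotheses in place, Proposition \ref{pro:suchgreatheights} applies directly to give the conclusion. The principal obstacle is thus entirely absorbed into Proposition \ref{thm_EK_detect}; the reduction to the Borel setting there was necessary precisely because any direct attack on the $\extrel$-obstructions would have to contend with $a$-torsion contributions invisible after inverting $a$, and the present theorem repackages that Borel-level detection into genuine equivariant detection of height $2$ via formal manipulation of the isotropy separation triangle.
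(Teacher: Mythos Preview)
Your proof is correct and follows essentially the same route as the paper: both apply Proposition~\ref{pro:suchgreatheights} with the isotropy separation functors $E\gr_+\wedge(-)$ and $\widetilde{E\gr}\wedge(-)$, reduce the triviality of $\widetilde{E\gr}\wedge v_1$ to the fact that $v_1$ is $a$-torsion (the paper makes this concrete via $a^3v_1=0$ in $\kr^{\star}_{\gr}$), and invoke Proposition~\ref{thm_EK_detect} for the Borel tower. You are in fact slightly more explicit than the paper in verifying the hypothesis $E(K\R)\simeq K\R$, deducing it from $\widetilde{E\gr}\wedge v_1\simeq 0$ by passing to the colimit.
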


\begin{proof}
We show that this tower satisfies the hypothesis of Proposition \ref{pro:suchgreatheights}.
\begin{itemize}
\item The functor $ E \gr_+ \wedge (-)$, is exact,
\item the functor $ \widetilde{E \gr} \wedge (-)$, is exact, and the isotropy separation sequence gives natural distinguished triangles $EX \rightarrow X \rightarrow \tilde{E} X$ for all $ \gr$-spectrum $X$,
\item Let $x \in \kr^{ \star}_{\gr}(BV)$, then $v_1x$ is $a$-torsion because $a^3v_1 = 0$ in $ \kr^{ \star}_{\gr}$, so the image of $v_1x$ in $ \widetilde{E \gr} \wedge \kr^{ \star}_{\gr}(BV)$ is trivial.
\end{itemize}
The result now follows from Proposition \ref{pro:suchgreatheights} for $h=1$.
\end{proof}

\subsection{Consequences of the detection of height $2$}

Recall the complete computation of $\mathcal{H}^{\star}(V)$ presented in Corollary \ref{cor:hv}.

\begin{pro} \label{pro_hstar_ktheories}
Write $H \F^*(BV) = F \oplus (H \F^*(BV))^{red}$.
Define the  $ \F[a]$-module morphism $t : \mathcal{H}^{\star}(V) \rightarrow \mathcal{H}^{ \star+3+2 \alpha}(V)$
 by the commutative diagram
$$ \xymatrix{ \mathcal{H}^{ \star}(V) \ar@{->>}[d] \ar[r]^t & \mathcal{H}^{ \star+3+2 \alpha}(V) \\
\frac{ \Gamma_{v_1}( \kr^{ \star}_{\gr}(BV))}{Ker_{v_1}( \kr^{ \star}_{\gr}(BV))}^{ \star} \ar[r]^{\iota_0} & (v_1 Ker_{v_1^2}( \kr^{ \star}_{\gr}(BV)))^{ \star+2+ \alpha}. \ar@{^(->}[u] } $$ 
There are isomorphisms:
\begin{enumerate}
\item $$ \Sigma^{1+ \alpha} v_1 Ker_{v_1^2} \cong \Gamma_{v_1}( \kr^{ \star}_{\gr}(BV))/Ker_{v_1}( \kr^{ \star}_{\gr}(BV)) \cong Im(t),$$
\item  $$ \frac{ co\Gamma_{v_1}( \kr^{ \star}_{\gr}(BV))}{v_1 co\Gamma_{v_1}( \kr^{ \star}_{\gr}(BV))} \cong \frac{Ker(t)}{Im(t)},$$
\item and $$ \frac{Ker_{v_1}( \kr^{ \star}_{\gr}(BV))}{ v_1Ker_{v_1^2}( \kr^{ \star}_{\gr}(BV))} \cong Im_{ \qi} \circ Ker_{ \qo}( H \mf^{ \star}_{\gr}(BV)).$$
\end{enumerate}
\end{pro}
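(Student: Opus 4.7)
The plan is to apply Theorem \ref{thm:chaincplx}(3) to the slice tower for $\kr$ at level $n=0$ (for the functor $[BV,-]^{\star}_{\gr}$) and exploit the detection of height $2$ established in Theorem \ref{thm_2detectkr}. By Example \ref{ex:periodic_tower}, detection of height $2$ for this $v_1$-multiplication tower is equivalent to the identity $\Gamma_{v_1}(\kr^{\star}_{\gr}(BV)) = Ker_{v_1^2}(\kr^{\star}_{\gr}(BV))$, and by Proposition \ref{pro:detect_1_2}(4) all the maps $\iota_m$ are isomorphisms. Unpacking Definition \ref{de:filtrationkernel} in this periodic setting identifies $F^2_n \cong \Gamma_{v_1}/Ker_{v_1}$ and $F^0_n \cong v_1 Ker_{v_1^2}$ at the appropriately shifted grading.

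Statement (1) is essentially formal once these identifications are in hand. The first isomorphism $\Sigma^{1+\alpha} v_1 Ker_{v_1^2} \cong \Gamma_{v_1}/Ker_{v_1}$ follows from the first isomorphism theorem applied to multiplication by $v_1$ on $\Gamma_{v_1} = Ker_{v_1^2}$: the kernel is $Ker_{v_1}$ and the image is $v_1 Ker_{v_1^2}$, shifted by $|v_1|= 1+\alpha$. The second isomorphism $\Gamma_{v_1}/Ker_{v_1} \cong Im(t)$ is immediate from the defining commutative diagram of $t$, which factors it as the surjection $\mathcal{H}^{\star} \twoheadrightarrow \Gamma_{v_1}/Ker_{v_1}$, the isomorphism $\iota_0$ (invertible by Proposition \ref{pro:detect_1_2}(4)), and the inclusion into $\mathcal{H}^{\star+3+2\alpha}$; as the latter two are injective, $Im(t)$ coincides with the image of the initial surjection.

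Statement (2) should then follow directly from Theorem \ref{thm:chaincplx}(3), which at $n = 0$ produces the chain complex
\[
F^2_0 \hookrightarrow \mathcal{H}^{\star}(V) \twoheadrightarrow \Sigma F^2_2
\]
whose middle homology is $\Phi_0/\Phi_1 = co\Gamma_{v_1}(\kr^{\star}_{\gr}(BV))/v_1 co\Gamma_{v_1}(\kr^{\star+1+\alpha}_{\gr}(BV))$. Combining the identifications of (1) with the isomorphism $\iota_2 \colon F^0_1 \cong F^2_2$ realizes $t$ as the surjection of this complex followed by the inclusion $\overline{c_0^*}$ of the analogous complex at the next level. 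Therefore $Ker(t)$ equals the kernel of the right-hand map and $Im(t)$ (pulled back via the degree shift) equals the image of the left-hand map, giving $Ker(t)/Im(t) \cong \Phi_0/\Phi_1$.

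Statement (3) is the computationally deepest part and I expect it to be the main obstacle. The plan is to apply $[BV,-]^{\star}_{\gr}$ to the cofiber sequence $k_1 \xrightarrow{v_1} k_0 \xrightarrow{c_0} H\mz \xrightarrow{\delta_0} \Sigma k_1$ of Proposition \ref{pro:de_q1}, producing a long exact sequence in which $Ker_{v_1}(\kr^{\star}_{\gr}(BV)) = Im(\delta_0^*)$ and $Ker(c_0^*) = Im(v_1)$. A direct verification yields $v_1 Ker_{v_1^2} = Ker_{v_1} \cap Ker(c_0^*)$: any $v_1 x$ with $v_1^2 x = 0$ lies in both, and conversely any $y \in Ker_{v_1} \cap Im(v_1)$ can be written $y = v_1 x$ with $v_1^2 x = v_1 y = 0$. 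Consequently $c_0^*$ induces an injection $Ker_{v_1}/v_1 Ker_{v_1^2} \hookrightarrow H\mz^{\star}_{\gr}(BV)$. The key input to identify its image is the factorization $\overline{\qi} = c_0^* \circ \delta_0^*$, an immediate consequence of the defining relation $\theta_0 = \Sigma c_1 \circ \delta_0$ together with $c_1 = \Sigma^{1+\alpha}c_0$. Given this, for any $y = \delta_0^*(z) \in Ker_{v_1}$ one obtains $c_0^*(y) = \overline{\qi}(z)$, identifying the image of the injection with $Im(\overline{\qi}) = Im_{\qi} \circ Ker_{\qo}(H\mf^{\star}_{\gr}(BV))$.
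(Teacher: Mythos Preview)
Your approach is correct and matches the paper's own proof, which is simply a one-line remark that the statement is an explicit reformulation of Theorem~\ref{thm:chaincplx} together with the definition of $\iota_n$, specialized to the slice tower for $K\R$ under the detection of height $2$ established in Theorem~\ref{thm_2detectkr}. You have unpacked this reformulation in detail; one minor simplification is that your argument for part~(3) rederives Theorem~\ref{thm:chaincplx}(1) from scratch via the long exact sequence, whereas you could cite it directly: in the periodic tower $F^1_0 = Ker_{v_1}/(Ker_{v_1}\cap Im(v_1)) = Ker_{v_1}/v_1 Ker_{v_1^2}$, and Theorem~\ref{thm:chaincplx}(1) identifies this with $Im(\theta_{-1}^*) = Im(\overline{\qi}) = Im_{\qi}\circ Ker_{\qo}(H\mf^{\star}_{\gr}(BV))$.
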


\begin{proof}
This is an explicit reformulation of Proposition \ref{thm:chaincplx} and the definition of $\iota_{n}$ in the particular case of the slice tower for $K\R$-theory, in the case of detection of height $2$ which is asserted by \ref{thm_2detectkr}.
\end{proof}

We now determine the morphism $t$ of the previous lemma.

\begin{lemma} \label{lemma_imt}
Let $$ \widetilde{\mathcal{H}}^{\star}(V) := \bigoplus_{i=1}^n \left(( \Sigma^{-i(1+ \alpha)}HP^{ \star})_{twist \geq 0} \oplus ( \Sigma^{-i(1+ \alpha)-1}HP^{ \star})_{twist \leq -2} \right)^{ \oplus \begin{pmatrix} n \\ i \end{pmatrix}},$$ the image by the functor $H_{01}^{\star} R$of the non $\ste(1)$-free part of $H \mf^{*}(BV)$.
Then, the map
$$t :\widetilde{\mathcal{H}}^{\star}(V) \oplus H_{01}^{ \star}(RF) \rightarrow \widetilde{\mathcal{H}}^{\star+3+2 \alpha}(V) \oplus H_{01}^{ \star+3+2 \alpha}(RF)$$
satisfies $t([ \sigma^{-2}Sq^1x]) = [Sq^2Sq^2Sq^2x]$, for all generator $x$ of a free sub-$\ste(1)$-module of  $H \F^*(BV)$, and $t$ takes trivial values elsewhere.
\end{lemma}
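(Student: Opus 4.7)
The plan is to use the additivity of the functor $H_{01}^{\star} \circ R$ together with the $\ste(1)$-module splitting $H\F^*(BV) = F \oplus H\F^*(BV)^{red}$ to analyze $t$ on the two summands $\widetilde{\mathcal{H}}^{\star}(V)$ and $H_{01}^{\star}(RF)$ separately. By Corollary \ref{pro_h01libre}, for each free generator $x$ the summand of $H_{01}^{\star}(RF)$ indexed by $x$ is spanned by $[Sq^2 Sq^2 Sq^2 x]$ (in integer twist) and $[\sigma^2 Sq^1 x]$ (in twist $-2$), and the bidegree difference between these two classes is exactly $3+2\alpha$. Hence on the free summand, $t$ is determined by a single coefficient $\lambda \in \F$ sending $[\sigma^2 Sq^1 x]$ to $\lambda\,[Sq^2 Sq^2 Sq^2 x]$, and the claim reduces to showing $\lambda = 1$ while verifying that all other possible entries of $t$ vanish.

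The vanishing of $t$ on $\widetilde{\mathcal{H}}^{\star}(V)$ and on the twist-$0$ piece of $H_{01}^{\star}(RF)$ should follow by repeating the $\F[a]$-module bidegree argument of Proposition \ref{thm_EK_detect}. The non-free summand $\widetilde{\mathcal{H}}^{\star}(V)$ is a direct sum of shifts of $HP^{\star} = \{1, x^4\}\F \otimes \F[a, \sigma^{-4}, v]/(a^3, av)$, and an inspection of the bidegrees available in $HP^{\star+3+2\alpha}$ shows that $\F[a]$-linearity combined with the relations $a^3 = 0$, $av = 0$ leaves no possible non-zero image for any generator. The twist-$0$ classes $[Sq^2 Sq^2 Sq^2 x]$ are similarly killed: a degree $3+2\alpha$ image would sit in the twist-$(+2)$ part of the target, where both $\widetilde{\mathcal{H}}^{\star}$ and $H_{01}^{\star}(RF)$ are concentrated only in twists $0$ and $-2$ for integer $\F[a]$-part-generators by Corollary \ref{pro_h01libre} and Theorem \ref{pro_calculh01pn}.

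The main obstacle, and the remaining non-trivial step, is to identify $\lambda = 1$. I plan to proceed by naturality: a free $\ste(1)$-generator $x$ of $H\F^*(BV)$ of degree $|x|$ corresponds to a split surjection onto $\Sigma^{|x|}\ste(1)$, realizable by a map $BV \to \Sigma^{|x|} H\F$, and the whole construction of the map $t$ via the composite $\overline{c_n^*}\,\iota_n\,\overline{\delta_{n-2}^*}$ of Proposition \ref{pro_hstar_ktheories} is natural with respect to such maps. It therefore suffices to verify the formula in the universal situation where the $\ste(1)$-module is free of rank one. In this universal case, I will lift a representative of $[\sigma^2 Sq^1 x]$ in $H\underline{\Z}^{\star}_{\gr}$-cohomology via $\delta^*$ to an element in $k\R^{\star+2+\alpha}_{\gr}$ which, by the height-$2$ detection established in Theorem \ref{thm_2detectkr}, is $v_1^2$-divisible; applying the Hurewicz reduction $c^*$ to the $v_1^2$-quotient produces the claimed class $[Sq^2 Sq^2 Sq^2 x]$. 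The technical difficulty lies in carrying out this $v_1$-chase explicitly: the precise form of the integral lift $\overline{\qi}$ from Proposition \ref{pro:de_q1}, the action of $\qi$ on $H\mf^{\star}_{\gr}$ from Corollary \ref{corr_actionstecoeff}, and the explicit formula for $\qi$ on $R\ste(1)$ from Definition \ref{de:R} must be combined to check that the resulting class in $\overline{\qi}$-homology is indeed $[Sq^2 Sq^2 Sq^2 x]$ and not zero; this is essentially the algebraic incarnation of the classical Adams filtration detection of $v_1^2$ by $Sq^2 Sq^2 Sq^2$ in connective $K$-theory, here upgraded to the equivariant slice setting.
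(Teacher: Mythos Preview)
Your analysis of the vanishing part is correct and matches the paper's argument: the $\F[a]$-module structure of $HP^{\star}$ and the sparsity of $H_{01}^{\star}(RF)$ (Corollary~\ref{pro_h01libre}) leave no room for a non-zero degree-$(3+2\alpha)$ morphism except possibly the single coefficient $\lambda$ on $[\sigma^{2}Sq^1x]\mapsto[Sq^2Sq^2Sq^2x]$.

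For the identification $\lambda=1$, however, the paper takes a different and shorter route than the direct $v_1$-chase you outline. Rather than lifting through the slice tower and tracking $\overline{\qi}$ algebraically, the paper passes to $\gr$-fixed points. The key observation is that $(\Sigma^{-1-\alpha}H\underline{\Z})^{\gr}=0$ (since $H\underline{\F}^{*-\alpha}_{\gr}=0$), so the fixed points of $v_1:\kr\to\Sigma^{-1-\alpha}\kr$ give a self-equivalence of $ko$, and the fixed points of the slice tower reproduce a piece of the Postnikov tower of $ko$. Under this comparison the composite defining $t$ in twist $-2$ becomes the classical composite $\tilde{c}\circ\partial$ in the $ko$-tower, which Bruner--Greenlees \cite[\S A.5]{BG10} identify as an integral lift of $Sq^2Sq^1Sq^2$. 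This is what forces $\lambda=1$.

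Your naturality-plus-universal-example strategy is in principle viable, but you have not carried out the actual chase, and you yourself flag this as the ``technical difficulty''. In the universal case (a single free $\ste(1)$), the computation amounts to reproducing by hand the $ko$ $k$-invariant identification that the paper simply imports from \cite{BG10}. So the paper's approach buys you exactly the step you left open: it replaces an explicit equivariant computation by a citation to a known non-equivariant fact, at the cost of checking the fixed-points comparison diagram. If you want a self-contained argument, you should either complete the chase or, more efficiently, adopt the paper's fixed-points comparison.
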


\begin{proof}
By  the first point of Lemma \ref{lemma_h01erkbv}, and by definition of $HP^{\star}$ there cannot be any non trivial morphism
$$ \widetilde{\mathcal{H}^{\star}(V)} \rightarrow \widetilde{\mathcal{H}^{\star+3+2 \alpha}(V)} .$$
Because of the cancellation of $H\mf^{*-\alpha}$, for all $* \in \Z$, $ ( \Sigma^{-1-\alpha}H \Z)^{\gr} = 0$, and thus $v_1^{\gr} : \kr^{\gr} \rightarrow \Sigma^{-1- \alpha} \kr$ is a weak auto equivalence of $ko$.

By definition of $t$, the following diagram is commutative

$$ \xymatrix{ H_{01}^{*+2-2 \alpha}(H \mf^{\star}_{\gr}(BV)) \ar[r]^{ \delta^{\gr}} & F^0_{-1} \ar[r]^{(v_1^{-1})^{\gr}} & F^2_0 \ar[r]^{c^{\gr} \ \ } & H_{01}^{*+5}(H \mf^{\star}_{\gr}(BV)) \\
\Sigma^{-2 \alpha}H \F^*(BV) \ar@{->>}[u] \ar[r]^{ \partial} &  \ ko^{*+5}(BV) \ar[r]^= & ko^{*+5}(BV) \ar[r]^{\tilde{c}} & \kera(H \F^*(BV)) \ar@{->>}[u] } $$

where $\partial$ and $\tilde{c}$ are morphisms coming from the Postnikov tower of $ko$ by

$$ \xymatrix{  ko\ar[d] \ar[r]^{\tilde{c}}& H \Z \\
KO<-4> \ar[r] & \Sigma^{-4}H \Z. \ar@{-->}[ul]^{\partial} } $$

But \cite[section A.5]{BG10} identifies $ \tilde{c} \partial$ to an integral lift of the non-equivariant Steenrod operation $Sq^2Sq^1Sq^2$.
The result follows.
\end{proof}

We are finally able to identify $\kr^{\star}_{\gr}(BV)$.

\begin{thm} \label{thm:krbvformula}
There is a $\Z[a,v_1]$-module splitting of $ \kr^{\star}_{\gr}(BV)$ as
$$ \kr^{\star}_{\gr}(BV) \cong co\Gamma_{v_1}(  \kr^{\star}_{\gr}(BV)) \oplus F^1(V) \oplus F^2(V) \otimes_{\Z} \Lambda(v_1)$$
and isomorphisms:
\begin{enumerate}
\item $F^1(V) \cong Im( \qi : H \mf^{\star}_{\gr}(BV) \rightarrow H \mf^{\star+2+\alpha}_{\gr}(BV))$,
\item $F^2(V) \cong Sq^2Sq^2Sq^2 F$ where $F$ is the largest free $ \ste(1)$-module contained in $ H \F^*(BV)$,
\item and $$ \Phi_n/\Phi_{n+1} \cong  \bigoplus_{i=1}^n \left(( \Sigma^{-i(1+ \alpha)}HP^{ \star})_{twist \geq 0} \oplus ( \Sigma^{-i(1+ \alpha)-1}HP^{ \star})_{twist \leq -2}, \right)^{ \oplus \begin{pmatrix} n \\ i \end{pmatrix}}$$
where $$\Phi_n = Im(v_1^n : co\Gamma_{v_1}( \kr^{\star+n(1+ \alpha)}_{\gr}(BV)) \rightarrow co\Gamma_{v_1}( \kr^{\star+n(1+ \alpha)}_{\gr}(BV))),$$ for $n \geq 0$ defines a decreasing exhaustive filtration of the $\Z[a,v_1]$-module $co\Gamma_{v_1}(  \kr^{\star}_{\gr}(BV))$.
\end{enumerate}
\end{thm}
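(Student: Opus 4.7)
The plan is to package together three inputs already in place: the detection of height $2$ for the slice tower of $K\mathbb{R}$ applied to the functor $[BV,-]^{\star}_{\gr}$ (Theorem~\ref{thm_2detectkr}), the complete computation of the Margolis-type object $\mathcal{H}^{\star}(V)$ (Corollary~\ref{cor:hv}), and the identification of the connecting map $t$ on $\mathcal{H}^{\star}(V)$ (Lemma~\ref{lemma_imt}). Detection of height $2$ means the hypotheses of Proposition~\ref{pro_hstar_ktheories} are satisfied, so the three isomorphisms there hold: $\Sigma^{1+\alpha} v_1 \mathrm{Ker}_{v_1^2} \cong \mathrm{Im}(t)$, $co\Gamma_{v_1}/v_1 co\Gamma_{v_1} \cong \mathrm{Ker}(t)/\mathrm{Im}(t)$, and $\mathrm{Ker}_{v_1}/v_1 \mathrm{Ker}_{v_1^2} \cong \mathrm{Im}_{\qi}\circ \mathrm{Ker}_{\qo}$.

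Next, decompose $H\F^{*}(BV) = F \oplus H\F^{*}(BV)^{red}$ as in Lemma~\ref{lemma_imt}, producing the corresponding splitting
$$\mathcal{H}^{\star}(V) = \widetilde{\mathcal{H}}^{\star}(V) \oplus H_{01}^{\star}(RF),$$
with $\widetilde{\mathcal{H}}^{\star}(V)$ the explicit sum of shifted copies of $HP^{\star}$ given in Corollary~\ref{cor:hv}, and $H_{01}^{\star}(RF)$ the two-cell object $(F \otimes_{\ste(1)} \F) \otimes (Sq^2Sq^2Sq^2 \F \oplus \sigma^2 Sq^1 \F)$ of Corollary~\ref{pro_h01libre}. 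By Lemma~\ref{lemma_imt}, $t$ vanishes on $\widetilde{\mathcal{H}}^{\star}(V)$ and on the $Sq^2Sq^2Sq^2$ summand of $H_{01}^{\star}(RF)$, while realizing an isomorphism from the $\sigma^2 Sq^1$ summand onto the $Sq^2Sq^2Sq^2$ summand. It follows immediately that $\mathrm{Im}(t) \cong Sq^2Sq^2Sq^2 F$ (defining $F^2(V)$), that $\mathrm{Ker}(t)/\mathrm{Im}(t) \cong \widetilde{\mathcal{H}}^{\star}(V)$ (producing the associated-graded formula for $\Phi_n/\Phi_{n+1}$ using Corollary~\ref{cor:hv}), and that $F^1(V) = \mathrm{Ker}_{v_1}/v_1\mathrm{Ker}_{v_1^2} \cong \mathrm{Im}(\qi)$ from part (3) of Proposition~\ref{pro_hstar_ktheories}, after observing that $\qi$-images automatically lie in $\mathrm{Ker}_{\qo}$ since $\qo \qi = \qi \qo$ and $\qi^2 = 0$.

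The main obstacle is promoting these short exact sequences into an actual $\Z[a,v_1]$-module splitting, as stated in the theorem. The associated graded only gives the filtration $\Phi_{\bullet}$; to produce the direct summand $F^2(V) \otimes_{\Z} \Lambda(v_1)$ one must construct canonical $v_1$-preimages of the $Sq^2Sq^2Sq^2$-classes that together span a $\Lambda(v_1)$-submodule. The strategy is to use that detection of height $2$ identifies each class in $F^2(V)$ as being represented by an integral lift in $k\R^{\star}_{\gr}(BV)$ whose $v_1$-image generates the $v_1\mathrm{Ker}_{v_1^2}$-part, and that for degree reasons no higher $v_1$-extensions can obstruct the splitting (the $Sq^2Sq^2Sq^2$-classes live in the unique degree where no nontrivial $v_1$-action is possible beyond the one recorded by $\Lambda(v_1)$). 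The $F^1(V)$ summand is then simply the remaining complement of $v_1 \mathrm{Ker}_{v_1^2}$ inside $\mathrm{Ker}_{v_1}$, which is a $v_1$-torsion $\Z[a]$-module splitting off because $v_1$ acts as zero on it. Assembling the three summands and comparing with the filtration formula for $co\Gamma_{v_1}$ yields the stated isomorphism.
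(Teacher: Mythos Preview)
Your proposal is correct and follows essentially the same route as the paper. You invoke Proposition~\ref{pro_hstar_ktheories} (which is the specialization of Theorem~\ref{thm:chaincplx} to the slice tower of $K\R$ and the functor $[BV,-]^{\star}_{\gr}$), together with Corollary~\ref{cor:hv} and Lemma~\ref{lemma_imt}, exactly as the paper does; the paper's proof is simply terser, citing Theorem~\ref{thm:chaincplx} and Proposition~\ref{pro:detect_1_2}(4) directly rather than going through their repackaging in Proposition~\ref{pro_hstar_ktheories}. One minor point: your justification that $\mathrm{Im}(\qi)$ agrees with $\mathrm{Im}_{\qi}\circ\mathrm{Ker}_{\qo}$ is not quite the right direction of containment (knowing $\qi$-images lie in $\mathrm{Ker}_{\qo}$ does not by itself show the two images coincide); the actual identification goes through $H\underline{\Z}^{\star}_{\gr}(BV)=\mathrm{Ker}_{\qo}(H\underline{\F}^{\star}_{\gr}(BV))$ and Theorem~\ref{thm:chaincplx}(1), as the paper indicates.
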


\begin{proof}
There always is a splitting of the form $$ \kr^{\star}_{\gr}(BV) \cong co\Gamma_{v_1}(  \kr^{\star}_{\gr}(BV)) \oplus \Gamma_{v_1}(  \kr^{\star}_{\gr}(BV))$$
The $v_1$-torsion comes from:
\begin{itemize}
\item first point of Proposition \ref{thm:chaincplx} for $F^1(V)$,
\item Lemma \ref{lemma_imt} for $F^2(V)$,
\item by $(4)$ of Proposition \ref{pro:detect_1_2}, $\Gamma_{v_1}(  \kr^{\star}_{\gr}(BV)) \cong Ker_{v_1}/Ker_{v_1|_{Im(v_1)}} \oplus \Gamma_{v_1}/Ker_{v_1} \otimes_{\Z} \Lambda(v_1)$ by detection of height $2$ of Theorem \ref{thm_2detectkr}).
\end{itemize}
Finally, the filtration of $co\Gamma_{v_1}(  \kr^{\star}_{\gr}(BV))$ is provided by point 2 of Proposition \ref{thm:chaincplx}. The exhaustivity in each $RO(\gr)$-grading is easily checked by connectivity of $K\R$.
\end{proof}

\bibliographystyle{alpha}
\bibliography{biblio}

\end{document}